\newtheorem{theo}{Theorem}
\newtheorem{lemma}{Lemma}
\newtheorem{prop}{Proposition}
\newtheorem{corollary}{Corollary}
\theoremstyle{definition}
\newtheorem{rem}{Remark}
\newtheorem{defi}{Definition}
\newtheorem{hypo}{Hypothesis}
\numberwithin{equation}{section}
\newcommand{\sym}{\mathrm{sym}}
\newcommand{\loc}{\mathrm{loc}}
\newcommand{\ho}{\mathrm{hom}}
\newcommand{\per}{\mathrm{per}}
\newcommand{\R}{\mathbb{R}}
\newcommand{\Z}{\mathbb{Z}}
\newcommand{\N}{\mathbb{N}}
\newcommand{\Md}{\mathcal{M}_d(\R)}
\newcommand{\Id}{\text{Id}}
\newcommand{\e}{\varepsilon}
\newcommand{\ee}{\mathbf{e}}
\newcommand{\calT}{\mathcal{T}}
\newcommand{\calP}{\mathcal{P}}
\newcommand{\calM}{\mathcal{M}}
\newcommand{\calL}{\mathcal{L}}
\newcommand{\calH}{\mathcal{H}}
\newcommand{\Mab}{\mathcal{M}_{\alpha\beta}}
\newcommand{\Mabs}{\mathcal{M}_{\alpha\beta}^\sym}
\newcommand{\DD}{\mathrm{D}}
\newcommand{\Rhodrho}{Q_{\delta\rho}\cap T_{-x}D}
\renewcommand{\P}{\mathbb{P}}
\mathchardef\emptyset="001F
\newcommand{\dig}[1]{\mathrm{diag}\left[ #1\right]}
\newcommand{\var}[1]{\mathrm{var}\left[#1\right]}
\newcommand{\osc}[2]{\mathrm{osc}_{#2}\,#1}
\newcommand{\expec}[1]{\left\langle #1 \right\rangle}
\newcommand{\step}[1]{\noindent \textit{Step} #1.}
\title[Reduction of the resonance error~II]
{Reduction of the resonance error in numerical homogenization~II: correctors and extrapolation}
\author[A. Gloria]{Antoine Gloria}
\address[Antoine Gloria]{Universit\'e Libre de Bruxelles (ULB), Brussels, Belgium \&
  Project-team MEPHYSTO, Inria Lille - Nord Europe, Villeneuve d'Ascq,
  France}
\email{agloria@ulb.ac.be}
\author[Z. Habibi]{Zakaria Habibi}
\date{\today}
\address[Zakaria Habibi]{Project-team MEPHYSTO, Inria Lille - Nord Europe, Villeneuve d'Ascq,  France}
\email{zakaria.habibi@inria.fr}
\begin{document}
\maketitle

\begin{center}
\begin{minipage}{13cm}
\small{
\noindent {\bf Abstract.} 
This paper is the follow-up of \cite{Gloria-09}.
One common drawback among numerical homogenization methods is the presence of the so-called resonance error,
which roughly speaking is a function of the ratio $\frac{\e}{\rho}$, where $\rho$ is a typical macroscopic lengthscale
and $\e$ is the typical size of the heterogeneities.
In the present work, we make a systematic use of regularization and extrapolation to reduce this resonance error at the level of the approximation
of homogenized coefficients and correctors for general non-necessarily symmetric stationary ergodic coefficients.
We quantify this reduction for the class of periodic coefficients, for the Kozlov subclass of almost periodic coefficients, and for the subclass of random coefficients that satisfy a spectral gap estimate (e.g. Poisson random inclusions).
We also report on a systematic numerical study in dimension 2, which
demonstrates the efficiency of the method and the sharpness of the analysis.
Last, we combine this approach to numerical homogenization methods, prove
the asymptotic consistency in the case of locally stationary ergodic coefficients
and give quantitative estimates in the case of periodic coefficients.
}

\vspace{10pt}
\noindent {\bf Keywords:} 
numerical homogenization, resonance error, effective coefficients, correctors, periodic, almost periodic, random.
%

\end{minipage}
\end{center}

\bigskip

\tableofcontents
\section{Introduction}

\noindent Numerical homogenization methods
are designed to solve partial differential equations for which the operator
is heterogeneous spatially at a small scale. 
Such problems arise in many applications such as diffusion in porous media or composite materials.
By numerical homogenization, we mean that we compute not only the mean field solution
of the heterogeneous problem, but also the local fluctuations, which may be important 
in many applications.
In the recent years, two main types of methods were introduced: methods which rely on homogenization to reduce the complexity of the problem (see \cite{Arbogast-00}, \cite{Hou-97,Hou-99}, \cite{E-05,E-07}, and \cite{Gloria-06b} e.g.), and methods which rely on the structure of the solution space independently of homogenization structures (see \cite{Owhadi-Zhang-07,Owhadi-Zhang-12,Owhadi-Zhang-Berlyand-13}, \cite{Henning-Peterseim-13}, \cite{Babuska-Lipton-11}).
Both approaches are nonlinear in the sense that the vector space in which the solution is approximated depends on the operator itself. 

\medskip

\noindent
In this article we focus on methods based on the homogenization theory.
More precisely, we shall address the following prototypical scalar linear elliptic equation: for some $1\gg \e>0$, 
find $u_\e\in H^1_0(D)$ such that
\begin{equation}\label{eq:Aeps}
-\nabla \cdot A_\e \nabla u_\e \,=\,f_\e \qquad \text{in }D,
\end{equation}
on a domain $D$ with $f_\e\in H^{-1}(D)$.
Here, the spatial dependence of the operator is encoded in the function $A_\e$, whose frequencies are of order $\e^{-1}$.
Academic examples are of the form: $A_\e(x)=A(\frac{x}{\e})$, with $A$ periodic,
almost periodic or stationary in an ergodic stochastic setting.
More realistic models can be of the form: $A_\e(x)=A(x,\frac{x}{\e})$, where $A(x,\cdot)$
may be periodic, almost periodic or stationary for all $x\in D$, provided some suitable cross-regularity
holds (see \cite{Allaire-92,Bourgeat-Mikelic-Wright-94} e.g.).
In all these examples, $\e$ refers to the \emph{actual} lengthscale of the heterogeneities.
Numerical homogenization methods suffer from the so-called resonance error, which is roughly speaking related to scale separation.
Since this error often dominates the error due to discretization (see for instance \cite[Section~4]{Abdulle-05} for a related discussion), its reduction is one of the biggest challenges in numerical homogenization of linear elliptic PDEs.
For general coefficients this was successfully achieved in  \cite{Owhadi-Zhang-Berlyand-13} and \cite{Henning-Peterseim-13}. Yet the price to pay is the ``oversampling" size (see below for details). In the present contribution we take advantage of homogenization structures (such as stationary ergodic coefficients) to reduce the resonance error without increasing significantly the oversampling size.
The aim of this paper is to introduce a general method which is consistent with homogenization and successfully reduces the resonance error in three main academic examples of heterogeneous media: the class of periodic coefficients, the Kozlov subclass of almost periodic coefficients, and the class of random coefficients that satisfy a spectral gap inequality (e.g. random Poisson inclusions).

\medskip
\noindent To this end we shall improve the approach proposed in \cite{Gloria-09} (and
further developed in \cite{Gloria-Mourrat-10}) to reduce the resonance error, and extend it
to the approximation of correctors, which are the central objects of the homogenization theory.
This will allow us to combine this method of reduction of the resonance error with standard numerical homogenization methods. 
Some of the results were announced in \cite{Gloria-12b}.
The starting point in \cite{Gloria-09} is the effective computation of 
\begin{equation}\label{eq:asymp-fo}
\xi'\cdot A_\ho\xi\,=\,\mathcal{M}\Big((\xi'+\nabla \phi')\cdot A(\xi+\nabla \phi)\Big),
\end{equation}
where $\xi,\xi'\in \R^d$, $\phi,\phi'$ are solutions (in a suitable sense) of
\begin{equation}\label{eq:corr}
\begin{array}{rcl}
-\nabla \cdot A(\xi+\nabla \phi)&=&0\qquad \text{in }\R^d,\\
-\nabla \cdot A^*(\xi'+\nabla \phi')&=&0\qquad \text{in }\R^d,
\end{array}
\end{equation}
and $A^*$ denotes the transpose of $A$, and 
where  $\mathcal{M}(\cdot)$ is the average operator on $\R^d$:
\begin{equation}\label{eq:average}
\mathcal{M}(\mathcal{E})\,:=\,\liminf_{R \uparrow\infty} \fint_{Q_R}\mathcal{E}(x)dx,
\end{equation}
with $Q_R:=(-R,R)^d$.
Such quantities are well-defined in periodic, almost periodic and stochastic homogenization, in which case the $\liminf$ in \eqref{eq:average} is a limit (see \cite{JKO-94} e.g.).
The naive approach to approximate \eqref{eq:asymp-fo} consists in replacing $\phi,\phi'$ by $\phi_R,\phi_R'$,  weak solutions in $H^1_0(Q_R)$ of
\begin{equation*}
\begin{array}{rcl}
-\nabla \cdot A(\xi+\nabla \phi_R)&=&0\qquad \text{in }Q_R,\\
-\nabla \cdot A^*(\xi'+\nabla \phi_R')&=&0\qquad \text{in }Q_R,
\end{array}
\end{equation*}
and the average on $\R^d$ by an average on $Q_R$, which yields the approximation $A_R$ of $A_{\hom}$:
\begin{equation}\label{eq:intro-approx1}
\xi'\cdot A_R\xi\,:=\, \fint_{Q_R} (\xi'+\nabla \phi_R'(x))\cdot A(x)(\xi+\nabla \phi_R(x))dx.
\end{equation}
In the case of periodic coefficients $A$, it is elementary to show that
\begin{equation}\label{eq:intro-conv1}
|A_R-A_{\hom}|\,\lesssim \, R^{-1}.
\end{equation}
This is what we (slightly abusively) called resonance error in \cite{Gloria-09}, in line with the classical resonance error originally introduced 
in \cite{Hou-97} and that concerns the approximation of $\nabla \phi,\nabla \phi'$ by $\nabla \phi_R,\nabla \phi_R'$,
see below for more details.
To reduce this error, we have introduced the following alternative to approximate $A_{\hom}$.
Let $\phi_{T,R},\phi_{T,R}'$ be the unique weak solutions in $H^1_0(Q_R)$ of
\begin{equation*}
\begin{array}{rcl}
T^{-1}\phi_{T,R}-\nabla \cdot A(\xi+\nabla \phi_{T,R})&=&0\qquad \text{in }Q_R,\\
T^{-1}\phi_{T,R}'-\nabla \cdot A^*(\xi'+\nabla \phi_{T,R}')&=&0\qquad \text{in }Q_R,
\end{array}
\end{equation*}
where $T>0$ controls the importance of the zero-order term and $R>0$ is the size of the finite domain $Q_R$. 
The homogenized coefficients are approximated by taking the filtered average
\begin{equation}\label{eq:homTRL}
\xi\cdot A_{T,R,L}\xi\,:=\,\int_{Q_R} (\xi'+\nabla \phi_{T,R}'(x))\cdot A(x) (\xi+\nabla \phi_{T,R}(x))\mu_L(x)dx,
\end{equation}
where $\mu_L$ is a smooth averaging function (whose properties will be detailed later on, see Definition~\ref{def:mask}) compactly supported in $Q_L=(-L,L)^d$, $L\leq R$, and of total mass 1.
Then, as showed in \cite[Theorem~1]{Gloria-09} for symmetric coefficients (the general case will be treated here), for
\begin{itemize}
\item $T=L^2 (\log L)^{-4}$,
\item $R=\frac{3}{2}L$,
\end{itemize}
and an averaging function of order at least 3,  we have for periodic coefficients
\begin{equation}\label{eq:opt-error-per}
|A_{T,R,L}-A_\ho|\,\lesssim\,R^{-4}(\log R)^8,
\end{equation}
which is much better than \eqref{eq:intro-conv1}.
This approach also yields improvements over the naive approach in the case of random independent and identically distributed coefficients for discrete elliptic equations, see 
\cite{Gloria-10}. 
Yet, it is not clear in general whether the approximation \eqref{eq:homTRL} remains uniformly elliptic for all values of the parameters, which is an issue of importance in practice.
In this contribution we shall show that a variant of \eqref{eq:homTRL} ensures the a priori uniform ellipticity of the approximation when the coefficients are symmetric (without changing in a quantitative way the convergence properties) and that Richardson extrapolations of $\nabla \phi_{T,R},\nabla \phi_{T,R}'$ with respect to $T$ further reduce the resonance error.

\medskip
\noindent Let us now go back to the original problem \eqref{eq:Aeps}, and show how the method introduced in \cite{Gloria-09}
can be used to reduce the ``classical" resonance error in the numerical solution of \eqref{eq:Aeps}.
We first assume that $A_\e$ satisfies a homogenization property,
that is there exist uniformly elliptic (not necessarily constant) coefficients $A_{\hom}$ such that the solution 
$u_\e$ of \eqref{eq:Aeps} and its flux $A_\e \nabla u_\e$ converge weakly in $H^1(D)$ and in $L^2(D)$, respectively, to the unique weak solution $u_{\hom}$ in $H^1_0(D)$ of
\begin{equation}\label{eq:intro2}
-\nabla \cdot A_\ho \nabla u_\ho\,=\, f\quad \text{ in }D
\end{equation}
and to the homogenized flux $A_\ho \nabla u_\ho$, respectively,
for all suitable $f$.
Such a homogenization property typically takes place when $A_\e$ is the combination
of a smooth function and an oscillating part at scale $\e>0$.
Unfortunately, $A_\ho$ is not explicit in general.
Its approximation is one part of numerical homogenization --- the other part being the approximation
of the local fluctuations of $\nabla u_\e$.

\medskip
\noindent A very general approach consists in averaging out the equation at a ``mesoscale" $\rho \gg \e$. 
More precisely, for all $\rho\geq \e>0$, let
$A_{\rho,\e}$ be defined by
\begin{equation}\label{eq:intro3}
\xi'\cdot A_{\rho,\e}(x)\xi\,:=\, \fint_{D\cap B(x,\rho)}(\xi'+\nabla_y \tilde \phi_{\rho,\e}'(x,y))\cdot A_\e(y)(\xi+\nabla_y \tilde \phi_{\rho,\e}(x,y))dy,
\end{equation}
where for all $x\in D$, $B(x,\rho)$ is the ball centered at $x$ and of radius $\rho$, and 
$y\mapsto \tilde \phi_{\rho,\e}(x,y),\tilde \phi_{\rho,\e}'(x,y)$ are the weak solutions in $H^1_0(D\cap B(x,\rho))$
of 
\begin{equation}\label{eq:intro3.1}
\begin{array}{rcl}
-\nabla_y \cdot A_\e(y)(\xi+\nabla_y \tilde \phi_{\rho,\e}(x,y))&=&0\qquad \text{in }D\cap B(x,\rho),\\
-\nabla_y \cdot A_\e^*(y)(\xi'+\nabla_y \tilde \phi_{\rho,\e}'(x,y)))&=&0\qquad \text{in }D\cap B(x,\rho).
\end{array}
\end{equation}
An approximation of $u_\ho$ is then given by the weak solution $u_{\rho,\e}$ in $H^1_0(D)$ of
\begin{equation}\label{eq:intro4}
-\nabla \cdot A_{\rho,\e} \nabla u_{\rho,\e}\,=\,f \quad \text{ in }D.
\end{equation}
In particular (see \cite{Gloria-12b} for the linear case, and \cite{Gloria-06b} for nonlinear operators), we have
\begin{equation}\label{eq:intro6}
\lim_{\rho\downarrow 0}\lim_{\e \downarrow 0}\|u_{\rho,\e}-u_\ho\|_{H^1(D)}\,=\,0,
\end{equation}
thus ensuring the consistency of the approach in the framework of the homogenization theory (the only assumption is that $A_\e$  H-converges towards $A_\ho$).
We may also approximate the local fluctuations of $\nabla u_\e$ on $D\cap B(x,\rho)$ by the
function $y\mapsto C_{\e,\rho}(x,y):=\sum_{i=1}^d \Big(\fint_{D\cap B(x,\rho)}\nabla_i u_{\rho,\e}(y')dy'\Big) \nabla_y \tilde \phi_{\rho,\e,i}(x,y)$,
where $\tilde \phi_{\rho,\e,i}$ denotes the solution of \eqref{eq:intro3.1} for $\xi=e_i$ (elements of the canonical basis of $\R^d$). We then have the following result (see \cite[Theorem~2]{Gloria-06b}, and \cite{Gloria-10} in the linear case):
$$
\lim_{\rho\downarrow 0}\lim_{\e \downarrow 0} \Big(\fint_{D\cap B(x,\rho)}|\nabla u_\e(y)-C_{\e,\rho}(x,y)|^2dy\Big)^{\frac{1}{2}}\,=\,0.
$$
For periodic coefficients, the argument of the limit above is of order $\sqrt{\frac{\e}{\rho}}$: this is the resonance error originally introduced in  \cite{Hou-97}.
By partitioning $D$ into subdomains of size $\rho$, this allows one to reconstruct a consistent approximation of the fluctutations of $\nabla u_\e$. 
In the case of periodic coefficients, for both the approximation of $u_{\e}$ by $u_{\e,\rho}$ and the approximation of $\nabla u_\e$ by $C_{\e,\rho}$, the error is controlled by the discrepancy between $\nabla \tilde \phi_{\rho,\e}$ and $\nabla \phi(\frac{\cdot}{\e})$ (where $\phi$ is the classical periodic corrector), which is why we use the term ``resonance error" for both quantities. 
The main difference between \eqref{eq:intro4} and \eqref{eq:Aeps} is that
$A_{\rho,\e}$ does not oscillate at scale $\e$, which is a big advantage for the
numerical practice. 
In order to make use of the strategy of \cite{Gloria-09} recalled above for the computation of
homogenized coefficients (and for approximations of local fluctuations), we change variables $x\leadsto \e^{-1}x$
to make the oscillations of $A_\e$ be of order 1.
Formula~\eqref{eq:intro3} now turns into
\begin{equation}\label{eq:intro3b}
\xi'\cdot A_{\rho,\e}(x)\xi\,:=\, \fint_{\frac{D-x}{\e}\cap B\left(0,\frac{\rho}{\e}\right)}(\xi'+\nabla_y \phi_{\rho,\e}'(x,y))\cdot A(y)(\xi+\nabla_y \phi_{\rho,\e}(x,y))dy
\end{equation}
where $A(y):=A_\e(x+\e y)$, and $y\mapsto \phi_{\rho,\e}(x,y),\phi_{\rho,\e}'(x,y)\in H^1_0\left(\frac{D-x}{\e}\cap B\left(0,\frac{\rho}{\e}\right)\right)$ are the weak solutions of 
\begin{equation}\label{eq:intro3.1b}
\begin{array}{rcl}
-\nabla_y \cdot A(y)(\xi+\nabla_y \phi_{\rho,\e}(x,y))&=&0\qquad \text{in }\frac{D-x}{\e}\cap B\left(0,\frac{\rho}{\e}\right),\\
-\nabla_y \cdot A^*(y)(\xi'+\nabla_y \phi_{\rho,\e}'(x,y)))&=&0\qquad \text{in }\frac{D-x}{\e}\cap B\left(0,\frac{\rho}{\e}\right).
\end{array}
\end{equation}
It is then clear that \eqref{eq:intro3b} can be seen as the approximation \eqref{eq:intro-approx1} of some homogenized coefficients $A_{\hom}$.
In particular, if $A$ is periodic, the error $|A_{\rho,\e}-A_\ho|$ scales as $\frac{\e}{\rho}$,
and $\Big(\fint_{\frac{D-x}{\e}\cap B\left(0,\frac{\rho}{\e}\right)}|\nabla_y \phi_{\rho,\e}(x,y)-\nabla_y \phi(y)|^2\Big)^{\frac{1}{2}}$ scales as $\sqrt{\frac{\e}{\rho}}$ (with $\phi$ the periodic corrector).
To reduce these resonance errors, techniques such as oversampling have been introduced (see e.g. \cite{Efendiev-00}, \cite{E-05}).
In particular, a variant of \eqref{eq:intro3b} is
\begin{equation}\label{eq:intro3c0}
\xi'\cdot {A}_{\rho,\e}(x)\xi\,:=\,\frac{1}{\int_{\frac{D-x}{\e}\cap B\left(0,\frac{\rho}{\e}\right)}\mu_{\rho,\e}(y)dy} \int_{\frac{D-x}{\e}\cap B\left(0,\frac{\rho}{\e}\right)}(\xi'+\nabla \phi_{\rho,\e}')
\cdot A(y)(\xi+\nabla \phi_{\rho,\e}) \mu_{\rho,\e}(y)dy,
\end{equation}
where $\mu_{\rho,\e}$ is an averaging function with support compactly included in $B\left(0,\frac{\rho}{\e}\right)$ and of total mass 1 (we still divide by the mass of $\mu_{\rho,\e}$
because we could have $B\left(0,\frac{\rho}{\e}\right))\neq \frac{D-x}{\e}\cap B\left(0,\frac{\rho}{\e}\right)$). Although this method does not change the scaling $\frac{\e}{\rho}$ of $|A_{\rho,\e}-A_\ho|$ in the periodic setting, it may reduce the prefactor of the error (the multiplicative constant in front of $\frac{\e}{\rho}$). It also improves the approximation of the corrector itself, 
and $\Big(\fint_{\frac{D-x}{\e}\cap B\left(0,\frac{\rho}{\e}\right)}|\nabla_y \phi_{\rho,\e}(x,y)-\nabla_y \phi(y)|^2\Big)^{\frac{1}{2}}$ scales as ${\frac{\e}{\rho}}$ provided $\mu_{\rho,\e}$ does not charge the boundary 
of $\frac{D-x}{\e}\cap B\left(0,\frac{\rho}{\e}\right)$; see \cite[Tables~3.1 \&~3.2]{Gloria-07b} for elementary numerical examples.
In general however, it is not clear under which conditions on $A_\e$ and $\mu_{\rho,\e}$ the coefficients $A_{\rho,\e}$ 
defined in \eqref{eq:intro3c0} are uniformly elliptic.
In order to circumvent this difficulty, reduce the resonance error, and obtain convergence rates valid for any coefficients $A_\e$,  Henning and Peterseim introduced in \cite{Henning-Peterseim-13} an original oversampling approach which amounts to solving a problem of type \eqref{eq:intro3.1} with an additional orthogonality constraint on a domain of size $\rho |\ln \rho|$ instead of $\rho$. One expects this method to be optimal in general in the sense that for some coefficients $A_\e$, the size $\rho |\ln \rho|$ of the ``oversampling region" cannot be reduced. It is however not clear at all in the analysis of \cite{Henning-Peterseim-13} whether the size can be reduced for specific coefficients which enjoy homogenization properties. A similar achievement is obtained in \cite{Owhadi-Zhang-Berlyand-13} by Owhadi, Zhang and Berlyand, and the same remark applies on the oversampling size. Progress in that direction is related as much to homogenization as to numerical analysis of linear elliptic PDEs.
The aim of this article is to introduce a numerical homogenization method for
the \emph{restricted class of locally stationary ergodic coefficients} with the following three properties:
\begin{enumerate}
\item the approximation of the homogenized coefficients is unconditionally uniformly elliptic for symmetric coefficients (there is a subtility for non symmetric coefficients);
\item the resonance error can be quantified and is reduced, both for the approximation of homogenized coefficients and for the approximation of fluctuations of the gradient of the solution;
\item the oversampling size remains of order $\rho$.
\end{enumerate}
We conclude this introduction by a quick presentation of our method.
Let $\mu_{\rho,\e}$  be a bounded averaging function.
For all $\psi \in L^1(\frac{D-x}{\e}\cap B\left(0,\frac{\rho}{\e}\right))$, set
$$
\expec{\psi}_{x,\mu_{\rho,\e}}\,:=\,\frac{1}{\int_{\frac{D-x}{\e}\cap B\left(0,\frac{\rho}{\e}\right)}
\mu_{\rho,\e}(y)dy}\int_{\frac{D-x}{\e}\cap B\left(0,\frac{\rho}{\e}\right)}
\psi(y)\mu_{\rho,\e}(y)dy.
$$
We replace \eqref{eq:intro3c0} by 
\begin{multline}\label{eq:intro3c}
\xi'\cdot {A}_{T,k,\rho,\e}(x)\xi\,\\
:=\,
\expec{(\xi'+\nabla \phi_{T,k,\rho,\e}'-\expec{\nabla \phi_{T,k,\rho,\e}'}_{x,\mu_{\rho,\e}})
\cdot A(y)(\xi+\nabla \phi_{T,k,\rho,\e}-\expec{\nabla \phi_{T,k,\rho,\e}}_{x,\mu_{\rho,\e}})}_{x,\mu_{\rho,\e}},
\end{multline}
where $T>0$ is a parameter, $k\in \N$, $\phi_{T,k,\rho,\e},\phi_{T,k,\rho,\e}'$ are Richardson extrapolations (with respect to $T$) of the unique weak solutions $\phi_{T,1,\rho,\e},\phi_{T,1,\rho,\e}'\in H^1_0\Big(\frac{D-x}{\e}\cap B\left(0,\frac{\rho}{\e}\right)\Big)$ of the regularized cell problems
\begin{equation}\label{eq:intro3.1c}
\begin{array}{rcl}
T^{-1} \phi_{T,1,\rho,\e}-\nabla \cdot A(\xi+\nabla \phi_{T,1,\rho,\e})&=&0\quad \text{in }\frac{D-x}{\e}\cap B\left(0,\frac{\rho}{\e}\right), \\
T^{-1} \phi_{T,1,\rho,\e}'-\nabla \cdot A^*(\xi'+\nabla \phi_{T,1,\rho,\e}')&=&0\quad \text{in }\frac{D-x}{\e}\cap B\left(0,\frac{\rho}{\e}\right),
\end{array}
\end{equation}
Then, if $A$ is uniformly elliptic, then ${A}_{T,k,\rho,\e}$ is uniformly elliptic as well, and we may approximate $u_\ho$ by the weak solution $u_{T,k,\rho,\e}$ in $H^1_0(D)$ of 
\begin{equation}\label{eq:intro4b}
-\nabla \cdot A_{T,k,\rho,\e} \nabla u_{T,k,\rho,\e}\,=\,f.
\end{equation}
Likewise, one may approximate $\nabla u_\e$ in $L^2({D}\cap B(0,\rho),\mu_{\rho,\e}dx)$ by
$y\mapsto C_{T,k,\e,\rho}(x,y):=\sum_{i=1}^d \expec{\nabla_i u_{T,k,\rho,\e}}_{x,\mu_{\rho,\e}} \nabla_y \phi_{T,k,\rho,\e,i}(x,\e y)$,
where $\phi_{T,k,\rho,\e,i}$ denotes the solution of \eqref{eq:intro3.1c} for $\xi=e_i$.

\medskip
\noindent From the analysis point of view, the first natural question concerns consistency: under which assumptions on $A_\e$ does the convergences
\begin{equation*}
\lim_{T\uparrow \infty,\rho\downarrow 0}\lim_{\e \downarrow 0}\|u_{T,k,\rho,\e}-u_{\ho}\|_{H^1(D)} \,=\,0,
\lim_{T\uparrow \infty,\rho\downarrow 0}\lim_{\e \downarrow 0} \expec{\nabla u_\e-C_{T,k,\e,\rho}(x,\cdot)}_{x,\mu_{\rho,\e}}^{\frac{1}{2}}\,=\,0
\end{equation*}
hold? Can we obtain convergence rates in function of $T$, $k$, $\rho$, $\e$?
The last question is related to the discretization of this approach and its relation with standard numerical homogenization methods.

\medskip
\noindent The article is organized as follows.
We recall in Section~\ref{sec:spectral} 
standard results on stochastic homogenization, and introduce 
a method based on regularization and extrapolation to approximate homogenized coefficients and correctors.
We provide two proofs of the consistency of the approximations in the stationary ergodic setting: one using spectral theory (limited to symmetric coefficients) and another one relying solely on PDE arguments (to treat nonsymmetric coefficients as well). We obtain in addition optimal quantitative estimates for the class of periodic coefficients, for the subclass of almost periodic coefficients satisfying a Poincar\'e inequality, and in the subclass of random stationary coefficients satisfying a spectral gap estimate (including random Poisson inclusions).
In Section~\ref{sec:tests} we devise computable proxies for these approximations, prove sharp 
estimates on the approximation error,  and display the results of numerical tests in dimension 2 on both symmetric and nonsymmetric 
periodic and almost periodic coefficients. The numerical tests confirm the interest of the method and the sharpness of the analysis. An interesting output of the analysis (confirmed by the numerical tests) is that \emph{correctors are easier to approximate in practice than homogenized coefficients} (essentially because there is no average to compute).
In Section~\ref{sec:reduc}, we finally show how the regularization and extrapolation method can be combined with standard numerical homogenization 
methods, and prove the asymptotic consistency for locally stationary ergodic coefficients.
If the coefficients are in addition periodic, we give a quantitative error analysis based on Section~\ref{sec:spectral} and Section~\ref{sec:tests}, which shows that the resonance error 
is drastically reduced. 

\medskip

\noindent We make use of the following notation:
\begin{itemize}
\item $d\geq 2$ is the dimension;
\item For all $D$ open subset of $\R^d$, $|D|$ denotes its $d$-dimensional Lebesgue measure, $\int_D$ the integral on $D$, and $\fint_D$ is a notation for $\frac{1}{|D|}\int_D$;
\item $Q:=(-1,1)^d$, $Q_R:=(-R,R)^d$, $B_R(x):=\{y\in \R^d:|x-y|<R\}$, $B_R:=B_R(0)$ for all $x\in \R^d$ and $R\in\R^+$;
\item $A^*$ denotes its transpose of a matrix $A$;
\item $H^1_\per(Q)$ denotes the closure in $H^1(Q)$ of smooth $Q$-periodic functions with zero mean;
\item $\lesssim$ and $\gtrsim$ stand for $\leq$ and $\geq$ up to a multiplicative constant which only depends on the dimension $d$ and the constants $\alpha,\beta$ (see Definition~\ref{def:alpha-beta} below) if not otherwise stated;
\item When both $\lesssim$ and $\gtrsim$ hold, we simply write $\sim$;
\item we use $\gg $ instead of $\gtrsim$ when the multiplicative constant is (much) larger than $1$.
\end{itemize}


\section{Correctors, homogenized coefficients, and Richardson extrapolation}\label{sec:spectral}

\subsection{Stochastic homogenization}

We first recall some standard qualitative results, which will be used in the core of the article and whose proofs can be found in \cite{Papanicolaou-Varadhan-79}.

\medskip

\noindent Let $(\Omega,\mathcal F,\mathbb{P})$ be a probability space (we denote by $\expec{\cdot}$ the associated expectation).
We shall say that the family of mappings 
$(\theta_z)_{z\in \R^d}$ from $\Omega$ to $\Omega$ is a strongly continuous measure-preserving ergodic translation group if:
\begin{itemize}
\item $(\theta_z)_{z\in \R^d}$ has the group property: $\theta_0=\mathrm{Id}$ (the identity mapping), and for all $x,y\in \R^d$, $\theta_{x+y}=\theta_x\circ\theta_y$;
\item $(\theta_z)_{z\in \R^d}$ preserves the measure: for all $x\in \R^d$, and every measurable set $F\in \mathcal F$, $\theta_xF$ is measurable
and $\mathbb{P}(\theta_xF)=\mathbb{P}(F)$;
\item $(\theta_z)_{z\in \R^d}$ is strongly continuous: for any measurable function $f$ on $\Omega$, the function $(\omega,x)\mapsto f(\theta_x\omega)$
defined on $\Omega\times \R^d$ is measurable (with the Lebesgue measure on $\R^d$);
\item $(\theta_z)_{z\in \R^d}$ is ergodic: for all $F\in \mathcal F$, if for all $x\in \R^d$, $\theta_xF \subset F$, then $\mathbb{P}(F)\in \{0,1\}$.
\end{itemize}

\medskip
\begin{defi}\label{def:alpha-beta} 
For all $d\geq 2$, $\beta\geq \alpha>0$, we define $\Mab$ as the set of $d\times d$ real matrices
such that for all $\xi\in \R^d$,
\begin{equation*}
\xi\cdot A\xi \,\geq \, \alpha |\xi|^2, \qquad
|A\xi|\,\leq \, \beta |\xi|,
\end{equation*}
and $\Mabs$the subset of symmetric matrices of $\Mab$.
\qed
\end{defi}
\noindent Let $0<\alpha\leq \beta <\infty$, and let $A \in L^\infty(\Omega,\Mab)$.
We define the stationary extension of $A$ (still denoted by $A$) on $\R^d\times \Omega$ as
follows: 
\begin{equation*}
A:(x,\omega) \mapsto A(x,\omega)\,:=\,A(\theta_x\omega).
\end{equation*}
Homogenization theory ensures that the solution operator associated
with $-\nabla \cdot A(\frac{\cdot}{\e},\omega)\nabla$ converges as $\e>0$ vanishes
to the solution operator of $-\nabla \cdot A_\ho \nabla$ for
$\P$-almost every $\omega$, where $A_\ho$ is a deterministic elliptic
matrix characterized as follows.
For all $\xi,\xi'\in \R^d$ with $|\xi'|=|\xi|=1$, and $\P$-almost every $\omega$,
\begin{eqnarray}\label{eq:hom-coeff-gen}
\xi'\cdot A_\ho \xi&=&\lim_{R\uparrow \infty} \fint_{Q_R} (\xi'+\nabla
\phi'(x,\omega))\cdot A(x,\omega)(\xi+\nabla
\phi(x,\omega))dx \nonumber \\
&=&\expec{(\xi'+\nabla
\phi')\cdot A(\xi+\nabla
\phi)(0)},
\end{eqnarray}
where $\phi:\R^d\times \Omega\to \R$ is the primal corrector in direction $\xi\in \R^d$ defined as the unique Borel measurable map such
that $\phi(0,\cdot)=0$ almost surely, $\nabla \phi$ is stationary, $\expec{\nabla \phi}=0$, and such that
$\phi(\cdot,\omega)\in H^1_\loc(\R^d)$ is almost surely a
distributional solution of the corrector equation
\begin{equation}\label{eq:corr-sto} 
-\nabla \cdot A(x,\omega)(\xi+\nabla \phi(x,\omega))\,=\,0 \mbox{
  in } \R^d.
\end{equation}
Likewise  $\phi'$ is the dual corrector in direction $\xi'\in \R^d$
defined as the unique Borel measurable map such
that $\phi'(0,\cdot)=0$ almost surely, $\nabla \phi'$ is stationary, $\expec{\nabla \phi'}=0$, and such that
$\phi'(\cdot,\omega)\in H^1_\loc(\R^d)$ is almost surely a
distributional solution of the dual corrector equation
\begin{equation}\label{eq:corr-sto-ad} 
-\nabla \cdot A^*(x,\omega)(\xi'+\nabla \phi'(x,\omega))\,=\,0 \mbox{
  in } \R^d.
\end{equation}

\noindent The proof of existence and uniqueness of these correctors is obtained
by regularization, and we consider for all $T>0$ the stationary
solutions $\phi_T,\phi_T'$ with zero expectation of the equations
\begin{equation}\label{eq:corr-reg}
\begin{array}{rcl}
T^{-1}\phi_T(x)-\nabla \cdot A(x)(\xi+\nabla \phi_T(x))&=&0 \mbox{
  in } \R^d,\\
T^{-1}\phi_T'(x)-\nabla \cdot A^*(x)(\xi'+\nabla \phi_T'(x))&=&0 \mbox{
  in } \R^d.
\end{array}
\end{equation}
Indeed, as proved in \cite[Lemma~2.7]{Gloria-Otto-10b},  for all $A\in L^\infty(\R^d,\Mab)$, the regularized corrector equations admit unique solutions $\phi_T,\phi_T'$ in the class of  functions $v\in H^1_\loc(\R^d)$
such that 
\begin{equation}\label{eq:class-1}
\sup_{x\in \R^d}\int_{B_{1}(x)}(v^2+|\nabla v|^2)dx'<\infty.
\end{equation}
Furthermore, the solutions $\phi_{T},\phi_T'$ satisfy the uniform a priori estimate
\begin{equation}\label{eq:class-2}
\begin{array}{l}
\sup_{x\in \R^d}\int_{B_{\sqrt{T}}(x)}(T^{-1}\phi_T^2+|\nabla \phi_T|^2)dx'\,\lesssim\,\sqrt{T}^d,\\
\sup_{x\in \R^d}\int_{B_{\sqrt{T}}(x)}(T^{-1}\phi_T'^2+|\nabla \phi_T'|^2)dx'\,\lesssim\,\sqrt{T}^d.
\end{array}
\end{equation}
Note that the existence of regularized correctors does not require any assumption on the coefficients $A$ besides uniform boundedness from below and above.
In the case when $A$ is a stationary field, the associated random fields $\phi_T,\phi_T'$ of solutions are stationary, and in addition the defining equations have an equivalent form in the probability space, to
which we can apply the Lax-Milgram theorem.
This formulation requires the definition of a differential calculus in the probability space: the differential operator $\nabla_i$ (for $i\in \{1,\dots,d\}$) has a counterpart in probability, denoted by $\DD_i$
and defined by
\begin{equation*}
\DD_i f(\omega)\,=\,\lim_{h\downarrow 0} \frac{f(\theta_{h\ee_i}\omega) -f(\omega)}{h}.
\end{equation*}
These are the infinitesimal generators of the $d$ one-parameter strongly continuous
unitary groups on $L^2(\Omega)$ defined by the translations in each of
the $d$ directions. These operators commute and are closed and densely defined on $L^2(\Omega)$.
We denote by $\calH(\Omega)$ the domain of $\DD=(\DD_1,\dots,\DD_d)$. 
This subset of $L^2(\Omega)$ is a Hilbert space
for the norm
\begin{equation*}
\|f\|_{\calH}^2\,=\,\expec{|\DD f|^2}+\expec{f^2}.
\end{equation*}
Since the groups are unitary, the operators are skew-dual so that
we have an integration by parts formula: for all $f,g\in \calH(\Omega)$
\begin{equation*}
\expec{f\DD_i g}\,=\,-\expec{g\DD_i f}.
\end{equation*}
The equivalent form of the regularized corrector equations for stationary coefficients is as
follows:
\begin{equation}\label{eq:mod-corr-sto} 
\begin{array}{rcl}
T^{-1}\phi_T(0,\cdot)-\DD \cdot A(0) (\xi+\DD \phi_T(0,\cdot))&=&0 ,\\
T^{-1}\phi_T'(0,\cdot)-\DD \cdot A^*(0) (\xi'+\DD \phi_T'(0,\cdot))&=&0 ,
\end{array}
\end{equation}
which admit unique weak solutions $\phi_T(0,\cdot),\phi_T'(0,\cdot)\in \calH(\Omega)$,
that are such that for all $\psi \in \calH(\Omega)$,
\begin{equation}\label{eq:mod-corr-sto-exp} 
\begin{array}{rcl}
\expec{T^{-1}\phi_T(0,\cdot)\psi+\DD \psi \cdot A(0) (\xi+\DD\phi_T(0,\cdot))}&=&0,\\
\expec{T^{-1}\phi_T'(0,\cdot)\psi+\DD \psi \cdot A^*(0) (\xi'+\DD\phi_T'(0,\cdot))}&=&0.
\end{array}
\end{equation}
One may prove using the integration by parts formula that $\DD
\phi_T(0,\cdot),\DD\phi_T'(0,\cdot)$ are bounded in $L^2(\Omega)$ and converge weakly in
$L^2(\Omega)$ to some potential fields $\Phi,\Phi'\in L^2(\Omega,\R^d)$.
Using the spectral representation of the translation group we may then
prove uniqueness of the corrector $\phi,\phi'$ (which are such that
$\nabla \phi(x,\omega)=\Phi(\theta_x \omega), \nabla \phi'(x,\omega)=\Phi'(\theta_x \omega)$ under the ergodicity assumption), see \cite{Papanicolaou-Varadhan-79}.
Note that whereas we have $\nabla \phi_T(x,\omega)\,=\,\DD \phi_T(0,\theta_x \omega),\nabla \phi_T'(x,\omega)\,=\,\DD \phi_T'(0,\theta_x \omega)$ for all $T>0$, $\phi,\phi'$ are not stationary fields and $\DD \phi(0,\cdot),\DD \phi'(0,\cdot)$ are not a well-defined quantities a priori (except typically in the periodic setting), only $\nabla \phi,\nabla \phi'$ are.

\medskip

\noindent We recall that the case of periodic and almost periodic coefficients can be recast in this stochastic framework up to randomizing the origin of the periodic cell. In particular, for $Q$-periodic coefficients, we then have
$\calH(\Omega)=H^1_\per(Q)$, and the expectation is replaced by the spatial average on $Q$.
We conclude by an example of random coefficients we shall more specifically consider in this article: Random Poisson inclusions.
By the ``Poisson ensemble'' we understand the following probability measure:
Let the configuration of points $\calP:=\{x_n\}_{n\in \N}$ on $\R^d$
be distributed according to the Poisson
point process with density one. This means the following
\begin{itemize}
\item 
For any two disjoint (Lebesgue measurable) subsets $D$ and $D'$ of $\R^d$ we have that
the configuration of points in $D$ and the configuration of points in $D'$ are independent.
\item For any (Lebesgue measurable) bounded subset $D$ of $\R^d$, 
the number of points in $D$ is Poisson distributed;
the expected number is given by the Lebesgue measure of $D$.
\end{itemize}
With any realization $\calP=\{x_n\}_{n\in \N}$ of the Poisson point process, 
we associate the coefficient field $A$ via
\begin{equation*}\label{eq:A-poisson}
A(x)=\left\{\begin{array}{ccc}
\alpha&\mbox{if}&x\in\bigcup_{n=1}^{\infty}B_{\frac{1}{2}}(x_n)\\
\beta&\mbox{else}\end{array}\right\}\Id.
\end{equation*}
This defines a probability measure $\expec{\cdot}$ on $\Omega$ by ``push-forward'' of Poisson measure.

\subsection{Approximation of homogenized coefficients and correctors}

\label{sec:App.ahom}

The starting point in \cite{Gloria-09} is the observation that the solutions 
$\phi_T,\phi_T' \in H^1_\loc(\R^d)$ of \eqref{eq:corr-reg} can be approximated 
 on a bounded domain $Q_L$ by the weak solutions $\phi_{T,R},\phi_{T,R}'\in H^1_0(Q_R)$
of 
\begin{equation}\label{eq:extra-approx}
\begin{array}{rcl}
T^{-1}\phi_{T,R}-\nabla \cdot A (\xi+\nabla \phi_{T,R})&=&0  \mbox{ in }Q_R,\\
T^{-1}\phi_{T,R}'-\nabla \cdot A^* (\xi'+\nabla \phi_{T,R}')&=&0  \mbox{ in }Q_R,
\end{array}
\end{equation}
up to an error which is of infinite order measured in units of $\frac{R-L}{\sqrt{T}}$.
This is in contrast to the correctors $\phi,\phi'$ for which the approximation using homogeneous
Dirichlet boundary conditions yields a surface term of order $\frac{1}{R}$.
Since we have easy access to a proxy for $\phi_T,\phi_T'$, a natural question is how to approximate
the gradients $\nabla \phi,\nabla \phi'$ of the correctors and the homogenized coefficients $A_\ho$ using $\phi_T,\phi_T'$ instead of $\phi,\phi'$.
In  \cite{Gloria-09}, the following approximations are considered in the case of symmetric coefficients $A$:
$$
\nabla \phi \leadsto \nabla \phi_T, \qquad \xi\cdot A_\ho\xi \leadsto \xi\cdot A_T\xi:=\expec{(\xi+\nabla \phi_T)\cdot A(\xi+\nabla \phi_T)}.
$$
In the case of symmetric periodic coefficients, it is shown that  $\expec{|\nabla \phi-\nabla \phi_T|^2}\sim T^{-2}$ and $|A_T-A_\ho|\sim T^{-2}$.
An explicit family of higher-order approximations $\{\tilde A_{T,k}\}_{k\in \N}$ has then been introduced in \cite{Gloria-Mourrat-10} based on spectral theory which satisfies
for symmetric periodic coefficients $|\tilde A_{T,k}-A_\ho|\sim T^{-2k}$.
More precisely, the first three approximations of $A_\ho$ are given by:
\begin{eqnarray*}
\xi\cdot \tilde A_{T,1}\xi&=& \expec{(\xi+\nabla \phi_T)\cdot A (\xi+\nabla \phi_T)},\\
\xi\cdot \tilde A_{T,2}\xi&=& \expec{(\xi+\nabla \phi_T)\cdot A (\xi+\nabla \phi_T)}-3T^{-1} \expec{\phi_T^2}-2T^{-1} \expec{\phi_{\frac{T}{2}}^2}\\
&&+5T^{-1} \expec{\phi_T\phi_{\frac{T}{2}}},\\
\xi\cdot \tilde A_{T,3}\xi&=& \expec{(\xi+\nabla \phi_T)\cdot A (\xi+\nabla \phi_T)}-\frac{55}{9}T^{-1}\expec{\phi_T^2}-8T^{-1}  \expec{\phi_{\frac{T}{2}}^2}\\
&&-\frac{4}{9}T^{-1} \expec{\phi_{\frac{T}{4}}^2} +\frac{41}{3}T^{-1} \expec{\phi_T\phi_{\frac{T}{2}}}-\frac{22}{9}T^{-1}\expec{\phi_T\phi_{\frac{T}{4}}}
\\
&&+\frac{10}{3}T^{-1}
 \expec{\phi_{\frac{T}{2}}\phi_{\frac{T}{4}}}.
\end{eqnarray*}
These results have two drawbacks:
\begin{itemize}
\item Approximation of homogenized coefficients: It is not clear how to extend the definition of $\tilde A_{T,k}$ to nonsymmetric coefficients;
\item Approximation of correctors: The approximation of $\nabla \phi$ by $\nabla \phi_T$ yields an $L^2(\Omega)$ error which saturates at $T^{-1}$.
\end{itemize}
The aim of this section is to propose approximations of $\nabla \phi$ and $A_\ho$ that reduce these drawbacks.
As noticed in \cite{Gloria-Neukamm-Otto-14}, the family $\{\tilde A_{T,k}\}_{k\in \N}$ can be seen as extrapolations of $A_T$ wrt $T$ in spectral space (noting that $T\mapsto A_T$ is a smooth function of $T$ for $T>0$).
The main idea of this section consists in approximating the corrector $\phi,\phi'$ by Richardson extrapolations of $\phi_T,\phi_T'$.
\begin{defi}[Richardson extrapolation of regularized correctors]\label{def:extra}
Let $A\in L^\infty(\R^d,\Mab)$ be some coefficients.
For all $T>0$, $\xi,\xi' \in \R^d$, we consider the regularized primal and dual correctors $\phi_T,\phi_T'\in H^1_\loc(\R^d)$ in directions $\xi,\xi'$, unique distributional solutions of \eqref{eq:corr-reg}
satisfying \eqref{eq:class-1}.
The families $\{\phi_{T,k}\}_{k\in \N},\{\phi_{T,k}'\}_{k\in \N}$ of Richardson extrapolations are defined by the following induction: $\phi_{T,1}\,:=\,\phi_{T},\phi_{T,1}'\,:=\,\phi_{T}'$, and
for all $k\in \N$,
\begin{equation}\label{eq:def:extrapo}
\phi_{T,k+1}\,:=\,\dfrac{1}{2^k-1}(2^k\phi_{2T,k}-\phi_{T,k}),
\qquad 
\phi'_{T,k+1}\,:=\,\dfrac{1}{2^k-1}(2^k\phi'_{2T,k}-\phi'_{T,k}).
\end{equation}
\qed
\end{defi}
\noindent We may define approximations of the homogenized coefficients associated with the Richardson extrapolations of the regularized correctors:
\begin{defi}[Approximation of homogenized coefficients]\label{def:extra-hom}
Let $A\in L^\infty(\R^d,\Mab)$ be some coefficients.
For all $T>0$, $\xi,\xi' \in \R^d$, we let $\{\phi_{T,k}\}_{k\in \N},\{\phi_{T,k}'\}_{k\in \N}$ be as in Definition~\ref{def:extra}, and we define the family $\{A_{T,k}\}_{k\in \N}$ of approximations of the homogenized coefficients by
\begin{equation*}
\xi'\cdot A_{T,k}\xi \,:=\,\calM\big( (\xi'+\nabla \phi_{T,k}')\cdot A(\xi+\nabla {\phi}_{T,k})\big).
\end{equation*}
\qed
\end{defi}
\noindent In the case of stationary ergodic coefficients $A$, Definition~\ref{def:extra-hom} makes sense and we have 
$$
\xi'\cdot A_{T,k}\xi \,=\,\expec{ (\xi'+\nabla \phi'_{T,k})\cdot A(\xi+\nabla {\phi}_{T,k})}.$$

\medskip

\noindent In the following paragraph we show, using elementary PDE arguments, that the approximations $\nabla \phi_{T,k},\nabla \phi'_{T,k}$ and $A_{T,k}$
are consistent with $\nabla \phi,\nabla \phi'$ and $A_\ho$ in the case of stationary ergodic coefficients, in the limit $T\uparrow +\infty$.

\subsection{Asymptotic consistency}
\begin{theo}\label{th:spec}
Let $A\in L^\infty(\Omega,\Mab)$ be stationary ergodic 
coefficients, $\phi,\phi'$ be the primal and dual correctors in directions $\xi,\xi'\in \R^d$, $|\xi'|=|\xi|=1$,
and $A_\ho$ be the homogenized coefficients.
For all $k\in \N$ and $T>0$, let $\phi_{T,k},\phi_{T,k}'$ and $A_{T,k}$ be as in Definitions~\ref{def:extra} and~\ref{def:extra-hom}.
Then,
\begin{equation*}
\lim_{T\uparrow \infty}\expec{|\nabla \phi_{T,k}-\nabla \phi|^2} \,=\,0,\qquad\lim_{T\uparrow \infty}\expec{|\nabla \phi'_{T,k}-\nabla \phi'|^2} \,=\,0,\qquad 
\lim_{T\uparrow \infty}|A_{T,k}-A_\ho| \,=\,0.
\end{equation*}
\qed
\end{theo}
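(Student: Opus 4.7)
The plan is to proceed in three stages: establish asymptotic consistency for the base case $k=1$ using classical PDE/probabilistic arguments, propagate this to arbitrary $k$ via the linearity of the Richardson recursion, and finally transfer the convergence to $A_{T,k}$ through the bilinear representation formula.

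For the base case, I would test the probabilistic form \eqref{eq:mod-corr-sto-exp} of the regularized corrector equation against $\phi_T(0,\cdot)\in\calH(\Omega)$ to obtain the energy identity
\[
T^{-1}\expec{\phi_T^2}+\expec{\nabla\phi_T\cdot A(\xi+\nabla\phi_T)}=0.
\]
Uniform ellipticity together with Young's inequality then yield $\expec{|\nabla\phi_T|^2}+T^{-1}\expec{\phi_T^2}\lesssim|\xi|^2$ uniformly in $T$, so that along a subsequence one has $\nabla\phi_T\rightharpoonup\Phi$ weakly in $L^2(\Omega,\R^d)$ and $T^{-1}\phi_T\to 0$ strongly. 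The limit $\Phi$ is a potential field of vanishing expectation, and passing to the limit in \eqref{eq:mod-corr-sto-exp} shows it satisfies $\DD\cdot A(\xi+\Phi)=0$; by the uniqueness of the corrector $\phi$ as recalled in Section~\ref{sec:spectral}, $\Phi$ must coincide with the stationary representative of $\nabla\phi$, i.e.\ $\nabla\phi(x,\omega)=\Phi(\theta_x\omega)$.

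Strong convergence is then upgraded from weak convergence by means of an energy comparison. Testing the limit equation against the stationary test function $\phi_T(0,\cdot)\in\calH(\Omega)$ gives $\expec{\nabla\phi_T\cdot A(\xi+\nabla\phi)}=0$, and passing to the limit yields $\expec{\nabla\phi\cdot A\nabla\phi}=-\expec{\nabla\phi\cdot A\xi}$. Combining this identity with the energy identity for $\phi_T$ produces
\[
\limsup_{T\uparrow\infty}\expec{\nabla\phi_T\cdot A\nabla\phi_T}\,\leq\,\expec{\nabla\phi\cdot A\nabla\phi}.
\]
Since the skew-symmetric part of $A$ drops out of both quadratic forms, this inequality is in fact an inequality involving the coercive symmetric part $(A+A^*)/2$; weak lower semicontinuity of the associated quadratic form then forces equality and hence strong convergence $\nabla\phi_T\to\nabla\phi$ in $L^2(\Omega)$. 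The same argument applied to $A^*$ handles $\phi_T'$, and the uniqueness of the limit removes the extraction.

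The induction step on $k$ is immediate: the recursion \eqref{eq:def:extrapo} is affine in $\phi$, so
\[
\nabla\phi_{T,k+1}-\nabla\phi=\frac{1}{2^k-1}\bigl(2^k(\nabla\phi_{2T,k}-\nabla\phi)-(\nabla\phi_{T,k}-\nabla\phi)\bigr),
\]
and the triangle inequality propagates $L^2(\Omega)$-convergence from step $k$ to step $k+1$ (and similarly for $\phi'_{T,k}$). Consistency of $A_{T,k}$ finally follows by inserting the strong convergence of $\nabla\phi_{T,k}$ and $\nabla\phi_{T,k}'$ into the bilinear expression of Definition~\ref{def:extra-hom} and using uniform boundedness of $A$ to pass to the limit, recovering the characterization \eqref{eq:hom-coeff-gen} of $A_\ho$. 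The main analytic obstacle lies in the base case, specifically in obtaining strong convergence of $\nabla\phi_T$ without any symmetry assumption on $A$; this is precisely where one must exploit that the skew-symmetric part vanishes in the quadratic energies and that the limiting equation can be tested against $\phi_T(0,\cdot)$ itself.
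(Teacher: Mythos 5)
Your proposal is correct and follows essentially the same route as the paper: test the regularized corrector equation with $\phi_T(0,\cdot)$ to get the energy identity, use the weak convergence $\DD\phi_T\rightharpoonup\nabla\phi(0)$ together with the sign of $T^{-1}\expec{\phi_T^2}$ and weak lower semicontinuity of the coercive (symmetric-part) quadratic form to upgrade to strong $L^2(\Omega)$ convergence, then propagate through the finite linear combinations defining the Richardson extrapolants and pass to the limit in the bilinear formula for $A_{T,k}$. The only cosmetic difference is that you re-derive the identification of the weak limit with $\nabla\phi(0)$, which the paper simply quotes from Papanicolaou--Varadhan.
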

\begin{proof}
As we shall see in Step~1, it is enough to prove that 
\begin{equation}\label{eq:asymp-conv-phiT}
\lim_{T\uparrow \infty} \expec{(\xi+\nabla\phi_T)\cdot A(\xi+\nabla\phi_T)}\,=\,
\expec{(\xi+\nabla \phi)\cdot A(\xi+\nabla \phi)}.
\end{equation}

\medskip

\step{1} Proof that \eqref{eq:asymp-conv-phiT} implies the claim.

\noindent We start with the approximation of the correctors. 
Because $\phi_{T,k},\phi'_{T,k}$ are defined by extrapolations, it is enough to prove
the claim for $\phi_T,\phi_T'$. We only address $\phi_T$. 
From the uniform ellipticity of $A$, the map $L^2(\Omega,\R^d)\ni \Psi \mapsto \expec{\Psi\cdot A(0)\Psi}^{\frac{1}{2}}$ is equivalent to the norm $\|\cdot\|_{L^2(\Omega,\R^d)}$.
Hence, since $\nabla \phi_T(0)$ converges weakly to $\nabla \phi(0)$ in $L^2(\Omega,\R^d)$
as $T \uparrow \infty$, it converges strongly to $\nabla \phi(0)$ 
in $L^2(\Omega,\R^d)$ if and only if 
$$
\lim_{T\uparrow \infty} \expec{\nabla \phi_T\cdot A\nabla \phi_T}^{\frac{1}{2}}\,=\,\expec{\nabla \phi\cdot A\nabla \phi}^{\frac{1}{2}}.
$$
The latter is a consequence of \eqref{eq:asymp-conv-phiT} and of the weak convergence 
of $\nabla \phi_T$ to $\nabla \phi$ as follows:
\begin{eqnarray*}
\expec{\nabla \phi_T\cdot A\nabla \phi_T}&=&\expec{(\xi+\nabla \phi_T)\cdot A(\xi+\nabla \phi_T)}-\expec{\nabla \phi_T\cdot A\xi}-\expec{\xi\cdot A\nabla \phi_T}-\expec{\xi\cdot A\xi}
\\
&\stackrel{T\uparrow \infty}{\longrightarrow}& \expec{(\xi+\nabla \phi)\cdot A(\xi+\nabla \phi)}-\expec{\nabla \phi\cdot A\xi}-\expec{\xi\cdot A\nabla \phi}-\expec{\xi\cdot A\xi}\\
&=&\expec{\nabla \phi\cdot A\nabla \phi}.
\end{eqnarray*}
The strong convergence in $L^2(\Omega,\R^d)$ of $\nabla \phi_{T,k}(0),\nabla\phi'_{T,k}(0)$
to $\nabla \phi(0),\nabla \phi'(0)$ trivially implies the convergence of $A_{T,k}$ to $A_\ho$.

\medskip

\step{2} Proof of \eqref{eq:asymp-conv-phiT}.

\noindent Since the coefficients are stationary, we can rewrite the regularized corrector equation for $\phi_T$ in the probability space, the weak formulation of which yields with test-function $\phi_T$:
\begin{equation}\label{eq:pr-asymp-conv-phiT-3}
T^{-1}\expec{\phi_T^2}+\expec{(\xi+\DD\phi_T)\cdot A(0)(\xi+\DD \phi_T)}\,=\,\expec{\xi\cdot A(0)(\xi+\DD \phi_T)}.
\end{equation}
On the one hand, since $\DD \phi_T \rightharpoonup \nabla \phi(0)$ weakly in $L^2(\Omega)^d$ (the convergence of the whole sequence and the uniqueness of the limit are consequences of ergodicity),
\begin{equation}\label{eq:pr-asymp-conv-phiT-1}
\lim_{T\uparrow \infty} \expec{\xi\cdot A(0)(\xi+\DD \phi_T)}\,=\,\xi\cdot A_\ho\xi.
\end{equation}
On the other hand, by the weak lower semicontinuity of quadratic functionals,
\begin{equation}\label{eq:pr-asymp-conv-phiT-2}
\liminf_{T\uparrow \infty} \expec{(\xi+\DD\phi_T)\cdot A(0)(\xi+\DD \phi_T)}\,\geq \, 
\expec{(\xi+\nabla\phi(0))\cdot A(0)(\xi+\nabla \phi(0))}=\xi\cdot A_\ho \xi.
\end{equation}
The combination of \eqref{eq:pr-asymp-conv-phiT-1},  \eqref{eq:pr-asymp-conv-phiT-2}, and \eqref{eq:pr-asymp-conv-phiT-3} then yields by non-negativity of $T^{-1}\expec{\phi_T^2}$
$$
\lim_{T\uparrow \infty} \expec{(\xi+\DD\phi_T)\cdot A(\xi+\DD \phi_T)}\,=\,\xi\cdot A_\ho \xi,
$$
as desired.
\end{proof}
\begin{rem}\label{rem:prop-spec3.1}
This elementary result seems to be new in the case of non-symmetric coefficients. For symmetric coefficients
it was proved by Papanicolaou and Varadhan in their seminal paper \cite{Papanicolaou-Varadhan-79} using spectral theory, see Subsection~\ref{sec:quant-spec}.
Note in particular that the proof of Theorem~\ref{th:spec} yields the following result:
\begin{equation*}
\lim_{T\uparrow \infty} T^{-1}\expec{\phi_T^2}\,=\,\lim_{T\uparrow \infty} T^{-1}\expec{\phi_T'^2}\,=\,0.
\end{equation*}
This convergence is much stronger than the a priori estimates 
$$
\expec{\phi_T^2}\,\lesssim \,T, \quad \expec{\phi_T'^2}\,\lesssim \,T,
$$
associated with \eqref{eq:corr-reg} for \emph{generic} coefficients $A$. They are consequences of \emph{ergodicity}. \qed
\end{rem}
\noindent Before we turn to quantitative results we discuss the uniform ellipticity of $A_{T,k}$, which is a rather subtle issue.
\begin{prop}\label{prop:coerc}
Let $A\in L^\infty(\Omega,\Mab)$ be stationary ergodic 
coefficients, let $\xi'=\xi\in \R^d$, $|\xi'|=|\xi|=1$,
and for all $k\in \N$ and $T>0$, let $\phi_{T,k},\phi_{T,k}'$ and $A_{T,k}$ be as in Definitions~\ref{def:extra} and~\ref{def:extra-hom}.
Then, 
\begin{multline}\label{eq:coer-AT1}
\xi \cdot A_{T}\xi\,=\, \frac{1}{2}\expec{(\xi+\nabla \phi_T)\cdot A(\xi+\nabla \phi_T)}
+\frac{1}{2}\expec{(\xi+\nabla \phi_T')\cdot A(\xi+\nabla \phi_T')}\\
+\frac{T^{-1}}{2}\expec{(\phi_T-\phi_T')^2},
\end{multline}
and for all $k\geq 1$,
\begin{multline}\label{eq:coer-ATk}
\xi \cdot A_{T,k+1}\xi\,=\, \frac{1}{2}\expec{(\xi+\nabla \phi_{T,k+1})\cdot A(\xi+\nabla \phi_{T,k+1})}
+\frac{1}{2}\expec{(\xi+\nabla \phi_{T,k+1}')\cdot A(\xi+\nabla \phi_{T,k+1}')}\\
+(2^{k+1}T)^{-1}\expec{(\phi_{T,k+1}-\phi'_{T,k+1})(\phi_{T,k+1}-\phi'_{T,k+1}-\phi_{T,k}+\phi'_{T,k})}.
\end{multline}
In particular, when $A$ is symmetric, $A_{T,k}$ is uniformly coercive for all $k\in \N$ and $T>0$, whereas
when $A$ is non-symmetric, $A_{T}$ is coercive for all $T>0$ but $A_{T,k}$ may have an ellipticity defect for $k>1$.
\qed
\end{prop}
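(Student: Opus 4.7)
My plan is to first derive the PDE satisfied by the Richardson extrapolates $\phi_{T,k+1}$ and $\phi'_{T,k+1}$, and then to exploit its weak form to rewrite $\xi\cdot A_{T,k+1}\xi$ in a symmetric way. Both identities~\eqref{eq:coer-AT1} and~\eqref{eq:coer-ATk} will follow uniformly for $k\geq 0$ if I adopt the convention $\phi_{T,0}=\phi'_{T,0}:=0$.

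\emph{Step~1: equation for the extrapolates.} I claim that for every $k\geq 0$,
\begin{equation*}
(2^kT)^{-1}\phi_{T,k+1}-\nabla\cdot A(\xi+\nabla\phi_{T,k+1})\,=\,(2^kT)^{-1}\phi_{T,k},
\end{equation*}
and analogously for $\phi'_{T,k+1}$ with $A^*$. The base case $k=0$ is~\eqref{eq:corr-reg}. For the inductive step, I apply the linear operator $u\mapsto\nabla\cdot A(\xi+\nabla u)$ to the defining identity $(2^k-1)\phi_{T,k+1}=2^k\phi_{2T,k}-\phi_{T,k}$, insert the induction hypothesis at parameters $T$ and $2T$, and close the algebra using the auxiliary identity $\phi_{T,k}-\phi_{T,k-1}=2^{k-1}(\phi_{T,k}-\phi_{2T,k-1})$, which I obtain by solving the Richardson recursion at level $k$ for $\phi_{T,k-1}$. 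I expect this to be the main technical obstacle: Richardson extrapolation does not preserve the exact form of a regularized corrector equation (the zero-order weight migrates to $(2^kT)^{-1}$ and a nonzero source $(2^kT)^{-1}\phi_{T,k}$ appears), so the telescoping of scales $T,2T,4T,\dots$ into a single equation at scale $2^kT$ is not transparent a priori.

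\emph{Step~2: the formula.} Set $u:=\phi_{T,k+1}$, $v:=\phi'_{T,k+1}$, $w:=u-v$, and $F(a,b):=\expec{(\xi+\nabla a)\cdot A(\xi+\nabla b)}$, so that $\xi\cdot A_{T,k+1}\xi=F(v,u)$. Testing the primal equation of Step~1 with $w$ produces $F(u,u)-F(v,u)=(2^kT)^{-1}\expec{(\phi_{T,k}-u)w}$, and testing the dual equation with $w$ produces $F(v,v)-F(v,u)=-(2^kT)^{-1}\expec{(\phi'_{T,k}-v)w}$. Summing and rearranging,
\begin{equation*}
2F(v,u)\,=\,F(u,u)+F(v,v)+(2^kT)^{-1}\expec{(u-v)\big((u-v)-(\phi_{T,k}-\phi'_{T,k})\big)},
\end{equation*}
which is exactly~\eqref{eq:coer-ATk} after division by $2$; specializing $k=0$ (so that $\phi_{T,0}=\phi'_{T,0}=0$) collapses the remainder to $T^{-1}\expec{(\phi_T-\phi'_T)^2}/2$, recovering~\eqref{eq:coer-AT1}.

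\emph{Step~3: coercivity.} When $A=A^*$, the primal and dual regularized corrector equations coincide, so by uniqueness $\phi_T=\phi'_T$, and the Richardson recursion then yields $\phi_{T,k}=\phi'_{T,k}$ for every $k\geq 1$. The remainder term in~\eqref{eq:coer-AT1}--\eqref{eq:coer-ATk} vanishes, and each of the two surviving quadratic terms is bounded below by $\alpha|\xi|^2$ via uniform ellipticity combined with $\expec{\nabla\phi_{T,k}}=0$ (a consequence of stationarity and of $\phi_{T,k}$ being a linear combination of stationary regularized correctors with zero-mean gradients). For non-symmetric $A$ and $k=1$, the remainder $\frac{T^{-1}}{2}\expec{(\phi_T-\phi'_T)^2}$ in~\eqref{eq:coer-AT1} has a definite sign, so $A_T$ is still coercive. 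For $k\geq 2$, however, the remainder in~\eqref{eq:coer-ATk} is the $L^2(\Omega)$ inner product between $\phi_{T,k+1}-\phi'_{T,k+1}$ and its Richardson-in-level increment $(\phi_{T,k+1}-\phi'_{T,k+1})-(\phi_{T,k}-\phi'_{T,k})$, which need not be non-negative: this is precisely the mechanism allowing an ellipticity defect for $k\geq 2$ in the non-symmetric case.
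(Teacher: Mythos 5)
Your proof is correct and follows essentially the same route as the paper's: derive by induction the equation satisfied by the Richardson extrapolates, test the primal and dual equations with $\phi_{T,k+1}-\phi'_{T,k+1}$, and take the half-sum. Your normalization $(2^kT)^{-1}\phi_{T,k+1}-\nabla\cdot A(\xi+\nabla\phi_{T,k+1})=(2^kT)^{-1}\phi_{T,k}$ is equivalent, via the defining Richardson relation, to the paper's form $T^{-1}\phi_{T,k+1}-\nabla\cdot A(\xi+\nabla\phi_{T,k+1})=T^{-1}\phi_{2T,k}$, and has the minor advantage of yielding \eqref{eq:coer-ATk} directly without the final rewriting step the paper needs.
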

\begin{rem}
We can bound the possibly negative correction in \eqref{eq:coer-ATk} as follows
\begin{multline*}
(2^{k+1}T)^{-1}\expec{(\phi_{T,k+1}-\phi'_{T,k+1})(\phi_{T,k+1}-\phi'_{T,k+1}-\phi_{T,k}+\phi'_{T,k})} \\ \lesssim \,
(2^{k+1} T)^{-1} \Big(\sup_{2^k T\leq \tau \leq 2^{k+1}T} \expec{\phi_\tau^2}+\sup_{2^k T\leq \tau \leq 2^{k+1}T} \expec{{\phi'_\tau}^2}\Big),
\end{multline*}
which vanishes both in the limits $T\uparrow \infty$ and $k\uparrow \infty$.
As soon as we have a good control of the $L^2$-norms of the regularized correctors wrt $T$, this crude estimate yields a useful control of 
the ellipticity defect (say, when $\expec{\phi_T^2}$ is bounded).
When correctors are not well-behaved, extrapolation is not expected to improve the convergence rates so that the approximation $A_T$ is sufficient (and unconditionally uniformly elliptic).
\qed
\end{rem}
\begin{proof}[Proof of Proposition~\ref{prop:coerc}]
We start with the proof of \eqref{eq:coer-AT1} and then turn to \eqref{eq:coer-ATk}.

\medskip
\step{1} Proof of \eqref{eq:coer-AT1}.

\noindent By the regularized corrector equation for $\phi_T$,
\begin{multline*}
\expec{(\xi+\nabla \phi_T')\cdot A(\xi+\nabla \phi_T)}\,=\,
\expec{(\xi+\nabla \phi_T)\cdot A(\xi+\nabla \phi_T)}+\expec{(\nabla \phi_T'-\nabla \phi_T)\cdot A(\xi+\nabla \phi_T)}
\\=\,
\expec{(\xi+\nabla \phi_T)\cdot A(\xi+\nabla \phi_T)}-T^{-1}\expec{(\phi_T'-\phi_T)\phi_T}.
\end{multline*}
Likewise, by the regularized corrector equation for $\phi_T'$,
\begin{multline*}
\expec{(\xi+\nabla \phi_T')\cdot A(\xi+\nabla \phi_T)}\,=\,
\expec{(\xi+\nabla \phi_T')\cdot A(\xi+\nabla \phi_T)}+\expec{(\nabla \phi_T-\nabla \phi_T')\cdot A^*(\xi+\nabla \phi_T')}
\\=\,
\expec{(\xi+\nabla \phi_T')\cdot A(\xi+\nabla \phi_T')}-T^{-1}\expec{(\phi_T-\phi_T')\phi_T'}.
\end{multline*}
The half sum of these identities yields \eqref{eq:coer-AT1}.

\medskip
\step{2} Proof of \eqref{eq:coer-ATk}.

\noindent We proceed in a similar way for \eqref{eq:coer-ATk}. 
As we shall show by induction in the proof of Theorem~\ref{th:quant-estim-Perio} below (cf. \eqref{eq:induc-phik}),
\begin{equation*}
T^{-1} \phi_{T,k+1}-\nabla \cdot A(\xi+\nabla \phi_{T,k+1})\,=\,T^{-1}  \phi_{2T,k},
\end{equation*}
so that 
\begin{multline*}
\expec{(\xi+\nabla \phi_{T,k+1}')\cdot A(\xi+\nabla \phi_{T,k+1})}\\
=\,
\expec{(\xi+\nabla \phi_{T,k+1})\cdot A(\xi+\nabla \phi_{T,k+1})}+\expec{(\nabla \phi_{T,k+1}'-\nabla \phi_{T,k+1})\cdot A(\xi+\nabla \phi_{T,k+1})}
\\=\,
\expec{(\xi+\nabla \phi_{T,k+1})\cdot A(\xi+\nabla \phi_{T,k+1})}-T^{-1}\expec{(\phi_{T,k+1}'-\phi_{T,k+1})(\phi_{T,k+1}-\phi_{2T,k})}.
\end{multline*}
Likewise,
\begin{multline*}
\expec{(\xi+\nabla \phi_{T,k+1}')\cdot A(\xi+\nabla \phi_{T,k+1})}\\
=\,
\expec{(\xi+\nabla \phi_{T,k+1}')\cdot A(\xi+\nabla \phi_{T,k+1}')}+\expec{(\nabla \phi_{T,k+1}-\nabla \phi'_{T,k+1})\cdot A^*(\xi+\nabla \phi'_{T,k+1})}
\\=\,
\expec{(\xi+\nabla \phi'_{T,k+1})\cdot A(\xi+\nabla \phi'_{T,k+1})}-T^{-1}\expec{(\phi_{T,k+1}-\phi'_{T,k+1})(\phi'_{T,k+1}-\phi'_{2T,k})},
\end{multline*}
so that the half sum of these identities yields
\begin{multline*}
\expec{(\xi+\nabla \phi_{T,k+1}')\cdot A(\xi+\nabla \phi_{T,k+1})}\\
=\,
\frac{1}{2}\expec{(\xi+\nabla \phi_{T,k+1})\cdot A(\xi+\nabla \phi_{T,k+1})}+\frac{1}{2} \expec{(\xi+\nabla \phi'_{T,k+1})\cdot A(\xi+\nabla \phi'_{T,k+1})} \\
+\frac{1}{2}T^{-1}\expec{(\phi_{T,k+1}-\phi'_{T,k+1})(\phi_{T,k+1}-\phi'_{T,k+1}-\phi_{2T,k}+\phi_{2T,k}')},
\end{multline*}
which we rewrite as \eqref{eq:coer-ATk} using the definition of the Richardson extrapolation.
\end{proof}

\subsection{Poincar\'e-type inequalities}

In this paragraph we introduce two types of Poincar\'e's inequality in the probability space $L^2(\Omega)$ that will allow us to turn Theorem~\ref{th:spec} quantitative in the following two paragraphs.
Consider a function $X$ of the field $A$ seen itself as a map from $\R^d$ to $\Mab$. 
We call ``horizontal" the derivative of $X$ with respect to changes of $A$ obtained by shifting the map along the directions of $\R^d$. We call ``vertical" the derivative of $X$ with respect to changes of the values of $A$ in $\Mab$ in some spatial region of $\R^d$. We shall use Poincar\'e's inequalities for both types of derivatives.

\medskip

\noindent We start with Poincar\'e-type inequalities for the horizontal derivatives  in $L^2(\Omega)$, which are nothing but $\{\DD_j\}_{j=1,\dots,d}$.
As usual, we say that $\DD$ satisfies a Poincar\'e inequality if for all $X\in \calH(\Omega)$ such that $\expec{X}=0$, we have
\begin{equation}\label{eq:Poinca-hor}
\|X\|_{L^2(\Omega)}=\expec{X^2}\,\lesssim\,\expec{|\DD X|^2}=\|\DD X\|_{L^2(\Omega)}.
\end{equation}
This inequality holds for periodic coefficients (in which case it reduces to the Poincar\'e-Wirtinger inequality on the torus).
We can weaken this inequality by strengthening the norm of the RHS, and ask that for some $s_0>0$,
\begin{equation}\label{eq:Poinca-hor-weak}
\|X\|_{L^2(\Omega)}\,\lesssim\,\|\DD X\|_{H^{s_0}(\Omega)},
\end{equation}
where
$$
\|\DD X\|_{H^{s_0}(\Omega)}\,:=\,\expec{X (-\triangle)^{s_0} X}^{\frac{1}{2}},
$$
$-\triangle\,=\,-\DD\cdot \DD$ is the Laplacian on $L^2(\Omega)$ (whose fractional  powers can be defined by the spectral theorem). Although it is not yet clear, this inequality holds true for a subclass of almost periodic coefficients (in which case $\Omega$ is a high-dimensional torus), see \cite{Kozlov-78}.

\medskip

\noindent We turn now to the Poincar\'e-type inequality for the vertical derivatives. 
We say that the coefficients satisfy a Poincar\'e inequality (or a specral gap) for the Glauber dynamics if 
there exist $0<\rho,\ell<\infty$ such that
\begin{equation}\label{eq:Poinca-vert}
\expec{X^2}\,\leq \,\frac{1}{\rho} \int_{\R^d} \expec{\Big(\osc{X}{A|_{B_\ell(z)}}\Big)^2}dz ,
\end{equation}
where
$$
\osc{X}{A|_{B_\ell(z)}}:=\sup_{\tilde A: \tilde A|_{\R^d\setminus B_\ell(z)}}X(\tilde A)-\inf_{\tilde A: \tilde A|_{\R^d\setminus B_\ell(z)}}X(\tilde A).
$$
Such an inequality holds for Poisson random inclusions, see \cite{Gloria-Otto-10b}.

\medskip

\noindent It is not surprising that these Poincar\'e inequalities allow us to obtain quantitative estimates 
for our approximation methods. Indeed, they also allow one to quantify the error in the first two terms of the two-scale expansion, cf. \cite{bensoussan-lions-papanicolaou-78} and \eqref{eq:Poinca-hor} for periodic coefficients, \cite{Kozlov-78} and \eqref{eq:Poinca-hor-weak} for a class of almost periodic coefficients, and \cite{Gloria-Neukamm-Otto-11} and \eqref{eq:Poinca-vert} for random coefficients (in the context of discrete elliptic equations).

\medskip
\noindent The last two paragraphs of this section are dedicated to the proof of quantitative results for the approximation of the homogenized coefficients and of the correctors by regularization and extrapolation.
We start with an approach based on spectral theory, which makes more intuitive the roles of the Poincar\'e inequalities \eqref{eq:Poinca-hor} and \eqref{eq:Poinca-vert}. This approach is however limited to symmetric coefficients. We then present a second approach which solely relies on PDE analysis and allows one to cover the case of non-symmetric coefficients under any of the three assumptions  \eqref{eq:Poinca-hor},  \eqref{eq:Poinca-hor-weak}, and \eqref{eq:Poinca-vert}.

\subsection{Quantitative results via spectral theory}\label{sec:quant-spec}

Spectral theory has played an important role in the theory of stochastic homogenization of self-dual operators since the seminal contribution
by Papanicolaou and Varadhan \cite{Papanicolaou-Varadhan-79}.
In this paragraph, we assume that $A$ is a stationary ergodic field of symmetric coefficients, and shall prove a spectral representation for the quantities in Theorem~\ref{th:spec}.
This will allow us to turn Theorem~\ref{th:spec} quantitative for coefficients satisfying
\eqref{eq:Poinca-hor} or \eqref{eq:Poinca-vert}.
To this end, let us recall the spectral theory introduced in \cite{Papanicolaou-Varadhan-79}.

\medskip

\noindent Let $\calL=-\DD \cdot A(0) \DD$ be the operator defined on
$\calH(\Omega)$ as a quadratic form.
We denote by $\calL$ its Friedrichs extension on $L^2(\Omega)$.
This operator is a nonnegative self-dual operator, so that 
by the spectral theorem it admits a spectral resolution 
\begin{equation*}
\calL \,=\, \int_{\R^+} \lambda G(d\lambda).
\end{equation*}
\begin{prop}\label{prop:spec}
In the context of Theorem~\ref{th:spec}, if $A\in L^\infty(\Omega,\Mabs)$,
we have
\begin{eqnarray}
\expec{|\nabla \phi_{T,k}-\nabla \phi|^2}&\sim & \int_{\R^+} \frac{T^{-2k} }{\lambda(T^{-1}+\lambda)^{2k}}de_{\mathfrak{d}}(\lambda),\label{eq:prop-spec1}\\
|A_{T,k}-A_\ho|&\sim& \int_{\R^+} \frac{T^{-2k} }{\lambda(T^{-1}+\lambda)^{2k}}de_{\mathfrak{d}}(\lambda),\label{eq:prop-spec2}
\end{eqnarray}
where $e_{\mathfrak{d}}$ denotes the projection of the spectral resolution $G$ of $\calL=-\DD \cdot A(0) \DD$ onto the
local drift defined by $\mathfrak{d}\,:=\,\DD\cdot A(0)\xi \in \calH(\Omega)'$, 
that is, $e_{\mathfrak{d}}([0,\nu])=\int_0^\nu \expec{\mathfrak{d}G(d\lambda)\mathfrak{d}}$.
\qed
\end{prop}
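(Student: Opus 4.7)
The plan is to exploit the spectral theorem for the self-adjoint operator $\calL = -\DD\cdot A(0)\DD$ on $L^2(\Omega)$ (self-adjoint thanks to the symmetry of $A$), together with the observation that Richardson extrapolation acts linearly on $\phi_T$ viewed as a function of $T$, so that all three quantities can be expressed explicitly by functional calculus applied to the drift $\mathfrak{d}=\DD\cdot A(0)\xi$.

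First, I would rewrite \eqref{eq:mod-corr-sto} as $(T^{-1}+\calL)\phi_T(0,\cdot)=\mathfrak{d}$, so that $\phi_T=g_1(\calL,T)\mathfrak{d}$ with multiplier $g_1(\lambda,T)=(T^{-1}+\lambda)^{-1}$; by induction on \eqref{eq:def:extrapo}, $\phi_{T,k}=g_k(\calL,T)\mathfrak{d}$ with the recursion $g_{k+1}(\lambda,T)=(2^k-1)^{-1}(2^k g_k(\lambda,2T)-g_k(\lambda,T))$. Setting $r_k(\lambda,T):=\lambda^{-1}-g_k(\lambda,T)$ (which formally represents $\phi-\phi_{T,k}$ via $\phi=\calL^{-1}\mathfrak{d}$), the same recursion holds for $r_k$, and a direct induction using $a_0^{-1}-a_k^{-1}=(a_k-a_0)/(a_0 a_k)$ with $a_j=2^{-j}T^{-1}+\lambda$ yields the closed form
\begin{equation*}
r_k(\lambda,T)\,=\,\frac{T^{-k}\prod_{j=1}^{k-1}2^{-j}}{\lambda\,\prod_{j=0}^{k-1}(2^{-j}T^{-1}+\lambda)},
\end{equation*}
from which the sharp two-sided estimate $r_k(\lambda,T)\sim T^{-k}/(\lambda(T^{-1}+\lambda)^k)$ follows since each factor $2^{-j}T^{-1}+\lambda$ is comparable to $T^{-1}+\lambda$.

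Second, for the corrector estimate \eqref{eq:prop-spec1}, the existence of a stationary $\DD\phi\in L^2(\Omega)^d$ ensures that $\mathfrak{d}$ lies in the range of $\calL^{1/2}$, and the weak convergence $\DD\phi_T\to\DD\phi$ in $L^2(\Omega)^d$ (Theorem~\ref{th:spec}) together with functional calculus identifies $\DD\phi$ spectrally with $\DD\calL^{-1}\mathfrak{d}$. The quadratic equivalence $\expec{|\DD\psi|^2}\sim\expec{\DD\psi\cdot A\DD\psi}=(\calL\psi,\psi)_{L^2(\Omega)}$ from symmetry and uniform ellipticity of $A$ then gives
\begin{equation*}
\expec{|\DD(\phi-\phi_{T,k})|^2}\,\sim\,\int_{\R^+}\lambda\,r_k(\lambda,T)^2\,de_\mathfrak{d}(\lambda),
\end{equation*}
and inserting the previous bound together with $\nabla\phi(x,\omega)=\DD\phi(0,\theta_x\omega)$ yields \eqref{eq:prop-spec1}. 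For \eqref{eq:prop-spec2}, by symmetry $\phi_{T,k}'=\phi_{T,k}$ when $\xi'=\xi$, and the expansion
\begin{equation*}
\xi\cdot(A_{T,k}-A_\ho)\xi \,=\, \expec{(\DD\phi_{T,k}-\DD\phi)\cdot A(\DD\phi_{T,k}-\DD\phi)} + 2\expec{(\xi+\DD\phi)\cdot A(\DD\phi_{T,k}-\DD\phi)}
\end{equation*}
shows that the cross term vanishes (since $\phi_{T,k}\in\calH(\Omega)$ is an admissible test function in the weak corrector equation \eqref{eq:mod-corr-sto-exp} for $\phi$, obtained by passing to the limit along $\phi_{2^\ell T}$), while the remaining quadratic term matches the right-hand side of the preceding display; polarization handles off-diagonal entries.

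The main obstacle will be pinning down the sharp \emph{two-sided} bound on $r_k$: a one-sided upper bound follows easily from the recursion and the elementary inequality $(T^{-1}+\lambda)^{-1}\le\lambda^{-1}$, but the matching lower bound requires the explicit product representation above, as a purely recursive estimate could overstate the cancellations between the two summands in \eqref{eq:def:extrapo} and yield a spuriously fast decay in $T$.
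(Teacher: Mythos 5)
Your proposal is correct and follows essentially the same route as the paper's proof: you derive the same closed-form product representation for the residual multiplier $\lambda^{-1}-\psi_{T,k}(\lambda)$ by induction on the Richardson recursion (your $\prod_{j=1}^{k-1}2^{-j}$ equals the paper's $2^{-\frac{1}{2}k(k-1)}$), use the equivalence between the $L^2$ norm and the Dirichlet form to reduce \eqref{eq:prop-spec1} to a spectral integral, and establish \eqref{eq:prop-spec2} through the same energy identity in which the cross term is killed by testing the corrector equation with $\phi_{T,k}$ and using the symmetry of $A$. The point you flag as the main obstacle — obtaining the two-sided bound rather than just an upper bound — is handled exactly as you anticipate, via the explicit product formula together with the integrability $\int_{\R^+}\lambda^{-1}de_{\mathfrak{d}}(\lambda)<\infty$.
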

\noindent Combined with the integrability property $\int_{\R^+}\frac{1}{\lambda}de_{\mathfrak{d}}(\lambda)<\infty$ proved in \cite{Papanicolaou-Varadhan-79} (this is also a direct consequence of Remark~\ref{rem:prop-spec3.1} and of the monotone convergence theorem), Proposition~\ref{prop:spec}, combined with the Lebesgue dominated convergence theorem, yields an alternative
proof of Theorem~\ref{th:spec} for symmetric coefficients.

\begin{proof}[Proof of Proposition~\ref{prop:spec}]
We split the proof into three steps.

\medskip

\step{1} Reformulation of \eqref{eq:prop-spec1}.

\noindent
Let $\tilde T \geq T>0$.
By ellipticity and the spectral theorem, 
\begin{eqnarray*}
\expec{  |\nabla \phi_{T}-\nabla \phi_{\tilde T}|^2 } &\lesssim &
\expec{  (\nabla \phi_{T}-\nabla \phi_{\tilde T})\cdot A (\nabla \phi_{T}-\nabla \phi_{\tilde T})  }\\ &=&
\int_{\R^+} \lambda \Big( \frac{1}{T^{-1}+\lambda}-\frac{1}{\tilde T^{-1}+\lambda}\Big)^2de_{\mathfrak{d}}(\lambda) \\
&=& \int_{\R^+}\frac{(T^{-1}-\tilde T^{-1})^2\lambda}{(T^{-1}+\lambda)^2(\tilde T^{-1}+\lambda)^2}de_{\mathfrak{d}}(\lambda).
\end{eqnarray*}
Since, as proved in \cite{Papanicolaou-Varadhan-79}, 
\begin{equation}\label{eq:prop-spec3}
\int_{\R^+}\frac{1}{\lambda}de_{\mathfrak{d}}(\lambda)<\infty,
\end{equation}
this implies by the Lebesgue dominated convergence theorem that $\nabla \phi_T(0)$ is a Cauchy sequence in $L^2(\Omega)$, which therefore converges to $\nabla \phi(0)$ strongly in $L^2(\Omega)$ (and not only weakly).
Let now $\psi:\R^+\to \R^+$ be a continuous function such that
$\psi(\lambda)\lesssim \frac{1}{\lambda}$ and set $\Psi:=\psi(\calL)\mathfrak{d}$. 
Then, by the spectral theorem, on the one hand,
$$
\expec{|\nabla \Psi|^2} \,\lesssim \,\expec{\nabla \Psi \cdot A \nabla \Psi}
\,=\,\int_{\R^+} \lambda \psi(\lambda)^2 de_{\mathfrak{d}}(\lambda) <\infty,
$$
and on the other hand,
$$
\expec{(\nabla \Psi-\nabla \phi_{\tilde T})\cdot A(\nabla \Psi-\nabla \phi_{\tilde T})}
\,=\,\int_{\R^+} \lambda \Big(\psi(\lambda)-\frac{1}{\tilde T^{-1}+\lambda}\Big)^2de_{\mathfrak{d}}(\lambda).
$$
Combined with the convergence of $\nabla \phi_{\tilde T}(0)$ to $\nabla \phi(0)$ in $L^2(\Omega)$ as $\tilde T\uparrow \infty$ and the Lebesgue dominated convergence theorem, this turns into:
\begin{equation}\label{eq:abstract-spec-psi}
\expec{(\nabla \Psi-\nabla \phi)\cdot A(\nabla \Psi-\nabla \phi)}
\,=\,\int_{\R^+} \lambda \Big(\psi(\lambda)-\frac{1}{\lambda}\Big)^2de_{\mathfrak{d}}(\lambda).
\end{equation}
For all $T>0$, set 
$$\psi_{T,1}:\R^+\to \R^+, \lambda\mapsto \psi_{T,1}(\lambda)=\frac{1}{T^{-1}+\lambda},$$
and define by induction for all $k\in \N$, 
$$\psi_{T,k+1}:\R^+\to \R^+, \lambda\mapsto \psi_{T,k+1}(\lambda)=\frac{1}{2^k-1}(2^k \psi_{2T,k}(\lambda)-\psi_{T,k}(\lambda)).$$
These functions are continuous and satisfy $\psi_{T,k}(\lambda) \lesssim \frac{1}{\lambda}$ (where the multiplicative constant depends on $T$ and $k$).
By definition of the Richardson extrapolation of the regularized corrector (cf. Definition~\ref{def:extra}), we also have $\phi_{T,k}=\psi_{T,k}(\calL)\mathfrak{d}$ for all $T>0$ and $k\in \N$.
Finally, by ellipticity, 
$$
\expec{ |\nabla \phi_{T,k}-\nabla \phi|^2 } \,\lesssim \,\expec{ (\nabla \phi_{T,k}-\nabla \phi)\cdot A (\nabla \phi_{T,k}-\nabla \phi) } \,\lesssim \, \expec{ |\nabla \phi_{T,k}-\nabla \phi|^2 } ,
$$
so that it enough to estimate the Dirichlet form to prove \eqref{eq:prop-spec1}.
We shall prove that for all $T>0$, $k\in \N$, and $\lambda>0$,
\begin{equation}\label{eq:induc-psi}
\frac{1}{\lambda}-\psi_{T,k}(\lambda)\,=\,2^{-\frac{1}{2}k(k-1)}T^{-k} \frac{1}{ \lambda \prod_{i=0}^{k-1} ((2^iT)^{-1}+\lambda)}.
\end{equation}
The desired estimate \eqref{eq:prop-spec1} will then follow from the combination of \eqref{eq:abstract-spec-psi}, \eqref{eq:induc-psi}, and \eqref{eq:prop-spec3} --- the latter to prove the equivalence in terms of scaling and not only the upper bound.

\medskip

\step{2} Proof of \eqref{eq:induc-psi}.

\noindent We proceed by induction. For $k=1$, \eqref{eq:induc-psi} reduces to 
$$
\frac{1}{\lambda}-\psi_{T,1}(\lambda)\,=\,\frac{1}{T^{-1}+\lambda}-\frac{1}{\lambda}\,=\,T^{-1} \frac{1}{\lambda(T^{-1}+\lambda)}.
$$
Assume that \eqref{eq:induc-psi} holds at step $k\in \N$.
Then, 
\begin{eqnarray}
\frac{1}{\lambda}-\psi_{2T,k}(\lambda)&=&2^{-\frac{1}{2}k(k-1)}(2T)^{-k} \frac{1}{ \lambda \prod_{i=0}^{k-1} ((2^{i+1}T)^{-1}+\lambda)}\nonumber\\
&=& 2^{-k} \frac{T^{-1}+\lambda}{(2^{k}T)^{-1}+\lambda}   2^{-\frac{1}{2}k(k-1)}T^{-k} \frac{1}{ \lambda \prod_{i=0}^{k-1} ((2^iT)^{-1}+\lambda)} \nonumber\\
&=&2^{-k} \frac{T^{-1}+\lambda}{(2^{k}T)^{-1}+\lambda}  \Big(\frac{1}{\lambda}-\psi_{T,k}(\lambda)\Big)
\label{eq:prop-spec-indus-int}
\end{eqnarray}
Hence the induction rule for $\psi_{T,k+1}$ and \eqref{eq:induc-psi} at step $k\in \N$ yield
\begin{eqnarray*}
\frac{1}{\lambda}-\psi_{T,k+1}(\lambda)&=&
\frac{2^k}{2^k-1}\Big(\frac{1}{\lambda}-\psi_{2T,k+1}(\lambda)\Big)-\frac{1}{2^k-1}\Big(\frac{1}{\lambda}-\psi_{T,k+1}(\lambda)\Big)\\
&\stackrel{\eqref{eq:prop-spec-indus-int}}{=}&\frac{1}{2^k-1}\Big(\frac{1}{\lambda}-\psi_{T,k}(\lambda)\Big) \Big(\frac{T^{-1}+\lambda}{(2^{k}T)^{-1}+\lambda}-1 \Big) 
\\
&=&2^{-k}T^{-1}\Big(\frac{1}{\lambda}-\psi_{T,k}(\lambda)\Big)\frac{1}{(2^{k}T)^{-1}+\lambda}\\
&\stackrel{\eqref{eq:induc-psi} \text{ at step }k }{=}& 2^{-\frac{1}{2}k(k+1)}T^{-(k+1)} \frac{1}{ \lambda \prod_{i=0}^{k} ((2^iT)^{-1}+\lambda)},
\end{eqnarray*}
as desired.

\medskip

\step{3} Proof of \eqref{eq:prop-spec2}.

\noindent Recall the weak form of the corrector equation for $\phi$:  For all $\Psi \in \calH(\Omega)$,
\begin{equation}\label{eq:weak-form-corr-spec}
\expec{\DD \Psi\cdot A(0)(\xi+\nabla \phi(0))}\,=\,0, 
\end{equation}
and since $\nabla \phi_T(0) \to \nabla \phi(0)$ strongly in $L^2(\Omega,\R^d)$, we also have
\begin{equation}\label{eq:weak-form-corr-spec2}
\quad \expec{\nabla \phi\cdot A(\xi+\nabla \phi)}\,=\,0.
\end{equation}
By symmetry of $A$ and \eqref{eq:weak-form-corr-spec} \& \eqref{eq:weak-form-corr-spec2} we then have for all $\Psi \in \calH(\Omega)$,
\begin{eqnarray*}
\lefteqn{\expec{(\xi+\DD \Psi)\cdot A(0)(\xi+\DD \Psi)}-\expec{(\xi+\nabla \phi)\cdot A(\xi+\nabla \phi)}}\nonumber \\
&=&\expec{(\DD \Psi-\nabla \phi(0))\cdot A(0)(\xi+\DD \Psi)}+\expec{(\xi+\nabla \phi(0))\cdot A(0)(\DD \Psi-\nabla \phi(0))}\nonumber\\
&\stackrel{\mbox{symmetry}}{=}&\expec{(\DD \Psi-\nabla \phi(0))\cdot A(0)(\xi+\DD \Psi)}+\expec{(\DD \Psi-\nabla \phi(0))\cdot A(0) (\xi+\nabla \phi(0))}\nonumber\\
&\stackrel{\eqref{eq:weak-form-corr-spec}\& \eqref{eq:weak-form-corr-spec2}}{=}&\expec{(\xi+\nabla \phi(0))\cdot A(0)(\DD \Psi-\nabla \phi(0))}-\expec{(\DD \Psi-\nabla \phi(0))\cdot A(0) (\xi+\nabla \phi(0))}\nonumber\\
&=&\expec{ (\DD \Psi-\nabla \phi(0))\cdot A(0) (\DD \Psi-\nabla \phi(0)) }.
\end{eqnarray*}
Estimate \eqref{eq:prop-spec2} then follows from the definition of $A_{T,k}$ and \eqref{eq:prop-spec1} 
by taking $\Psi=\phi_{T,k}(0)$ in this identity.
\end{proof}
\noindent Let us now show how Proposition~\ref{prop:spec} allows one to turn the consistency result
of Theorem~\ref{th:spec} quantitative if we assume \eqref{eq:Poinca-hor} or \eqref{eq:Poinca-vert}.

\medskip

\noindent In the case when \eqref{eq:Poinca-hor} holds (namely for periodic coefficients), the elliptic operator $\calL$ is not degenerate and has a spectral gap: there exists $\mu>0$ such that the non-negative measure $e_{\mathfrak{d}}$ satisfies $e_{\mathfrak{d}}((0,\mu))=0$.
Hence, for all $k\in \N$
$$
\int_{\R^+} \frac{1}{\lambda^k}d_{\mathfrak{d}}(\lambda) \,=\,\int_{\mu}^\infty \frac{1}{\lambda^k}d_{\mathfrak{d}}(\lambda)\,\stackrel{\eqref{eq:prop-spec3}}{\lesssim} \, \mu^{-k}+1 <\infty,
$$
which makes the estimates of Proposition~\ref{prop:spec} explicit in $T$ (see Theorem~\ref{th:quant-estim-Perio} below for the precise statement for general non-necessarily symmetric periodic coefficients).

\medskip

\noindent In the case when the coefficients satisfy \eqref{eq:Poinca-vert}, the operator $\calL$ is a degenerate elliptic operator which does not have a spectral gap. Obtaining quantitative estimates is then much more subtle.
In~\cite{Gloria-Otto-10b}, Otto and the first author obtained the following bounds on the associated spectral measure. With the notation of Theorem~\ref{th:spec}, we say that the spectral exponents are \emph{at least} $(\gamma_1,\gamma_2)\in \R^+\times \R^+$ if for all $0<\lambda\leq 1$,
\begin{equation}\label{eq:bott-spec}
e_{\mathfrak{d}}([0,\lambda])\,\lesssim\,
\lambda^{\gamma_1}(\log^{\gamma_2}(\lambda^{-1}) +1).
\end{equation}
By \cite[Corollary~2]{Gloria-Otto-10b}, \eqref{eq:Poinca-vert} implies that
\begin{equation}\label{eq:spec-exponents2}
\begin{array}{rll}
2\le d<6:&\gamma_1=\frac{d}{2}+1,& \gamma_2= 0,\\
d=6:&\gamma_1=4, &\gamma_2=1,\\
d>6:&\gamma_1=4,& \gamma_2=0.
\end{array}
\end{equation}
From these estimates of the spectral exponents, one may deduce the following error estimates for the regularization and extrapolation methods.
\begin{theo}\label{th:quant-estim-Poisson}
Let the symmetric coefficients $A\in L^\infty(\Omega,\Mabs)$ satisfy \eqref{eq:Poinca-vert}, and let $A_\ho$
be the associated homogenized coefficients.
Let $\xi \in \R^d$ be a fixed unit vector and let $\phi$ be the corrector in direction $\xi$. For all $T>0$ and $k\in \N$, let $\phi_{T,k}$ be the extrapolation of the regularized corrector of Definition~\ref{def:extra}, and $A_{T,k}$ be the approximation of the homogenized coefficients of Definition~\ref{def:extra-hom}.
Let $k(d):=[\frac{d}{4}]$ (where $[\cdot]$ denotes here the smallest integer larger or equal to). Then  for all $d\geq 2$ and for all $k'\geq \min\{k(d),2\}$, we have
\begin{equation}\label{eq:syst-err-sto-cont11}
\expec{|\nabla \phi_{T,k'}-\nabla \phi|^2}\,\lesssim\, 
\left|
\begin{array}{rcl}
2\le d<6&:&T^{-\frac{d}{2}},\\
d=6&:&T^{-3}\log T,\\
d>6&:&T^{-3}.
\end{array}
\right.
\end{equation}
Likewise, for all $k'\geq  \min\{k(d),2\}$
\begin{equation}\label{eq:syst-err-sto-cont21}
|A_{T,k'}-A_\ho|\,\lesssim\, 
\left|
\begin{array}{rcl}
2\leq d<6&:&T^{-\frac{d}{2}},\\
d=6&:&T^{-3}\log T,\\
d>6&:&T^{-3}.
\end{array}
\right.
\end{equation}
In the estimates above, the multiplicative constant depends on $k'$, next to $\alpha,\beta$, and $d$.
\qed
\end{theo}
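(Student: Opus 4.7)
The plan is to combine the spectral representation of Proposition~\ref{prop:spec} with the quantitative spectral decay recorded in \eqref{eq:bott-spec}--\eqref{eq:spec-exponents2}. Since $A$ is symmetric, Proposition~\ref{prop:spec} reduces both \eqref{eq:syst-err-sto-cont11} and \eqref{eq:syst-err-sto-cont21} to controlling the single spectral integral
\[
I_{k'}(T) \,:=\, \int_0^\infty \frac{T^{-2k'}}{\lambda(T^{-1}+\lambda)^{2k'}}\,de_{\mathfrak{d}}(\lambda),
\]
so the whole task boils down to extracting the sharp scaling of $I_{k'}(T)$ from the behavior $e_{\mathfrak{d}}([0,\lambda])\lesssim \lambda^{\gamma_1}(\log^{\gamma_2}(\lambda^{-1})+1)$ near the bottom of the spectrum.

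To do this, I would rewrite $I_{k'}(T)$ in distribution-function form by integrating by parts. The weight $f_{k'}(\lambda):=T^{-2k'}\lambda^{-1}(T^{-1}+\lambda)^{-2k'}$ is strictly decreasing, scales like $\lambda^{-1}$ for $\lambda \ll T^{-1}$ and like $T^{-2k'}\lambda^{-(2k'+1)}$ for $\lambda \gg T^{-1}$, and the boundary terms vanish at $\lambda = 0$ (since $\gamma_1 > 1$ in \eqref{eq:spec-exponents2} forces $f_{k'}(\lambda)\,e_{\mathfrak{d}}([0,\lambda])\to 0$) and at $\lambda = \infty$ (since $e_{\mathfrak{d}}(\R^+) = \expec{|\mathfrak{d}|^2} < \infty$), so that
\[
I_{k'}(T) \,=\, \int_0^\infty \bigl(-f_{k'}'(\lambda)\bigr)\, e_{\mathfrak{d}}([0,\lambda])\,d\lambda.
\]
Inserting \eqref{eq:bott-spec} and splitting at the natural scales $\lambda = T^{-1}$ and $\lambda = 1$, the low-frequency half is bounded by $\int_0^{T^{-1}} \lambda^{\gamma_1 - 2}(\log^{\gamma_2}(\lambda^{-1})+1)\,d\lambda \sim T^{-(\gamma_1-1)}(\log^{\gamma_2} T + 1)$, the intermediate-frequency half $T^{-2k'}\int_{T^{-1}}^1 \lambda^{\gamma_1-2k'-2}(\log^{\gamma_2}(\lambda^{-1})+1)\,d\lambda$ is of the same order provided $2k'+1 > \gamma_1$, and the tail beyond $\lambda=1$ reduces to an $O(T^{-2k'})$ contribution via the trivial mass bound $e_{\mathfrak{d}}(\R^+) \lesssim 1$ and is sub-dominant. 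Plugging in the exponents \eqref{eq:spec-exponents2} and checking that $k' \geq \min\{k(d),2\}$ supplies $2k'+1 > \gamma_1$ in each dimensional regime then yields the three stated rates $T^{-d/2}$, $T^{-3}\log T$, and $T^{-3}$.

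The main delicacy I anticipate is the bookkeeping at the critical dimension $d=6$, where $\gamma_1 = 4$ is exactly matched by $2k'+1 = 5$ at $k' = 2$ and $\gamma_2 = 1$ already contributes one logarithmic factor; one has to check that this log does not get amplified when the two halves of $I_{k'}(T)$ are summed. A rescaling $\lambda = T^{-1}u$ inside each sub-integral, which exhibits the concentration of $-f_{k'}'$ at the scale $\lambda \sim T^{-1}$ and makes the matching of the two halves transparent, should suffice to settle this point cleanly; after that, the passage from the spectral bounds to the three regimes is essentially a mechanical substitution.
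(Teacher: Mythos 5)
Your proposal is essentially the paper's own proof: reduce to the spectral integral via Proposition~\ref{prop:spec}, pass to distribution-function form by integration by parts (the paper's identity \eqref{eq:fond-th-calc}), insert the spectral exponents \eqref{eq:bott-spec}--\eqref{eq:spec-exponents2}, and conclude under the condition relating $2k'+1$ to $\gamma_1$. One small caveat: your claim that $k'\geq\min\{k(d),2\}$ always gives the \emph{strict} inequality $2k'+1>\gamma_1$ fails at $d=4$, $k'=1$ (where $2k'+1=\gamma_1=3$, so the intermediate-frequency piece produces an extra $\log T$); the paper only assumes $2k'+1\geq\gamma_1$ and its final bound glosses over exactly this same borderline, so this is a shared, minor bookkeeping issue rather than a defect specific to your argument.
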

\begin{rem}\label{rem:sto-disc}
In the case of discrete linear elliptic equations on $\Z^d$ with coefficients
that satisfy a discrete version of \eqref{eq:Poinca-vert} (e.g. independent and identically distributed coefficients), Neukamm, Otto, and the first author proved in \cite{Gloria-Neukamm-Otto-14} the following \emph{optimal} values of the spectral exponents: $(\frac{d}{2}+1,0)$ for all $d\geq 2$. 
In particular, this can be used to prove the following \emph{optimal} estimates for the regularization and extrapolation method in this discrete setting:
For all $k'\geq k(d)$, 
\begin{equation*}
\expec{|\nabla \phi_{T,k'}-\nabla \phi|^2},|A_{T,k'}-A_\ho| \,\lesssim\, 
 T^{-\frac{d}{2}}.
\end{equation*}
We believe that these estimates hold as well in the case of continuum equations for coefficients satisfying \eqref{eq:Poinca-vert}.
\qed
\end{rem}
\begin{rem}\label{rk:optimality-k=1}
In view of Remark~\ref{rem:sto-disc}, Theorem~\ref{th:quant-estim-Poisson} is optimal in terms of scaling for $d=2,3,4,5,6$ up to a logarithmic correction in dimension $d=6$. In particular, this implies that for dimensions of interest in practice (say, $d=2$ and $d=3$), $k'=1$ is enough to reach the optimal scaling and Richardson extrapolation does not generally reduce the error further for random coefficients that satisfy \eqref{eq:Poinca-vert} (as opposed to \eqref{eq:Poinca-hor}). \qed
\end{rem}
\begin{proof}[Proof of Theorem~\ref{th:quant-estim-Poisson}]
Theorem~\ref{th:quant-estim-Poisson} is a direct consequence of Proposition~\ref{th:spec} and 
of \cite{Gloria-Otto-10b}.
The fundamental theorem of calculus and Fubini's theorem imply that for all $f\in C^1((0,1])$,
\begin{eqnarray}
\int_0^{1} f(\lambda)d e_{\mathfrak{d}}(\lambda) &=& -\int_{\lambda=0}^1\int_{\hat \lambda=\lambda}^{1}f'(\hat \lambda) d\hat \lambda d e_{\mathfrak{d}}(\lambda)+ f(1) \int_{\lambda=0}^1 d e_{\mathfrak{d}}(\lambda)\nonumber\\
&= & -\int_{\hat \lambda=0}^1f'(\hat \lambda)e_{\mathfrak{d}}([0,\hat \lambda ])\, d\hat \lambda  + f(1) e_{\mathfrak{d}} ([0,1]).\label{eq:fond-th-calc}
\end{eqnarray}
Used with $f(\lambda)=\frac{1 }{\lambda(T^{-1}+\lambda)^{2k'}}$ and combined with Proposition~\ref{th:spec}, this yields for all $k'\in \N$
\begin{eqnarray*}
\expec{|\nabla \phi_{T,k'}-\nabla \phi|^2}&\lesssim & \int_{\R^+} \frac{T^{-2k'} }{\lambda(T^{-1}+\lambda)^{2k'}}de_{\mathfrak{d}}(\lambda)\\
&\leq & \int_0^1 \frac{T^{-2k'} }{\lambda(T^{-1}+\lambda)^{2k'}}de_{\mathfrak{d}}(\lambda)+\int_{\R^+} \frac{T^{-2k'} }{\lambda}de_{\mathfrak{d}}(\lambda) \\
&\stackrel{\eqref{eq:fond-th-calc} \& \eqref{eq:prop-spec3}}{\lesssim}&
 \int_0^1 \frac{T^{-2k'} }{\lambda^2(T^{-1}+\lambda)^{2k'}}e_{\mathfrak{d}}([0, \lambda ])d\lambda+T^{-2k'}.
\end{eqnarray*}
We then appeal to \cite[Corollary~2]{Gloria-Otto-10b} in the form of \eqref{eq:spec-exponents2},
and assume that $2k'+1\geq \gamma_1$ (which yields the definition of $k(d)$).
According to \eqref{eq:spec-exponents2}, $\gamma_1\geq 2$, so that we have in that case
\begin{eqnarray*}
\expec{|\nabla \phi_{T,k'}-\nabla \phi|^2}&\stackrel{\eqref{eq:bott-spec} \& \eqref{eq:spec-exponents2}}{\lesssim} &
T^{-2k'}  \int_0^1 \frac{\log^{\gamma_2}(\lambda^{-1}) +1}{(T^{-1}+\lambda)^{2k'+2-\gamma_1}}d\lambda+T^{-2k'} \\
&{\lesssim}& T^{-\gamma_1+1} \log^{\gamma_2}T,
\end{eqnarray*}
which yields the claim.
\end{proof}

\subsection{Quantitative results via PDE analysis}

In this paragraph we address the quantitative analysis for non-necessarily symmetric coefficients satisfying \eqref{eq:Poinca-hor}, \eqref{eq:Poinca-hor-weak}, or \eqref{eq:Poinca-vert}.
We start with \eqref{eq:Poinca-hor}.
\begin{theo}\label{th:quant-estim-Perio}
Let $A\in L^\infty(\R^d,\Mab)$ be periodic non-necessarily symmetric coefficients (so that they satisfy \eqref{eq:Poinca-hor}), and let $A_{\ho}$ be the associated homogenized coefficients. 
Let $\xi,\xi'\in \R^d$ be fixed unit vectors, and denote by $\phi,\phi'\in H^1_\per(Q)$ the
periodic corrector and dual corrector in directions $\xi,\xi'$, respectively.
For all $T>0$ and $k\in \N$, let $\phi_{T,k}$ and $\phi_{T,k}'$ be the associated regularized correctors of Definition~\ref{def:extra}, and ${A}_{T,k}$ the approximations of the homogenized  coefficients 
of Definition~\ref{def:extra-hom}.
Then, we have
\begin{eqnarray}\label{apprx3}
\|\nabla(\phi_{T,k}-\phi)\|_{L^2(Q)},\|\nabla(\phi_{T,k}'-\phi')\|_{L^2(Q)} \lesssim T^{-k} ,
\end{eqnarray}
and
\begin{eqnarray}\label{apprx5}
\left|A_{\ho}-{A}_{T,k} \right|\lesssim T^{-2k},
\end{eqnarray}
where the multiplicative constants depend on $k$, next to $\alpha,\beta$, and $d$.
\qed
\end{theo}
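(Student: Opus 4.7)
My plan is to first derive, by induction on $k$, an explicit PDE for the Richardson extrapolation $\phi_{T,k+1}$ of the form
\begin{equation*}
T^{-1}\phi_{T,k+1}-\nabla \cdot A(\xi+\nabla \phi_{T,k+1})\,=\,T^{-1}\phi_{2T,k},
\end{equation*}
with the convention $\phi_{2T,0}:=0$. This is the key identity (already announced in the proof of Proposition~\ref{prop:coerc}). The induction uses the linearity of the operator $L_T v:=T^{-1}v-\nabla\cdot A\nabla v$ and the recursive definition \eqref{eq:def:extrapo}: writing $L_T\phi_{T,k+1}=\frac{1}{2^k-1}(2^k L_T\phi_{2T,k}-L_T\phi_{T,k})$, using the induction hypothesis for $\phi_{T,k}$ and $\phi_{2T,k}$ (note that the induction hypothesis applied at scale $2T$ yields $L_{2T}\phi_{2T,k}=\nabla\cdot A\xi+(2T)^{-1}\phi_{4T,k-1}$), and simplifying algebraically, the $\nabla\cdot A\xi$ terms combine with coefficient $1$ while the zeroth-order remainders reconstitute $T^{-1}\phi_{2T,k}$ via the defining identity of Richardson extrapolation applied at scale $2T$. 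This algebraic step is the main technical obstacle, since it is where the specific combinatorics $\frac{1}{2^k-1}(2^k\cdot,-\cdot)$ must be used.

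Once this identity is established, I will prove \eqref{apprx3} by induction on $k$. For $k=1$, test the regularized corrector equation for $\phi_T$ with $\phi_T$ itself to get $\|\nabla\phi_T\|_{L^2(Q)}\lesssim 1$ and hence $\|\phi_T\|_{L^2(Q)}\lesssim 1$ by Poincar\'e--Wirtinger (noting $\phi_T$ has zero mean on $Q$). Subtracting the corrector equation $-\nabla\cdot A(\xi+\nabla\phi)=0$ from the equation for $\phi_T$ yields
\begin{equation*}
-\nabla\cdot A\nabla(\phi_T-\phi)\,=\,-T^{-1}\phi_T,
\end{equation*}
and testing with $\phi_T-\phi$ (periodic, zero mean), combined with ellipticity, Cauchy--Schwarz and Poincar\'e, gives $\|\nabla(\phi_T-\phi)\|_{L^2(Q)}\lesssim T^{-1}\|\phi_T\|_{L^2(Q)}\lesssim T^{-1}$. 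For the induction step, subtract the corrector equation from the PDE satisfied by $\phi_{T,k+1}$ to get
\begin{equation*}
-\nabla\cdot A\nabla(\phi_{T,k+1}-\phi)\,=\,T^{-1}\bigl((\phi_{2T,k}-\phi)-(\phi_{T,k+1}-\phi)\bigr).
\end{equation*}
Testing with $\phi_{T,k+1}-\phi$ and using Poincar\'e on both the LHS (after ellipticity) and the RHS, then Young's inequality to absorb $T^{-1}\|\phi_{T,k+1}-\phi\|^2$ into the $T^{-1}\|\phi_{T,k+1}-\phi\|^2$ term naturally appearing on the LHS, yields
\begin{equation*}
\|\nabla(\phi_{T,k+1}-\phi)\|_{L^2(Q)}^2\,\lesssim\, T^{-2}\|\phi_{2T,k}-\phi\|_{L^2(Q)}^2\,\lesssim\, T^{-2}(2T)^{-2k},
\end{equation*}
where the induction hypothesis (and Poincar\'e) is applied at scale $2T$. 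This gives $\|\nabla(\phi_{T,k+1}-\phi)\|_{L^2(Q)}\lesssim T^{-(k+1)}$. The argument for $\phi_{T,k}'$ is identical with $A^*$ in place of $A$.

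Finally, for \eqref{apprx5}, set $u:=\phi_{T,k}-\phi$ and $u':=\phi_{T,k}'-\phi'$, both periodic and of zero mean. Expanding
\begin{equation*}
\xi'\cdot(A_{T,k}-A_\ho)\xi\,=\,\fint_Q\bigl[(\xi'+\nabla\phi'+\nabla u')\cdot A(\xi+\nabla\phi+\nabla u)-(\xi'+\nabla\phi')\cdot A(\xi+\nabla\phi)\bigr]dx
\end{equation*}
produces three cross-terms. Two of them vanish by testing the primal corrector equation against $u'$ (yielding $\fint_Q\nabla u'\cdot A(\xi+\nabla\phi)=0$) and the dual corrector equation against $u$ (yielding $\fint_Q(\xi'+\nabla\phi')\cdot A\nabla u=0$); the periodic boundary terms cancel. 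Hence
\begin{equation*}
\xi'\cdot(A_{T,k}-A_\ho)\xi\,=\,\fint_Q\nabla u'\cdot A\nabla u\,dx,
\end{equation*}
and by Cauchy--Schwarz together with \eqref{apprx3} for both $u$ and $u'$, we conclude $|A_{T,k}-A_\ho|\lesssim T^{-k}\cdot T^{-k}=T^{-2k}$. The quadratic gain encodes the Galerkin orthogonality underlying the fact that the homogenized coefficients are computed by a variational quantity.
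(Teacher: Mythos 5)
Your proposal is correct and follows essentially the same route as the paper's proof: the same induction establishing the hierarchy $T^{-1}\phi_{T,k+1}-\nabla\cdot A(\xi+\nabla\phi_{T,k+1})=T^{-1}\phi_{2T,k}$ (your operator formulation $L_T=L_{2T}+(2T)^{-1}$ is just a repackaging of the paper's algebra), the same energy-estimate-plus-periodic-Poincar\'e induction for \eqref{apprx3}, and the same orthogonality identity reducing \eqref{apprx5} to $\fint_Q \nabla u'\cdot A\nabla u$ and hence to a product of two gradient errors. One bookkeeping detail in the induction step: to get the $T^{-2}$ prefactor you (correctly) state, the Young term must be absorbed into the coercive gradient term after applying Poincar\'e to $\|\phi_{T,k+1}-\phi\|_{L^2(Q)}$ (or one uses Cauchy--Schwarz and Poincar\'e directly, as the paper does); absorbing it into the zeroth-order term $T^{-1}\|\phi_{T,k+1}-\phi\|_{L^2(Q)}^2$ as literally written would only yield a $T^{-1}$ prefactor and lose half a power of $T$ at each step.
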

\noindent Before we turn to the proof, let us emphasize that the only noteworthy feature of periodic coefficients
in this context is the validity of the Poincar\'e inequality \eqref{eq:Poinca-hor}, and we could have stated Theorem~\ref{th:quant-estim-Perio} under the assumptions that the coefficients be stationary and satisfy \eqref{eq:Poinca-hor} (which essentially reduces to the case of periodic coefficients).
\begin{proof}
We split the proof into two steps.

\medskip
\step{1} Proof of \eqref{apprx3}.

\noindent We only prove the estimate for $\phi_{T,k}$.
We first claim that $\phi_{T,k}$ satisfies for all $k\in \N$ the equation
\begin{equation}\label{eq:induc-phik}
T^{-1} \phi_{T,k+1}-\nabla \cdot A(\xi+\nabla \phi_{T,k+1})\,=\,T^{-1}  \phi_{2T,k}.
\end{equation}
We proceed by induction.
For $k=1$, this follows from a direct calculation. Assume \eqref{eq:induc-phik} holds at step $k$.
Then, by the induction assumption for $\phi_{T,k+1}$ and $2^{k+1}\phi_{2T,k+1}$, we have
\begin{eqnarray*}
T^{-1} \phi_{T,k+1}-\nabla \cdot A(\xi+\nabla \phi_{T,k+1})&=&T^{-1}\phi_{2T,k}, \\
T^{-1}(2^{k} \phi_{2T,k+1})-\nabla \cdot A(2^{k+1}\xi+\nabla 2^{k+1}\phi_{2T,k+1})&=&2^{k}T^{-1}\phi_{4T,k},
\end{eqnarray*}
so that the definition \eqref{eq:def:extrapo} of the Richardson extrapolation at step $k+1$ yields
\begin{multline*}
T^{-1}\Big(\frac{2^k}{2^{k+1}-1} \phi_{2T,k+1}-\frac{1}{2^{k+1}-1}\phi_{T,k+1}\Big)
-\nabla \cdot A (\xi+\nabla \phi_{T,k+2})\\
\,=\,T^{-1} \Big(\frac{2^k}{2^{k+1}-1}\phi_{4T,k}-\frac{1}{2^{k+1}-1}\phi_{2T,k}\Big),
\end{multline*}
that is, using the identity $\frac{2^k}{2^{k+1}-1}\,=\,\frac{2^{k+1}}{2^{k+1}-1}-\frac{2^{k+1}-2^k}{2^{k+1}-1}$,
\begin{eqnarray*}
\lefteqn{T^{-1}\phi_{T,k+2} -\nabla \cdot A (\xi+\nabla \phi_{T,k+2})}\\
&=&\frac{T^{-1}}{2^{k+1}-1} \Big((2^{k+1}-2^k) \phi_{2T,k+1}+2^k\phi_{4T,k}-\phi_{2T,k}\Big) \\
&\stackrel{\eqref{eq:def:extrapo}\text{ at step }k}{=}&\frac{T^{-1}}{2^{k+1}-1} \Big((2^{k+1}-2^k)  \phi_{2T,k+1}+(2^k-1) \phi_{2T,k+1}\Big) \\
&=&T^{-1}\phi_{2T,k+1},
\end{eqnarray*}
as claimed.

\medskip

\noindent From equation~\eqref{eq:induc-phik} we deduce that
\begin{equation}\label{eq:rec-phi_T-phi}
T^{-1} (\phi_{T,k+1}-\phi)-\nabla \cdot A \nabla (\phi_{T,k+1}-\phi)\,=\,T^{-1}  (\phi_{2T,k}-\phi).
\end{equation}
Testing this equation with test-function $\phi_{T,k+1}-\phi$, integrating by parts and using Poincar\'e's inequality on $H^1_\per(Q)$ then yields
$$
\|\nabla (\phi_{T,k+1}-\phi)\|_{L^2(Q)} \,\lesssim \, T^{-1} \|\nabla (\phi_{2T,k}-\phi)\|_{L^2(Q)}.
$$
This proves \eqref{apprx3} by induction, starting with the elementary energy estimate
$$
\|\nabla (\phi_{T,1}-\phi)\|_{L^2(Q)} \,\lesssim \, T^{-1} \|\phi\|_{L^2(Q)} \|\phi_{T,1}-\phi\|_{L^2(Q)},
$$
combined itself with Poincar\'e's inequality.

\medskip

\step{2} Proof of \eqref{apprx5}.

\noindent The proof is similar to Step~3 in the proof of Proposition~\ref{prop:spec}. We
recall the weak form of the corrector and dual corrector equations: For all 
$\Psi \in H^1_\per(Q)$
\begin{equation}\label{eq:weak-form-corr-spec3}
\int_Q \nabla \Psi \cdot A^*(\xi'+\nabla \phi')\,=\,0, \quad \int_Q \nabla \Psi \cdot A(\xi+\nabla \phi)\,=\,0.
\end{equation}
For all $\Psi_1,\Psi_2 \in H^1_\per(Q)$, we then have using \eqref{eq:weak-form-corr-spec3} once
with $\Psi= \Psi_1- \phi$  and $A^*$ and once with $\Psi= \Psi_2-\phi'$ and $A$:
\begin{eqnarray*}
\lefteqn{\fint_Q{(\xi'+\nabla \Psi_2)\cdot A(\xi+\nabla \Psi_1)}-\fint_Q{(\xi'+\nabla \phi')\cdot A(\xi+\nabla \phi)}}\nonumber \\
&=&\fint_Q{(\nabla \Psi_2-\nabla\phi')\cdot A(\xi+\nabla \Psi_1)}+\fint_Q{(\xi'+\nabla \phi')\cdot A(\nabla \Psi_1-\nabla \phi)}\nonumber\\
&{=}&\fint_Q{(\nabla \Psi_2-\nabla\phi')\cdot A(\xi+\nabla \Psi_1)}+\fint_Q{(\nabla \Psi_1-\nabla \phi)\cdot A^* (\xi'+\nabla \phi')}\nonumber\\
&\stackrel{\eqref{eq:weak-form-corr-spec3}}{=}&\fint_Q{(\nabla \Psi_2-\nabla \phi')\cdot A(\xi+\nabla \Psi_1)}-\fint_Q{(\nabla \Psi_2-\nabla \phi')\cdot A(\xi+\nabla \phi)}\nonumber\\
&=&\fint_Q{ (\nabla \Psi_2-\nabla \phi')\cdot A (\nabla \Psi_1-\nabla \phi) }.
\end{eqnarray*}
Hence \eqref{apprx5} follows from \eqref{apprx3} by taking $\Psi_1=\phi_{T,k}$ and $\Psi_2=\phi_{T,k}'$ in the identity above.
\end{proof}

\medskip

\noindent We turn now to the case of coefficients satisfying \eqref{eq:Poinca-hor-weak}, and consider a subclass of almost periodic coefficients introduced by Kozlov in \cite{Kozlov-78}. We start with
a definition.
\begin{defi}[Kozlov class of almost periodic coefficients]\label{def:Kozlov}
Let $N\in \N$ and $\Gamma=\{\gamma^j\}_{1\leq j\leq N}$ be a finite set of vectors $\gamma^j\in \R^d$
such that there exist $s_0>0$ and $C>0$ for which %
\begin{equation}\label{eq:diophan}
\min_{1\leq j\leq N} |\gamma^j\cdot \xi'|\,\geq\,C|\xi'|^{-s_0}
\end{equation}
for all $\xi'\in \Z^d\setminus \{0\}$.
We say that the coefficients $A$ are in the Kozlov $\Gamma$-class of almost periodic coefficients
if all the entries of $A$ are trigonometric polynomials with Fourier exponents in $\Gamma$, that is,
such that $[A(x)]_{kl} \,=\,\sum_{j=1}^N c_{jkl} \exp(i\gamma^j\cdot x)$ for some coefficients $c_{jkl}$ ($k,l\in \{1,\dots,d\}$).
\qed
\end{defi}
\begin{rem}\label{rem:dioph}
A standard Diophantine condition shows that if $\{\gamma^j_k\}_{1\leq j\leq N,1\leq k\leq d}$ are algebraic, then \eqref{eq:diophan} holds for the exponent
$s_0=\frac{1}{\max_{1\leq k\leq d} {N_k}}$ where $N_k$ denotes the cardinal of any rationally independent basis over $\Z$ of $\{\gamma^j_k\}_{1\leq j\leq N}$.
\qed
\end{rem}
\begin{theo}\label{th:quant-estim-quasiper}
Let $A\in L^\infty(\R^d,\Mab)$ be in the class of Kozlov almost periodic coefficients, and let $A_\ho$
be the associated homogenized coefficients.
Let $\xi,\xi' \in \R^d$ be fixed unit vectors, and denote by $\phi,\phi'$ the associated corrector and dual corrector in directions $\xi,\xi'$, respectively.
For all $T>0$ let $\phi_{T},\phi_T'$ be the associated regularized correctors of Definition~\ref{def:extra}, and $A_{T}$ be the approximation of the homogenized coefficients of Definition~\ref{def:extra-hom}.
Then, we have
\begin{eqnarray}\label{apprx3-kozlov}
\calM(|\nabla \phi_{T,k}-\nabla \phi|^2)^{\frac{1}{2}},\calM(|\nabla \phi_{T,k}'-\nabla \phi'|^2)^{\frac{1}{2}} \lesssim T^{-k} ,
\end{eqnarray}
and
\begin{eqnarray}\label{apprx5-kozlov}
\left|A_{\ho}-{A}_{T,k} \right|\lesssim T^{-2k},
\end{eqnarray}
where the multiplicative constants depend on $k$, next to $\alpha,\beta$, the class $\Gamma$, and $d$.
\qed
\end{theo}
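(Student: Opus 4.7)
The plan is to adapt the proof of Theorem~\ref{th:quant-estim-Perio} to the Kozlov class by working on the infinite-dimensional torus $\Omega$ associated with the Bohr compactification of the frequency lattice generated by $\Gamma$, using Kozlov's weakened Poincar\'e inequality \eqref{eq:Poinca-hor-weak} in place of the standard periodic one, and replacing the spatial average on $Q$ by the mean-value operator $\calM(\cdot)$.

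First I would reproduce the key algebraic recursion by induction on $k$, which carries over verbatim from the periodic case since it relies only on linearity and the definition \eqref{eq:def:extrapo} of the Richardson extrapolation:
\begin{equation*}
T^{-1}\phi_{T,k+1}-\nabla\cdot A(\xi+\nabla\phi_{T,k+1})\,=\,T^{-1}\phi_{2T,k},
\end{equation*}
together with the analogous equation for the dual correctors $\phi_{T,k}'$. Subtracting the corrector equation $-\nabla\cdot A(\xi+\nabla\phi)=0$ and adding $-T^{-1}\phi$ on both sides yields the linear resolvent-type equation
\begin{equation*}
T^{-1}(\phi_{T,k+1}-\phi)-\nabla\cdot A\nabla(\phi_{T,k+1}-\phi)\,=\,T^{-1}(\phi_{2T,k}-\phi),
\end{equation*}
which, crucially, makes sense in $L^2(\Omega)$ because in the Kozlov class the corrector $\phi$ itself is an almost periodic function (not only $\nabla\phi$).

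Next I would reformulate this equation in the probability space, test with $\phi_{T,k+1}-\phi$, and use uniform ellipticity together with Kozlov's Poincar\'e-type inequality \eqref{eq:Poinca-hor-weak} applied to both factors on the RHS to obtain
\begin{equation*}
\|\nabla(\phi_{T,k+1}-\phi)\|_{L^2(\Omega)}\,\lesssim\,T^{-1}\|\nabla(\phi_{2T,k}-\phi)\|_{L^2(\Omega)}.
\end{equation*}
Iterating from the base case $k=1$ (which follows from an elementary energy estimate on $T^{-1}\phi_{T,1}-\nabla\cdot A\nabla(\phi_{T,1}-\phi)=-T^{-1}\phi$ combined with \eqref{eq:Poinca-hor-weak}) delivers \eqref{apprx3-kozlov}, and the same argument on the dual equation controls $\phi_{T,k}'-\phi'$. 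The estimate \eqref{apprx5-kozlov} on the homogenized coefficients then follows by the very same algebraic identity used in Step~2 of the proof of Theorem~\ref{th:quant-estim-Perio}, namely
\begin{equation*}
\calM\bigl((\xi'+\nabla\Psi_2)\cdot A(\xi+\nabla\Psi_1)\bigr)-\xi'\cdot A_{\ho}\xi \,=\, \calM\bigl((\nabla\Psi_2-\nabla\phi')\cdot A(\nabla\Psi_1-\nabla\phi)\bigr),
\end{equation*}
applied with $\Psi_1=\phi_{T,k}$ and $\Psi_2=\phi_{T,k}'$, followed by Cauchy-Schwarz on $\Omega$.

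The main obstacle is the third step: deriving a usable Poincar\'e-type statement that still gives a clean $T^{-1}$ gain per iteration despite the fractional loss encoded in \eqref{eq:Poinca-hor-weak}. This is where the Diophantine condition \eqref{eq:diophan} and the elliptic regularity theory on the Bohr compactification developed in \cite{Kozlov-78} play the essential role --- they control the small-divisor issue inherent to almost periodic coefficients and ensure that the iterated differences $\phi_{T,k+1}-\phi$ remain in the regularity class to which \eqref{eq:Poinca-hor-weak} applies, the price being that all constants now depend on the class $\Gamma$ in addition to $\alpha,\beta,d,k$.
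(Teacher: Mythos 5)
Your outline has the right skeleton and the same algebraic backbone as the paper (the recursion $T^{-1}\phi_{T,k+1}-\nabla\cdot A(\xi+\nabla\phi_{T,k+1})=T^{-1}\phi_{2T,k}$, the resolvent-type equation for $\phi_{T,k+1}-\phi$, and the identity $\calM\bigl((\xi'+\nabla\Psi_2)\cdot A(\xi+\nabla\Psi_1)\bigr)-\xi'\cdot A_\ho\xi=\calM\bigl((\nabla\Psi_2-\nabla\phi')\cdot A(\nabla\Psi_1-\nabla\phi)\bigr)$ for the passage to \eqref{apprx5-kozlov}), but the central quantitative step is not established. Your displayed iteration estimate
\begin{equation*}
\|\nabla(\phi_{T,k+1}-\phi)\|_{L^2(\Omega)}\,\lesssim\,T^{-1}\|\nabla(\phi_{2T,k}-\phi)\|_{L^2(\Omega)}
\end{equation*}
does \emph{not} follow from testing with $\phi_{T,k+1}-\phi$ and invoking \eqref{eq:Poinca-hor-weak}: that inequality controls $\|X\|_{L^2}$ by $\|\DD X\|_{H^{s_0}}$, i.e. with a loss of $s_0$ derivatives, so the plain $L^2$-level energy estimate does not close — you would need the $H^{s_0}$-norm of the gradient on the right, which the energy identity does not give you. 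You acknowledge this as the "main obstacle" and point vaguely at Kozlov's regularity theory, but that is precisely the content of the proof rather than a technicality one can defer.

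The paper closes this gap in two moves on the \emph{finite-dimensional} torus $\Pi_M$ (not the Bohr compactification): first, a uniform-in-$T$ regularity theory for the regularized correctors, obtained by combining G{\aa}rding's inequality $\bigl((-\triangle)^s\calL_T\overline\psi,\overline\psi\bigr)_{L^2(\Pi_M)}\geq c_1(s)\|\DD\overline\psi\|_{H^s}^2-c_2(s)\|\DD\overline\psi\|_{L^2}^2$ with the weak Poincar\'e inequality $\|\DD\overline\psi\|_{H^s}\geq c\|\overline\psi\|_{H^{s-s_0}}$ valid for $s\geq s_0$ (this is where the Diophantine condition enters), which bootstraps to $\|\overline\phi_T\|_{H^s(\Pi_M)}\lesssim 1$ for every $s$, uniformly in $T$, and hence smoothness of $\overline\phi_T$ and of the limit $\overline\phi$; second, the Richardson induction is run \emph{simultaneously in the whole Sobolev scale}: the estimate at level $H^s$ for $\overline\phi_{T,k+1}-\overline\phi$ is obtained by testing with $(-\triangle)^s$ of the difference and uses the induction hypothesis $\|\overline\phi_{2T,k}-\overline\phi\|_{H^{2s}(\Pi_M)}\lesssim T^{-k}$ at the \emph{higher} level $H^{2s}$, which is available because the hypothesis is asserted for all $s\geq 0$ with $T$-independent constants. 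This simultaneous multi-level induction is exactly what absorbs the $s_0$-derivative loss that your single-level $L^2$ argument cannot handle; without it (or an equivalent device) the claimed per-step gain of $T^{-1}$ is unproven.
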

\begin{rem}\label{rk:quant-estim-quasiper}
As we shall see in the proof of Theorem~\ref{th:quant-estim-quasiper}, the correctors and regularized correctors are smooth and the estimate~\eqref{apprx3-kozlov} can be strengthened to:
\begin{eqnarray}\label{apprx3-kozlov-reg}
\sup_{\R^d}|\nabla \phi_{T,k}-\nabla \phi|,\sup_{\R^d}|\nabla \phi_{T,k}'-\nabla \phi'| \lesssim T^{-k}.
\end{eqnarray}
\qed
\end{rem}
\begin{proof}[Proof of Theorem~\ref{th:quant-estim-quasiper}]
In order to prove Theorem~\ref{th:quant-estim-quasiper}, we first recall how Kozlov defines correctors in \cite{Kozlov-78} and show how to adapt the proof of Theorem~\ref{th:quant-estim-Perio} in this setting.

\medskip

\step{1} Kozlov's construction of correctors.

\noindent When the coefficients are in some $\Gamma$-Kozlov class of almost periodic coefficients, one can 
take as $\Omega$ a $(M=\sum_{k=1}^d N_k)$-dimensional torus $\Pi_M$, where $N_k$ is as in Remark~\ref{rem:dioph}, and extend $A$ as a map $\overline A: \Pi_M\to \Mab$.
As shown in \cite[Proof of Theorem~1 and Proof of Theorem~4]{Kozlov-78}, any smooth periodic function $\overline\psi$ on $\Pi_M$ defines a smooth almost periodic function $\psi$ on $\R^d$ (in the sense of Besicovitch) by the diagonal restriction
$$
\psi(y_1,\dots,y_d):=\overline \psi(\underbrace{y_1,\dots,y_1}_{N_1\text{ times}},\dots,\underbrace{y_d,\dots,y_d}_{N_d\text{ times}}).
$$
Conversely, the differential operators $\nabla_i$ on $\R^d$ translate by the diagonal restriction into 
differential operators $\DD_i$ on $\Pi_M$. The regularized corrector equation then turns into: find $\overline \phi_T\in H^1_\per(\Pi_M)$ such that
$$
T^{-1}\overline \phi_T-\DD\cdot \overline A(\xi+\DD \overline \phi_T)\,=\,0 \text{ in }\Pi_M.
$$
Note that $H^1_\per(\Pi_M)$ does not coincide with $\calH(\Pi_M)$ since we use standard differential operators $\{\nabla_i\}_{i=1,\dots,M}$ on $\Pi_M$ to define $H^1_\per(\Pi_M)$ whereas we use the differential operators $\{\DD_i\}_{i=1,\dots,d}$ to define
$\calH(\Pi_M)$.
We first solve the corrector equation in the Hilbert space $\calH(\Pi_M)=\{\overline\psi\in L^2(\Pi_M):\int_{\Pi_M}|\DD\overline \psi|^2<\infty\}$. We shall then show a posteriori using elliptic regularity that the regularized corrector $\overline \phi_T$ belongs to $H^1_\per(\Pi_M)$ uniformly in $T$ (and even to $C^\infty(\Pi_M)$).
By construction of $\overline A$ and $\DD$, the coercivity of $A$ yields the following coercivity estimate
$$
\Big(\calL_T \overline\phi_T,\overline\phi_T\Big)_{L^2(\Pi_M)} \,\gtrsim \, T^{-1}\int_{\Pi_M} \overline\phi_T^2+\int_{\Pi_M}|\DD\overline\phi_T|^2.
$$
Using in addition the bound
$$
\Big|\int_{\Pi_M}\DD\overline \psi\cdot \overline A \xi\Big| \,\lesssim \, \Big(\int_{\Pi_M}|\DD\overline \psi|^2\Big)^{\frac{1}{2}},
$$
we deduce from the Lax-Milgram theorem the existence of a unique weak solution in $\calH(\Pi_M)$
of the regularized corrector equation, and the a priori estimate 
\begin{equation}\label{eq:garding-1}
\|\DD\overline \phi_T\|_{L^2(\Pi_M)}^2+T^{-1}\|\overline\phi_T\|_{L^2(\Pi_M)}^2\,\lesssim\,1,
\end{equation}
where the multiplicative constant is independent of $T$.
Now comes the regularity argument. 
On the one hand, G{\aa}rding's inequality yields for all $s>0$ and $\overline\psi\in L^2(\Pi_M)$,
\begin{equation}\label{eq:garding-2}
\Big( (-\triangle)^s \calL_T \overline\psi,\overline\psi\Big)_{L^2(\Pi_M)} \,\geq \,c_1(s) \|\DD \overline\psi\|_{H^s(\Pi_M)}^2-c_2(s)\|\DD \overline\psi\|_{L^2(\Pi_M)}^2,
\end{equation}
where $\triangle$ is the Laplacian in $\R^M$ (and the constants $0<c_1(s),c_2(s)<\infty$ depend on $s$).
With $\overline \psi=\overline \phi_T$, the $L^2(\Pi_M)$-norm of the first term of the LHS is bounded as follows:
\begin{equation}\label{eq:garding-3}
\|(-\triangle)^s \calL_T \overline\phi_T\|_{L^2(\Pi_M)}=\|(-\triangle)^s \DD\cdot \overline A\xi\|_{L^2(\Pi_M)}\,\leq \,\|\overline A\|_{H^{2s+1}(\Pi_M)}\,\lesssim\,1
\end{equation}
since $\overline A$ is smooth. The combination of \eqref{eq:garding-2} and \eqref{eq:garding-3}
then yields by Cauchy-Schwarz' inequality
\begin{equation}\label{eq:garding-4}
\|\overline A\|_{H^{2s+1}(\Pi_M)}\|\overline\phi_T\|_{L^2(\Pi_M)}\,\geq\, c_1(s) \|\DD \overline\phi_T\|_{H^s(\Pi_M)}^2-c_2(s)\|\DD \overline\phi_T\|_{L^2(\Pi_M)}^2.
\end{equation}
On the other hand, since $\Gamma$ is as in Definition~\ref{def:Kozlov}, we have the following weak Poincar\'e inequalities (cf. \cite[Proof of Theorem~4]{Kozlov-78}): for all $s\ge s_0$ and all $\overline \psi\in L^2(\Pi_M)$ such that 
$\fint_{\Pi_M}\psi=0$,
\begin{equation}\label{eq:garding-5}
\|\DD \overline \psi\|_{H^s(\Pi_M)} \,\geq\, c\|\overline \psi\|_{H^{s-s_0}(\Pi_M)},
\end{equation}
which we shall use for $\overline\phi_T$ in the form 
\begin{equation}\label{eq:garding-6}
\|\DD \overline \phi_T\|_{H^{s_0}(\Pi_M)} \,\geq\, c\|\overline \phi_T\|_{L^2(\Pi_M)}.
\end{equation}
Inserting \eqref{eq:garding-6} into \eqref{eq:garding-4} for $s\ge s_0$ and using Young's inequality
to absorb the LHS in the RSH lead to 
\begin{equation*}
\frac{1}{2}\Big(\frac{1}{c\sqrt{c_1(s)}}\Big)^2\|\overline A\|_{H^{2s+1}(\Pi_M)}^2
+c_2(s)\|\DD \overline\phi_T\|_{L^2(\Pi_M)}^2\,\geq \, \frac{c_1(s)}{2} \|\DD \overline\phi_T\|_{H^s(\Pi_M)}^2.
\end{equation*}
Combined with the a priori estimate~\eqref{eq:garding-1}, this yields for all $s\ge s_0$
$$
 \|\DD \overline\phi_T\|_{H^s(\Pi_M)}\,\lesssim\, 1,
$$
and therefore by G{\aa}rding's inequality \eqref{eq:garding-5} again, for all $s\ge 0$,
$$
 \|\overline\phi_T\|_{H^s(\Pi_M)}\,\lesssim\, 1,
$$
where the multiplicative constant depends on $s$ but not on $T$.
This proves in particular that $\phi_T$ is smooth and that the limit $\overline \phi$ of $\overline \phi_T$ as $T\uparrow \infty$ exists and defines a function of class $C^\infty(\Pi_M)$ (uniqueness of correctors is standard). The associated diagonal restrictions $\phi_T$ and $\phi$ 
are therefore smooth almost periodic functions.

\medskip

\step{2} Quantitative estimates.

\noindent The quantitative estimates are now elementary consequences of Step~1 and of the algebra of the proof of Theorem~\ref{th:quant-estim-Perio}.
Indeed, since $\overline \phi_T$ anf $\overline \phi$ are smooth periodic functions, their Richardson extrapolations satisfy the following version of \eqref{eq:rec-phi_T-phi}: for all $k\in \N$ and $T>0$:
\begin{equation}\label{eq:induction-k=0-Garding-1}
T^{-1} (\overline\phi_{T,k+1}-\overline\phi)-\DD \cdot \overline A \DD (\overline\phi_{T,k+1}-\overline\phi)\,=\,T^{-1}  (\overline\phi_{2T,k}-\overline\phi).
\end{equation}
As for Theorem~\ref{th:quant-estim-Perio}, we proceed by induction. We shall first prove that
for all $s\ge 0$,
\begin{equation}\label{eq:induction-k=0-Garding}
\| \overline \phi_T- \overline \phi\|_{H^s(\Pi_M)}\,\lesssim\, T^{-1},
\end{equation}
where the multiplicative constant depends on $s$ but not on $T$.
We start with G{\aa}rding's inequality for $s\ge s_0$, that we combine with
the a priori estimate
$$ 
\int_{\Pi_M}\DD (\overline \phi_T-\overline \phi)\cdot \overline A\,\DD (\overline \phi_T-\overline \phi)
\,\leq \, -T^{-1} \int_{\Pi_M} \overline \phi(\overline \phi_T-\overline \phi)
$$
to get
\begin{eqnarray*}
 \lefteqn{T^{-1}\|\overline \phi\|_{H^{2s}(\Pi_M)} \|\overline \phi_T-\overline \phi\|_{L^2(\Pi_M)}
\,\geq \,
\Big( (-\triangle)^s \calL_T (\overline \phi_T-\overline \phi),\overline \phi_T-\overline \phi\Big)_{L^2(\Pi_M)}}
\\
&\geq& c_1(s) \|\DD (\overline \phi_T-\overline \phi)\|_{H^s(\Pi_M)}^2-c_2(s)\|\DD (\overline \phi_T-\overline \phi)\|_{L^2(\Pi_M)}^2 \\
&\geq & c_1(s) \|\DD (\overline \phi_T-\overline \phi)\|_{H^s(\Pi_M)}^2-c_2(s)T^{-1}\|\overline \phi_T-\overline \phi\|_{L^2(\Pi_M)}\|\overline \phi\|_{L^2(\Pi_M)},
\end{eqnarray*}
up to changing $c_2(s)$ in the last line.
We then appeal to the weak Poincar\'e inequality \eqref{eq:garding-5} and use that $s\ge s_0$ to turn this estimate into
$$
 \|\DD (\overline \phi_T-\overline \phi)\|_{H^s(\Pi_M)} \, \leq \, \frac{1}{c_1(s) c}(T^{-1}\|\overline \phi\|_{H^{2s}(\Pi_M)}+c_2(s)T^{-1}\|\overline \phi\|_{L^2(\Pi_M)}).
$$
Since we have proved in Step~1 that $\overline \phi$ is smooth, this yields \eqref{eq:induction-k=0-Garding} for all $s\ge 0$ by \eqref{eq:garding-5}.

\medskip

\noindent We now turn to the induction argument proper, and assume that at step~$k\in \N$,
for all $s\ge 0$,
\begin{equation}\label{eq:induction-k=0-Garding-2}
\|\overline\phi_{T,k}-\overline\phi\|_{H^s(\Pi_M)}\,\lesssim \, T^{-k},
\end{equation}
where the multiplicative constant depends on $s$ but not on $T$.
Arguing as above and starting from \eqref{eq:induction-k=0-Garding-1},  we end up with the estimate 
$$
 \|\DD (\overline\phi_{T,k+1}-\overline\phi)\|_{H^s(\Pi_M)} \, \leq \, \frac{1}{c_1(s) c}(T^{-1}\|\overline\phi_{2T,k}-\overline\phi\|_{H^{2s}(\Pi_M)}+c_2(s)T^{-1}\|\overline\phi_{2T,k}-\overline\phi\|_{L^2(\Pi_M)}),
$$
from which the induction hypothesis at step $k+1$ follows.

\medskip

\noindent Since \eqref{eq:induction-k=0-Garding-2} holds for all $s\ge 0$, the same estimate holds
for the $C^k(\Pi_M)$-norms by Sobolev embedding. This completes the proof of \eqref{apprx3-kozlov-reg}
by using the diagonal restriction (which is continuous from $C^k(\Pi_M)$ to $C^k(\R^d)$ for all $k\in \R^+$).

\medskip

\noindent The results for the homogenized coefficients then follow from the same calculations as in Theorem~\ref{th:quant-estim-Perio}, based on the approximation of the correctors.
\end{proof}

\medskip

\noindent We conclude with the case of non-necessarily symmetric coefficients that satisfy \eqref{eq:Poinca-vert}. As proved by Otto and the first author in \cite[Theorem~2 \& Proposition~2]{Gloria-Otto-10b}:
\begin{theo}\label{th:quant-estim-Poisson2}
Let $A\in L^\infty(\Omega,\Mab)$ be stationary random coefficients
satisfying~\eqref{eq:Poinca-vert}, and let $A_\ho$
be the associated homogenized coefficients.
Let $\xi,\xi' \in \R^d$ be fixed unit vectors, and denote by $\phi,\phi'$ the associated corrector and dual corrector in directions $\xi,\xi'$, respectively.
For all $T>0$ let $\phi_{T},\phi_T'$ be the associated regularized correctors of Definition~\ref{def:extra}, and $A_{T}$ be the approximation of the homogenized coefficients of Definition~\ref{def:extra-hom}.
Then, for all $d\geq 2$, we have
\begin{equation}\label{eq:syst-err-sto-cont12}
\expec{|\nabla \phi_{T}-\nabla \phi|^2},\expec{|\nabla \phi_{T}'-\nabla \phi'|^2}\,\lesssim\, 
\left|
\begin{array}{rcl}
2\le d<4&:&T^{-\frac{d}{2}},\\
d=4&:&T^{-2}\log T,\\
d>4&:&T^{-2}.
\end{array}
\right.
\end{equation}
Likewise,
\begin{equation}\label{eq:syst-err-sto-cont22}
|A_{T}-A_\ho|\,\lesssim\, 
\left|
\begin{array}{rcl}
2\le d<4&:&T^{-\frac{d}{2}},\\
d=4&:&T^{-2}\log T,\\
d>4&:&T^{-2}.
\end{array}
\right.
\end{equation}
\qed
\end{theo}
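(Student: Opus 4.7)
The plan is to replicate in the stochastic setting the two-step structure used in the proof of Theorem~\ref{th:quant-estim-Perio}: an essentially algebraic PDE reduction of the error estimates to a variance bound on the regularized correctors, followed by the quantitative estimate of that variance under the spectral-gap assumption \eqref{eq:Poinca-vert}. Primal and dual objects are treated in parallel and, since $A^*$ satisfies \eqref{eq:Poinca-vert} whenever $A$ does, the same bounds will hold for $\phi_T'$ as for $\phi_T$. Note that no Richardson extrapolation is needed here: by Remark~\ref{rk:optimality-k=1} and \eqref{eq:spec-exponents2}, $k=1$ is already optimal under this assumption.

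For the first step, I would work with the equation satisfied by the difference $\phi_T-\phi_{T'}$ for $T'\geq T$ (or, equivalently, with $\partial_T\phi_T$, which solves a massive elliptic equation with zero-order source proportional to $T^{-2}\phi_T$), test it against a suitable combination of primal and dual quantities, use stationarity and ergodicity as in the proof of Theorem~\ref{th:spec}, and let $T'\uparrow\infty$. This yields a reduction of the gradient error $\expec{|\nabla\phi_T-\nabla\phi|^2}$ to a control of $\sup_{t\geq T}\expec{\phi_t^2}$ (and an analogous reduction for the dual error), independently of \eqref{eq:Poinca-vert}.

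The key quantitative input is then a sharp bound on $\expec{\phi_T^2}$. I would apply \eqref{eq:Poinca-vert} to $X=\phi_T(0)$ (which is centred by stationarity and ergodicity), controlling $\osc{\phi_T(0)}{A|_{B_\ell(z)}}$ by a perturbation argument written through the massive Green's function $G_T$ of $T^{-1}-\nabla\cdot A\nabla$. The desired scaling then follows from annealed algebraic/exponential decay estimates on $G_T$ and $\nabla G_T$ (algebraic decay $|y|^{2-d}$ truncated exponentially at scale $\sqrt{T}$), leading to
\begin{equation*}
\expec{\phi_T^2},\ \expec{(\phi_T')^2}\,\lesssim\,\begin{cases}T^{(4-d)/2}& d<4,\\ \log T & d=4,\\ 1 & d>4,\end{cases}
\end{equation*}
and hence to \eqref{eq:syst-err-sto-cont12} through the PDE reduction above. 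Estimate \eqref{eq:syst-err-sto-cont22} is then obtained from the stochastic analogue of the identity
\begin{equation*}
\xi'\cdot(A_T-A_\hom)\xi\,=\,\expec{(\nabla\phi_T'-\nabla\phi')\cdot A\,(\nabla\phi_T-\nabla\phi)}
\end{equation*}
(the counterpart of Step~2 in the proof of Theorem~\ref{th:quant-estim-Perio}, derived from the weak forms of \eqref{eq:corr-sto} and \eqref{eq:corr-sto-ad}) combined with Cauchy--Schwarz and \eqref{eq:syst-err-sto-cont12}.

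The hard part is unambiguously the variance bound on $\phi_T$: the raw spectral-gap inequality only gives the useless estimate $\expec{\phi_T^2}\lesssim T$, and extracting the dimensional scaling above requires annealed $L^p$-estimates on the (gradient of the) massive Green's function, which is the technical centerpiece of \cite{Gloria-Otto-10b}. The PDE reduction and the algebraic identity for $A_T-A_\hom$ are, by contrast, soft and structurally identical to their periodic counterparts.
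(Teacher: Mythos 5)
The first thing to note is that the paper does not prove this statement at all: Theorem~\ref{th:quant-estim-Poisson2} is quoted verbatim from \cite[Theorem~2 \& Proposition~2]{Gloria-Otto-10b}, so your proposal has to be measured against that reference. Your final step is fine: the identity $\xi'\cdot(A_T-A_\ho)\xi=\expec{(\nabla\phi_T'-\nabla\phi')\cdot A(\nabla\phi_T-\nabla\phi)}$ holds (same algebra as Step~2 of the proof of Theorem~\ref{th:quant-estim-Perio}, carried out in probability), and Cauchy--Schwarz then gives \eqref{eq:syst-err-sto-cont22} from \eqref{eq:syst-err-sto-cont12}. The observation that $A^*$ inherits \eqref{eq:Poinca-vert} and that no extrapolation is needed is also correct.

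The gap is in your first step, the ``PDE reduction'' of $\expec{|\nabla\phi_T-\nabla\phi|^2}$ to $\sup_{t\ge T}\expec{\phi_t^2}$. For your exponents to combine to $T^{-d/2}$ you need a reduction of the form $\expec{|\nabla\phi_T-\nabla\phi|^2}\lesssim T^{-2}\sup_{t\ge T}\expec{\phi_t^2}$, i.e.\ you need to gain \emph{two} factors of $T^{-1}$. In the periodic proof the second factor comes precisely from the horizontal Poincar\'e inequality \eqref{eq:Poinca-hor}, which absorbs $\|\phi_T-\phi\|_{L^2}$ into $\|\nabla(\phi_T-\phi)\|_{L^2}$; under \eqref{eq:Poinca-vert} alone this is unavailable. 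Testing the equation $T^{-1}(\phi_T-\phi_{2T})-\nabla\cdot A\nabla(\phi_T-\phi_{2T})=-(2T)^{-1}\phi_{2T}$ with $\phi_T-\phi_{2T}$ (or any combination of primal/dual differences) only yields $\expec{|\nabla(\phi_T-\phi_{2T})|^2}\lesssim T^{-1}\expec{\phi_{2T}^2}$, which after dyadic summation gives at best $T^{1-d/2}$ with your variance bound --- one full power of $T$ short for $d=3$, and nothing at all for $d=2$. Worse, the stronger reduction cannot be true: the spectral-gap/Green-function argument you sketch actually yields the sharp bounds $\expec{\phi_T^2}\lesssim\log T$ for $d=2$ and $\expec{\phi_T^2}\lesssim 1$ for $d\ge3$ (the sensitivity of $\phi_T(0)$ involves $\nabla_yG_T$, not $G_T$), so a $T^{-2}\expec{\phi_T^2}$ reduction would give $\expec{|\nabla\phi_T-\nabla\phi|^2}\lesssim T^{-2}\log T$ in $d=2,3$, contradicting the optimality of $T^{-d/2}$ discussed in Remarks~\ref{rem:sto-disc} and~\ref{rk:optimality-k=1}. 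The variance of $\phi_T$ is simply too weak an intermediate quantity: the low-lying spectrum contributes more to the gradient error than to $\expec{\phi_T^2}$. The proof in \cite{Gloria-Otto-10b} instead applies the vertical calculus \eqref{eq:Poinca-vert} \emph{directly to the gradient quantities} (to $\nabla\phi_T-\nabla\phi_{2T}$, via its sensitivity with respect to $A|_{B_\ell(z)}$), which requires annealed estimates on the mixed second derivatives $\nabla_x\nabla_y G_T$ of the massive Green function in addition to those on $G_T$ and $\nabla_y G_T$; that is the part your outline is missing.
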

\noindent These results are less precise for $d\ge 4$
 than in the case of symmetric coefficients considered in Theorem~\ref{th:quant-estim-Poisson}. We could push the arguments used in \cite{Gloria-Otto-10b} to obtain
the corresponding results for non-symmetric coefficients.
However, in view of Remark~\ref{rk:optimality-k=1}, Theorem~\ref{th:quant-estim-Poisson2} is sufficient for our purposes since higher-order Richardson extrapolations do not reduce further the error in general for coefficients satisfying \eqref{eq:Poinca-vert} in the dimensions $d=2,3$ of practical interest.


\section{Numerical approximation of homogenized coefficients and correctors} \label{sec:tests}

The aim of this section is to introduce numerical approximations of correctors and homogenized coefficients based on the regularization and extrapolation method introduced in Section~\ref{sec:spectral}. In order to make this method of any practical use one has to make the different quantities at stake computable and control the approximation errors: this is the objective of the first subsection.
The last two subsections are dedicated to a systematic numerical study of the method, which illustrates both the interest of the approach and the sharpness of the analysis.
More precisely, Subsection~\ref{sec:test1} displays results of numerical tests for symmetric periodic and almost periodic coefficients, whereas Subsection~\ref{sec:test2} treats non-symmetric periodic and almost periodic coefficients.
For random coefficients, we refer the reader to the numerical study of the discrete elliptic equations with random conductivities in \cite{Gloria-10} and \cite{EGMN-12}.

\subsection{From abstract to computable approximations}
As recalled in the introduction, the motivation to use the regularization approach is the observation that the solution $\phi_T \in H^1_\loc(\R^d)$ (which exists for any $A\in L^\infty(\R^d,\Mab)$ and is unique in the class of functions that satisfy \eqref{eq:class-1}) of 
$$
T^{-1}\phi_{T}-\nabla \cdot A(\xi+\nabla \phi_T)\,=\,0 \mbox{ in }\R^d
$$
is much easier to approximate on bounded domains than the solution $\phi \in H^1_\loc(\R^d)$ (whose existence is only known to hold under structure assumptions, e.~g. almost-sure existence in the stationary ergodic case) of
$$
-\nabla \cdot A(\xi+\nabla \phi)\,=\,0 \mbox{ in }\R^d.
$$
This fact relies on the exponential decay of the Green function associated with the operator $T^{-1}-\nabla \cdot A \nabla$.

\medskip
\noindent In particular, this observation takes the following general form which holds for \emph{any} coefficient field $A$.
\begin{theo}\label{th:approx-TR}
Let $A\in L^\infty(\R^d,\Mab)$, $\xi \in \R^d$ be a unit vector, and $\phi_{T}$ be the associated regularized corrector 
of Definition~\ref{def:extra}. For all $R$, we let $\phi_{T,R}\in H^1_0(Q_R)$ be the unique weak solution of: for all $\psi\in H^1_0(Q_R)$,
\begin{equation}\label{eq:equation-for-phiTR}
\int_{Q_R}T^{-1} \psi\phi_{T,R}+\nabla \psi\cdot A(\xi+\nabla \phi_{T,R})\,=\,0.
\end{equation}
Then there exists $c>0$ depending only on $\alpha,\beta$ and $d$ such that for all $0<L\leq R$
with $R\sim R-L\gtrsim \sqrt{T}$, we have 
\begin{equation}\label{eq:th-approx-TR}
\int_{Q_L} |\nabla \phi_{T,R}-\nabla \phi_T|^2 \, \lesssim \, R^d T \exp\big(-c \frac{R-L}{\sqrt{T}}\big).
\end{equation}
\qed
\end{theo}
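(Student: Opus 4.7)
The plan is to set $w := \phi_{T,R} - \phi_T \in H^1(Q_R)$ and exploit the exponential decay induced by the zero-order term in the equation satisfied by $w$. Subtracting the two corrector equations shows that $w$ solves
\[
T^{-1} w - \nabla \cdot A \nabla w \,=\, 0 \quad \text{in } Q_R,
\]
in the weak sense, with ``boundary data'' $w + \phi_T \in H^1_0(Q_R)$. The operator $T^{-1} - \nabla \cdot A \nabla$ is massive and its natural length scale is $\sqrt{T}$, so $w$ should decay exponentially away from $\partial Q_R$ on that scale. This is the only structural input we need; no assumption on $A$ beyond uniform ellipticity enters.

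The first ingredient is the global bound $\|w\|_{L^2(Q_R)}^2 \lesssim R^d T$. For $\phi_{T,R}$ this is the energy estimate associated with \eqref{eq:equation-for-phiTR}, giving $T^{-1}\|\phi_{T,R}\|_{L^2(Q_R)}^2 + \|\nabla \phi_{T,R}\|_{L^2(Q_R)}^2 \lesssim R^d$. For $\phi_T$ the analogous estimate is obtained by summing \eqref{eq:class-2} over a cover of $Q_R$ by balls of radius $\sqrt{T}$. Adding the two yields the claimed control on $w$.

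The core of the proof is an iterated Caccioppoli estimate on nested cubes. Fix $\ell := c_1 \sqrt{T}$ with $c_1$ large (depending only on $\alpha,\beta$) and let $Q_k := Q_{L+k\ell}$ for $k=0,\dots,K$ with $K \sim (R-L)/\sqrt{T}$, arranged so that $Q_{K+1} \subset Q_R$. For each $k$, pick a cutoff $\eta_k \in C_c^\infty(Q_{k+1})$ with $\eta_k \equiv 1$ on $Q_k$ and $|\nabla \eta_k| \lesssim \ell^{-1}$; because $\eta_k$ is compactly supported in $Q_R$, the test function $\eta_k^2 w$ belongs to $H^1_0(Q_R)$ even though $w$ itself does not, and this is where the non-homogeneous boundary data is absorbed. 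Testing the equation with $\eta_k^2 w$, using uniform ellipticity to bound $\int \eta_k^2 \nabla w \cdot A \nabla w$ from below and Young's inequality on the cross term $-2\int \eta_k w \nabla \eta_k \cdot A \nabla w$, produces
\[
T^{-1} \int_{Q_k} w^2 + \int_{Q_k}|\nabla w|^2 \,\leq\, \frac{C}{\ell^2}\int_{Q_{k+1}\setminus Q_k} w^2.
\]
Choosing $c_1^2 > C$ and rewriting the RHS as $C\ell^{-2}(\int_{Q_{k+1}} w^2 - \int_{Q_k} w^2)$, the first term on the LHS absorbs part of the RHS and one obtains the contraction $\int_{Q_k} w^2 \leq \theta \int_{Q_{k+1}} w^2$ with a fixed $\theta \in (0,1)$. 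Iterating from $k=K$ down to $k=0$ gives $\int_{Q_L} w^2 \leq \theta^K \int_{Q_R} w^2 \lesssim R^d T \exp(-c(R-L)/\sqrt{T})$ with $c := -\log(\theta)/c_1$, and one last application of the Caccioppoli estimate (with $k=0$) upgrades this $L^2$ bound to the claimed gradient bound \eqref{eq:th-approx-TR}.

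The main obstacle to monitor is the calibration between the mass scale $\sqrt{T}$ and the Caccioppoli loss of order $\ell^{-1}$: the whole argument hinges on choosing $\ell$ proportional to but strictly larger than $\sqrt{T}$, so that the mass gain beats the Caccioppoli loss and yields a uniform contraction factor $\theta<1$ per shell. Everything else is bookkeeping — handling the non-vanishing trace of $w$ on $\partial Q_R$ by keeping cutoffs strictly inside $Q_R$, and estimating $\int_{Q_R} w^2$ via the $L^2$ control on $\phi_T$ provided by \eqref{eq:class-2}.
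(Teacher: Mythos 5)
Your proposal is correct, but it follows a genuinely different route from the paper. The paper's proof writes $\phi_T-\phi_{T,R}=\phi_1+\phi_2$ with $\phi_1=\chi\phi_T$ a boundary-layer cutoff, represents $\phi_2$ via the massive Green function $G_{T,R}$, and combines the pointwise exponential decay \eqref{eq:Green-T} (together with the maximum principle $G_{T,R}\le G_T$) with a Caccioppoli estimate for $G_{T,R}$ near $\partial Q_R$ and a final Caccioppoli estimate for $\phi_2$; the exponential decay of the Green function is an external input. You instead work directly with $w=\phi_{T,R}-\phi_T$, which solves the homogeneous massive equation against $H^1_0(Q_R)$ test functions, and extract the exponential decay by a shell-by-shell Caccioppoli/hole-filling contraction on cubes $Q_{L+k\ell}$ with $\ell\sim\sqrt T$: the mass term $T^{-1}\int\eta_k^2w^2$ beats the cutoff loss $\ell^{-2}\int_{Q_{k+1}\setminus Q_k}w^2$, giving a fixed contraction factor per shell, and the starting value $\int_{Q_R}w^2\lesssim R^dT$ follows from the energy estimate for \eqref{eq:equation-for-phiTR} and from summing \eqref{eq:class-2} over a $\sqrt T$-cover of $Q_R$. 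This is more elementary and self-contained (no Green function theory, no pointwise bounds, no maximum principle), and in fact your final Caccioppoli step yields $\int_{Q_L}|\nabla w|^2\lesssim \ell^{-2}\int_{Q_{L+\ell}}w^2\lesssim R^d\exp\big(-c\frac{R-L}{\sqrt T}\big)$, i.e.\ a slightly \emph{sharper} prefactor than the stated $R^dT$ in the regime $T\gtrsim1$, which is the only regime used in the paper (there $T\sim R$); the only cosmetic caveat is that for $T\ll1$ your bound does not literally reduce to the stated one, but neither that corner case nor the paper's own unit-scale conventions are of any interest, and one recovers the statement verbatim by keeping the $L^2$ decay and noting $\ell^{-2}\lesssim T^{-1}$. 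What the paper's Green-function route buys in exchange is pointwise control of the boundary-layer remainder $\phi_2$, which is not needed for \eqref{eq:th-approx-TR} itself.
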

\begin{rem}\label{rem:upgrade-per}
In the case of periodic coefficients, we proved in \cite[Theorem~3.1]{Gloria-09} that \eqref{eq:th-approx-TR} can be upgraded to 
\begin{equation}\label{eq:th-approx-TR-per}
\int_{Q_L} |\nabla \phi_{T,R}-\nabla \phi_T|^2 \, \lesssim \, R^d \sqrt{T} \exp\big(-c \frac{R-L}{\sqrt{T}}\big).
\end{equation}
The same estimate holds for almost periodic coefficients of the Kozlov class.
\qed
\end{rem}
\begin{rem}\label{rem:approx-TRk}
By definition of Richardson extrapolation, in the context of Theorem~\ref{th:approx-TR} and Definition~\ref{def:extra}, we have for all $k\in \N$:
\begin{equation}\label{eq:th-approx-TRk}
\int_{Q_L} |\nabla \phi_{T,k,R}-\nabla \phi_{T,k}|^2 \, \lesssim \, R^d {2^{k-1}T} \exp\big(-c \frac{R-L}{\sqrt{2^{k-1}T}}\big).
\end{equation}
In the case of periodic coefficients or almost periodic coefficients of the Kozlov class, this estimate can be upgraded, as in Remark~\ref{rem:upgrade-per}, to
\begin{equation}\label{eq:th-approx-TRk-per}
\int_{Q_L} |\nabla \phi_{T,k,R}-\nabla \phi_{T,k}|^2 \, \lesssim \, R^d \sqrt{2^{k-1}T} \exp\big(-c \frac{R-L}{\sqrt{2^{k-1}T}}\big).
\end{equation}
Combined with Theorem~\ref{th:quant-estim-Perio} for periodic coefficients and with Theorem~\ref{th:quant-estim-quasiper} for almost periodic coefficients, this yields in particular:
\begin{equation}\label{eq:th-approx-TRk-per-full}
\int_{Q_L} |\nabla \phi_{T,k,R}-\nabla \phi|^2 \, \lesssim \, L^dT^{-2k}+R^d \sqrt{2^{k-1}T} \exp\big(-c \frac{R-L}{\sqrt{2^{k-1}T}}\big).
\end{equation}
\qed
\end{rem}
\noindent Before we turn to proof of Theorem~\ref{th:approx-TR}, let us make some comments on the approximation of homogenized coefficients. Unlike correctors, homogenized coefficients are averaged quantities.
In particular, in order to make the approximations $A_{T,k}$ of Definition~\ref{def:extra-hom} computable, one needs to approximate both the extrapolation $\phi_{T,k},\phi_{T,k}'$ of the regularized correctors and the averaging operator $\calM$ itself.
Since we have already addressed the approximation of the corrector in Theorem~\ref{th:approx-TR}, it only remains to approximate the averaging operator on domains $Q_L$. As recalled in the introduction, a first possibility is to replace $\calM$ by the average on $Q_L$. This yields however a very slow convergence for correlated fields, as the periodic example shows:
If $\mathcal E$ is a periodic (non constant) integrable function, then
\begin{equation}\label{eq:simple-average}
|\calM(\mathcal E)-\frac{1}{|Q_L|}\int_{Q_L}\mathcal E|\sim \frac{1}{L}.
\end{equation}
In order to enhance the convergence rate, we have introduced in \cite{Gloria-09} a filtered average, inspired
by the work by Blanc and Le Bris in \cite{Blanc-LeBris-09}.
\begin{defi}\label{def:mask}
A function $\mu:[-1,1]\to \R^+$ is said to be a filter of order $p \geq 0$ if
$\mu$ is continuous, even, non-increasing on $[0,1]$, and satisfies
\begin{itemize}
\item[(i)] $\mu \in C^{p-1}([-1,1])\cap W^{p,\infty}((-1,1))$,
\item[(ii)] $\int_{-1}^1 \mu(x)dx=1$,
\item[(iii)] $\mu^{(k)}(-1)=\mu^{(k)}(1)=0$ for all $k\in \{0,\dots,p-1\}$, but not for $k=p$.
\end{itemize}
\qed
\end{defi}
\begin{rem}
For $p=0$, the conditions (i) and (iii) are empty. The average on $[-1,1]$ (as used in \eqref{eq:simple-average} in the multidimensional version) is a filter of order $0$.
\qed
\end{rem}
\noindent Then, we may replace the average on $Q_L$ by the filtered average with filter $\mu_L:Q_L\to \R^+$
given by
\begin{equation*}
\mu_L(x)\,:=\,L^{-d}\prod_{i=1}^d\mu(L^{-1}x_i),
\end{equation*}
where $x=(x_1,\dots,x_d)\in \R^d$.
This yields the following formula for the computable approximation of $A_{T,k}$ in directions $\xi,\xi'\in \R^d$ and on
a box $Q_R$ with average on $Q_L$:
\begin{equation}\label{eq:approx-ATKRLp}
\xi' \cdot A'_{T,k,R,L,p}\xi\,:=\,\int_{Q_L} (\xi'+\nabla {\phi}_{T,k,R}'(x))\cdot A(x) (\xi+\nabla \phi_{T,k,R}(x))\mu_L(x)dx.
\end{equation}
We shall also use a variant of this definition, for which we can show the a priori uniform ellipticity for symmetric coefficients (see in particular Step~1 of the proof of Theorem~\ref{th:main-reg} below):
\begin{multline}\label{eq:approx-ATKRLp-v}
\xi' \cdot A_{T,k,R,L,p}\xi\,:=\,\int_{Q_L} \Big(\xi'+\nabla {\phi}_{T,k,R}'(x)-\expec{\nabla {\phi}_{T,k,R}'}_{\mu_L}\Big)
\\
\cdot A(x) \Big(\xi+\nabla \phi_{T,k,R}(x)-\expec{\nabla \phi_{T,k,R}}_{\mu_L}\Big)\mu_L(x)dx,
\end{multline}
where for all $\psi \in L^1(Q_L)$, 
$$
\expec{\psi}_{\mu_L}\,:=\,\int_{Q_L} \psi(x)\mu_L(x)dx.
$$
The following general convergence result holds:
\begin{theo}\label{th:ATkRLp-approx}
Let $d\geq 2$, $A\in L^\infty(\Omega,\Mab)$ be stationary ergodic coefficients, $\xi,\xi'\in \R^d$ be unit vectors, and $\phi_{T,k},\phi_{T,k}'$
be the associated Richardson extrapolations of the regularized corrector and dual corrector for $T>0$ and $k\in \N$, cf. Definition~\ref{def:extra}.
Then for all $p\in \N_0$, 
$$
\lim_{L,R-L\uparrow \infty} \xi' \cdot A_{T,k,R,L,p}\xi\,=\,\lim_{L,R-L\uparrow \infty} \xi' \cdot A'_{T,k,R,L,p}\xi \,=\, \xi' \cdot A_{T,k} \xi,
$$
almost surely, where $A_{T,k,R,L,p}$ and $A'_{T,k,R,L,p}$ are as in \eqref{eq:approx-ATKRLp-v} and \eqref{eq:approx-ATKRLp}, and $A_{T,k}$ as in Definition~\ref{def:extra-hom}.
\qed
\end{theo}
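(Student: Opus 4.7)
The plan is to split the error between the computable approximation and the abstract one into two independent pieces, controlled respectively by Theorem~\ref{th:approx-TR} (via Remark~\ref{rem:approx-TRk}) and by the multi-parameter ergodic theorem. I will treat $A'_{T,k,R,L,p}$ in detail and explain at the end how the uniformly elliptic variant $A_{T,k,R,L,p}$ reduces to it.

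\medskip

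First, I would set $g(x):=(\xi'+\nabla\phi_{T,k}'(x))\cdot A(x)(\xi+\nabla\phi_{T,k}(x))$. By Definition~\ref{def:extra-hom} in the stationary ergodic case, $\expec{g}=\xi'\cdot A_{T,k}\xi$, and $g$ is itself a stationary field with finite second moment (since $\nabla\phi_{T,k},\nabla\phi_{T,k}'$ are linear combinations of $\nabla\phi_{2^jT},\nabla\phi_{2^jT}'\in L^2(\Omega,\R^d)$). The difference
\[
\xi'\cdot A'_{T,k,R,L,p}\xi-\xi'\cdot A_{T,k}\xi\;=\;\underbrace{\int_{Q_L}\bigl[g_{R}(x)-g(x)\bigr]\mu_L(x)\,dx}_{\text{I: truncation error}}\;+\;\underbrace{\int_{Q_L}g(x)\mu_L(x)\,dx-\expec{g}}_{\text{II: ergodic error}},
\]
where $g_R$ denotes the analogue of $g$ with $\phi_{T,k,R},\phi_{T,k,R}'$ in place of $\phi_{T,k},\phi_{T,k}'$, will be treated term by term, and I would show that both pieces vanish a.s. as $L,R-L\uparrow\infty$.

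\medskip

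For \textbf{I}, I would expand the bilinear form, use the uniform bound $\|\mu_L\|_{L^\infty(Q_L)}\lesssim L^{-d}$ together with Cauchy--Schwarz to reduce matters to the $L^2(Q_L)$ norm of $\nabla\phi_{T,k,R}-\nabla\phi_{T,k}$ (and of its dual counterpart), and the $L^2(Q_L)$ norms of $\nabla\phi_{T,k,R},\nabla\phi_{T,k}$ themselves. The latter are controlled by the a priori estimates \eqref{eq:class-2} and a standard energy estimate for \eqref{eq:equation-for-phiTR}, giving a polynomial factor in $L,R,T$. The former is precisely what Remark~\ref{rem:approx-TRk} controls through the exponential factor $\exp\bigl(-c(R-L)/\sqrt{2^{k-1}T}\bigr)$, which beats any such polynomial prefactor as soon as $R-L\to\infty$ with $T$ fixed. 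Hence $\mathrm{I}\to 0$ deterministically in that limit.

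\medskip

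The term \textbf{II} is the heart of the argument and is where I expect the only genuine work to lie: one must extend the usual ergodic theorem (stated for averages over $Q_L$) to the filtered average $\int_{Q_L}g\,\mu_L\,dx$. After rescaling $y=L^{-1}x$ this is $\int_{Q_1}g(Ly)\mu(y)\,dy$. I would invoke the layer-cake representation $\mu(y)=\int_0^{\|\mu\|_\infty}\mathbf{1}_{\{\mu>t\}}(y)\,dt$ and Fubini to write
\[
\int_{Q_1}g(Ly)\mu(y)\,dy\;=\;\int_0^{\|\mu\|_\infty}\!\!|\{\mu>t\}|\cdot\frac{1}{|L\{\mu>t\}|}\int_{L\{\mu>t\}}\!\!g(x)\,dx\,dt.
\]
For each $t$, the set $\{\mu>t\}\subset Q_1$ is a bounded, regular (nested, symmetric) set, so Wiener's multi-parameter ergodic theorem applies and the inner average converges a.s.\ to $\expec{g}$. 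Using the uniform $L^1$ bound $\frac{1}{|L\{\mu>t\}|}\int_{L\{\mu>t\}}|g|\lesssim \expec{|g|}$ (maximal ergodic theorem) to justify dominated convergence in $t$, one concludes that $\int_{Q_1}g(Ly)\mu(y)\,dy\to\expec{g}\cdot\int_{Q_1}\mu=\expec{g}$ almost surely. This is the only non-routine step of the argument.

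\medskip

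Finally, for the variant $A_{T,k,R,L,p}$ defined in \eqref{eq:approx-ATKRLp-v}, I would first show that $\expec{\nabla\phi_{T,k,R}}_{\mu_L}\to 0$ and $\expec{\nabla\phi_{T,k,R}'}_{\mu_L}\to 0$ a.s.\ in the same limit, by the very same two-step decomposition applied to each component of $\nabla\phi_{T,k}$ (which is stationary with zero expectation, since the regularized gradients $\nabla\phi_{2^jT}$ are). Expanding the bilinear form in \eqref{eq:approx-ATKRLp-v} produces $\xi'\cdot A'_{T,k,R,L,p}\xi$ plus correction terms that are products of filtered means with filtered stationary quantities; each such product vanishes in the limit by the results already obtained. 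Combining this with the convergence of $A'_{T,k,R,L,p}$ established above yields the claim for $A_{T,k,R,L,p}$, completing the proof.
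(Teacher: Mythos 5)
Your proposal is correct and follows essentially the same route as the paper: the truncation error (your term I) is absorbed via Remark~\ref{rem:approx-TRk}, the filtered ergodic average is handled by the same layer-cake decomposition over level sets of the filter combined with the ergodic theorem and dominated convergence, and the variant $A_{T,k,R,L,p}$ is reduced to $A'_{T,k,R,L,p}$ by showing $\expec{\nabla\phi_{T,k,R}}_{\mu_L}\to 0$, exactly as in the paper's Step~2. No substantive gap to report.
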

\begin{proof}
We split the proof into two steps.

\medskip

\step{1} Proof that $A'_{T,k,R,L,p} \to A_{T,k}$.

\noindent By Remark~\ref{rem:approx-TRk}, it is enough to show that almost surely
$$
\lim_{L\uparrow \infty} \int_{Q_L} (\xi'+\nabla {\phi}'_{T,k}(x))\cdot A(x) (\xi+\nabla \phi_{T,k}(x))\mu_L(x)dx\,=\,\xi'\cdot A_{T,k} \xi.
$$
We shall prove the claim for any stationary function $\psi\in L^1_\loc(\R^d)$ that satisfies the uniform bound $\sup_{x\in \R^d} \int_{Q_\rho(x)} |\psi(x)|dx<\infty$ for some $\rho>0$ (recall that $\phi_{T,k}$ satisfies
\eqref{eq:class-1}).
For all $t\in \mu_1([0,1])$, set $\mu_1^{-1}(t):=\{x\in Q\,|\,\mu_1(x)\le t\}$ and $L\mu_1^{-1}(t):=\{Lx\,|\,x\in Q \text{ and }\mu_1(x)\le t\}$. By definition of $\mu_1$, this is a connected set. 
Integrating along sub-levelsets of $\mu_L$, we can then rewrite the $\mu_L$-average as
$$
\expec{\psi}_{\mu_L}\,=\,\int_0^{\mu_1(0)} L^{-d} \int_{L\mu_1^{-1}(t)} \psi(x)dxdt.
$$
For all $0<t<\mu(0)$,
the ergodic theorem then yields
$$
\lim_{L\uparrow \infty} L^{-d}\int_{L\mu_1^{-1}(t)} \psi(x)dx \,=\,|\mu_1^{-1}(t)|\calM(\psi)
$$
almost surely, where $|\mu_1^{-1}(t)|$ denotes the $1$-dimensional Lebesgue measure of $\mu_1^{-1}(t)$. Hence, the claim follows from the Lebesgue dominated convergence
theorem and the identity $\int_0^{\mu_1(0)}|\mu_1^{-1}(t)|dt=1$.

\medskip

\step{2} Proof that $A_{T,k,R,L,p} \to A_{T,k}$

\noindent In view of Step~1, it suffices to show that 
$$
\lim_{L,R-L\uparrow \infty} |A_{T,k,R,L,p}-A'_{T,k,R,L,p}|\,=\,0. 
$$
By definition of $A_{T,k,R,L,p}$ and $A'_{T,k,R,L,p}$, the claim follows if
$$
\lim_{L,R-L\uparrow \infty} \expec{\nabla \phi_{T,k,R}}_{\mu_L} \,=\,0,
$$
which, by Remark~\ref{rem:approx-TRk}
$$
\Big| \expec{\nabla \phi_{T,k,R}}_{\mu_L}-\expec{\nabla \phi_{T,k}}_{\mu_L}  \Big|
\,\lesssim \,
\sqrt{T} \big(\frac{R}{L}\big)^d \exp\big(-c\frac{R-L}{\sqrt{T}}\big),
$$
we may prove in the equivalent form of
\begin{equation}\label{eq:phiTmeanfree}
\lim_{L\uparrow \infty} \expec{\nabla \phi_{T,k}}_{\mu_L} \,=\,0.
\end{equation}
But this follows from Step~1 and the fact that $\nabla\phi_{T,k}$ has vanishing expectation:
\begin{equation}\label{eq:phiTmeanfree}
\lim_{L\uparrow \infty} \expec{\nabla \phi_{T,k}}_{\mu_L} \,=\,\expec{\nabla \phi_{T,k}}=0
\end{equation}
almost surely.
\end{proof}

\noindent In the case of periodic coefficients we may turn this qualitative convergence result quantitative:
\begin{theo}\label{th:error-estim-per}
Let $d\geq 2$, $A\in L^\infty(\R^d,\Mab)$ be periodic coefficients, $\mu$ be a filter of order $p\geq 0$, $k\in \N$, and $A_\ho$, $A_{T,k,R,L,p}$ and $A'_{T,k,R,L,p}$  be the
homogenized coefficients and their approximations \eqref{eq:approx-ATKRLp-v} and \eqref{eq:approx-ATKRLp}, respectively. Then in the regime $R^2 \gtrsim T \gtrsim R$, $R\geq L \sim R \sim R-L$,  there exists $c>0$ depending only on $\alpha,\beta$ and $d$ such that we have
\begin{equation}\label{eq:th-error-estim-per}
|A_{T,k,R,L,p}-A_\ho|,|A'_{T,k,R,L,p}-A_\ho|\,\lesssim\, 
L^{-(p+1)}+T^{-2k}+T^{\frac{1}{4}} \exp \left(-c \frac{R-L}{\sqrt{2^{k-1}T}}\right),
\end{equation}
where the multiplicative constant depends on $k$ and $p$ next to $\alpha,\beta,$ and $d$.
\qed
\end{theo}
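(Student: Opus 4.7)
The plan is to split the error into three contributions via the telescoping sum
\[
A_\ho-A_{T,k,R,L,p}\,=\,(A_\ho-A_{T,k})+(A_{T,k}-\tilde A_{T,k,L,p})+(\tilde A_{T,k,L,p}-A_{T,k,R,L,p}),
\]
where $\tilde A_{T,k,L,p}$ is built as in \eqref{eq:approx-ATKRLp-v} (or \eqref{eq:approx-ATKRLp}) but with the full-space Richardson-extrapolated correctors $\phi_{T,k},\phi'_{T,k}$ in place of their Dirichlet surrogates $\phi_{T,k,R},\phi'_{T,k,R}$. The three terms are then of \emph{homogenization}, \emph{averaging}, and \emph{boundary} type respectively, and they will produce the three summands in \eqref{eq:th-error-estim-per}.

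The first term is immediate from Theorem~\ref{th:quant-estim-Perio}, which gives $|A_\ho-A_{T,k}|\lesssim T^{-2k}$. For the averaging error, I would first treat the primal variant \eqref{eq:approx-ATKRLp}: the integrand
\[
F(x)\,:=\,(\xi'+\nabla\phi'_{T,k}(x))\cdot A(x)(\xi+\nabla\phi_{T,k}(x))
\]
is $Q$-periodic (say) with period-average equal to $\xi'\cdot A_{T,k}\xi$, so it suffices to bound $\big|\int_{Q_L}F\mu_L-\fint_Q F\big|$. Expanding $F-\fint_Q F$ in Fourier series and using $\int_{Q_L}e^{2\pi ik\cdot x}\mu_L(x)dx=\hat\mu(Lk)$ together with the decay $|\hat\mu(\zeta)|\lesssim(1+|\zeta|)^{-(p+1)}$ (which follows from $p$ integrations by parts using the vanishing conditions of Definition~\ref{def:mask}, plus one further use of the mean-zero structure of the oscillation, as in a standard Riemann--Lebesgue refinement) yields the bound $L^{-(p+1)}$. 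For the symmetric variant \eqref{eq:approx-ATKRLp-v}, there are additional cross terms involving $\langle\nabla\phi_{T,k}\rangle_{\mu_L}$ and $\langle\nabla\phi'_{T,k}\rangle_{\mu_L}$; but since $\nabla\phi_{T,k},\nabla\phi'_{T,k}$ are themselves periodic with zero period-average, the same Fourier argument gives $|\langle\nabla\phi_{T,k}\rangle_{\mu_L}|,|\langle\nabla\phi'_{T,k}\rangle_{\mu_L}|\lesssim L^{-(p+1)}$, and so these extra terms are absorbed.

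For the boundary error, I would write $\tilde A_{T,k,L,p}-A_{T,k,R,L,p}$ as a sum of bilinear terms in which each summand pairs one of the differences $\nabla\phi_{T,k}-\nabla\phi_{T,k,R}$ or $\nabla\phi'_{T,k}-\nabla\phi'_{T,k,R}$ against a factor that is bounded in $L^2(Q_L,\mu_L dx)$ (uniformly in $T,R,L$, using the a priori energy estimate \eqref{eq:class-2} for $\phi_{T,k},\phi_{T,k,R}$ in the regime $R^2\gtrsim T$). By Cauchy--Schwarz in $L^2(Q_L,\mu_L dx)$ and the pointwise bound $\mu_L\lesssim L^{-d}$, one obtains
\[
\Big(\int_{Q_L}|\nabla\phi_{T,k}-\nabla\phi_{T,k,R}|^2\mu_L\Big)^{1/2}\lesssim L^{-d/2}\|\nabla\phi_{T,k}-\nabla\phi_{T,k,R}\|_{L^2(Q_L)},
\]
and by Remark~\ref{rem:approx-TRk} in its periodic-upgrade form~\eqref{eq:th-approx-TRk-per} the right-hand side is bounded by
\[
L^{-d/2}R^{d/2}T^{1/4}\exp\!\Big(-\tfrac{c}{2}\tfrac{R-L}{\sqrt{2^{k-1}T}}\Big)\lesssim T^{1/4}\exp\!\Big(-c'\tfrac{R-L}{\sqrt{2^{k-1}T}}\Big),
\]
where we used $L\sim R$. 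Summing the bilinear contributions produces the third summand in \eqref{eq:th-error-estim-per}.

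The main obstacle I expect is the careful bookkeeping in Step~3: one must factor each bilinear term so that the \emph{bounded} factor is placed in $L^2(Q_L,\mu_L)$ and the small factor is placed in $L^2(Q_L)$ (losing a $L^{-d/2}$ that is recovered by the $R^{d/2}\sim L^{d/2}$ prefactor of \eqref{eq:th-approx-TRk-per}); this is what yields the prefactor $T^{1/4}$ rather than a spurious $T^{1/2}$. The regime constraint $R^2\gtrsim T\gtrsim R$ is used to control the $L^2$ norms of $\nabla\phi_{T,k}$ on $Q_L$ uniformly (via \eqref{eq:class-2}) and to guarantee that the exponential factor is meaningful. The estimate for $A'_{T,k,R,L,p}$ follows along the same lines; the only difference is that the Step~2 bound is more direct since no mean-subtraction appears.
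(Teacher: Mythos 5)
Your proposal is correct and follows essentially the same route as the paper: the error is split into the systematic regularization error (controlled by Theorem~\ref{th:quant-estim-Perio}), the filtering/averaging error of order $L^{-(p+1)}$ (which the paper obtains by citing \cite[Theorem~3.1 and Lemma~3.1]{Gloria-09} rather than redoing the Fourier/integration-by-parts argument you sketch), and the domain-truncation error controlled via Remark~\ref{rem:approx-TRk}. The only cosmetic difference is that the paper first establishes the bound for $A'_{T,k,R,L,p}$ and then deduces the bound for $A_{T,k,R,L,p}$ by estimating $|\expec{\nabla\phi_{T,k,R}}_{\mu_L}|$, whereas you treat the mean-subtraction cross terms directly; the underlying computation is the same.
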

\begin{proof}
The estimate for $|A'_{T,k,R,L,p}-A_\ho|$ is a direct consequence of the combination of Theorem~\ref{th:quant-estim-Perio}, Remark~\ref{rem:approx-TRk}, 
and \cite[Theorem~3.1]{Gloria-09}. 
It only remains to prove that the estimate for $|A_{T,k,R,L,p}-A_\ho|$ is a consequence of the estimate for 
$|A'_{T,k,R,L,p}-A_\ho|$.
By definition of $A'_{T,k,R,L,p}$ and $A_{T,k,R,L,p}$ and an elementary energy estimate, it is enough to prove that
in the desired regime of $R,L,T$,
$$
\big| \expec{\nabla \phi_{T,k,R}}_{\mu_L} \big| \,\lesssim \, L^{-(p+1)}+T^{\frac{1}{4}} \exp \left(-c \frac{R-L}{\sqrt{2^{k-1}T}}\right).
$$
By Remark~\ref{rem:approx-TRk}, this follows from 
$$
\big| \expec{\nabla \phi_{T,k}}_{\mu_L} \big| \,\lesssim \, L^{-(p+1)},
$$
which is itself a consequence of \cite[Lemma~3.1]{Gloria-09} since $\calM(\nabla \phi_{T,k})=0$.
\end{proof}

\medskip
\noindent A similar result holds for the Kozlov class of almost-periodic coefficients.
\begin{theo}\label{th:error-estim-Kozlov}
Let $d\geq 2$, $A\in L^\infty(\R^d,\Mab)$ be in the Kozlov class of almost periodic coefficients for some set of Fourier modes $\Gamma$. Let $\mu$ be a filter of order $p\geq 0$, $k\in \N$, and $A_\ho$, $A_{T,k,R,L,p}$ and $A'_{T,k,R,L,p}$  be the
homogenized coefficients and their approximations \eqref{eq:approx-ATKRLp-v} and \eqref{eq:approx-ATKRLp}, respectively. Then in the regime $R^2 \gtrsim T \gtrsim R$, $R\geq L \sim R \sim R-L$,  there exists $c>0$ depending only on $\alpha,\beta$ and $d$ such that we have
\begin{equation}\label{eq:th-error-estim-per}
|A_{T,k,R,L,p}-A_\ho|,|A'_{T,k,R,L,p}-A_\ho|\,\lesssim\, 
L^{-(p+1)}+T^{-2k}+T^{\frac{1}{4}} \exp \left(-c \frac{R-L}{\sqrt{2^{k-1}T}}\right),
\end{equation}
where the multiplicative constant depends on $k$ and $p$ next to $\alpha,\beta,\Gamma$ and $d$.
\qed
\end{theo}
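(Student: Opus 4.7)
The plan is to mimic the proof of Theorem~\ref{th:error-estim-per} and replace each periodic ingredient by its Kozlov counterpart. The $T^{-2k}$ contribution comes from Theorem~\ref{th:quant-estim-quasiper} instead of Theorem~\ref{th:quant-estim-Perio}; the exponentially small truncation error is provided by Remark~\ref{rem:approx-TRk} in the upgraded form~\eqref{eq:th-approx-TRk-per}, whose validity for Kozlov coefficients is stated in Remark~\ref{rem:upgrade-per}. The only genuinely new ingredient is an analog of \cite[Lemma~3.1]{Gloria-09} for the Kozlov class: for every smooth almost periodic function $\psi$ of the Kozlov $\Gamma$-class with vanishing Besicovitch mean $\calM(\psi)=0$, one would prove
$$|\expec{\psi}_{\mu_L}| \,\lesssim\, L^{-(p+1)},$$
with a constant depending on a sufficiently strong Sobolev norm of the periodic lift of $\psi$ on $\Pi_M$. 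Granted this, the proof of Theorem~\ref{th:error-estim-per} carries over verbatim: I apply the filtering bound first to $(\xi'+\nabla\phi')\cdot A(\xi+\nabla\phi)-\xi'\cdot A_\ho\xi$ to control $|A'_{T,k,R,L,p}-A_\ho|$, and then to $\nabla\phi_{T,k}$ in order to control $|A_{T,k,R,L,p}-A'_{T,k,R,L,p}|$ via an elementary energy estimate on $\phi_{T,k,R}$.

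To establish the filtering estimate, I would lift $\psi$ to a smooth periodic function $\overline\psi$ with vanishing mean on the torus $\Pi_M$, following Step~1 of the proof of Theorem~\ref{th:quant-estim-quasiper}. Expanding $\overline\psi(y)=\sum_{k\in\Z^M\setminus\{0\}}c_k\,e^{2\pi i k\cdot y}$, the diagonal restriction yields $\psi(x)=\sum_{k\neq 0}c_k\,e^{i\Omega_k\cdot x}$ for effective frequencies $\Omega_k\in\R^d$ built from $k$ and the rationally independent bases underlying the Kozlov construction. A direct Fubini computation gives
$$\expec{\psi}_{\mu_L} \,=\, \sum_{k\neq 0} c_k \prod_{i=1}^d \hat\mu\bigl(L(\Omega_k)_i\bigr).$$
The filter condition of order $p$ in Definition~\ref{def:mask} produces by iterated integration by parts the decay $|\hat\mu(\eta)|\lesssim\min(1,|\eta|^{-(p+1)})$, while the Diophantine condition~\eqref{eq:diophan} (equivalently the weak Poincar\'e inequality~\eqref{eq:garding-5} used in the proof of Theorem~\ref{th:quant-estim-quasiper}) translates in Fourier into the small-divisor bound $|\Omega_k|\gtrsim|k|^{-s_0}$. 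For every $k\neq 0$ there is thus an index $i$ with $|(\Omega_k)_i|\gtrsim|k|^{-s_0}/\sqrt d$, so that the product is bounded by $|k|^{s_0(p+1)}\,L^{-(p+1)}$ up to a constant depending only on $\Gamma$ and $d$, the remaining factors being trivially controlled by $\|\mu\|_{L^1}=1$. The smoothness of $\overline\psi$, which in the applications holds uniformly in $T$ by Step~1 of Theorem~\ref{th:quant-estim-quasiper}, then ensures that $\sum_{k\neq 0}|c_k|\,|k|^{s_0(p+1)}$ is finite, yielding the desired bound.

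The main obstacle is the small-divisor analysis underpinning the Kozlov version of the filtering lemma: the loss $|k|^{s_0(p+1)}$ induced by the Diophantine exponent $s_0$ must be absorbed by a sufficiently high Sobolev norm of the lift $\overline\psi$ on $\Pi_M$, which is exactly what the Kozlov regularity established in the proof of Theorem~\ref{th:quant-estim-quasiper} provides (both for $\overline\phi,\overline\phi'$ and for the smooth almost periodic function $(\xi'+\nabla\phi')\cdot A(\xi+\nabla\phi)$). Everything else is a routine transcription of the periodic argument, so no further difficulty is expected; the resulting constant depends on $k,p,\alpha,\beta,\Gamma,d$ but not on $T,R,L$, as required by the statement.
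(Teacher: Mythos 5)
Your proposal is correct and follows essentially the same route as the paper's proof: reduce to the periodic argument of Theorem~\ref{th:error-estim-per}, lift the correctors (smooth uniformly in $T$ by Step~1 of Theorem~\ref{th:quant-estim-quasiper}) to the torus $\Pi_M$, expand in Fourier series, bound the filtered average of each individual exponential by $p$ integrations by parts, and sum against the summable Fourier coefficients. Your explicit tracking of the small-divisor loss $|k|^{s_0(p+1)}$ and its absorption by a sufficiently high Sobolev norm of the lift is in fact slightly more careful than the paper, which asserts a mode-uniform $L^{-(p+1)}$ bound directly; otherwise the two arguments coincide.
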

\begin{proof}
The proof is similar to the proof for periodic coefficients. The only difference 
comes from the fact that we cannot directly appeal to  \cite[Lemma~3.1]{Gloria-09} to quantify
the convergence of averages.
The argument is however very similar. Since the corrector $\overline \phi_{T,k}$ is periodic on
the high-dimensional torus $\Pi_M$, it can be expanded in Fourier series:
$$
\overline\phi_{T,k}(y_1,\dots,y_M)\,=\,\sum_{l\in \Z^M} c_{k,T}(l)\exp(i\sum_{j=1}^M l_j\omega_jy_j),
$$
where $\Pi_M=\prod_{j=1}^M[0,2\pi \omega_j)$ (note that $c_{k,T}(0)=0$).
Since $\overline \phi_{T,k}$ is smooth uniformly w.~r.~t $T$, we have
\begin{equation}\label{eq:Fourier-exp}
\sum_{l\in \Z^M} |l||c_{k,T}(l)|\,\lesssim\, 1,
\end{equation}
where the multiplicative constant depends on $k$ but not on $T$. 
To make the diagonal restriction explicit we introduce $M$ functions $\nu_j:\R^d\to \R$
which map $x$ to the component $\nu_j(x)$ of $x$ which corresponds to the restriction of $y_j$, so 
that we have for $x\in \R^d$,
$$
\phi_{T,k}(x)=\overline \phi_{T,k}(\nu_1(x),\dots,\nu_M(x)).
$$
We may then turn the Fourier expansion of $\overline{\phi}_{T,k}$ into an expansion for $\phi_{T,k}$:
$$
\phi_{T,k}(x)\,=\,\sum_{l\in \Z^M} c_{k,T}(l)\exp(i\sum_{j=1}^M l_j\omega_j\nu_j(x)),
$$
which is absolutely convergent by \eqref{eq:Fourier-exp}.
Similar results hold for the dual regularized corrector.
We need both to average the energy and the gradient of the corrector and dual corrector.
In terms of Fourier expansion, we therefore have to control terms with coefficients $|l||c_{k,T}(l)|$ for 
the gradient of the correctors and terms with coefficients $|l|^2c_{k,T}^2(l)$ for the energy.
Since both series are summable on $\Z^M$ by \eqref{eq:Fourier-exp}, it is enough to prove that
$$
\sup_{l\in \Z^M}\left|\expec{\exp(i\sum_{j=1}^M l_j\omega_j\nu_j(\cdot))}_{\mu_L}-\calM\Big(\exp(i\sum_{j=1}^M l_j\omega_j\nu_j(\cdot))\Big)\right|
\,\lesssim \, L^{-(p+1)},
$$
which simply follows from $p$ integrations by parts, as in the proof of  \cite[Lemma~3.1]{Gloria-09}.
\end{proof}

\medskip
\noindent In the random setting one cannot expect to have an almost sure \emph{quantitative} convergence result.
The combination of Remark~\ref{rem:approx-TRk} and \cite[Theorem~1 and Remark~2.5]{Gloria-Otto-10b} yields:
\begin{theo}\label{th:error-estim-Poisson}
Let $A\in L^\infty(\Omega,\Mab)$ be stationary random coefficients
satisfying~\eqref{eq:Poinca-vert}, let  $\mu$ be a filter of order $p\geq 1$, and $A_\ho$ and $A'_{T,1,R,L,p}$ be the
homogenized matrix and its approximation \eqref{eq:homTRL} respectively, where
$R^2 \gtrsim T \gtrsim R$ and $R\geq L \sim R \sim R-L$.
Then, there exists $c>0$ and $q>0$ depending only on $\alpha,\beta$ and $d$ such that we have
\begin{multline}\label{eq:th-error-estim-poisson}
\expec{|A'_{T,1,R,L,p}-A_\ho|^2}^{\frac{1}{2}} \,\lesssim\, \sqrt{T} \exp \left(-c \frac{R-L}{\sqrt{T}}\right) 
+ L^{-\frac{d}{2}}\left|
\begin{array}{rcl}
d=2&:& \log L\\
d>2&:&1
\end{array}
\right.\\
+\left|
\begin{array}{rcl}
d=2&:& T^{-1},\\
d=3&:&T^{-\frac{3}{2}},\\
d=4&:&T^{-2}\log T,\\
d>4&:&T^{-2},
\end{array}
\right.
\end{multline}
where the multiplicative constant depends on $p$, next to $\alpha,\beta,$ and $d$.
\qed
\end{theo}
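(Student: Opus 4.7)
The plan is to decompose the error $A'_{T,1,R,L,p}-A_{\hom}$ via the triangle inequality by introducing the auxiliary, infinite-domain filtered quantity
$$
\widetilde A_T(\xi,\xi')\,:=\,\int_{Q_L}(\xi'+\nabla\phi_T'(x))\cdot A(x)(\xi+\nabla\phi_T(x))\,\mu_L(x)\,dx,
$$
and to estimate separately the three contributions: (i) the boundary layer error $A'_{T,1,R,L,p}-\widetilde A_T$ (difference of regularized correctors on $Q_R$ versus on $\R^d$), (ii) the probabilistic fluctuation $\widetilde A_T-\langle\widetilde A_T\rangle$, and (iii) the systematic bias $\langle\widetilde A_T\rangle-A_{\hom}$.

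For (i), this is a deterministic, pointwise-in-$\omega$ estimate. Since $\|\mu_L\|_\infty\lesssim L^{-d}$, Cauchy--Schwarz reduces the task to the $L^2(Q_L)$-control of $\nabla\phi_{T,R}-\nabla\phi_T$ (and its dual analogue) multiplied by the $L^2(Q_L)$-norm of $\xi+\nabla\phi_T$. The first factor is precisely the content of Remark~\ref{rem:approx-TRk} with $k=1$, giving $R^dT\exp(-c(R-L)/\sqrt T)$, and the second is $\lesssim R^d$ by the a priori estimate \eqref{eq:class-2}. With $R\sim L$ and $R^2\gtrsim T\gtrsim R$ the prefactor becomes $\sqrt T$, producing the $\sqrt T\exp(-c(R-L)/\sqrt T)$ term after taking the $L^2(\Omega)$-norm.

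For (ii), the random fluctuation is controlled by the spectral gap assumption \eqref{eq:Poinca-vert}. This is exactly where \cite[Theorem~1 and Remark~2.5]{Gloria-Otto-10b} enters: applied to the spatial average over $Q_L$ of the stationary random field $(\xi'+\nabla\phi_T')\cdot A(\xi+\nabla\phi_T)$ against a filter of order $p\geq 1$ (which kills the low-order boundary contribution), it yields the variance estimate of order $L^{-d}$ for $d\geq 3$, with the logarithmic correction $L^{-2}(\log L)^2$ in $d=2$ reflecting the borderline decay of the Green function. Taking square roots gives the $L^{-d/2}$ (resp.\ $L^{-1}\log L$) term in the statement. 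For (iii), stationarity of $A$, $\nabla\phi_T$, $\nabla\phi_T'$ and the normalization $\int\mu_L=1$ imply $\langle\widetilde A_T\rangle=A_T$, so the remaining systematic error is $|A_T-A_{\hom}|$, to which Theorem~\ref{th:quant-estim-Poisson2} (applied with $k=1$, the case covered even for non-symmetric coefficients) applies verbatim and delivers the dimension-dependent $T$-bounds.

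The main obstacle is the variance estimate in step (ii): the gradient of the regularized corrector depends on the coefficient field in a nonlocal way, so a naive law-of-large-numbers argument does not apply. One must combine the spectral gap with a quantitative sensitivity analysis of $\nabla\phi_T$ under local perturbations of $A$, which reduces via the Green's representation to weighted annealed estimates of the gradient of the Green function of the augmented operator $T^{-1}-\nabla\cdot A\nabla$; it is the integrability of these estimates at infinity that produces the dimension-dependent scaling and the logarithm in $d=2$. This is precisely the technical heart of \cite{Gloria-Otto-10b}, which we invoke as a black box rather than reprove.
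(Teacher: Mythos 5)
Your proposal is correct and follows exactly the route the paper intends: the paper states this theorem as a direct combination of Remark~\ref{rem:approx-TRk} (your boundary-layer step~(i)) with \cite[Theorem~1 and Remark~2.5]{Gloria-Otto-10b} (your fluctuation step~(ii)) and the systematic-error bounds of Theorem~\ref{th:quant-estim-Poisson2} (your step~(iii)), the three contributions matching the three terms on the right-hand side of \eqref{eq:th-error-estim-poisson}. Your accounting of the prefactors in step~(i) (using $\|\mu_L\|_\infty\lesssim L^{-d}$, the a priori bound \eqref{eq:class-2}, and $R\sim L$, $R^2\gtrsim T\gtrsim R$) and the identification $\langle\widetilde A_T\rangle=A_T$ by stationarity are exactly what is needed.
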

\begin{rem}
Since $k=1$, we drop the subscript $k$ in this remark.
The triangle inequality combined with energy estimates yields for all $1\leq i \leq d$
\begin{multline*}
\expec{(\xi'\cdot(A_{T,R,L,p}-A_\ho)\xi)^2}^{\frac{1}{2}} \lesssim \,\expec{(\xi'\cdot(A'_{T,R,L,p}-A_\ho)\xi)^2}^{\frac{1}{2}} 
\\+\expec{|\expec{\nabla \phi_{T,R}}_{\mu_L}|^2+|\expec{\nabla \phi_{T,R}'}_{\mu_L}|^2+|\expec{\nabla \phi_{T,R}}_{\mu_L}|^4+|\expec{\nabla \phi_{T,R}'}_{\mu_L}|^4}^{\frac{1}{2}} .
\end{multline*}
We only address the corrector (the estimates for the dual corrector are similar).
On the one hand, by the energy estimate $\int_{Q_R} |\nabla \phi_{T,R}|\lesssim R^d$, one has
$$
|\expec{\nabla \phi_{T,R}}_{\mu_L}|^4\,\lesssim \, |\expec{\nabla \phi_{T,R}}_{\mu_L}|^2 \big(\frac{R}{L}\big)^2.
$$
On the other hand, by Remark~\ref{rem:approx-TRk},
$$
\Big|\expec{\nabla \phi_{T,R}-\nabla \phi_{T}}_{\mu_L}\Big|\,\lesssim\,\sqrt{T} \exp \left(-c \frac{R-L}{\sqrt{T}}\right),
$$
and by a simplified version of the string of arguments that leads to \cite[Theorem~1]{Gloria-Otto-10b},
we can prove that for all $1\leq j\leq d$
$$
\var{\xi'\cdot \int_{Q_L} \nabla \phi_{T} \mu_L}\,=\,\var{\xi'\cdot \int_{Q_L} \phi_{T} \nabla \mu_L}\,\lesssim \,
L^{-d}\left|
\begin{array}{rcl}
d=2&:& \log L\\
d>2&:&1
\end{array}
\right.
$$
in the desired regime of $L$ and $T$.
Hence, the same estimate as \eqref{eq:th-error-estim-poisson} holds for $A_{T,R,L,p}$.
\qed
\end{rem}
\begin{rem}
Note that the convergence rate does not depend on the order of the filter provided it is at least of order $1$ (the only fact used in the proof is that 
$\mu$ is smooth and vanishes at $-1$ and $1$). This is due to the fact that the result is not almost-sure.
\qed
\end{rem}
\medskip

\noindent We conclude this subsection with the proof of Theorem~\ref{th:approx-TR}, which is a generalization 
of the corresponding result for symmetric periodic coefficients, see \cite{Gloria-12b}, to any coefficients $A\in L^\infty(\R^d,\Mab)$.
\begin{proof}[Proof of Theorem~\ref{th:approx-TR}]
The argument relies on the exponential decay of the Green function, Caccioppoli's inequality,
and the uniform a priori estimate \eqref{eq:class-2}.

\medskip

\noindent For the main properties of the massive Green function and their proofs, we refer the reader to \cite[Definition~2.4]{Gloria-Otto-10b}.
In particular, for all $A\in L^\infty(\R^d,\Mab)$ and all $T>0$, the Green function $G_T(\cdot,y)$
is for all $y\in \R^d$ the unique distributional solution in $W^{1,1}(\R^d)$, continuous on $\R^d\setminus\{y\}$, of the equation
$$
T^{-1}G_T(x,y)-\nabla_x \cdot A(x)\nabla_xG_T(x,y)\,=\,\delta(x-y).
$$
The following pointwise bound holds uniformly wrt $A$: For all $|x-y|\gtrsim \sqrt{T}$,
\begin{equation}\label{eq:Green-T}
0\leq G_T(x,y) \, \lesssim \, \frac{1}{|x-y|^{d-2}}\exp\left(-c\frac{|x-y|}{\sqrt{T}}\right).
\end{equation}

\medskip
\noindent Arguing by density in \eqref{eq:th-approx-TR}, one may assume by a standard regularization argument that $A$ is smooth, so that $\phi_T$ and $\phi_{T,R}$ are smooth on $Q_R$ by elliptic regularity.
By assumption there exists $\gamma \sim 1$ such that $R-L\geq 2\gamma \sqrt{T}$.
By definition of $\phi_T$ and $\phi_{T,R}$, we have
\begin{equation*}
\left\{
\begin{array}{rcll}
T^{-1}(\phi_T-\phi_{T,R})-\nabla\cdot A(\nabla \phi_T-\nabla \phi_{T,R})&=&0&\text{ in } Q_R ,\\
\phi_T-\phi_{T,R}&=&\phi_T & \text{ on }\partial Q_R.
\end{array}
\right.
\end{equation*}
Set $\phi_1=\chi \phi_T$ where $\chi\in C^\infty(\overline Q_R,\R^+)$ is such that $\chi|_{\partial Q_R}=1$,
$\chi|_{Q_{R-\gamma\sqrt{T}}}=0$, and $|\nabla \chi|\lesssim \sqrt{T}^{-1}$.
Since $\phi_T$ satisfies \eqref{eq:class-2} and $\phi_1$ vanishes on $Q_{R-\gamma\sqrt{T}}$, we have
\begin{equation}\label{eq:bound-phi1}
\|\phi_1\|_{L^2(Q_R)}^2 \,\lesssim \, (R^{d-1} \sqrt{T}) T, \quad \|\nabla \phi_1\|_{L^2(Q_R)}^2
=\|\chi \nabla \phi_T+\phi_T\nabla \chi\|_{L^2(Q_R)}^2\,\lesssim \,R^{d-1} \sqrt{T}.
\end{equation}
The function 
\begin{equation}\label{eq:def-phi2}
\phi_2:=\phi_T-\phi_{T,R}-\phi_1
\end{equation}
is smooth and satisfies the equation
\begin{equation}\label{eq:phi2}
\left\{
\begin{array}{rcll}
T^{-1}\phi_2-\nabla\cdot A\nabla \phi_2&=&-T^{-1}\phi_1+\nabla\cdot A\nabla \phi_1&\text{ in } Q_R, \\
\phi_2&=&0 &\text{ on }\partial Q_R.
\end{array}
\right.
\end{equation}
Let $G_{T,R}:Q_R\times Q_R \to \R^+$ be the Green function associated with the operator $(T^{-1}-\nabla \cdot A \nabla)$
on $Q_R$ with homogeneous Dirichlet boundary conditions. Since the RHS of \eqref{eq:phi2} is smooth, the Green representation formula yields
\begin{equation*}
\phi_2(x)\,=\,\int_{Q_R}(-T^{-1}\phi_1(y)G_{T,R}(x,y)+\nabla G_{T,R}(x,y)\cdot A(y)\nabla \phi_1(y))dy.
\end{equation*}
By Cauchy-Schwarz' inequality, this turns for all $x\in Q_L \subset Q_{R-2\gamma\sqrt{T}}$ into 
\begin{eqnarray}
|\phi_2(x)|&\lesssim & T^{-1} \|\phi_1\|_{L^2(Q_R)} \left(\int_{Q_R\setminus Q_{R-\gamma \sqrt{T}}}G_{T,R}(x,y)^2dy\right)^{\frac{1}{2}}\nonumber
\\
&& \qquad\qquad\qquad +\|\nabla \phi_1\|_{L^2(Q_R)} \left(\int_{Q_R\setminus Q_{R-\gamma \sqrt{T}}}|\nabla G_{T,R}(x,y)|^2dy\right)^{\frac{1}{2}}.\label{eq:avt-caccio}
\end{eqnarray}
To control the first RHS term of \eqref{eq:avt-caccio}, we appeal to \eqref{eq:Green-T} and to the maximum principle in the form of $0\leq G_{T,R} \leq G_{T}$.
For the second RHS term of \eqref{eq:avt-caccio}, we use in addition Caccioppoli's inequality.
Let $\eta:Q_R\to \R^+$ be such that $\eta|_{Q_{R}\setminus Q_{R-\gamma \sqrt{T}}}=1$, $\eta|_{Q_{R-\frac{3}{2}\gamma \sqrt{T}}}=0$, and $|\nabla \eta|\lesssim \sqrt{T}^{-1}$, and test the equation for $G_{T,R}(x,\cdot)$ with $\eta^2 G_{T,R}(x,\cdot)\in H^1_0(Q_R)$.
This yields after integration by parts 
\begin{eqnarray*}
0&=&T^{-1}\int_{Q_R}\eta^2(y)G_{T,R}(x,y)^2dy+\int_{Q_R}\nabla G_{T,R}(x,y)\cdot A(y) \nabla (\eta(y)^2G_{T,R}(x,y))dy\\
&=&T^{-1}\int_{Q_R}\eta^2(y)G_{T,R}(x,y)^2dy+\int_{Q_R}\nabla (\eta(y) G_{T,R}(x,y))\cdot A(y) \nabla (\eta(y) G_{T,R}(x,y))dy
\\
&& \qquad \qquad\qquad \qquad -\int_{Q_R}G_{T,R}(x,y)^2\nabla \eta(y)\cdot A(y)\nabla \eta(y)dy.
\end{eqnarray*}
Hence, by uniform ellipticity of $A$,
\begin{equation*}
\int_{Q_R}|\nabla (\eta(y) G_{T,R}(x,y))|^2 dy
\,\lesssim \,
\int_{Q_R}G_{T,R}(x,y)^2|\nabla \eta(y)|^2dy,
\end{equation*}
that is,
\begin{equation}\label{eq:caccio}
\int_{Q_R\setminus Q_{R-\gamma \sqrt{T}}}|\nabla G_{T,R}(x,y)|^2 dy
\,\lesssim \,
T^{-1} \int_{Q_R\setminus Q_{R-\frac{3}{2}\gamma \sqrt{T}}} G_{T,R}(x,y)^2dy.
\end{equation}
We insert \eqref{eq:caccio} into \eqref{eq:avt-caccio}, appeal to \eqref{eq:Green-T} and \eqref{eq:bound-phi1}, and use that for all $x\in Q_{L+1}$ and $y\in Q_R\setminus Q_{R-\frac{3}{2}\gamma \sqrt{T}}$, 
we have $|x-y|\gtrsim R-L$, so that
\begin{eqnarray}
|\phi_2(x)|&\lesssim & (T^{-1}\|\phi_1\|_{L^2(Q_R)} + \sqrt{T}^{-1}\|\nabla \phi_1\|_{L^2(Q_R)})
\left(\int_{Q_R\setminus Q_{R-\frac{3}{2}\gamma \sqrt{T}}}G_{T,R}(x,y)^2dy\right)^{\frac{1}{2}}\nonumber
\\
&\stackrel{\eqref{eq:bound-phi1}}{\lesssim} & (R^{d-1}\sqrt{T}) \sqrt{T}^{-1} \frac{1}{(R-L)^{d-2}}\exp\big( -c\frac{R-L}{\sqrt{T}}\big)\nonumber
\\
&=& \sqrt{T} \frac{R^{d-1}}{(R-L)^{d-1}}\frac{R-L}{\sqrt{T}} \exp\big( -c\frac{R-L}{\sqrt{T}}\big)\nonumber
\\
&\stackrel{R-L\sim R}{\lesssim}& \sqrt{T} \exp\big( -c\frac{R-L}{\sqrt{T}}\big),
\end{eqnarray}
where $c$ is a positive constant which may change from line to line but does not depend on $T$, $L$, and $R$.
Hence,
\begin{eqnarray*}
\int_{Q_{L+1}}\phi_2(x)^2dx &\lesssim &  R^{d}  {T}\exp\big(-c\frac{R-L}{\sqrt{T}}\big)  .
\end{eqnarray*}
Another use of Caccioppoli's inequality, this time for $\phi_2$ (recall that the RHS of \eqref{eq:phi2} 
vanishes identically in $Q_{R-\gamma \sqrt{T}}$), yields by definition \eqref{eq:def-phi2} of $\phi_2$
and since $\phi_1$ vanishes on $Q_L$:
\begin{multline*}\label{eq:estim-grad-phi2}
\int_{Q_{L}}|\nabla (\phi_T-\phi_{T,R})(x)|^2dx\,=\,\int_{Q_{L}}|\nabla \phi_2(x)|^2dx \,\lesssim \, \int_{Q_{L+1}}\phi_2(x)^2dx\\
\,\lesssim\, R^d T \exp\big(-c\frac{R-L}{\sqrt{T}}\big)
\end{multline*}
as desired.
\end{proof}

\subsection{Numerical tests for symmetric coefficients}\label{sec:test1}

We display the results of three series of numerical tests.
In the first paragraph we treat the case of a discrete elliptic equation on $\Z^d$ with periodic coefficients. The advantage of this example is that there is no additional approximation error besides the machine precision since the problem is already discrete. This allows us to explore the behavior of the method for large $R$, which is not possible for a continuum equation due to necessary discretization of the periodic cell (the total number of degrees of freedom becomes rapidly very large).
We then address two examples of a continuum equation: periodic and almost periodic coefficients.
As expected from the analysis point of view, what we observe empirically for discrete elliptic equations is representative of what is observed for continuum elliptic equations.
The conclusion is as follows: Whereas the first error term $T^{-2k}$ of the RHS of \eqref{eq:th-approx-TRk-per-full} in Remark~\ref{rem:approx-TRk} seems to be dominant in the regime of moderate $R$,  the RHS of \eqref{eq:th-error-estim-per} in Theorem~\ref{th:error-estim-per} is dominated for moderate $R$ by the error term $L^{-(p+1)}$ due to the averaging process, regardless of the error-term in $T^{-2k}$. 
This tends to show that numerical homogenization methods which only rely on  approximations of the corrector (and not on the approximation of homogenized coefficients) may be likely to perform better in terms of the resonance error (at least when the ratio $R=\frac{\rho}{\e}$ is only moderately large).

\medskip

\noindent 
For numerical tests on random coefficients, we refer the reader to \cite{Gloria-10,EGMN-12}, where a systematic study of numerical methods for discrete linear elliptic equations with random coefficients is conducted. These tests confirm the sharpness of (the discrete version of) Theorem~\ref{th:error-estim-Kozlov} (as well as other quantitative results)
and the superiority of our method over the naive approach (for essentially the same computational cost).

\medskip

\noindent Before we turn to the tests proper, let us make precise the filters we use.
The functions $\mu^1,\dots,\mu^4,\mu^\infty$ are filters of order $1,\dots,4,\infty$ on the
interval $[0,1]$, the constants $\kappa_1,\dots,\kappa_4,\kappa_\infty$ are such that the total mass is $1$ on $[0,1]$. The filters on $[-1,1]$ are then obtained by translation and dilation.
\begin{equation*}
\mu^1(t)\,=\kappa_1\,\left\{
\begin{array}{rcl}
t\le \frac{1}{3}&:&0,\\
\frac{1}{3}<t\le \frac{4}{9}&:&3(3t-1) ,\\
\frac{4}{9}<t\le \frac{5}{9}&:&1,\\
\frac{5}{9}<t\le \frac{2}{3}&:&3(2-3t) ,\\ 
\frac{2}{3}<t&:&0.
\end{array}
\right.
\end{equation*}
\begin{equation*}
\mu^2(t)\,=\kappa_2\,\left\{
\begin{array}{rcl}
t\le \frac{1}{3}&:&0,\\
\frac{1}{3}<t\le \frac{4}{9}&:&(3t-1)^2,\\
\frac{4}{9}<t\le \frac{5}{9}&:&\frac{1}{6}-18(t-\frac{1}{2})^2,\\
\frac{5}{9}<t\le \frac{2}{3}&:&(3t-2)^2,\\ 
\frac{2}{3}<t&:&0.
\end{array}
\right.
\end{equation*}
\begin{equation*}
\mu^3(t)\,=\kappa_3\,\left\{
\begin{array}{rcl}
t\le \frac{1}{3}&:&0\\
\frac{1}{3}<t\le \frac{4}{9}&:&(3t-1)^3,\\
\frac{4}{9}<t\le \frac{5}{9}&:&\frac{19}{27}+1359(t-\frac{1}{2})^2+1458(t-\frac{1}{2})^4,\\
\frac{5}{9}<t\le \frac{2}{3}&:&(2-3t)^3 ,\\ 
\frac{2}{3}<t&:&0.
\end{array}
\right.
\end{equation*}
\begin{equation*}
\mu^4(t)\,=\kappa_4\,\left\{
\begin{array}{rcl}
t\le \frac{1}{3}&:&0\\
\frac{1}{3}<t\le \frac{4}{9}&:&(3t-1)^4,\\
\frac{4}{9}<t\le \frac{5}{9}&:&\frac{1}{27}-\frac{27}{2}(t-\frac{1}{2})^2+2268(t-\frac{1}{2})^4-157464 (t-\frac{1}{2})^6,\\
\frac{5}{9}<t\le \frac{2}{3}&:&(3t-2)^4,\\ 
\frac{2}{3}<t&:&0.
\end{array}
\right.
\end{equation*}
\begin{equation*}
\mu^\infty(t)\,=\kappa_\infty\,\left\{
\begin{array}{rcl}
t\le \frac{1}{3}&:&0\\
\frac{1}{3}<t\le \frac{2}{3}&:&\exp(-\frac{1}{(t-\frac{1}{3})(\frac{2}{3}-t)}),\\
\frac{2}{3}<t&:&0.
\end{array}
\right.
\end{equation*}

\subsubsection{Warm-up example: Discrete periodic coefficients}

Consider the discrete elliptic equation
\begin{equation}\label{eq:disc-per}
-\nabla^*\cdot A(\xi+\nabla \phi)=0 \qquad \text{ in }\Z^2,
\end{equation}
where for all $u:\Z^2\to \R$,
\begin{equation*}
\nabla u(x):=\left[  
\begin{array}{l}
u(x+\ee_1)-u(x) \\
u(x+\ee_2)-u(x)
\end{array}
\right],
\
\nabla^* u(x):=\left[  
\begin{array}{l}
u(x)-u(x-\ee_1) \\
u(x)-u(x-\ee_2)
\end{array}
\right],
\end{equation*}
and
\begin{equation}\label{eq:A-disc}
A(x):=\dig{a(x,x+\ee_1),a(x,x+\ee_2)}.
\end{equation}
The matrix $A$ is $[0,4)^2$-periodic, and sketched on a periodic cell on Figure~\ref{fig:discret}. 
In the example considered, $a(x,x+\ee_1)$ and $a(x,x+\ee_2)$ represent 
the conductivities $1$ (light grey) or $100$ (black) of the horizontal edge $[x,x+\ee_1]$ and the vertical edge $[x,x+\ee_2]$ respectively,
cf. Figure~\ref{fig:discret}.
The homogenization theory for such discrete elliptic operators is similar to the one of the continuum setting (see for instance \cite{Vogelius-91} in this two-dimensional case). 
Likewise, the analogue of Theorem~\ref{th:error-estim-per} holds, and our aim is to investigate the
behavior of the error in the regime of $R$ large.
By rotation invariance the homogenized matrix $A_\ho$ is a multiple of the identity.
It can be evaluated numerically (note that we do not make any other error than the machine precision). Its numerical value
is $A_\ho=26.240099009901\dots$.
\begin{figure}
\centering
\includegraphics[scale=.5]{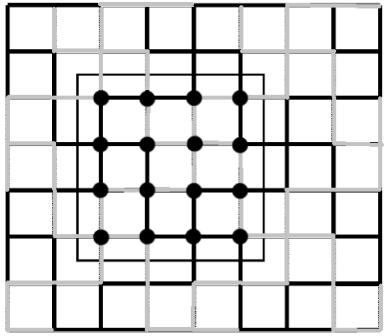}
\caption{Periodic cell in the discrete case}
\label{fig:discret}
\end{figure}

\medskip
\noindent We have considered the first two approximations formulas $A'_{T,1,R,L,\infty}$ and $A'_{T,2,R,L,\infty}$ of $A_\ho$, cf. \eqref{eq:approx-ATKRLp}, 
as well as the naive approximation $A'_{\infty,1,R,L,\infty}$ (where $\phi_{\infty,R}\in H^1_0(Q_R)$ is the solution of \eqref{eq:equation-for-phiTR} for $T=\infty$, that is, without zero-order term).
In all the tests, for $A'_{T,1,R,L,p}$ and $A'_{T,2,R,L,p}$ we have taken the following parameters:
\begin{itemize}
\item zero-order term $T= \frac{R}{10}$,
\item filter of order $p=\infty$ with $L=\frac{R}{3}$.
\end{itemize}
This yields the following theoretical predictions according to (the discrete counterpart of) Theorem~\ref{th:error-estim-per}: 
$$
|A_\ho-A'_{\infty,1,R,L,\infty}|\,\lesssim\, R^{-1}, \quad |A_{\ho}-{A}'_{T,1,R,L,\infty}|\,\lesssim\, R^{-2},\quad |A_{\ho}-{A}'_{T,2,R,L,\infty}|\,\lesssim\,R^{-4}.
$$
The numerical tests have been performed up to $R=100$ periodic cells per dimension (that is $400$ points per dimension, $1.6\,10^5$ degrees of freedom).
They confirm the theoretical predictions, as can be seen of Figure~\ref{fig1}.
Yet, for $k=1$ and even more for $k=2$, the theoretical predictions are only attained for very large $R$. 
Indeed, recall that Theorem~\ref{th:error-estim-per} provides more details on the error: for all $p\in \N_0$,
$$
|A_{\ho}-{A}'_{T,k,R,L,\infty}|\,\leq \, C_pR^{-(p+1)}+C_k R^{-2k}+C_kR^{\frac{1}{4}} \exp(-c\sqrt{2^{-k+1}R}),
$$
where $C_p \uparrow \infty$ as $p\uparrow \infty$.
In the tests, the last error-term is negligible and only the first two error-terms are observed. 
In particular, the error due to averaging is super-algebraic for large $R$ but may be dominant for moderate $R$. This is indeed what we observe on Figure~\ref{fig1}.
For $A_{\infty,1,R,L,\infty}$ the error due to the approximation of the correctors dominate for all $R$ so that the error curve is a straight line of slope $-1$ (this is the naive approach combined with averaging). For $A_{T,1,R,L,\infty}$ and $A_{T,2,R,L,\infty}$  however, for small $R$, both errors coincide and seem to decay super-algebraically --- this is the averaging error --- until they cross a straight line of slopes $-2$ and $-4$, respectively, when the systematic error ``$R^{-2k}$" starts dominating.
This illustrates that, due to averaging, the computational approximation of the homogenized coefficients can be effectively much slower 
than the computational approximation of the correctors for moderate $R$ (which is the regime of interest), although the asymptotic error (for large $R$) is the same.

\medskip

\noindent Last, let us comment on the computational cost of the methods.
The cost is proportional to the extrapolation order $k$ of the method since it requires the approximations of $k$ correctors when the homogenized matrix is a multiple of the identity. Hence the computational cost for the approximation of $A_{T,k,R,L,p}$ is exactly $k$ times 
the computational cost of the naive approximation $A_{\infty,1,R,L,0}$. Likewise for the correctors $\phi_{T,k,R}$ and $\phi_{\infty,1,R}$.
\begin{figure}
\centering
\psfrag{hh}{$\small \log_{10}(R)$}
\psfrag{kk}{$\small \log_{10} \mathcal E (R)$}
\includegraphics[scale=.6]{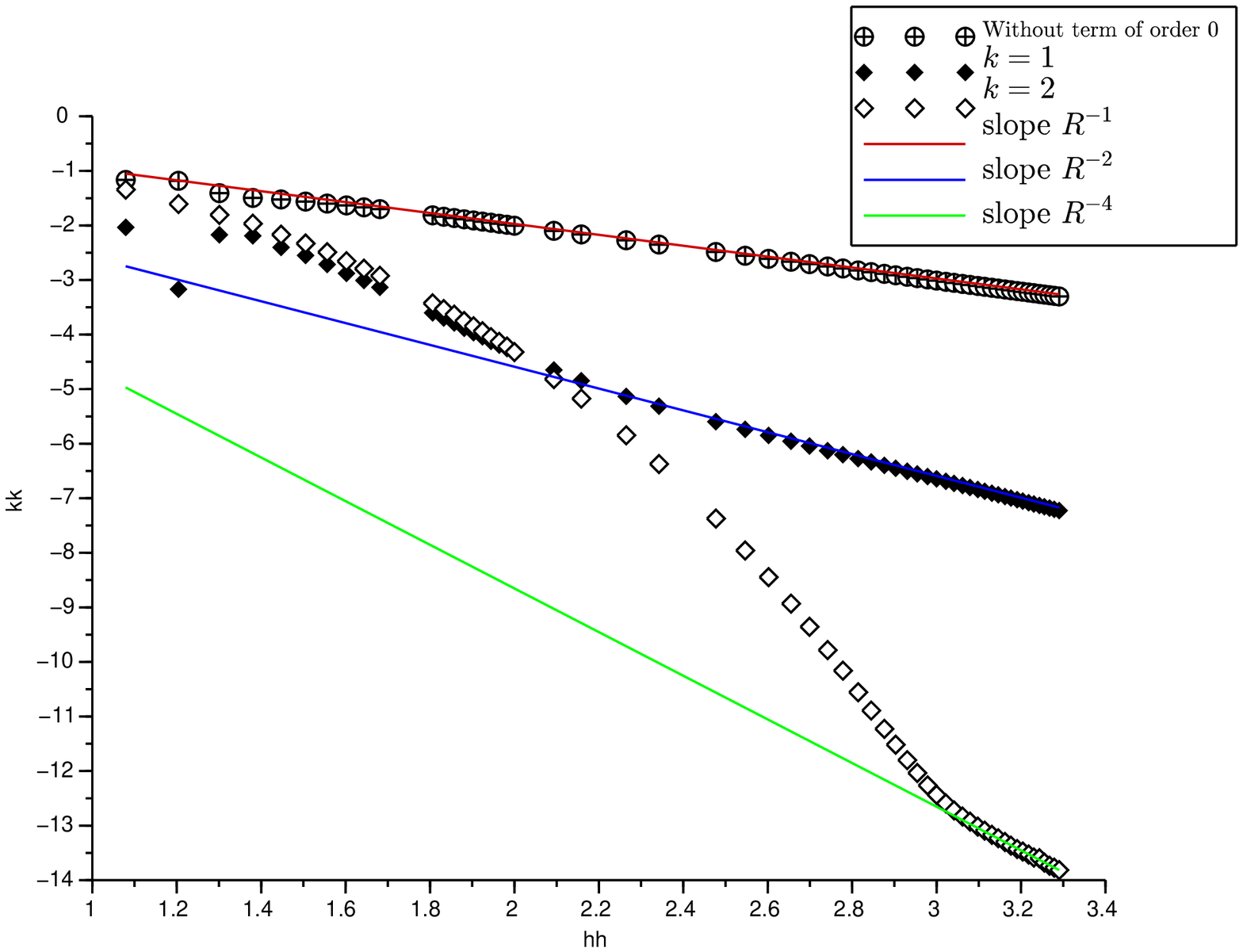}
\caption{Symmetric discrete periodic example \eqref{eq:A-disc}. Bound on $R\mapsto \mathcal{E}(R):=|A_{T,k,R,L,\infty}-A_\ho|$ for $(T,k)=(\infty,1),(\frac{R}{10},1),(\frac{R}{10},2)$, log-log scale (slopes $-1$, $-2$, and $-4$).}
\label{fig1}
\end{figure}

\subsubsection{Symmetric periodic example}

We come back to the continuum setting addressed in this article, and consider the following coeffients $A:\R^2\to \Mabs$,
\begin{equation}\label{eq:mat2}
A(x)\,=\, \left(\frac{2 + 1.8 \sin(2\pi x_1)}{2 + 1.8\cos(2\pi x_2)}+\frac{2 + \sin(2\pi x_2)}{2 + 1.8\cos(2\pi x_1)}\right) \,\mathrm{Id},
\end{equation}
used as benchmark tests in \cite{Hou-04}.
In this case, $\alpha\simeq 0.35$, $\beta\simeq 20.5$, and
$A_\ho\simeq 2.75\,\mathrm{Id}$. The value of $A_\ho$ can be approximated on a single periodic cell with periodic boundary conditions to any order of precision.
We have considered the first two approximation formulas $A_{T,1,R,L,p}$ and $A_{T,2,R,L,p}$ of $A_\ho$,
cf. \eqref{eq:approx-ATKRLp-v}.
In all the tests, we have taken the following parameters:
\begin{itemize}
\item zero-order term $T= \frac{R}{100}$,
\item filters of order $p=3,4$ with $L=\frac{R}{3}$.
\end{itemize}
Theorem~\ref{th:error-estim-per} then yields
\begin{equation}\label{eq:test-per-sym-hom}
|{A}_{\infty,1,R,L,0}-A_\ho|\,\lesssim\, R^{-1},\quad |{A}_{T,1,R,L,3}-A_\ho|\,\lesssim\, R^{-2},\quad |{A}_{T,2,R,L,3}-A_\ho|\,\lesssim\,R^{-4}.
\end{equation}
Likewise, \eqref{eq:th-approx-TRk-per-full} in Remark~\ref{rem:approx-TRk} yields:
\begin{equation}\label{eq:test-per-sym-corr}
\frac{1}{L^d}\int_{Q_{L/2}} |\nabla \phi_{T,k,R}-\nabla \phi|^2 \, \lesssim \, R^{-2k}.
\end{equation}
The numerical tests have been performed with FreeFEM++ \cite{FreeFEM} and $R$ ranges from 3 to 60.
The results for the approximation of the correctors are in perfect agreement with \eqref{eq:test-per-sym-corr},
which describes correctly the decay, even for moderate $R$, see Figure~\ref{fig3}.
Note that there are two different behaviors for the 
naive approach whether oversampling is used or. Without oversampling, the error is given by $\mathcal{E}(R):=\fint_{Q_{R}}|\nabla \phi_{\infty,1,R}-\nabla \phi|^2$ and takes into account the boundary layer: it is of order $R^{-1}$. With oversampling, the error  is given by $\mathcal{E}(R):=\fint_{Q_{R/6}}|\nabla \phi_{\infty,1,R}-\nabla \phi|^2$, and the boundary layer is discarded: it is of order $R^{-2}$.
\begin{figure}
\centering
\psfrag{hh}{$\log_{10}(R)$}
\psfrag{kk}{$\log_{10} \mathcal E (R)$}
\includegraphics[scale=.5]{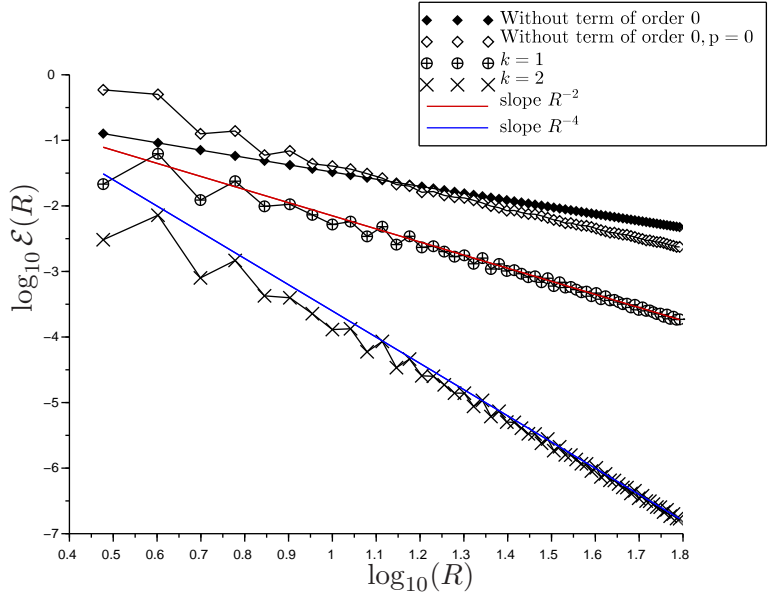}
\caption{Symmetric periodic example \eqref{eq:mat2}. Bound on $R\mapsto \mathcal{E}(R):=\fint_{Q_{R}}|\nabla \phi_{\infty,1,R}-\nabla \phi|^2$, and on $R\mapsto \mathcal{E}(R):=\fint_{Q_{R/6}}|\nabla \phi_{T,k,R}-\nabla \phi|^2$  for $(T,k)=(\infty,1),(\frac{R}{100},1),(\frac{R}{100},2)$, log-log scale (slopes $-1$, $-2$, $-2$, and $-4$).}
\label{fig3}
\end{figure} 
For the homogenized coefficients however, the theoretical predictions of \eqref{eq:test-per-sym-hom}
are only met for larger $R$: for smaller $R$, the averaging error dominates. This can be seen on Figure~\ref{fig5}, which displays the errors $|{A}_{\infty,1,R,L,0}-A_\ho|$ (naive approximation: no zero order term, no filtering but oversampling with $L<R$),
$|{A}_{T,1,R,L,3}-A_\ho|$, and $|{A}_{T,2,R,L,3}-A_\ho|$. 
The naive approximation yields better results for small $R$ because of the large error due to filtering.
As in the discrete case, the errors for $k=1$ and $k=2$ are ``super-algebraic'' and coincide until 
they cross the straight lines of slopes $-2$ and $-4$, respectively, when the systematic error $R^{-2k}$ dominates.
With respect to the tests in the discrete case, there is an additional feature: the overall error displays periodic oscillations, especially for $k=2$. 
Whereas the systematic error $T^{-2k}$ decays monotonically (as illustrated on Figure~\ref{fig3}), the averaging error does not: when $L$ is a multiple of the period, the error gets smaller, whence the periodic modulation in $L$. For $k=1$, the order of magnitude of these oscillations vanish as $R$ gets larger (and the averaging error becomes negligible wrt the systematic error). Indeed, the periodic modulation is still there but the systematic error $T^{-2k}\sim R^{-2}$ dominates the averaging error $L^{-(p+1)}\sim R^{-4}$, so that this effect is not relevant asymptotically.
For $k=2$ and $p=4$, although the averaging error is of order $L^{-5}$ whereas the systematic error is of order $T^{-4}$ in 
Theorem~\ref{th:error-estim-per}, the averaging error for $R\leq 60$ is still a non-negligible part of the overall error (of the same order as the systematic error). 
To further illustrate this fact we have plotted on Figure~\ref{fig5bis} the approximation errors $|{A}_{T,2,R,L,3}-A_\ho|$ and $|{A}_{T,2,R,L,4}-A_\ho|$, the difference of which is only due to the averaging function: for $p=3$, the averaging error is of order $L^{-4}\sim T^{-4}$ and the oscillations due to averaging are of the same order as the systematic error for all $R$ whereas for $p=4$, $L^{-5}\ll T^{-4}$ and the oscillations are smaller wrt to the systematic error as $R$ gets larger.
Again, these tests illustrate the fact that the computational approximation of the homogenized coefficients can be much slower than the computational approximation of the correctors for moderate $R$, due to averaging.

\medskip

\noindent In terms of complexity, the computational cost of the approximation of $A_{T,k,R,L,p}$ is exactly $k$-times higher that the computational cost of the approximation of $A_{\infty,1,R,L,0}$ since
the approximation of $A_{T,k,R,L,p}$ requires the approximation of $kd$ correctors whereas the naive approximation requires  the approximation of $d$ correctors for symmetric coefficients. Likewise for the correctors $\phi_{T,k,R}$ and $\phi_{\infty,1,R}$.
\begin{figure}
\centering
\psfrag{hh}{$\log_{10}(R)$}
\psfrag{kk}{$\log_{10} \mathcal E (R)$}
\includegraphics[scale=.5]{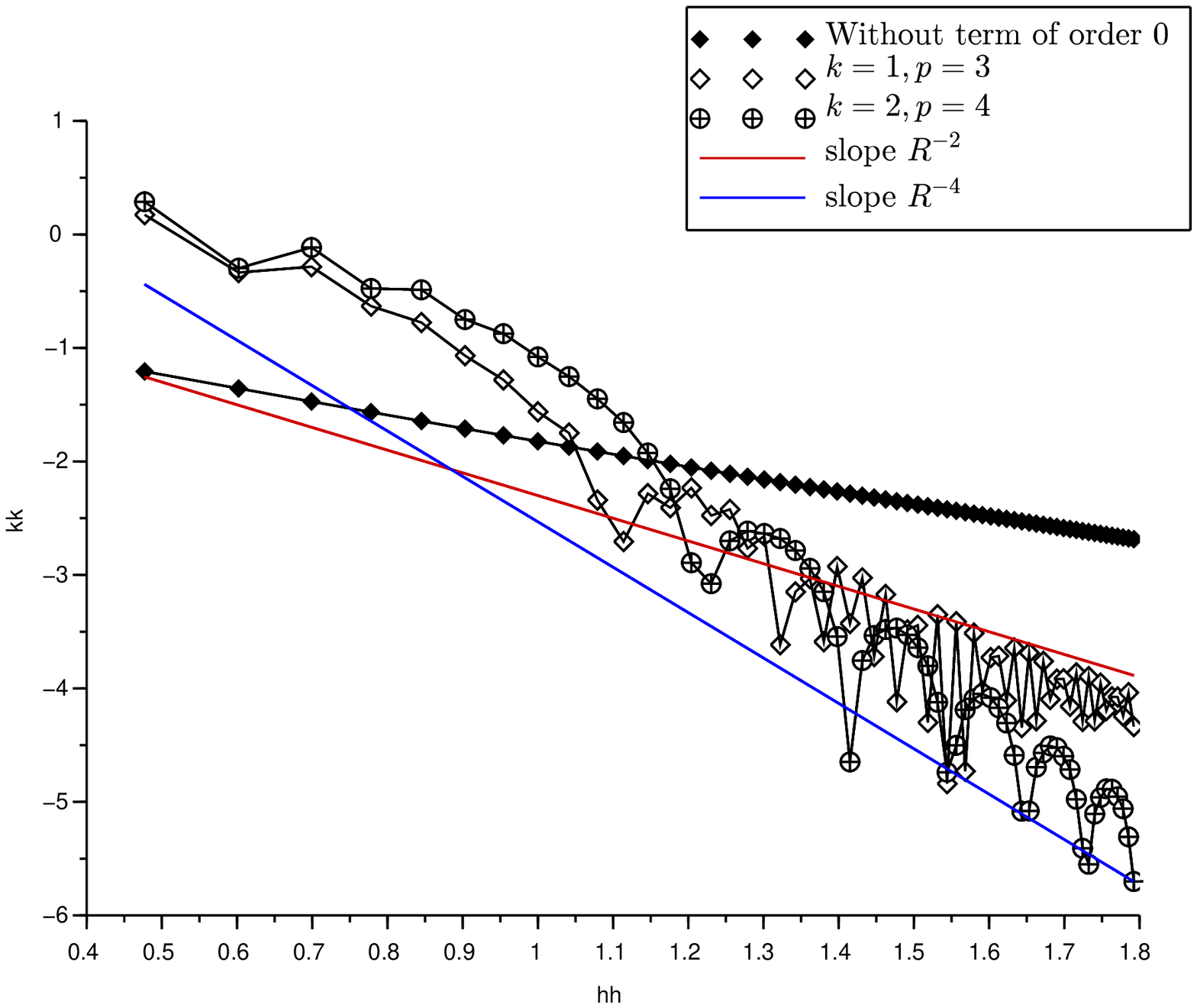}
\caption{Symmetric periodic example \eqref{eq:mat2}. Bound on $R\mapsto \mathcal{E}(R):=|{A}_{T,k,R,L,p}-A_\ho|$ for $(T,k,p)=(\infty,1,0),(\frac{R}{100},1,3),(\frac{R}{100},2,4)$, log-log scale (slopes $-1$, $-2$, and $-4$).}
\label{fig5}
\psfrag{hh}{$\log_{10}(R)$}
\psfrag{kk}{$\log_{10} \mathcal E (R)$}
\includegraphics[scale=.5]{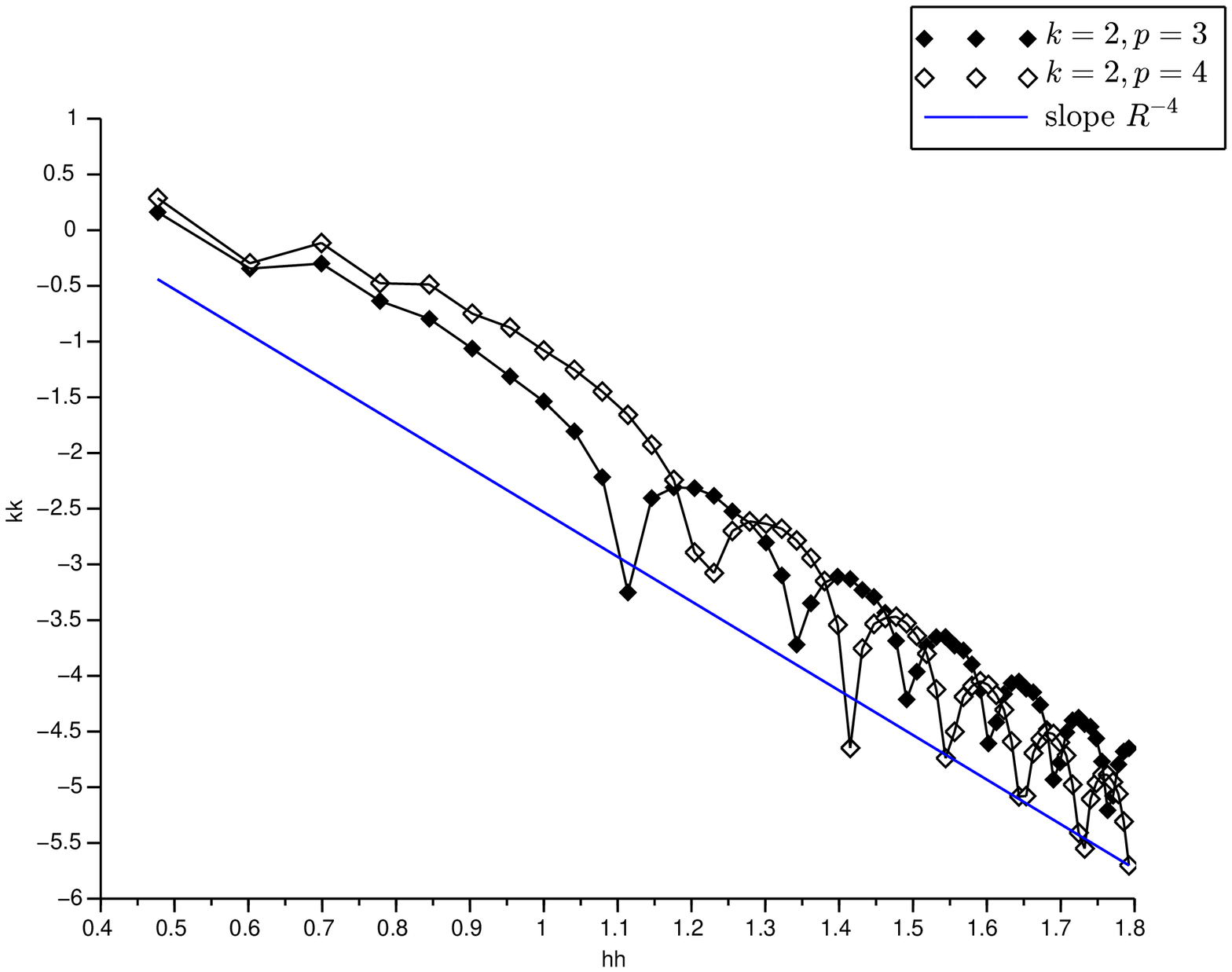}
\caption{Symmetric periodic example \eqref{eq:mat2}. Bound on $R\mapsto \mathcal{E}(R):=|{A}_{T,2,R,L,p}-A_\ho|$ for $(T,p)=(\frac{R}{100},3),(\frac{R}{100},4)$, log-log scale (slopes $-4$).}
\label{fig5bis}
\end{figure} 

\subsubsection{Symmetric almost periodic example}
We turn now to an almost periodic example and consider the following coefficients:
\begin{equation}\label{eq:mat3}
A(x)\,=\,
\begin{pmatrix}
4+\cos(2\pi(x_1+x_2))+\cos(2\pi \sqrt{2}(x_1+x_2)) & 0\\
0 & 6+\sin^2(2\pi x_1)+\sin^2(2\pi \sqrt{2}x_1)\\
\end{pmatrix},
\end{equation}
which are of Kozlov class.
In this case, $\alpha\geq 2$, $\beta\leq 8$, so that the ellipticity ratio $4=\frac{8}{2}$ is
much smaller than in the previous two examples ($100$ and $60$, respectively). On the one hand, this makes the homogenization
procedure easier, and we expect approximation errors to be much smaller.
On the other hand, in the almost periodic setting, we have no easy access to the homogenized coefficients (as opposed to the periodic setting where $A_\ho$ is given by a cell formula), so that one needs a reliable error estimator.
Since the coefficients are of Kozlov class, Theorem~\ref{th:quant-estim-quasiper} and Remark~\ref{rk:quant-estim-quasiper}
imply that for all $k\in \N$,
\begin{eqnarray*}
|A_{T,k}-A_\ho|&\lesssim & T^{-2k}, \\
\sup_{\R^d} |\nabla \phi_{T,k}-\nabla \phi|&\lesssim &T^{-k}.
\end{eqnarray*}
In particular, for all $k'> k$, the triangle inequality then yields:
\begin{eqnarray*}
|A_{T,k}-A_\ho|&\lesssim & T^{-2k'}+|A_{T,k}-A_{T,k'}|, \\
\sup_{\R^d} |\nabla \phi_{T,k}-\nabla \phi|&\lesssim &T^{-k'}+\sup_{\R^d} |\nabla \phi_{T,k}-\nabla \phi_{T,k'}|.
\end{eqnarray*}
Hence, provided $|A_{T,k}-A_{T,k'}|\gtrsim T^{-2k'}$, the quantity $|A_{T,k}-A_{T,k'}|$ is an estimator for 
$|A_{T,k}-A_\ho|$. This allows us not to bias the test with some arbitrary guess of $A_\ho$.
We have considered the first two approximations formulas $A_{T,1,R,L,p}$ and $A_{T,2,R,L,p}$ of $A_\ho$,
besides the naive approximation $A_{\infty,1,R,L,0}$ (no zero-order term, no filtering),
and taken $k'=3$ as a reference for the homogenized coefficients.
In all the tests, we have taken the following parameters:
\begin{itemize}
\item zero-order term $T=\frac{R}{100}$,
\item filter of order $p=4$ with $L=\frac{R}{3}$.
\end{itemize}
We then have by Theorem~\ref{th:error-estim-Kozlov}
\begin{eqnarray*}
|A_{\infty,1,R,L,0}-A_{\ho}|&\lesssim & R^{-5}+R^{-6}+|A_{\infty,1,R,L,0}-A_{T,3,R,L,4}|, \\
|A_{T,1,R,L,4}-A_{\ho}|&\lesssim & R^{-5}+R^{-6}+|A_{T,1,R,L,4}-A_{T,3,R,L,4}|, \\
|A_{T,2,R,L,4}-A_{\ho}|&\lesssim & R^{-5}+R^{-6}+|A_{T,2,R,L,4}-A_{T,3,R,L,4}|,
\end{eqnarray*}
where the error-term $R^{-5}$ is due to averaging (cf. $p=4$) and the second error-term due to
the systematic error (for $k'=3$).
The choice of the filter $p=4$ minimizes the error due to averaging (which is expected to be of higher order than the systematic error for $k=1,2$). Note that the chosen estimator for $|A_{T,k,R,L,4}-A_{\ho}|$
is not very sensitive to the averaging function since the same averaging function is taken 
both in $A_{T,k,R,L,4}$ for $k=1,2$ and in $A_{T,3,R,L,4}$ --- this is not the case for the estimator of $|A_{\infty,1,R,L,0}-A_{\ho}|$.
This explains why  there are not many oscillations on Figure~\ref{fig10} (besides for small 
$R$) for $|A_{T,k,R,L,4}-A_{\ho}|$ (where the errors for $k=1$ and $k=2$ decay at the predicted rates $R^{-2}$ and $R^{-4}$, respectively), whereas they are oscillations for $|A_{\infty,1,R,L,0}-A_{\ho}|$ (which decays at the predicted rate $R^{-1}$ as well).

\medskip
\noindent
For the approximation of correctors, recall that for $k>0$,
$$
\sup_{\R^d} |\nabla \phi_{T,k}-\nabla \phi|\,\lesssim \,T^{-k}.
$$
Since the difference of $\nabla \phi_{T,k}$ and $\nabla \phi_{T,k,R}$ on $Q_{L/2}$ is of infinite order in terms of $\frac{R-L}{\sqrt{T}}$ by Theorem~\ref{th:approx-TR}, this turns with $T= \frac{R}{100}$ and $L=\frac{R}{3}$ into
$$
\fint_{Q_{L/2}}|\nabla \phi_{T,k,R}-\nabla \phi|^2\,\lesssim\, R^{-2k},
$$
which we confront to the results of numerical experiments in the form of 
\begin{equation}\label{eq:quasi-perio-test-corr}
\fint_{Q_{L/2}}|\nabla \phi_{T,k,R}-\nabla \phi_{T,k',R}|^2\,\lesssim\, R^{-2k}
\end{equation}
for $3=k'>k=1,2$. 
Without zero-order term, we still expect
\begin{equation}\label{eq:quasi-perio-test-corr}
\fint_{Q_{L/2}}|\nabla \phi_{\infty,1,R}-\nabla \phi_{T,k',R}|^2\,\lesssim\, R^{-2}
\end{equation}
since oversampling is implicitly taken into accound here (cf. $L/2<R$) and we discard the boundary layer (this estimate is however not proved).
On Figure~\ref{fig10b}, we obtain a monotonic decay with the expected convergence rates
$-1$, $-2$, and $-4$.
In this example, it is not clear that the approximation of correctors is better than the approximation of homogenized coefficients. However, as mentioned above, the estimator $|A_{T,k,R,L,4}-A_{T,3,R,L,4}|$ for $|A_{T,k,R,L,4}-A_{\ho}|$ is not very sensitive to averaging (since they have the same averaging function) and the averaging part of the error is mostly in the term $R^{-5}$, which we did not represent. 

\medskip

\noindent
In terms of computational cost, the approximation of $A_{T,k,R,L,p}$ is again $k$ times more expensive than the approximation of $A_{\infty,1,R,L,0}$. Likewise for the correctors $\phi_{T,k,R}$ and $\phi_{\infty,1,R}$.
\begin{figure}
\centering
\psfrag{hh}{$\log_{10}(R)$}
\psfrag{kk}{$\log_{10} \mathcal E (R)$}
\includegraphics[scale=.5]{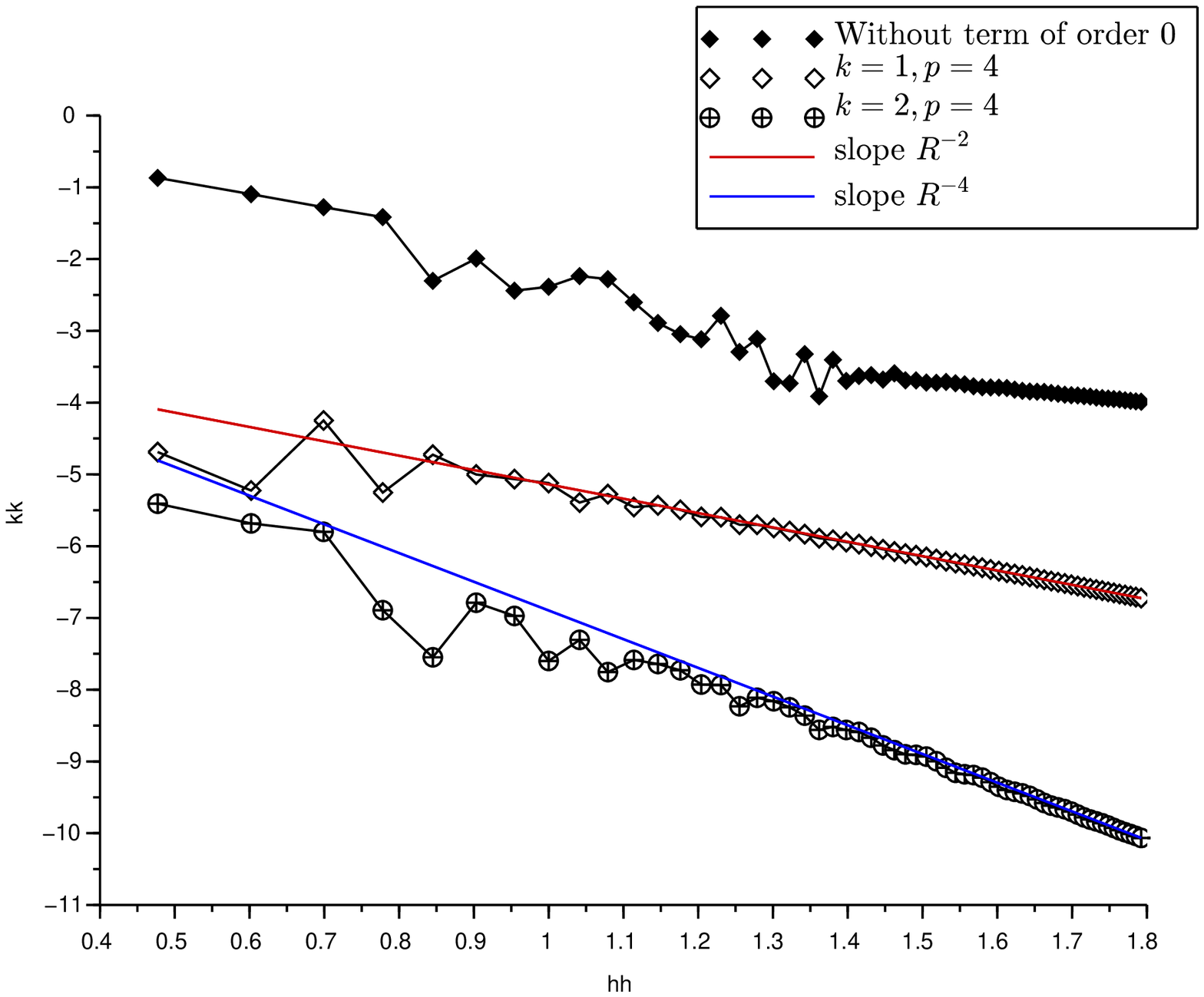}
\caption{Symmetric almost periodic example \eqref{eq:mat3}. Bound on $R\mapsto \mathcal E (R):=|A_{T,k,R,L,p}-A_{\ho}|$ for $(T,k,p)=(\infty,1,0),(\frac{R}{100},1,3),(\frac{R}{100},2,3)$, log-log scale (slopes $-1$, $-2$, and $-4$).}
\label{fig10}
\psfrag{hh}{$\log_{10}(R)$}
\psfrag{kk}{$\log_{10} \mathcal E (R)$}
\includegraphics[scale=.5]{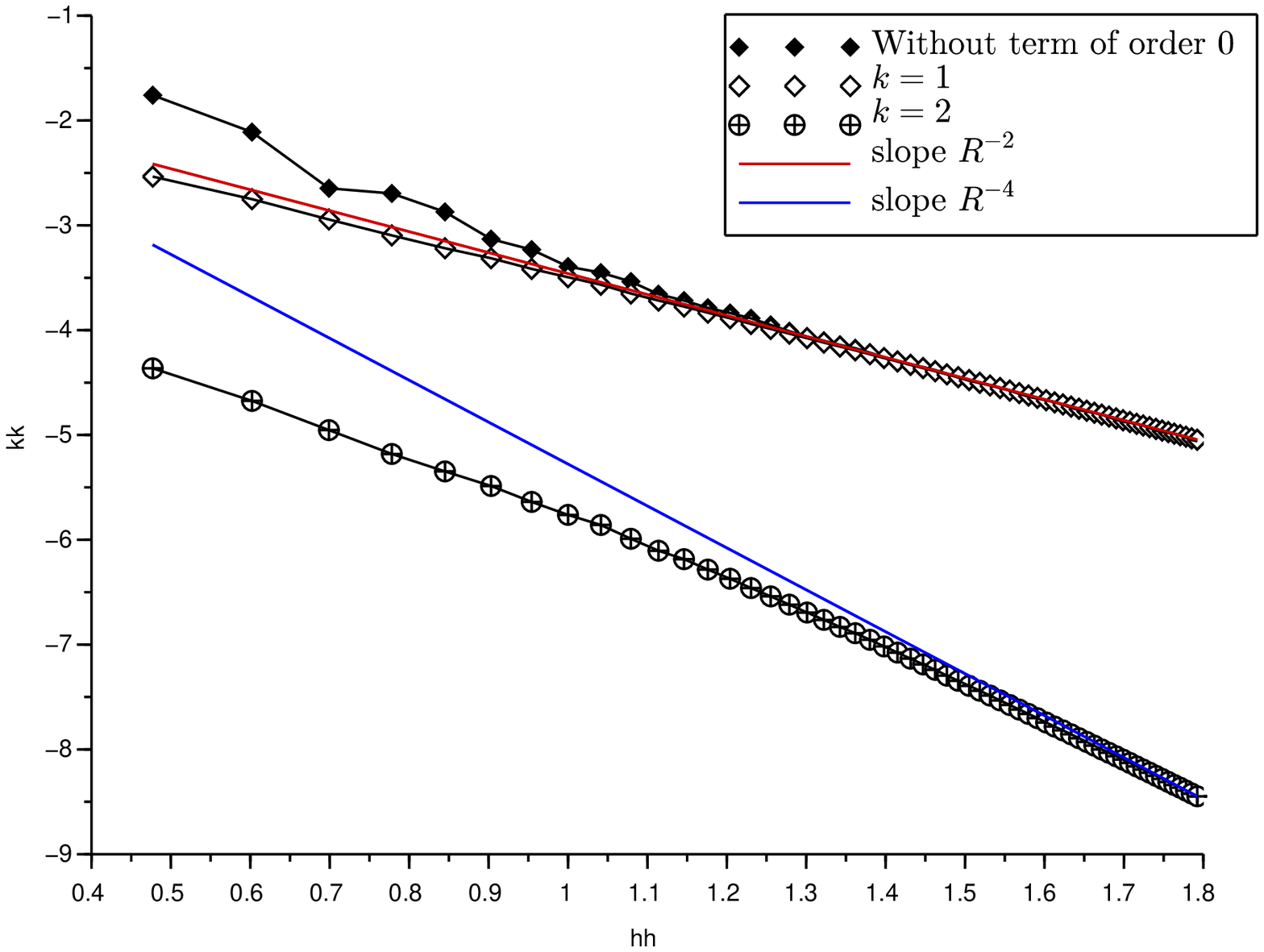}
\caption{Symmetric almost periodic example \eqref{eq:mat3}. Bound on $R\mapsto \mathcal{E}(R):=\fint_{Q_{R/6}}|\nabla \phi_{T,k,R}-\nabla \phi_{T,3,R}|^2$ for $(T,k)=(\infty,1),(\frac{R}{100},1),(\frac{R}{100},2)$, log-log scale (slopes $-2$, $-2$, and $-4$).}
\label{fig10b}
\end{figure} 

\begin{rem}
Although the convergence rates are the same in this almost periodic example and in the periodic examples of the previous paragraphs, the observed errors are smaller for this almost periodic example.
The difference lies in the ellipticity contrast (that is, the ratio of the two best ellipticity constants), which is expected to play a significant role in the prefactors of the estimates since it quantifies the nonlinearity of the homogenization process (the closer to $1$ the ellipticity contrast, the smaller the prefactors). As already emphasized, the ellipticity contrast is less than $4$ in this almost periodic example whereas it is $100$ and $60$ in the two periodic examples.
\end{rem}

\subsection{Numerical tests for non-symmetric coefficients}\label{sec:test2}

The tests, expected errors, and results are similar to those of the symmetric setting.

\subsubsection{Non-symmetric periodic example}

We consider the following coefficients:
\begin{equation}\label{eq:mat4}
A(x)\,=\,
\begin{pmatrix}
\frac{2 + 1.8 \sin(2\pi x_1)}{2 + 1.8\cos(2\pi x_2)}+\frac{2 + \sin(2\pi x_2)}{2 + 1.8\cos(2\pi x_1)} & 2 + \sin(2\pi x_1)\cos(2\pi x_2)\\
-2 + \sin(2\pi x_1)\cos(2\pi x_2) & \frac{2 + 1.8 \sin(2\pi x_1)}{2 + 1.8\cos(2\pi x_2)}+\frac{2 + \sin(2\pi x_2)}{2 + 1.8\cos(2\pi x_1)}\\
\end{pmatrix}.
\end{equation}
In addition to the naive approximation, we take $L=\frac{R}{3}$, $T=\frac{R}{100}$, and filters of orders $p=3,4$. 
As expected, Figures~\ref{fig12} and~\ref{fig14} are in agreement with the theoretical predictions
for $(k,p)=(1,3),(2,4)$:
\begin{equation*}
\fint_{Q_{L/2}} |\nabla \phi_{T,k,R}-\nabla \phi|^2 \, \lesssim \, R^{-2k},\quad
|{A}_{T,k,R,L,p}-A_{\ho}|\,\lesssim\, R^{-2k},
\end{equation*}
and 
\begin{equation*}
\fint_{Q_{R}} |\nabla \phi_{\infty,1,R}-\nabla \phi|^2 \, \lesssim \, R^{-1},\quad
|{A}_{\infty,1,R,L,0}-A_{\ho}|\,\lesssim\, R^{-1}
\end{equation*}
for the naive approximation. When oversampling  is combined with the naive approximation (that is, discarding the boundary layer), we expect a better rate for the approximation of correctors (although there is no rigorous proof)
\begin{equation*}
\fint_{Q_{L/2}} |\nabla \phi_{\infty,1,R}-\nabla \phi|^2 \, \lesssim \, R^{-2}.
\end{equation*}
\begin{figure}
\centering
\psfrag{hh}{$\log_{10}(R)$}
\psfrag{kk}{$\log_{10}\mathcal E(R)$}
\includegraphics[scale=.5]{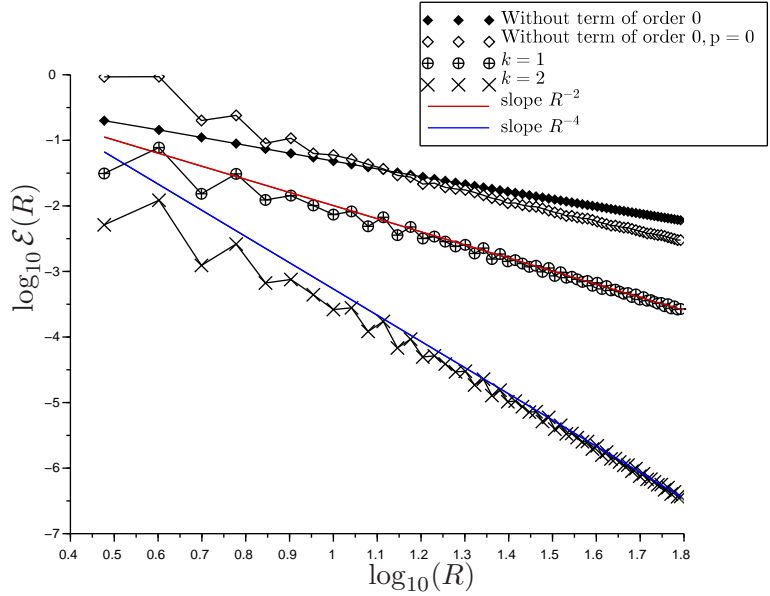}
\caption{Non-symmetric periodic example \eqref{eq:mat4}. Bound on $R\mapsto \mathcal{E}(R):=\fint_{Q_{R}} |\nabla \phi_{\infty,1,R}-\nabla \phi|^2$
and $R\mapsto \mathcal{E}(R):=\fint_{Q_{R/6}} |\nabla \phi_{T,k,R}-\nabla \phi|^2$ for $(T,k)=(\infty,1),(\frac{R}{100},1),(\frac{R}{100},2)$, log-log scale (slopes $-1$, $-2$, $-2$, and $-4$).}
\label{fig12}
\end{figure}
\begin{figure}
\centering
\psfrag{hh}{$\log_{10}(R)$}
\psfrag{kk}{$\log_{10}\mathcal E(R)$}
\includegraphics[scale=.5]{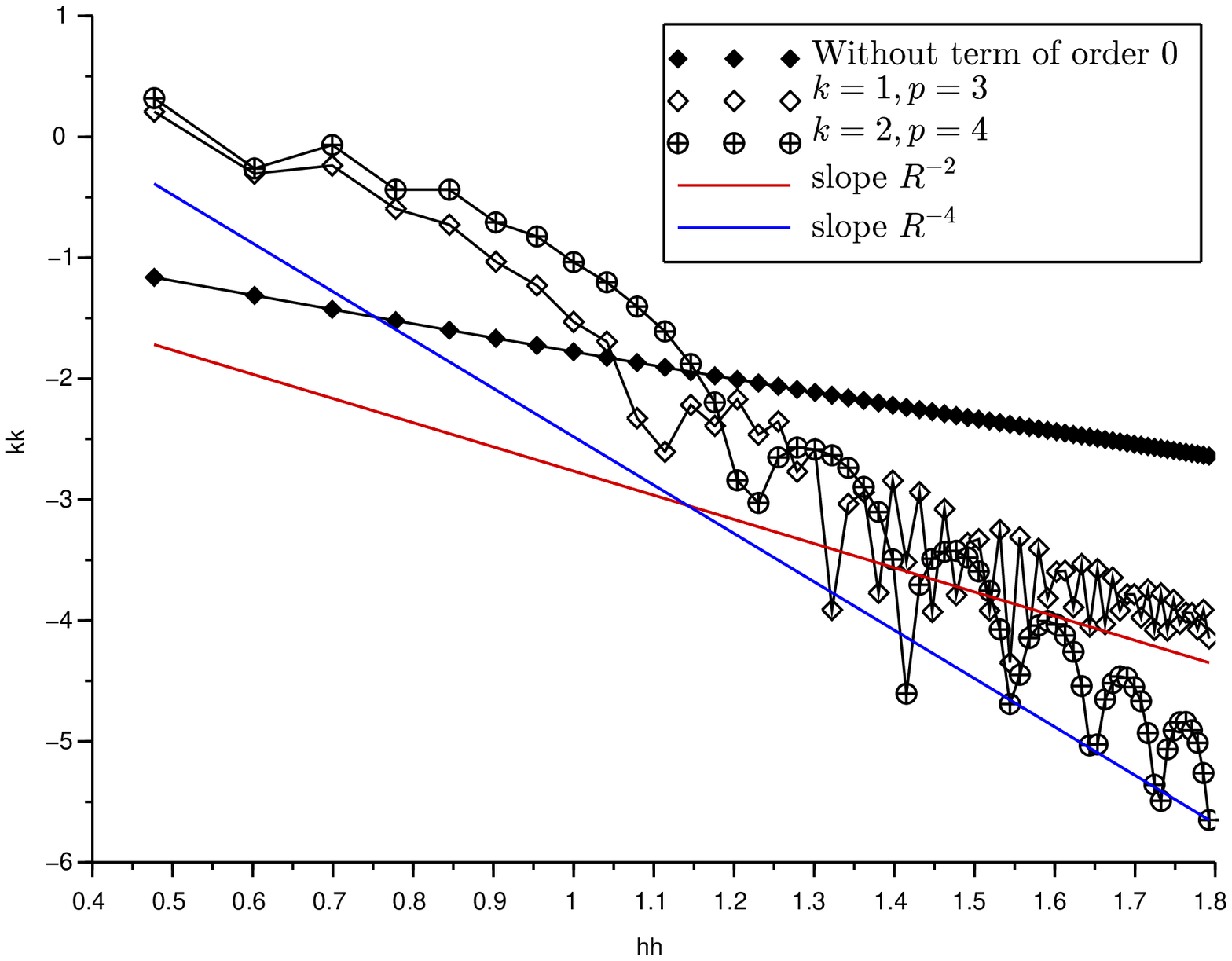}
\caption{Non-symmetric periodic example \eqref{eq:mat4}. Bound on $R\mapsto \mathcal{E}(R):=|A_{T,k,R,L,p}-A_\ho|$ for $(T,k,p)=(\infty,1,0),(\frac{R}{100},1,3),(\frac{R}{100},2,4)$, log-log scale (slopes $-1$, $-2$, and $-4$).}
\label{fig14}
\end{figure}

\subsubsection{Non-symmetric almost periodic example}

We consider the following coefficients:
\begin{equation}\label{eq:mat5}
A(x)\,=\,
\begin{pmatrix}
4+\cos(2\pi(x_1+x_2))+\cos(2\pi \sqrt{2}(x_1+x_2)) & 2 + \sin(2\pi x_1)\cos(2\pi x_2)\\
-2 + \sin(2\pi x_1)\cos(2\pi x_2) & 6+\sin^2(2\pi x_1)+\sin^2(2\pi \sqrt{2}x_1)\\
\end{pmatrix},
\end{equation}
In addition to the naive approximation, we take $L=\frac{R}{3}$, $T=\frac{R}{100}$, and a filter of order $p=4$. 
The results are similar as for the symmetric case and Figures~\ref{fig18}~and~\ref{fig18b} 
show that for $k=1,2$,
\begin{equation*}
\fint_{Q_{L/2}} |\nabla \phi_{T,k,R}-\nabla \phi_{T,3,R}|^2 \, \lesssim \, R^{-2k},\quad
|{A}_{T,k,R,L,3}-{A}_{T,3,R,L,3}|\,\lesssim\, R^{-2k}.
\end{equation*}
and 
\begin{equation*}
\fint_{Q_{L/2}} |\nabla \phi_{\infty,1,R}-\nabla \phi_{T,3,R}|^2 \, \lesssim \, R^{-1},\quad
|{A}_{\infty,1,R,L,0}-{A}_{T,3,R,L,3}|\,\lesssim\, R^{-1}
\end{equation*}
for the naive approximation. 
Since in this comparison, oversampling  is implictly used (since $L/2<R$) for the naive approach, the estimate for the corrector is pessimistic and we rather expect (although there is no rigorous proof)
\begin{equation*}
\fint_{Q_{L/2}} |\nabla \phi_{\infty,1,R}-\nabla \phi_{T,3,R}|^2\, \lesssim \, R^{-2}.
\end{equation*}
\begin{figure}
\centering
\psfrag{hh}{$\log_{10}(R)$}
\psfrag{kk}{$\log_{10}\mathcal E(R)$}
\includegraphics[scale=.5]{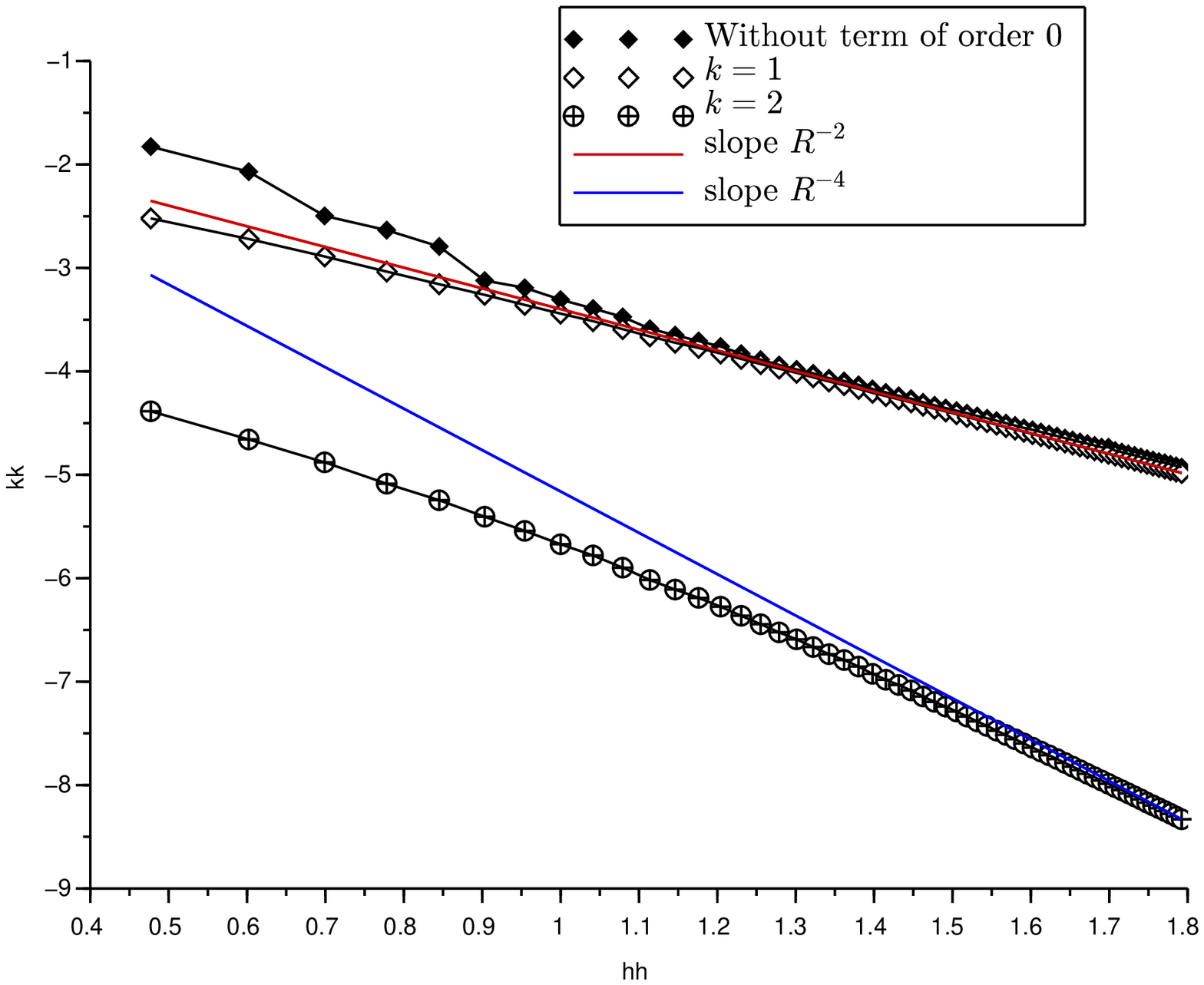}
\caption{Non-symmetric almost periodic example \eqref{eq:mat5}. Bound on $R\mapsto \mathcal{E}(R):=\fint_{Q_{R/6}} |\nabla \phi_{T,k,R}-\nabla \phi_{T,3,R}|^2$ for $(T,k)=(\infty,1),(\frac{R}{100},1),(\frac{R}{100},2)$, log-log scale (slopes $-2$, $-2$, and $-4$).}
\label{fig18}
\psfrag{hh}{$\log_{10}(R)$}
\psfrag{kk}{$\log_{10}\mathcal E(R)$}
\includegraphics[scale=.5]{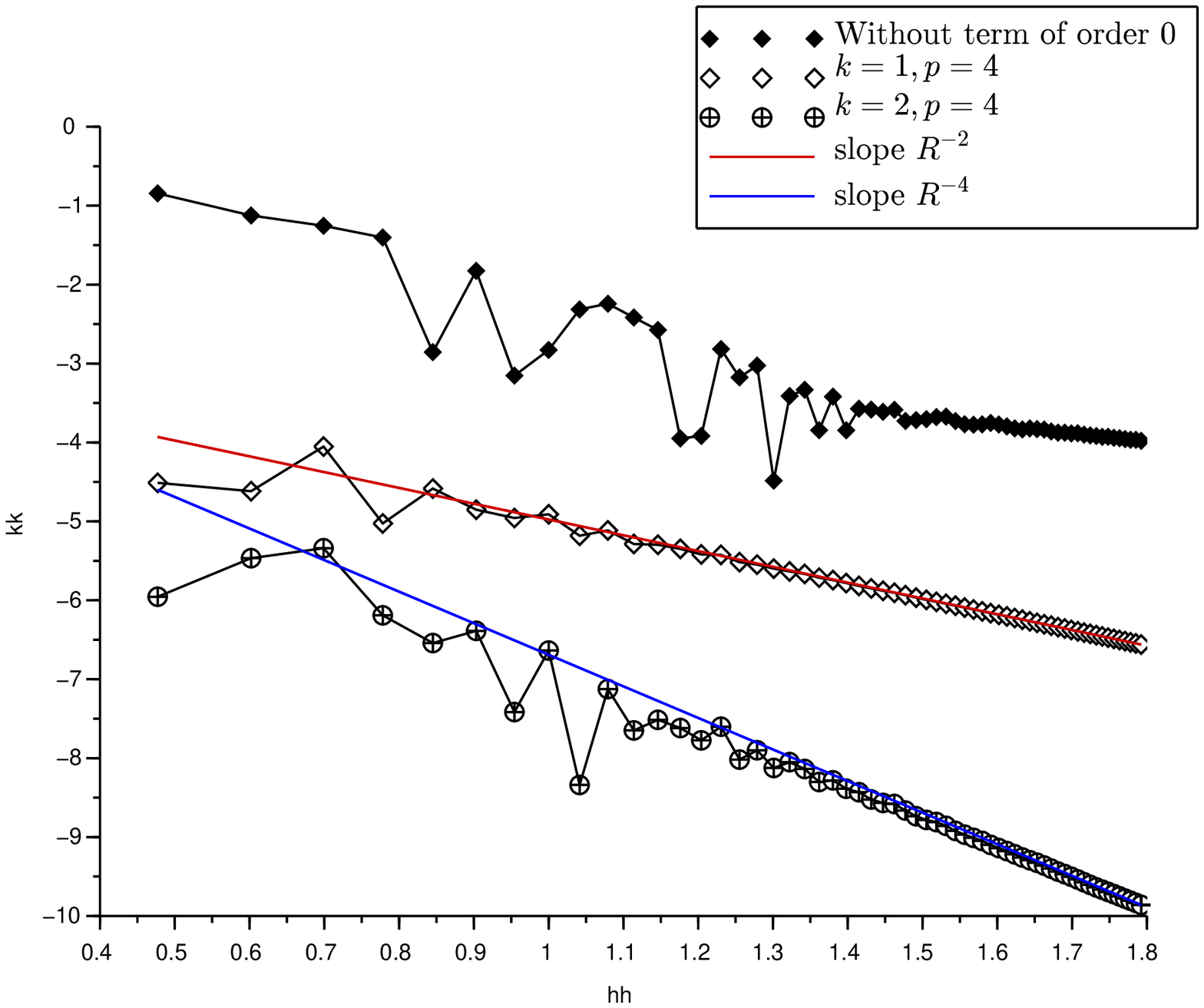}
\caption{Non-symmetric almost periodic example \eqref{eq:mat5}. Bound on $R\mapsto \mathcal{E}(R):=|A_{T,k,R,L,4}-A_{T,3,R,L,4}|$ for $(T,k,p)=(\infty,1,0),(\frac{R}{100},1,3),(\frac{R}{100},2,4)$, log-log scale (slopes $-1$, $-2$, and $-4$).}
\label{fig18b}
\end{figure} 
\medskip

\noindent In terms of complexity, the case non-symmetric coefficients is slightly different than the case of symmetric coefficients.
Indeed, the approximation of $A_{T,k,R,L,p}$ requires the approximation of $2kd$ correctors ($kd$ regularized correctors and $kd$ regularized dual correctors) whereas the naive approximation does not require the approximation of the dual correctors stricto sensu, so that the approximation of $A_{T,k,R,L,p}$ is $2k$ more expensive than the naive approximation.
For the approximation of the correctors $\phi_{T,k,R}$ and $\phi_{\infty,1,R}$, the ratio remains $k$.


\section{Combination with numerical homogenization methods}\label{sec:reduc}

The aim of this last section is to show that the regularization and extrapolation method to approximate homogenized coefficients and correctors can be used to reduce the resonance error in standard numerical homogenization methods.
Following~\cite{Gloria-06b,Gloria-07b} we first introduce a general analytical framework and prove the consistency 
in the case of locally stationary ergodic coefficients. We then recover variants of standard numerical homogenization methods after discretization. 
We conclude the presentation by a quantitative convergence analysis in the case of periodic coefficients.

\subsection{Analytical framework}

Let $A_\e:D\times \Omega \to \Mab$ be a family of random diffusion coefficients parametrized by $\e>0$.
We make the assumption that $A_\e$ is locally stationary and ergodic (and assume some cross-regularity).
\begin{hypo}\label{hypo:reg}
There exists a random Carath\'eodory function (that is continuous in the first variable and measurable in the second variable) 
$\tilde A:D\times \R^d\times \Omega \to \Mab$, and a constant $\kappa>0$ such that
\begin{itemize}
\item For all $x\in D$, the random field $\tilde A(x,\cdot,\cdot)$ is stationary on $\R^d\times \Omega$, and ergodic;
\item \emph{(cross-regularity)} $\tilde A$ is $\kappa$-Lipschitz in the first variable: for all $x,y\in D$, for almost every $z\in \R^d$, and for almost
every realization $\omega \in \Omega$,
$$
|\tilde A(x,z,\omega)-\tilde A(y,z,\omega)|\,\leq\, \kappa|x-y|;
$$
\item For all $x\in D$, for and all $\e>0$, and for almost every  realization $\omega \in \Omega$,
$$
A_\e(x,\omega)\,:=\,\tilde A(x,\frac{x}{\e},\omega)
$$
almost surely.
\end{itemize}
\end{hypo}
\noindent Under Hypothesis~\ref{hypo:reg} $A_\e$ H-converges on $D$ to some deterministic diffusion function $A_\ho:D\to \Mab$ almost surely, which is 
$\kappa$-Lipschitz and characterized for all $x\in D$ and $\xi,\xi' \in \R^d$ by
$$
\xi'\cdot A_\ho(x)\xi\,=\, \expec{(\xi'+\nabla \phi'(x,0,\cdot))\cdot \tilde A(x,0,\cdot)(\xi+\nabla \phi(x,0,\cdot))},
$$
where $\phi(x,\cdot,\cdot),\phi'(x,\cdot,\cdot)$ are the corrector in direction $\xi$ and dual corrector in direction $\xi'$, respectively, associated with the stationary coefficients $\tilde A(x,\cdot,\cdot)$ ($x$ is treated as a parameter).
The proof is standard: by H-compacteness, for almost every realization, $A_\e$ H-converges up to extraction to some limit $A^*$.
Using the locality of H-convergence and the fact that $\tilde A$ is uniformly Lipschitz in the first variable, one concludes that $A^*(x)=A_\ho(x)$ almost surely.
Note that if we weaken the cross-regularity assumption from Lipschitz continuity on $D$ to continuity on $\overline{D}$, 
the result holds true as well --- although the proof of the homogenization result is more subtle,  see 
\cite[Theorem~4.1.1]{Bourgeat-Mikelic-Wright-94}. 

\medskip

\noindent The combination of the regularization and extrapolation method with the analytical framework of \cite{Gloria-06b,Gloria-07b} yields the following local approximation of $A_\ho$.
\begin{defi}\label{def:reg-ana}
Let $\mu:\R^+ \to [0,1]$ have support in $[-1,1]$ and be such that $\int_{-1}^1 \mu=1$.
For all $\rho>0$, let $\mu_\rho:\R^d\to \R^+$ denote the $\rho$-rescaled multidimensional version $y\mapsto \rho^{-d}\prod_{i=1}^d \mu(y_i/\rho)$ of $\mu$ (the support of which is in $Q_\rho$).
For all $\delta >1$, $\rho>0$, $T>0$, $k>0$, and $\e>0$, we denote by $A_{T,k,\rho,\e}^{\delta}:D\to \Md$ the function defined by: For all $\xi,\xi'\in \R^d$ and for $x\in D$,
\begin{multline}\label{eq:reg-approx}
\xi' \cdot A_{T,k,\rho,\e}^\delta(x)\xi :=\int_{Q_{\rho}\cap T_{-x}D} (\xi' +P_{\rho,x}(\nabla_y {v'}_{T,k}^{\delta\rho,\e})(x,y))\\ \cdot A_\e(x+y)(\xi+P_{\rho,x}(\nabla_y v_{T,k}^{\delta\rho,\e})(x,y))\mu_{\rho}(y)dy,
\end{multline}
where $T_{-x}D:=\{y\in \R^d \,|\, y+x\in D\}$, $v_{T,k}^{\delta\rho,\e},{v'}_{T,k}^{\delta\rho,\e}$ are defined by Richardson extrapolations of the unique weak solutions $v_{T,1}^{\delta\rho,\e},{v'}_{T,1}^{\delta\rho,\e}$ in $H^1_0(\Rhodrho)$ of 
\begin{equation}\label{eq:reg-corr-approx}
\begin{array}{rcl}
(T\e^2)^{-1}v_{T,1}^{\delta\rho,\e}(x,y)-\nabla \cdot A_\e(x+y)(\xi+\nabla_y v_{T,1}^{\delta\rho,\e}(x,y))&=&0,\\
(T\e^2)^{-1}{v'}_{T,1}^{\delta\rho,\e}(x,y)-\nabla \cdot A^*_\e(x+y)(\xi'+\nabla_y {v'}_{T,1}^{\delta\rho,\e}(x,y))&=&0,
\end{array}
\end{equation}
and $P_{\rho,x}$ is the projection on $\mu_\rho$-mean free fields of $L^2(Q_{\rho}\cap T_{-x}D)$:
for all $\psi \in L^2(Q_{\rho}\cap T_{-x}D)$,
\begin{equation}\label{def:mean-free-mu}
P_{\rho,x}(\psi)\,:=\,\psi-\frac{1}{\int_{Q_{\rho}\cap T_{-x}D}\mu_\rho(y)dy}\int_{Q_{\rho}\cap T_{-x}D}\psi(y) \mu_\rho(y)dy.
\end{equation}
\end{defi}
\begin{rem}
In \eqref{eq:reg-approx} we use $P_{\rho,x}(\nabla_y v_{T,k}^{\delta\rho,\e})(x,\cdot)$ instead of simply $\nabla_y v_{T,k}^{\delta\rho,\e}(x,\cdot)$ (as presented in \cite{Gloria-12b}). As we shall see, this has the advantage to ensure that 
$A_{T,k,\rho,\e}^\delta$ be uniformly elliptic for symmetric coefficients, while it is still consistent with the homogenization theory of locally stationary ergodic coefficients, and still reduces the resonance error.
\end{rem}

\medskip
\noindent The scaling of the zero-order term in \eqref{eq:reg-corr-approx} may be surprising since it makes the zero-order term diverge as $\e \downarrow 0$. It is however natural, as can be seen on the case of periodic coefficients: if $A_\e(y)=A(y/\e)$ with $A$ periodic, the change of variables $y=\e z$ turns \eqref{eq:reg-corr-approx} into the equation
\begin{equation*}
\begin{array}{rcl}
T^{-1}w(z)-\nabla \cdot A(x+z)(\xi+\nabla w(z))&=&0,\\
T^{-1}w'(z)-\nabla \cdot A^*(x+z)(\xi'+\nabla w'(z))&=&0,
\end{array}
\end{equation*}
which is of the form of \eqref{eq:extra-approx}. 
This shows in particular that it is crucial that $\e$ be the ``actual lengthscale" of the problem, and not
an abstract parameter, which, in this locally random case, is encoded in the assumption $A_\e(x)=\tilde A(x,\frac{x}{\e})$.

\medskip

\noindent Under Hypothesis~\ref{hypo:reg}, the following convergence result holds:
\begin{theo}\label{th:main-reg}
Let $D$ be a domain with a Lipschitz boundary.
For all $\e>0$, let $A_\e$ satisfy Hypothesis~\ref{hypo:reg}, and for all $\rho>0$, $\delta >1$,  $T>0$, and $k\in \N$, let $A_{T,k,\rho,\e}^\delta$ be as in Definition~\ref{def:reg-ana}.
Then, for almost every realization,  there exists a regime of parameters $(T\uparrow \infty,\rho\downarrow 0,\e \downarrow 0)$  for which $A_{T,k,\rho,\e}^\delta$ is uniformly elliptic on $D$, and for all $x\in D$, 
\begin{eqnarray}
\limsup_{T\uparrow \infty}\limsup_{\rho \downarrow 0}\limsup_{\e \downarrow 0} |A_{T,k,\rho,\e}^\delta(x)-A_{\ho}(x)|\,=\,0 .\label{eq:thmain-2-reg}
\end{eqnarray}
The limit in $T$ in \eqref{eq:thmain-2-reg} is uniform in $\rho$ for all $x$ at positive distance from $\partial D$.
In particular, we also have for all $1\leq p<\infty$
\begin{eqnarray}
\limsup_{\rho \downarrow 0,T\uparrow \infty}\limsup_{\e \downarrow 0} \|A_{T,k,\rho,\e}^\delta-A_{\ho}\|_{L^p(D)}\,=\,0 ,\label{eq:thmain-2-reg2}
\end{eqnarray}
where the order of the limits between $\rho$ and $T$ is irrelevant.
In addition, if $A_\e$ is symmetric, then there exists $\gamma>0$ depending only on $D$ such that $A_{T,k,\rho,\e}^\delta\in L^\infty(D,\mathcal{M}_{\alpha'\beta'})$ with $\alpha'=\gamma \alpha$ and $\beta'=2^d(1+\delta^{d})\beta $.
\end{theo}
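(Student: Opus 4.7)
The plan rests on three ingredients developed earlier in the paper: Theorem~\ref{th:approx-TR} (exponential stability of the regularized cell problem under Dirichlet truncation), Theorem~\ref{th:spec} (asymptotic consistency $A_{T,k}\to A_{\ho}$ for stationary ergodic coefficients), and the pointwise multidimensional ergodic theorem. The key manipulation is the rescaling $z=y/\e$, which turns~\eqref{eq:reg-corr-approx} into a standard regularized corrector problem on the expanding domain $Q_{\delta\rho/\e}\cap\e^{-1}T_{-x}D$ for the coefficients $(z,\omega)\mapsto\tilde A(x+\e z,z,\omega)$ with zero-order coefficient $T^{-1}$, and turns the $\mu_\rho$-averaging in~\eqref{eq:reg-approx} into an averaging against $\tilde\mu_{\rho/\e}(z):=\e^d\mu_\rho(\e z)$, whose support $Q_{\rho/\e}$ expands as $\e\downarrow 0$.

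For the convergence~\eqref{eq:thmain-2-reg}, I insert the intermediate quantity $A_{T,k}(x)$ of Definition~\ref{def:extra-hom} associated with the frozen stationary ergodic coefficients $\tilde A(x,\cdot,\omega)$, deterministic by ergodicity. Theorem~\ref{th:spec} applied with the parameter $x$ yields $|A_{T,k}(x)-A_{\ho}(x)|\to 0$ as $T\uparrow\infty$, which handles the outermost limit. It thus remains to show $\limsup_{\rho\downarrow 0}\limsup_{\e\downarrow 0}|A_{T,k,\rho,\e}^\delta(x,\omega)-A_{T,k}(x)|=0$ almost surely at fixed $T,k$, which I would do in three substeps: \textbf{(a)} the cross-regularity $|\tilde A(x+\e z,z,\omega)-\tilde A(x,z,\omega)|\le\kappa\e|z|\le\kappa\delta\rho$ on $Q_{\delta\rho/\e}$ combined with a continuous-dependence estimate shows that the rescaled $\nabla_z v_{T,k}^{\delta\rho,\e}$ is within $O(\rho)$ of the gradient of the regularized corrector for the frozen coefficients $\tilde A(x,\cdot,\omega)$ with the same Dirichlet conditions on $Q_{\delta\rho/\e}$; \textbf{(b)} Theorem~\ref{th:approx-TR} controls the boundary-layer error between this truncated corrector and the true regularized corrector $\psi_{T,k}^x\in H^1_\loc(\R^d)$, decaying as $\exp(-c(\delta-1)\rho/(\e\sqrt{2^{k-1}T}))$; \textbf{(c)} the ergodic theorem applied to the stationary ergodic integrand $(\xi'+\nabla\psi_{T,k}'^x)\cdot\tilde A(x,\cdot,\omega)(\xi+\nabla\psi_{T,k}^x)$ against the filter $\tilde\mu_{\rho/\e}$ yields almost-sure convergence of the spatial average to the expectation $\xi'\cdot A_{T,k}(x)\xi$. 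Combining (a)--(c) produces an $O(\rho)$ error which vanishes as $\rho\downarrow 0$.

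For the uniform ellipticity in the symmetric case, the projection~\eqref{def:mean-free-mu} ensures $\int P_{\rho,x}(\nabla v_{T,k}^{\delta\rho,\e})\mu_\rho=0$. Since for symmetric $A_\e$ and $\xi'=\xi$ the dual and primal regularized correctors coincide ($v'_{T,k}=v_{T,k}$, as $A_\e^*=A_\e$), pointwise coercivity $A_\e\ge\alpha\Id$ gives
\[
\xi\cdot A_{T,k,\rho,\e}^\delta(x)\xi\ge\alpha\int_{Q_\rho\cap T_{-x}D}\bigl|\xi+P_{\rho,x}(\nabla v_{T,k}^{\delta\rho,\e})\bigr|^2\mu_\rho\ge\alpha|\xi|^2\int_{Q_\rho\cap T_{-x}D}\mu_\rho\ge\gamma\alpha|\xi|^2,
\]
where the uniform-in-$x$ lower bound $\gamma(D,\mu)>0$ on $\int_{Q_\rho\cap T_{-x}D}\mu_\rho$ uses the Lipschitz regularity of $\partial D$ and the support properties of $\mu$. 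The upper bound $\beta'=2^d(1+\delta^d)\beta$ follows from Cauchy--Schwarz, the bound $A_\e\le\beta\Id$, and the energy estimate $\int_{Q_{\delta\rho}\cap T_{-x}D}|\nabla v_{T,k}^{\delta\rho,\e}|^2\lesssim(\delta\rho)^d|\xi|^2$, obtained by testing~\eqref{eq:reg-corr-approx} with the solution itself and propagating through the Richardson extrapolation. For non-symmetric coefficients the uniform ellipticity is not unconditional, but it is recovered in any limiting regime in which $A_{T,k,\rho,\e}^\delta(x)$ is sufficiently close to the uniformly elliptic $A_{\ho}(x)$, as ensured by~\eqref{eq:thmain-2-reg}, which explains the phrasing ``there exists a regime of parameters''.

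The main technical obstacle is the upgrade from pointwise-in-$x$ almost-sure convergence to convergence valid on a common full-measure set for all $x\in D$ simultaneously, and uniformly on compact subsets of $D$ for the statement following~\eqref{eq:thmain-2-reg} and for~\eqref{eq:thmain-2-reg2}. I would handle this by a diagonal argument: establish~\eqref{eq:thmain-2-reg} almost surely for each $x$ in a countable dense subset of $D$, then extend to all of $D$ using the Lipschitz dependence of $\tilde A$ on its first variable, which transfers (via Theorem~\ref{th:approx-TR} and the continuous dependence of correctors on coefficients) to equicontinuity of $A_{T,k,\rho,\e}^\delta$ and $A_{T,k}(x)$ in $x$. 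Dominated convergence combined with the uniform upper bound on $|A_{T,k,\rho,\e}^\delta|$ established above yields~\eqref{eq:thmain-2-reg2}, and the interchangeability of the $\rho$- and $T$-limits in the $L^p$ statement is a byproduct of the fact that the same error estimates govern both limits.
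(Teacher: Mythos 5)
Your proposal follows essentially the same route as the paper's proof: freezing the slow variable via the cross-regularity to incur an $O(\rho)$ error, rescaling $z=y/\e$ to identify the frozen quantity with the stationary approximation $A_{T,k,R,L,p}$ (your substeps (b)--(c) simply unfold the paper's citation of Theorem~\ref{th:ATkRLp-approx}, which itself combines Theorem~\ref{th:approx-TR} with the ergodic theorem, including the vanishing of the $\mu$-averaged gradients that justifies discarding the projection $P_{\rho,x}$ in the limit), invoking Theorem~\ref{th:spec} for the $T$-limit, obtaining coercivity from the mean-free projection, and passing to $L^p$ by dominated convergence together with the vanishing measure of the boundary strip. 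The argument is correct and no essential step is missing.
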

\begin{rem}\label{rem:coercivity-discrete}
For symmetric coefficients, Theorem~\ref{th:main-reg} ensures that $A_{T,k,\rho,\e}^\delta$ is uniformly elliptic for any values of the parameters. For non-symmetric coefficients however, we have no a priori uniform ellipticity result in general.
Under any of the three quantitative assumptions \eqref{eq:Poinca-hor},  \eqref{eq:Poinca-hor-weak}, and \eqref{eq:Poinca-vert}, one may quantify the difference between $A_{T,k,\rho,\e}^\delta$ and $A_\ho$ and thus deduce the uniform ellipticity of $A_{T,k,\rho,\e}^\delta$  from that of $A_\ho$ under mild conditons on the parameters.
For general coefficients, we can only give some information on the approximation $A_{T,1,\rho,\e}^\delta$ of
$A_\ho$, relying on the uniform ellipticity of $A_{T,1}$ proved in Proposition~\ref{prop:coerc}, by using Theorem~\ref{th:approx-TR} and the Lipschitz continuity of $\tilde A$ in the slow variable.
For the approximations $A_{T,k,\rho,\e}^\delta$ with $k>1$ of $A_\ho$, we are not able to provide any additional information in general.
\end{rem}
\noindent From this theorem, one may directly deduce the convergence of the numerical homogenization method:
\begin{corollary}\label{coro:reg}
Under the assumptions of Theorem~\ref{th:main-reg}, for almost every realization there exists a regime of parameters $(T\uparrow \infty,\rho\downarrow 0,\e \downarrow 0)$ such that $A_{T,k,\rho,\e}^\delta$ is uniformly elliptic, and for all $f\in H^{-1}(D)$
the unique weak solution $u_{T,\rho,\e}^\delta\in H^1_0(D)$ of
\begin{equation}\label{eq:Tkrhoepsdelta}
-\nabla \cdot A_{T,k,\rho,\e}^\delta\nabla u_{T,k,\rho,\e}^\delta=f
\end{equation}
satisfies
\begin{equation}\label{eq:coro-reg}
\limsup_{\rho \downarrow 0,T\uparrow \infty}\limsup_{\e \downarrow 0} \|u_{T,k,\rho,\e}^\delta-u_\ho\|_{H^1(D)}=0
\end{equation}
almost surely, where $u_\ho\in H^1_0(D)$ is the unique weak solution of
\begin{equation*}
-\nabla \cdot A_\ho\nabla u_{\ho}=f.
\end{equation*}
As a consequence of H-convergence we also have that 
\begin{equation}\label{eq:coro-reg2}
\limsup_{\rho \downarrow 0,T\uparrow \infty}\limsup_{\e \downarrow 0} \|u_{T,k,\rho,\e}^\delta-u_\e\|_{L^2(D)}=0
\end{equation}
almost surely, where $u_\e$ is the unique weak solution in $H^1_0(D)$ of 
\begin{equation*}
-\nabla \cdot A_{\e}\nabla u_{\e}\,=\,f.
\end{equation*}
The order of the limits in $\rho$ and $T$ is irrelevant in \eqref{eq:coro-reg} and \eqref{eq:coro-reg2}.
\end{corollary}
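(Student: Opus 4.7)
The plan is to deduce Corollary~\ref{coro:reg} from Theorem~\ref{th:main-reg} by the classical stability of solutions of uniformly elliptic equations with respect to $L^p$-convergence of the coefficients. First I would invoke Theorem~\ref{th:main-reg} to fix a regime of parameters $(T\uparrow \infty, \rho \downarrow 0, \e \downarrow 0)$ for which, almost surely, the approximations $A_{T,k,\rho,\e}^\delta$ are uniformly elliptic on $D$ with ellipticity constants $\alpha',\beta'$ independent of $T,\rho,\e$, and in which \eqref{eq:thmain-2-reg2} holds, i.e. $A_{T,k,\rho,\e}^\delta \to A_\ho$ in $L^p(D)$ for every $1\leq p<\infty$. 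Within this regime the weak solution $u_{T,k,\rho,\e}^\delta\in H^1_0(D)$ of \eqref{eq:Tkrhoepsdelta} is well-defined and uniformly bounded in $H^1_0(D)$ by $\alpha'^{-1}\|f\|_{H^{-1}(D)}$.

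The central step is the proof of \eqref{eq:coro-reg}. Setting $e_{T,k,\rho,\e}^\delta := u_{T,k,\rho,\e}^\delta - u_\ho$, I would subtract the two equations to obtain
\begin{equation*}
-\nabla \cdot A_{T,k,\rho,\e}^\delta \nabla e_{T,k,\rho,\e}^\delta \,=\, \nabla \cdot (A_{T,k,\rho,\e}^\delta - A_\ho)\nabla u_\ho \quad \text{in } D,
\end{equation*}
test with $e_{T,k,\rho,\e}^\delta$, use uniform ellipticity of $A_{T,k,\rho,\e}^\delta$ together with Cauchy-Schwarz' inequality, and deduce
\begin{equation*}
\alpha'\|\nabla e_{T,k,\rho,\e}^\delta\|_{L^2(D)} \,\lesssim\, \|(A_{T,k,\rho,\e}^\delta - A_\ho)\nabla u_\ho\|_{L^2(D)}.
\end{equation*}
Since $A_{T,k,\rho,\e}^\delta$ is uniformly bounded in $L^\infty(D)$ and converges to $A_\ho$ in $L^p(D)$ for all $p<\infty$, up to extraction it converges almost everywhere to $A_\ho$; the pointwise bound $|A_{T,k,\rho,\e}^\delta - A_\ho|\leq 2\beta'$ and $\nabla u_\ho\in L^2(D)^d$ then yield by Lebesgue's dominated convergence theorem that $(A_{T,k,\rho,\e}^\delta - A_\ho)\nabla u_\ho \to 0$ strongly in $L^2(D)^d$. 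Combined with Poincar\'e's inequality on $H^1_0(D)$, this gives \eqref{eq:coro-reg}.

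Finally, I would obtain \eqref{eq:coro-reg2} by the triangle inequality
\begin{equation*}
\|u_{T,k,\rho,\e}^\delta - u_\e\|_{L^2(D)} \,\leq\, \|u_{T,k,\rho,\e}^\delta - u_\ho\|_{L^2(D)} + \|u_\ho - u_\e\|_{L^2(D)}.
\end{equation*}
The first term vanishes in the announced limit by \eqref{eq:coro-reg}. For the second term I would use the H-convergence of $A_\e$ to $A_\ho$, which holds almost surely by Hypothesis~\ref{hypo:reg} (as recalled before Definition~\ref{def:reg-ana}): this yields $u_\e \rightharpoonup u_\ho$ weakly in $H^1_0(D)$, and therefore strongly in $L^2(D)$ by the Rellich-Kondrachov compact embedding. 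The only subtle point I anticipate is verifying that the order of the limits in $\rho$ and $T$ is indifferent, which is precisely what is granted by the second (global $L^p$) part of Theorem~\ref{th:main-reg}; the $L^\infty$ argument above would otherwise be delicate due to potential boundary behavior of $A_{T,k,\rho,\e}^\delta$ near $\partial D$.
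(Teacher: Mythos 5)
Your argument is correct, and it reaches the conclusion by a slightly different route than the paper. The paper does not redo the energy estimate inside the corollary: it first proves a general stability lemma (Lemma~\ref{6:lemma:ptwise-conv-sol}) for $-\nabla\cdot A_{h_1,h_2}\nabla u_{h_1,h_2}=f_{h_1,h_2}$, whose hypotheses are \emph{pointwise} convergence of the coefficients together with a bound $|A_{h_1,h_2}(x)-A(x)|\le g_1(x,h_1)+g_2(x,h_2)$ on good sets $D_{h_2}$ with $|D\setminus D_{h_2}|\to 0$, and then feeds in the estimate \eqref{eq:useful-proof-coro} (the splitting $D=D_\rho\cup(D\setminus D_\rho)$ with $|\tilde A_{T,k}-A_\ho|+\rho$ on $D_\rho$) from the proof of Theorem~\ref{th:main-reg}. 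You instead invoke the already-packaged $L^p$-convergence \eqref{eq:thmain-2-reg2}, and then run the same energy identity ($\alpha'\|\nabla e\|_{L^2}\lesssim\|(A^\delta_{T,k,\rho,\e}-A_\ho)\nabla u_\ho\|_{L^2}$) followed by uniform boundedness plus dominated convergence. The two are essentially equivalent in content — \eqref{eq:thmain-2-reg2} is itself derived from \eqref{eq:useful-proof-coro} — but your shortcut is legitimate and shorter, while the paper's lemma is more flexible (it allows a varying right-hand side $f_{h_1,h_2}$ and only needs pointwise information, and it is reused implicitly in the quantitative Section~\ref{sec:Num.Anal.loc_per} via \eqref{eq:ref-pr-HMM-corr-disc}). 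One small technical remark: since the parameters $(T,\rho,\e)$ vary over a continuum with iterated limits, the step ``$L^p$ convergence $\Rightarrow$ a.e. convergence up to extraction $\Rightarrow$ dominated convergence'' should be phrased along arbitrary sequences realizing the limsup (and concluded by uniqueness of the limit $0$), or replaced by the cleaner truncation bound
\begin{equation*}
\|(A^\delta_{T,k,\rho,\e}-A_\ho)\nabla u_\ho\|_{L^2(D)}^2\,\le\,M^2\,\|A^\delta_{T,k,\rho,\e}-A_\ho\|_{L^2(D)}^2+(\beta+\beta')^2\int_{\{|\nabla u_\ho|>M\}}|\nabla u_\ho|^2 ,
\end{equation*}
which makes the deduction from \eqref{eq:thmain-2-reg2} (with $p=2$) immediate and also makes transparent why the order of the limits in $\rho$ and $T$ is irrelevant. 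Your treatment of \eqref{eq:coro-reg2} via H-convergence and Rellich is exactly the intended one.
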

\noindent As mentioned in the introduction, by numerical homogenization we mean not only the approximation of the mean-field solution but also the approximation of the local fluctuations. These are the object of the following corrector result.
\begin{defi}\label{def:corr-reg}
Let $H>0$, $I_H\in \N$, and let $\{Q_{H,i}\}_{i\in [\![1,I_H]\!]}$ be a covering of $D$ in disjoint subdomains of diameter of order $H$.
We define a family $(M_H)$ of approximations of the identity on $L^2(D)$ associated with
$Q_{H,i}$: for every $w\in L^2(D)$ and $H>0$,
$$M_{H,i}(w)=\fint_{Q_{H,i}\cap D}w, \quad M_H(w)=\sum_{i=1}^{I_H} M_{H,i}(w)1_{Q_{H,i}\cap D}. $$
Let $\delta>1$ be as in Theorem~\ref{th:main-reg}, and for all $i\in [\![1,I_H]\!]$, set
$$
Q_{H,i}^\delta:=\{x\in \R^d\,|\,d(x,Q_{H,i})<(\delta-1)H\}.
$$
With the notation of Corollary~\ref{coro:reg}, we define the numerical correctors 
$\gamma_{T,k,1,\rho,\e}^{\delta,H,i}$ associated with $u_{T,k,\rho,\e}^\delta$  as
the unique weak solution in $H^1_0(Q_{H,i}^\delta\cap D)$ of
\begin{equation}\label{eq:def-corr-reg}
(T\e^2)^{-1}\gamma_{T,k,1,\rho,\e}^{\delta,H,i} -\nabla \cdot A_\e \bigg( M_{H,i}(\nabla u_{T,k,\rho,\e}^\delta)+\nabla \gamma_{T,k,1,\rho,\e}^{\delta,H,i} \bigg)\,=\,0,
\end{equation}
which generates $\gamma_{T,k,k',\rho,\e}^{\delta,H,i}$ for all $k'>1$ by Richardson extrapolation, cf. Definition~\ref{def:extra}.
We then set for all $k' \in \N$ and for all $1\leq i\leq I_H$,
\begin{equation*}
\nabla u_{T,k,k',\rho,\e}^{\delta,H,i}\,:=\,M_H(\nabla u_{T,k,\rho,\e}^\delta)|_{Q_{H,i}\cap D}+(\nabla \gamma_{T,k,k',\rho,\e}^{\delta,H,i}) |_{Q_{H,i}\cap D} \in L^2(Q_{H,i}\cap D),
\end{equation*}
and finally define the numerical corrector
\begin{equation*}
C^{\delta,H}_{T,k,k',\rho,\e}\,=\,\sum_{i=1}^{I_H}\nabla   u_{T,k,k',\rho,\e}^{\delta,H,i} 1_{Q_{H,i}\cap D}.
\end{equation*}
\end{defi}
\begin{rem}
For the definition of the numerical corrector, the extrapolation parameter $k'$ needs not be the same as the extrapolation parameter $k$ used for $A_{T,k,\rho,\e}^\delta$. Indeed, as shown in Paragraph~\ref{sec:Num.Anal.loc_per} in the case of periodic coefficients, it makes sense to take $k'=2k$ in terms of scaling.
\end{rem}
\noindent The following numerical corrector result holds:
\begin{theo}\label{th:corr-reg}
Under the assumptions of  Corollary~\ref{coro:reg}, the corrector of Definition~\ref{def:corr-reg}
satisfies for all $k'\in \N$,
\begin{equation}\label{eq:corr-conv-reg}
\limsup_{\rho \downarrow 0,T\uparrow \infty,H\downarrow 0} \limsup_{\e \downarrow 0}\Big|\!\Big|\nabla u_\e-C^{\delta,H}_{T,k,k',\rho,\e}\Big|\!\Big|_{L^2(D)}=0
\end{equation}
almost surely, where the order of the limits in $\rho,T$, and $H$ is irrelevant.
\end{theo}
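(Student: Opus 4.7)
The plan is to follow the classical strategy for corrector results in numerical homogenization, relying on a localized two-scale expansion that compares $\nabla u_\e$ to a superposition of plane-wave--like quantities built from local correctors of $\tilde A(x_i,\cdot,\cdot)$ weighted by the macroscopic gradient $\nabla u_\ho(x_i)$, and then show that $C^{\delta,H}_{T,k,k',\rho,\e}$ is itself close to such a superposition. By the triangle inequality, I would bound $\|\nabla u_\e-C^{\delta,H}_{T,k,k',\rho,\e}\|_{L^2(D)}$ by three pieces: (i) the distance between $\nabla u_\e$ and the true locally stationary two-scale expansion $\nabla u_\ho + \sum_i \nabla \phi(x_i,\cdot,\omega)\cdot M_{H,i}(\nabla u_\ho)\,\mathbf{1}_{Q_{H,i}}$ (evaluated at the oscillating scale); (ii) the distance between that expansion and the same expansion with $M_{H,i}(\nabla u_\ho)$ replaced by $M_{H,i}(\nabla u^\delta_{T,k,\rho,\e})$ and with the true corrector replaced by its regularized/extrapolated truncated version; (iii) the distance between the latter and $C^{\delta,H}_{T,k,k',\rho,\e}$ itself.

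For piece (i), I would invoke the standard corrector theorem in H-convergence in the locally stationary ergodic setting (see \cite{Bourgeat-Mikelic-Wright-94}): almost surely, $\nabla u_\e$ and its two-scale reconstruction have vanishing $L^2$-distance on $D$ as $\e\downarrow 0$, up to a boundary layer whose contribution vanishes as $H\downarrow 0$. Piece (ii) is purely macroscopic: by Corollary~\ref{coro:reg}, $\nabla u^\delta_{T,k,\rho,\e}\to \nabla u_\ho$ strongly in $L^2(D)$ in the prescribed limit, hence $M_H(\nabla u^\delta_{T,k,\rho,\e})\to \nabla u_\ho$ in $L^2(D)$ and then $M_H(\nabla u_\ho)\to \nabla u_\ho$ in $L^2(D)$ as $H\downarrow 0$ by the Lebesgue differentiation theorem; combined with the uniform $L^\infty(\Omega)$--control of stationary correctors this controls piece (ii).

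The delicate piece is (iii), which is essentially local. Rescaling $y=\e z$ on each cell $Q^\delta_{H,i}\cap D$, equation~\eqref{eq:def-corr-reg} for $\gamma^{\delta,H,i}_{T,k,1,\rho,\e}$ turns into a Dirichlet problem on a domain of diameter of order $H/\e$ with regularized operator $T^{-1}-\nabla\cdot \tilde A(x_i+\e z,z,\omega)\nabla$ and right-hand side driven by $M_{H,i}(\nabla u^\delta_{T,k,\rho,\e})$. In the limit $\e\downarrow 0$ the slow argument $x_i+\e z$ freezes to $x_i$ by the Lipschitz cross-regularity in Hypothesis~\ref{hypo:reg}, so the rescaled $\gamma$ relates to the regularized stationary corrector $\phi_T(\cdot,\cdot;\tilde A(x_i,\cdot,\omega))$ of $\tilde A(x_i,\cdot,\omega)$ on the ball of radius $\sim H/\e$, with zero Dirichlet data and a boundary oversampling strip of width $(\delta-1)H/\e$. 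Provided $H/\e\to\infty$, Theorem~\ref{th:approx-TR} (and its Richardson-extrapolated variant in Remark~\ref{rem:approx-TRk}) yields that on the inner cell $Q_{H,i}\cap D$ the rescaled $\nabla \gamma^{\delta,H,i}_{T,k,k',\rho,\e}$ is exponentially close in $L^2$ to $M_{H,i}(\nabla u^\delta_{T,k,\rho,\e})\cdot \nabla \phi_{T,k'}(\cdot,\omega;\tilde A(x_i,\cdot,\omega))$, and Theorem~\ref{th:spec} then gives convergence of $\nabla \phi_{T,k'}$ to the true corrector $\nabla\phi(x_i,\cdot,\omega)$ strongly in $L^2(\Omega)$ as $T\uparrow\infty$, which by stationarity can be upgraded to the $L^2$--spatial average over the cell almost surely by the ergodic theorem.

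The main obstacle will be the combined passage to the limit in $\e$, $H$, $T$ (and $\rho$): we need the cell width $H$ to vanish to capture the macroscopic gradient, while $H/\e$ must remain much larger than $\sqrt{T}$ to make the exponential oversampling bound in Theorem~\ref{th:approx-TR} useful, and $T\uparrow\infty$ must be taken \emph{after} $\e\downarrow 0$ (as in the statement). Handling the boundary cells, where $Q^\delta_{H,i}\cap D$ may differ substantially from $Q^\delta_{H,i}$, requires the Lipschitz regularity of $\partial D$ to ensure the measure of such cells is $O(H)$ and their contribution vanishes as $H\downarrow 0$. Once these compatibility conditions are verified (the limits are taken in the order $\e\downarrow 0$, then jointly $\rho\downarrow 0,T\uparrow\infty,H\downarrow 0$ with $H\gg \e\sqrt{T}$), the three error pieces vanish and \eqref{eq:corr-conv-reg} follows.
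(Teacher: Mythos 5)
Your overall architecture for the second half of the argument (freeze the slow variable using the Lipschitz cross-regularity, rescale the cell problems, compare the local Dirichlet regularized correctors to whole-space regularized correctors via Theorem~\ref{th:approx-TR} and Remark~\ref{rem:approx-TRk}, then pass $T\uparrow\infty$ via Theorem~\ref{th:spec} and upgrade $L^2(\Omega)$-statements to spatial cell averages by the ergodic theorem) is essentially the paper's Steps~2--4, and the bookkeeping of boundary cells and of the order of limits is correct. The gap is in your piece~(i) and in the way you propose to control products in pieces~(ii)--(iii).

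Concretely: the ``standard corrector theorem in H-convergence'' (Tartar) gives strong convergence of $\nabla u_\e$ against correctors built from the \emph{actual} operator $-\nabla\cdot A_\e\nabla$ on the domain (local Dirichlet problems, or oscillating test functions adapted to $A_\e$), and even then only in $L^1(D)$ unless one invokes Meyers-type equi-integrability; it does \emph{not} give an almost-sure strong $L^2(D)$ two-scale expansion with the stationary correctors $\nabla\phi(x_i,\cdot/\e,\omega)$ of the frozen fields $\tilde A(x_i,\cdot,\cdot)$. The reference \cite{Bourgeat-Mikelic-Wright-94} provides two-scale convergence \emph{in the mean}, not the pathwise strong $L^2$ statement you need, so piece~(i) is not citable and is essentially as hard as the theorem itself. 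The paper circumvents this entirely: its Step~1 proves $\lim_{H}\lim_{\e}\|\nabla u_\e-C_\e^H\|_{L^2(D)}=0$ for Tartar-type correctors $\gamma_\e^{H,i}$ driven by the non-frozen $A_\e$ and by $M_H(\nabla u_\ho)$, by expanding the quadratic form $\int_D(\nabla u_\e-C_\e^H)\cdot A_\e(\nabla u_\e-C_\e^H)$ and passing to the limit term by term with the div--curl lemma, convergence of energies, and Meyers' estimates for equi-integrability; no stationary corrector appears at that stage. Relatedly, your claimed ``uniform $L^\infty(\Omega)$-control of stationary correctors'' is false in general: for stationary ergodic coefficients one only has $\nabla\phi\in L^2(\Omega,\R^d)$, so the products of macroscopic errors with $\nabla\phi(x_i,\cdot/\e)$ in your pieces~(ii)--(iii) cannot be estimated by H\"older with an $L^\infty$ factor. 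They must instead be handled, for fixed $H$ (finitely many cells), by the ergodic theorem applied to $|\nabla\phi|^2$ cell by cell, or --- as the paper does --- by never estimating such products in norm at all, but only passing to the limit in the four quadratic terms \eqref{eq:Tstandard-corr21}--\eqref{eq:Tstandard-corr23} using weak convergence, compensated compactness, and equi-integrability. Until piece~(i) is replaced by an argument of that type (or an honest proof of the pathwise $L^2$ two-scale expansion for locally stationary fields), the proposal does not close.
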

\noindent We first prove Theorem~\ref{th:main-reg}, and then turn to the proof of~Theorem~\ref{th:corr-reg}.

\medskip

\begin{proof}[Proof of Theorem~\ref{th:main-reg}]
We split the proof into three steps, and assume w.~l.~o.~g. that $|\xi|=|\xi'|=1$.
In the first step we prove that $A_{T,k,\rho,\e}^\delta$ is uniformly bounded for general coefficients, uniformly coercive on an open set of the parameters provided \eqref{eq:thmain-2-reg} holds, and uniformly coercive if the coefficients are symmetric. 
Steps~2 and 3 are dedicated to the proof of \eqref{eq:thmain-2-reg}.

\medskip

\step{1} Uniform ellipticity of $A_{T,k,\rho,\e}^\delta$.

\noindent 
We start with the uniform boundedness.
Since $P_{\rho,x}$ is the $L^2(Q_\rho\cap T_{-x}D)$-projection on $\mu_\rho$-mean free functions, for 
all $\psi \in L^2(Q_\rho\cap T_{-x}D)$, 
\begin{equation}\label{eq:L2projection}
\int_{Q_\rho\cap T_{-x} D} (P_{x,\rho}\psi)^2(y)\mu_\rho(y)dy \,\leq \,
\int_{Q_\rho\cap T_{-x} D} \psi^2(y)\mu_\rho(y)dy.
\end{equation}
By uniform ellipticity of $A_\e$ and Young's inequality,
\begin{eqnarray*}
 |\xi'\cdot A_{T,k,\rho,\e}^\delta(x)\xi|
&\leq & \frac{1}{2}\beta  \int_{Q_{\rho}\cap T_{-x}D}|\xi+P_{\rho,x}(\nabla  v^{\delta\rho,\e}_{T,k})(x,y)|^2\mu_\rho(y)dy \\
&&+\frac{1}{2}\beta\int_{Q_{\rho}\cap T_{-x}D}|\xi'+P_{\rho,x}(\nabla  {v'}^{\delta\rho,\e}_{T,k})(x,y)|^2\mu_\rho(y)dy .
\end{eqnarray*}
Both terms of the RHS are similar and we only treat the first one.
Expanding the square, the cross-terms vanish identically by definition of $P_{\rho,x}$, and we 
are left with
\begin{multline*}
\int_{Q_{\rho}\cap T_{-x}D}|\xi+P_{\rho,x}(\nabla  v^{\delta\rho,\e}_{T,k})(x,y)|^2\mu_\rho(y)dy
\\=\, |\xi|^2 \int_{Q_{\rho}\cap T_{-x}D} \mu_\rho(y)dy+\int_{Q_{\rho}\cap T_{-x}D}|P_{\rho,x}(\nabla  v^{\delta\rho,\e}_{T,k})(x,y)|^2\mu_\rho(y)dy.
\end{multline*}
By \eqref{eq:L2projection} and by definition  of $\mu_\rho$, we have by an a priori estimate
\begin{multline}\label{eq:arg-control-proj}
\int_{Q_{\rho}\cap T_{-x}D}|P_{\rho,x}(\nabla  v^{\delta\rho,\e}_{T,k})(x,y)|^2\mu_\rho(y)dy\,=\,
\int_{Q_{\rho}\cap T_{-x}D}|P_{\rho,x}(\xi+\nabla  v^{\delta\rho,\e}_{T,k})(x,y)|^2\mu_\rho(y)dy
\\
\leq\,\int_{Q_{\rho}\cap T_{-x}D}|\xi+\nabla  v^{\delta\rho,\e}_{T,k}(x,y)|^2\mu_\rho(y)dy \,
\lesssim \, \delta^d |\xi|^2=\delta^d.
\end{multline}
This proves that 
$$
|\xi'\cdot A_{T,k,\rho,\e}^\delta(x)\xi| \,\lesssim\, \delta^d.
$$

\medskip

\noindent We turn now to the uniform coercivity for symmetric coefficients.
By regularity of the domain $D$, there exists $\gamma>0$ such that for all $x\in D$,
\begin{equation}\label{eq:def-gamma}
\int_{Q_{\rho}\cap T_{-x}D}\mu_\rho \, \geq\, \gamma.
\end{equation}
By uniform ellipticity of $A_\e$, and then expanding the square, we obtain
\begin{eqnarray*}
 \xi\cdot A_{T,k,\rho,\e}^\delta(x)\xi
&\geq & \alpha \int_{Q_{\rho}\cap T_{-x}D}|\xi+P_{\rho,x}(\nabla  v^{\delta\rho,\e}_{T,k})(x,y)|^2\mu_\rho(y)dy \\
&= & \alpha \int_{Q_{\rho}\cap T_{-x}D}|\xi|^2 \mu_\rho(y)dy + \alpha \int_{Q_{\delta \rho}\cap T_{-x}D}|P_{\rho,x}(\nabla  v^{\delta\rho,\e}_{T,k})(x,y)|^2 \mu_\rho(y)\\
&& +2 \alpha \xi\cdot \underbrace{\int_{Q_{\rho}\cap T_{-x}D} P_{\rho,x}(\nabla  v^{\delta\rho,\e}_{T,k})(x,y)\mu_\rho(y)dy}_{\displaystyle \stackrel{\eqref{def:mean-free-mu}}{=}\,0} \\
&\stackrel{\eqref{eq:def-gamma}}{\geq} & \gamma \alpha |\xi|^2\,=\,\gamma \alpha .
\end{eqnarray*}

\medskip

\noindent We conclude by the uniform coercivity for non-symmetric coefficients provided \eqref{eq:thmain-2-reg} holds. By the regularity of $D$, for all $\delta \ge 1$, there exists a Lipschitz constant $\kappa_\delta$ such that $x\mapsto A_{T,k,\rho,\e}^\delta$ is $\kappa_\delta$-Lipschitz on $D$ uniformly w.~r.~t $T,k,\rho,\e$. Recall that $A_\ho$ is uniformly coercive, say with constant $\alpha'$.
Choose $\bar \rho$ small enough so that $\kappa_\delta \sqrt{d}\bar \rho\leq \frac{1}{4}\alpha'$, and consider a finite set of points $\{x_i\}_{i\in I}$ such that $D \subset \cup_{i\in I} Q_{\bar \rho}(x_i)$.
By \eqref{eq:thmain-2-reg}, for all $i\in I$ there exists a regime of parameters (say a diagonal extraction) 
with $\rho\le \bar \rho$ for which $|A_{T,k,\rho,\e}^\delta(x_i)-A_\ho(x_i)|\leq \frac{1}{4}\alpha'$. Since the set $I$ is finite, one can take the intersection, which proves there exists 
a global regime of parameters for which $A_{T,k,\rho,\e}^\delta$ is $\frac{1}{2}\alpha'$-coercive on $D$.

\medskip

\step{2} From local ergodicity to ergodicity.

\noindent 
In this step we introduce a proxy $\tilde A_{T,k,\rho,\e}^\delta(x)$ for $A^\delta_{T,k,\rho,\e}(x)$ which is uniformly close to $A^\delta_{T,k,\rho,\e}(x)$ in $\rho$, and for which we can use 
the results of Section~\ref{sec:spectral}.
For all $x\in D$, we define $\tilde A^\delta_{T,k,\rho,\e}(x)$ by: 
\begin{multline*}
\xi' \cdot \tilde A_{T,k,\rho,\e}^\delta(x)\xi :=\int_{Q_{\rho}\cap T_{-x}D} (\xi' +P_{\rho,x}(\nabla_y {w'}^{\delta\rho,\e}_{T,k})(x,y))\\
\cdot \tilde A(x,\frac{x+y}{\e})(\xi+P_{\rho,x}(\nabla_y w^{\delta\rho,\e}_{T,k})(x,y))\mu_{\rho}dy,
\end{multline*}
where $w^{\delta\rho,\e}_{T,k}(x,\cdot),{w'}^{\delta\rho,\e}_{T,k}(x,\cdot)$ are (for $k>1$) the Richardson extrapolations of the unique weak solutions $w^{\delta\rho,\e}_{T,1}(x,\cdot),{w'}^{\delta\rho,\e}_{T,1}(x,\cdot)\in H^1_0(\Rhodrho)$ of
\begin{eqnarray*}
(T\e^2)^{-1} w^{\delta\rho,\e}_{T,1}(x,y)-\nabla \cdot \tilde A(x,\frac{x+y}{\e})(\xi+\nabla_y w^{\delta\rho,\e}_{T,1}(x,y))&=&0,\\
(T\e^2)^{-1} {w'}^{\delta\rho,\e}_{T,1}(x,y)-\nabla \cdot \tilde A^*(x,\frac{x+y}{\e})(\xi'+\nabla_y {w'}^{\delta\rho,\e}_{T,1}(x,y))&=&0.
\end{eqnarray*}
We shall prove that 
\begin{equation}\label{eq:local-ergo}
|A_{T,k,\rho,\e}^\delta(x)-\tilde A_{T,k,\rho,\e}^\delta(x)|\,\lesssim \, \rho
\end{equation}
uniformly in $x$, $T$, $\e$, the randomness, and where the multiplicative constant depends on $k$ next
to $\alpha,\beta$ and $d$.
We rewrite the difference as
\begin{eqnarray*}
\lefteqn{\xi'\cdot (A_{T,k,\rho,\e}^\delta(x)-\tilde A_{T,k,\rho,\e}^\delta(x))\xi }\\
&=&\int_{Q_{\rho}\cap T_{-x}D} (\xi' +P_{\rho,x}(\nabla_y {v'}^{\delta\rho,\e}_{T,k})(x,y))\cdot A_\e(x+y)(\xi+P_{\rho,x}(\nabla_y v^{\delta\rho,\e}_{T,k})(x,y))\mu_{\rho}(y)dy \\
&&-\int_{Q_{\rho}\cap T_{-x}D} (\xi' +P_{\rho,x}(\nabla_y {w'}^{\delta\rho,\e}_{T,k})(x,y))\cdot \tilde A(x,\frac{x+y}{\e})(\xi+P_{\rho,x}(\nabla_y w^{\delta\rho,\e}_{T,k})(x,y))\mu_{\rho}(y)dy
\\
&=&\int_{Q_{\rho}\cap T_{-x}D} (\xi' +P_{\rho,x}(\nabla_y {v'}^{\delta\rho,\e}_{T,k})(x,y))\cdot \tilde A(x,\frac{x+y}{\e})(\xi+P_{\rho,x}(\nabla_y v^{\delta\rho,\e}_{T,k})(x,y))\mu_{\rho}(y)dy\\
&&-\int_{Q_{\rho}\cap T_{-x}D} (\xi'+P_{\rho,x}(\nabla_y {w'}^{\delta\rho,\e}_{T,k})(x,y))\cdot \tilde A(x,\frac{x+y}{\e})(\xi+P_{\rho,x}(\nabla_y w^{\delta\rho,\e}_{T,k})(x,y))\mu_{\rho}(y)dy\\
&&+\int_{Q_{\rho}\cap T_{-x}D} (\xi' +P_{\rho,x}(\nabla_y {v'}^{\delta\rho,\e}_{T,k})(x,y)) \\
&& \qquad \qquad \qquad \cdot (A_\e(x+y)-\tilde A(x,\frac{x+y}{\e}))(\xi+P_{\rho,x}(\nabla_y v^{\delta\rho,\e}_{T,k})(x,y))\mu_{\rho}(y)dy.
\end{eqnarray*}
Using that 
\begin{equation}\label{eq:use-Lip}
|A_\e(x+y)-\tilde A(x,\frac{x+y}{\e})|=|\tilde A(x+y,\frac{x+y}{\e})-\tilde A(x,\frac{x+y}{\e})|\leq \kappa |y|,
\end{equation}
that $|\xi'|=|\xi|=1$, and the estimate \eqref{eq:arg-control-proj}, we may bound 
the third term of the RHS:
\begin{multline}
\int_{Q_{\rho}\cap T_{-x}D} (\xi' +P_{\rho,x}(\nabla_y {v'}^{\delta\rho,\e}_{T,k})(x,y)) \\
 \cdot (A_\e(x+y)-\tilde A(x,\frac{x+y}{\e}))(\xi+P_{\rho,x}(\nabla_y v^{\delta\rho,\e}_{T,k})(x,y))\mu_{\rho}(y)dy.
\\ \,\leq \,
\kappa \rho \delta^d
\,\lesssim\, \rho  \label{eq:local-ergo-er1}.
\end{multline}
We now turn to the first two terms, which we write in the form
\begin{eqnarray*}
&&\int_{Q_{\rho}\cap T_{-x}D} (\xi'+P_{\rho,x}(\nabla_y {v'}^{\delta\rho,\e}_{T,k})(x,y))\cdot \tilde A(x,\frac{x+y}{\e})(\xi+P_{\rho,x}(\nabla_y v^{\delta\rho,\e}_{T,k})(x,y))\mu_{\rho}(y)dy\\
&&-\int_{Q_{\rho}\cap T_{-x}D} (\xi' +P_{\rho,x}(\nabla_y {w'}^{\delta\rho,\e}_{T,k})(x,y))\cdot \tilde A(x,\frac{x+y}{\e})(\xi+P_{\rho,x}(\nabla_y w^{\delta\rho,\e}_{T,k})(x,y))\mu_{\rho}(y)dy\\
&=&\int_{Q_{\rho}\cap T_{-x}D} P_{\rho,x}(\nabla_y {v'}^{\delta\rho,\e}_{T,k}-\nabla_y {w'}^{\delta\rho,\e}_{T,k})(x,y)\\
&& \qquad \qquad\qquad \qquad \cdot \tilde A(x,\frac{x+y}{\e})(\xi+P_{\rho,x}(\nabla_y v^{\delta\rho,\e}_{T,k})(x,y))\mu_{\rho}(y)dy \\
&&+\int_{Q_{\rho}\cap T_{-x}D} (\xi' +P_{\rho,x}(\nabla_y {v'}^{\delta\rho,\e}_{T,k})(x,y)) \\
&& \qquad \qquad\qquad \qquad \cdot \tilde A(x,\frac{x+y}{\e})P_{\rho,x}(\nabla_y {v}^{\delta\rho,\e}_{T,k}-\nabla_y w^{\delta\rho,\e}_{T,k})(x,y)\mu_{\rho}(y)dy,
\end{eqnarray*}
so that by the continuity of $P_{\rho,x}$ on $L^2(Q_{\rho}\cap T_{-x}D)$ and using \eqref{eq:arg-control-proj}, it enough to prove that
\begin{equation}\label{eq:T-sansT-corr}
\int_{\Rhodrho} |\nabla_y {v}^{\delta\rho,\e}_{T,k}(x,y)-\nabla_y w^{\delta\rho,\e}_{T,k}(x,y)|^2dy\,\lesssim \, \rho^2
\end{equation}
in order to deduce \eqref{eq:local-ergo} from \eqref{eq:local-ergo-er1} (the argument for the dual correctors being similar).
By definition of the Richardson extrapolation, this estimate directly follows from the Lipschitz bound \eqref{eq:use-Lip}  and an energy estimate on the equation satisfied by  ${v}^{\delta\rho,\e}_{T,1}(x,\cdot)- w^{\delta\rho,\e}_{T,1}(x,\cdot)$:
\begin{multline*}
(T\e^2)^{-1}({v}^{\delta\rho,\e}_{T,1}(x,y)-w^{\delta\rho,\e}_{T,1}(x,y))-\nabla_y \tilde A(x,\frac{x+y}{\e}) \nabla_y ({v}^{\delta\rho,\e}_{T,1}(x,y)-\nabla_y w^{\delta\rho,\e}_{T,1}(x,y)) \\
\,=\,\nabla_y \cdot (A_\e(x+y)-\tilde A(x,\frac{x+y}{\e})) \nabla_y {v}^{\delta\rho,\e}_{T,1}(x,y).
\end{multline*}

\medskip

\step{3} Limits $\rho\downarrow 0$, $T\uparrow \infty$ and $\e \downarrow 0$.

\noindent For all $x\in D$, there exists $\overline \rho(x)>0$ such that $\Rhodrho=Q_{\delta\rho}$
for all $\rho\leq \overline \rho(x)$. Assume that $\Rhodrho=Q_{\delta\rho}$.
In that case, the change of variables $z=\frac{y}{\e}$ allows one to interprete 
$\tilde A_{T,k,\rho,\e}^\delta(x)$ as the approximation 
``$A_{T,k,R,L,p}(x)$'' of $A_\ho(x)$ defined in \eqref{eq:approx-ATKRLp-v} (with $\tilde A(x,\cdot)$ in place of $A(\cdot)$), with the parameters
\begin{equation}\label{eq:equiv-R-eps-L-delta}
R\,=\,\delta \frac{\rho}{\e},\qquad L\,=\,\frac{\rho}{\e}, \qquad \text{ and }p\in \N_0.
\end{equation}
By Theorem~\ref{th:ATkRLp-approx} we then have the almost sure convergence
\begin{equation}\label{eq:lim-eps-reg}
\lim_{\e\downarrow 0} \tilde A_{T,k,\rho,\e}^\delta(x)\,=\, \tilde A_{T,k}(x),
\end{equation}
where $x$ is seen as a parameter, and $\tilde A_{T,k}(x)$ is associated with
the stationary coefficients $y\mapsto \tilde A(x,y)$.
Note that the limit does not depend on $\rho$, as expected by stationarity of $\tilde A(x,y)$ in $y$.

\medskip

\noindent Theorem~\ref{th:spec} allows one to take the limit $T\uparrow \infty$, which yields for all $x\in D$,
\begin{equation}\label{eq:lim-Tk-reg}
\lim_{T\uparrow \infty}\tilde A_{T,k}(x)\,=\,A_\ho(x).
\end{equation}
Since for all $x\in D$, $\overline \rho(x)>0$ (where the function $\overline \rho$ is defined at the beginning of this step), the combination of \eqref{eq:local-ergo}, \eqref{eq:lim-eps-reg}, and \eqref{eq:lim-Tk-reg}
shows that for all $x\in D$
$$
\limsup_{T\uparrow \infty}\limsup_{\rho\downarrow 0}\limsup_{\e \downarrow 0} |A_{T,k,\rho,\e}^\delta(x)-A_\ho(x)|\,=\,0,
$$
and concludes the proof of \eqref{eq:thmain-2-reg}.

\medskip

\noindent 
Note that for any domain $\tilde D$ compactly included in $D$, $\inf_{x \in \tilde D} \overline \rho(x)>0$.
If $\rho \leq \overline \rho(x)$, the limit $\tilde A_{T,k}(x)$ of $\tilde A_{T,k,\rho,\e}^\delta(x)$ as $\e\downarrow 0$ exists almost surely and does not depend on $\rho$.
Hence, for all $x\in \tilde D$, if $\rho\leq \inf_{x \in \tilde D} \overline \rho(x)$ we thus have
using \eqref{eq:lim-eps-reg} and \eqref{eq:local-ergo},
\begin{eqnarray*}
\limsup_{\e \downarrow 0}|A_{T,k,\rho,\e}^\delta(x)-A_\ho(x)|&\leq& \lim_{\e \downarrow 0}|\tilde A_{T,k,\rho,\e}^\delta(x)-A_\ho(x)|\\
&& \qquad +\limsup_{\e \downarrow 0}|A_{T,k,\rho,\e}^\delta(x)-\tilde A_{T,k,\rho,\e}^\delta(x)| 
\\
&\leq &|\tilde A_{T,k}(x)-A_\ho(x)|+\rho,
\end{eqnarray*}
and the uniformity of the convergence in $T$ w.~r.~t. $\rho\leq \inf_{x \in \tilde D} \overline \rho(x)$ follows.

\medskip
\noindent
We prove conclude this step with the proof of \eqref{eq:thmain-2-reg2}.
For all $\rho>0$, set $D_\rho:=\{x\in D\,|\,\rho< \overline{\rho}(x)\}$.
Since $D$ has a Lipschitz boundary, 
\begin{equation}\label{eq:DrhoD}
\lim_{\rho\downarrow 0} |D\setminus D_\rho|\,=\,0.
\end{equation}
Let $1\leq p<\infty$.
By the uniform boundedness of $A_\ho$ and $A_{T,k,\rho,\e}^\delta$ and the definition of $D_\rho$, we have
\begin{eqnarray}
\limsup_{\e\downarrow 0} \|A_{T,k,\rho,\e}^\delta-A_\ho\|_{L^p(D)} &\leq& \limsup_{\e \downarrow 0}\|A_{T,k,\rho,\e}^\delta-A_\ho\|_{L^p(D_\rho)}\nonumber
\\
&& \qquad +\limsup_{\e \downarrow 0}\|A_{T,k,\rho,\e}^\delta-A_\ho\|_{L^p(D\setminus D_\rho)} \nonumber\\
&\lesssim & \|\tilde A_{T,k}-A_\ho\|_{L^p(D)}+\rho+|D\setminus D_\rho|^{\frac{1}{p}},\label{eq:useful-proof-coro}
\end{eqnarray}
and the conclusion follows from \eqref{eq:DrhoD}, \eqref{eq:lim-Tk-reg}, and the Lebesgue dominated convergence theorem. 
\end{proof}

\medskip

\noindent Corollary~\ref{coro:reg} is a consequence of Theorem~\ref{th:main-reg} (using in particular \eqref{eq:useful-proof-coro}) and of the following lemma:
\begin{lemma}\label{6:lemma:ptwise-conv-sol}
Let $(A_{h_1,h_2})_{h_1,h_2>0}$ be uniformly elliptic for all $h_1,h_2$ small enough, $A\in \Mab(D)$, and $(f_{h_1,h_2})_{h_1,h_2>0},f \in H^{-1}(D)$ be such that 
$f_{h_1,h_2} \to f$ in $H^{-1}(D)$ as $h_1,h_2\downarrow 0$ (in any order), and 
\begin{itemize}
\item For all $x\in D$, $\lim_{h_1\downarrow 0}\lim_{h_2\downarrow 0} A_{h_1,h_2}(x)\,=\,A(x)$;
\item There exist two bounded functions $g_1,g_2:\R^+\times D\to \R^+$ such that for all $x\in D$, $\lim_{h\to 0} g_1(x,h)=\lim_{h\to 0}g_2(x,h)=0$, and for all $h_2>0$ there exists a measurable subset $D_{h_2}$ of $D$ such that
$\lim_{h_2\to 0}|D\setminus D_{h_2}|=0$ and such that for all $x\in D_{h_2}$ and all $h_1>0$,
$$
|A_{h_1,h_2}(x)-A(x)|\,\leq \, g_1(x,h_1)+g_2(x,h_2).
$$
\end{itemize}
Then the unique weak solution $u_{h_1,h_2} \in H^1_0(D)$ of
\begin{equation*}
-\nabla \cdot A_{h_1,h_2} \nabla u_{h_1,h_2} =f_{h_1,h_2}
\end{equation*}
converges in $H^1(D)$ as $h_1,h_2\downarrow 0$ to the unique weak solution $u$ in $H^1_0(D)$ of
\begin{equation*}
-\nabla \cdot A \nabla u =f,
\end{equation*}
and the order of convergence of $h_1$ and $h_2$ is irrelevant.
\end{lemma}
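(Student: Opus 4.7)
The strategy is to first upgrade the hypotheses on $A_{h_1,h_2}$ to strong $L^p$-convergence towards $A$ for every $p<\infty$, to deduce from this the weak convergence of $u_{h_1,h_2}$ to $u$ in $H^1_0(D)$, and finally to turn this weak convergence into a strong one via the energy method. All convergences will be justified by dominated convergence arguments that are manifestly insensitive to the order in which $(h_1,h_2)\downarrow 0$.

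First, I would show that $\|A_{h_1,h_2}-A\|_{L^p(D)}\to 0$ for every $1\le p<\infty$. Split the integral along the exceptional set: on $D\setminus D_{h_2}$ the integrand is controlled by the uniform $L^\infty$-bound on $A_{h_1,h_2}-A$ together with $|D\setminus D_{h_2}|\to 0$; on $D_{h_2}$ the pointwise bound $|A_{h_1,h_2}-A|\le g_1(\cdot,h_1)+g_2(\cdot,h_2)$, the boundedness of $g_1,g_2$, and the pointwise limits $g_i(x,h)\to 0$ allow one to conclude by dominated convergence. The argument is symmetric in $h_1$ and $h_2$.

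Second, the uniform ellipticity of $A_{h_1,h_2}$ for $h_1,h_2$ small, together with the $H^{-1}$-boundedness of $f_{h_1,h_2}$, yields a uniform $H^1_0(D)$-bound on $u_{h_1,h_2}$. Hence any subsequence has a weakly convergent sub-subsequence with limit $u^\star \in H^1_0(D)$. The $L^p$-convergence from Step~1 makes it straightforward to pass to the limit in the weak form of the equation, identifying $u^\star$ as the unique solution $u$ of $-\nabla\cdot A \nabla u = f$; by uniqueness of the limit, the whole family converges weakly in $H^1_0(D)$.

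Third, I would use $u_{h_1,h_2}-u \in H^1_0(D)$ as a test function in the equation for $u_{h_1,h_2}$ together with uniform ellipticity (with constant $\alpha'>0$) to write
\begin{equation*}
\alpha' \|\nabla(u_{h_1,h_2}-u)\|_{L^2(D)}^2 \,\le\, \langle f_{h_1,h_2},u_{h_1,h_2}-u\rangle_{H^{-1},H^1_0} - \int_D \nabla(u_{h_1,h_2}-u)\cdot A_{h_1,h_2}\nabla u.
\end{equation*}
The duality pairing vanishes by weak convergence of $u_{h_1,h_2}-u$ to $0$ in $H^1_0(D)$ combined with strong $H^{-1}$-convergence of $f_{h_1,h_2}$ to $f$. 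For the remaining integral I would split $A_{h_1,h_2}\nabla u = (A_{h_1,h_2}-A)\nabla u + A\nabla u$: the first piece converges strongly to $0$ in $L^2(D)$ by a dominated convergence argument identical to that of Step~1 applied to $|A_{h_1,h_2}-A|^2|\nabla u|^2$, and pairs against a weakly null sequence in $L^2(D)$; the second piece vanishes directly by weak convergence against $A\nabla u\in L^2(D)$. This gives strong $H^1$-convergence, and then strong $L^2$-convergence by Poincar\'e's inequality.

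The main technical point is to organise the dominated convergence arguments so that they work indifferently in $h_1$ or $h_2$: the asymmetric bound on $|A_{h_1,h_2}-A|$, valid only on the shrinking set $D_{h_2}$, initially suggests that the order of the limits matters, but the uniform $L^\infty$-bound on $A_{h_1,h_2}-A$ combined with $|D\setminus D_{h_2}|\to 0$ absorbs the asymmetry and makes the whole argument order-independent.
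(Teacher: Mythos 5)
Your proof is correct, but it is organised differently from the paper's. You proceed in three stages: (i) upgrade the hypotheses to $L^p$-convergence of $A_{h_1,h_2}$ to $A$; (ii) extract a weakly convergent subsequence of $u_{h_1,h_2}$ and identify its limit as $u$ by passing to the limit in the weak formulation; (iii) test the equation for $u_{h_1,h_2}$ with $u_{h_1,h_2}-u$ and use the weak convergence from (ii) to kill the terms $\langle f, u_{h_1,h_2}-u\rangle$ and $\int_D\nabla(u_{h_1,h_2}-u)\cdot A\nabla u$. The paper skips the compactness and identification step entirely: it subtracts the weak formulations of the two equations tested with $u_{h_1,h_2}-u$, which produces the single error term $\int_D\nabla(u_{h_1,h_2}-u)\cdot(A_{h_1,h_2}-A)\nabla u$ plus the $f$-discrepancy; the gradient factor is then absorbed into the coercive left-hand side by Cauchy--Schwarz, Young and Poincar\'e, leaving only $\int_D|(A_{h_1,h_2}-A)\nabla u|^2$ (up to constants) and $\|f_{h_1,h_2}-f\|_{H^{-1}(D)}$ to estimate. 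The decisive ingredient is the same in both arguments and you have it exactly right: split $\int_D|A_{h_1,h_2}-A|^2|\nabla u|^2$ into $D_{h_2}$, where the bound $g_1+g_2$ and dominated convergence apply, and $D\setminus D_{h_2}$, where the uniform $L^\infty$ bound on $A_{h_1,h_2}-A$ together with $|D\setminus D_{h_2}|\to 0$ and $\nabla u\in L^2(D)$ does the job; this splitting is also what makes the order of the limits irrelevant. What the paper's route buys is brevity and the absence of any subsequence extraction; what yours buys is a slightly more modular argument (the $L^p$-convergence of the coefficients and the weak convergence of the solutions are established as standalone facts), at the cost of an extra step that the final energy estimate renders redundant.
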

\begin{proof}[Proof of Lemma~\ref{6:lemma:ptwise-conv-sol}]
Substract the weak forms of the two equations tested with the admissible test-function $u_{h_1,h_2}-u\in H^1_0(D)$. This yields
\begin{equation*}
\int_D \nabla (u_{h_1,h_2}-u) \cdot (A_{h_1,h_2} \nabla u_{h_1,h_2}-A\nabla u)=\Big( f_{h_1,h_2}-f,u_{h_1,h_2}-u\Big)_{H^{-1}(D),H^1_0(D)},
\end{equation*}
where $\big(\cdot,\cdot\big)_{H^{-1}(D),H^1_0(D)}$ denotes the duality pairing between $H^{-1}(D)$ and $H^1_0(D)$. We rewrite this equation in the form
\begin{multline}\label{eq:ref-pr-HMM-corr-disc}
\int_D \nabla (u_{h_1,h_2}-u) \cdot A_{h_1,h_2} \nabla (u_{h_1,h_2}-u)\\
=-\int_D \nabla(u_{h_1,h_2}-u)\cdot  (A_{h_1,h_2}-A)\nabla u  +\Big( f_{h_1,h_2}-f,u_{h_1,h_2}-u\Big)_{H^{-1}(D),H^1_0(D)}.
\end{multline}
Using the ellipticity of $A_{h_1,h_2}$, Cauchy-Schwarz', Young's, and Poincar\'e's inequalities, this turns into
\begin{equation*}
\|\nabla (u_{h_1,h_2}-u)\|^2_{L^2(D)} 
\,\lesssim \, \int_D |\nabla u \cdot (A_{h_1,h_2}-A)\nabla u|+\|f_{h_1,h_2}-f\|_{H^{-1}(D)}.
\end{equation*}
By assumption, the second term of the RHS goes to zero as $h_1,h_2\downarrow 0$.
It remains to treat the first term. We split the integral into two terms:
\begin{eqnarray*}
\int_D |\nabla u \cdot (A_{h_1,h_2}-A)\nabla u| & =& \int_{D_{h_2}} |\nabla u \cdot (A_{h_1,h_2}-A)\nabla u|+\int_{D\setminus D_{h_2}} |\nabla u \cdot (A_{h_1,h_2}-A)\nabla u| \\
&\leq &\int_{D} |\nabla u|^2 (g_1(x,h_1)+g_2(x,h_2))+2\beta \int_{D\setminus D_{h_2}} |\nabla u|^2.
\end{eqnarray*}
The first term of the RHS converges to zero as $h_1,h_2\downarrow 0$ by the Lebesgue dominated convergence theorem, while the second term converges to zero as $h_2\downarrow 0$ because $|D\setminus D_{h_2}|\downarrow 0$ and $\nabla u \in L^2(D)$. This concludes the proof of the lemma.
\end{proof}

\medskip

\noindent The proof of Theorem~\ref{th:corr-reg}, which is rather long and technical, is somewhat counter-intuitive: Although we expect the regularization and extrapolation method to enhance the convergence, our proof rather shows
that the method does not worsen the convergence of the standard method based on Dirichlet boundary conditions (without regularization and extrapolation) analyzed in \cite{Gloria-06b}.
The proof has essentially two parts: first we prove the numerical corrector result with the standard approximation of the corrector, and we then prove that the difference between that approximation and the one with regularization and extrapolation vanishes in a suitable sense.
For the first part, the main idea is to use Tartar's correctors on each element $Q_{H,i}\cap D$ of the partition of $D$, pass to the limit in $\e$ first, and then in $H$.
The second part of the proof makes crucial use of the ergodic theorem and of Theorem~\ref{th:spec}.
\begin{proof}[Proof of Theorem~\ref{th:corr-reg}]

We divide the proof into four steps.

\medskip

\step{1} Standard corrector result.

\noindent For all $1\leq i\leq I_H$ we define $\gamma_{\e}^{H,i}$ as the
unique weak solution in $H^1_0(Q_{H,i}\cap D)$ of
\begin{equation}\label{eq:corr-rev1}
-\nabla \cdot A_{\e} (M_H(\nabla u_{\ho})+ \nabla \gamma_{\e}^{H,i})\,=\,0,
\end{equation}
that we extend by zero on $D\setminus Q_{H,i}$ as a function of $H^1_0(D)$.
We then define the following variant of Tartar's corrector:
\begin{equation*}
C_{\e}^{H}\,:=\,\sum_{i=1}^{I_H}\nabla u_{\e}^{H,i},
\end{equation*}
where $\nabla u_{\e}^{H,i} \in L^2(D)$ has support in $Q_{H,i}\cap D$ and is given by
\begin{equation}
\nabla u_{\e}^{H,i}\,:=\, \Big( M_H(\nabla u_{\ho})+ \nabla \gamma_{\e}^{H,i} \Big)1_{Q_{H,i}\cap D}\label{eq:corr-rev7}.
\end{equation}
The aim of this step is to prove that almost surely
\begin{equation}\label{eq:standard-corr}
\lim_{H\downarrow 0}\lim_{\e \downarrow 0} \|\nabla u_\e - C_{\e}^{H}\|_{L^2(D)}\,=\,0.
\end{equation}
As we shall see this result is deterministic and follows from H-convergence.

\medskip
\noindent By ellipticity of $A_\e$, 
\begin{multline}\label{eq:dble-prod}
\|\nabla u_\e - C_{\e}^{H}\|_{L^2(D)}^2\,\lesssim\,\int_D (\nabla u_\e-C_{\e}^{H})\cdot A_\e (\nabla u_\e-C_{\e}^{H}) 
\\
=\,\int_D\nabla u_\e \cdot A_\e \nabla u_\e+\int_D C_{\e}^{H}\cdot A_\e \nabla C_{\e}^{H}
\\
-\sum_{i=1}^{I_H}\int_{D} \nabla u_\e\cdot A_\e\nabla u_{\e}^{H,i} -\sum_{i=1}^{I_H}\int_{D} \nabla u_{\e}^{H,i} \cdot A_\e\nabla u_\e.
\end{multline}
We first argue that 
\begin{equation}\label{eq:standard-corr-1}
\lim_{H\downarrow 0}\lim_{\e \downarrow 0} 
\sum_{i=1}^{I_H}\int_{D} \nabla u_{\e}^{H,i} \cdot A_\e\nabla u_\e\,=\, \int_D \nabla u_\ho\cdot A_\ho \nabla u_\ho.
\end{equation}
Since $\gamma_{\e}^{H,i} \in H^1_0(D)$, this term takes the form
$$
\sum_{i=1}^{I_H}\int_{D} \nabla u_{\e}^{H,i} \cdot A_\e\nabla u_\e\,=\,\sum_{i=1}^{I_H} \Big(f,\gamma_{\e}^{H,i}\Big)_{H^{-1}(D),H^1_0(D)}+\int_{D} M_H(\nabla u_{\ho})\cdot A_\e\nabla u_\e
$$
using the weak form of the defining equation for $u_\e$.
On the one hand, since $A_\e\nabla u_\e \rightharpoonup A_\ho \nabla u_\ho$ weakly in $L^2(D,\R^d)$
by homogenization
and $M_H$ converges to the identity on $L^2(D,\R^d)$, 
\begin{equation}\label{eq:standard-corr-2}
\lim_{H\downarrow 0} \lim_{\e \downarrow 0} \int_{D} M_H(\nabla u_{\ho})\cdot A_\e\nabla u_\e \,=\,\int_D \nabla u_\ho\cdot A_\ho \nabla u_\ho.
\end{equation}
On the other hand, homogenization ensures that $\gamma_{\e}^{H,i} \stackrel{\e \downarrow 0}{\to} \gamma_{\ho}^{H,i}$ weakly in $H^1_0(Q_{H,i}\cap D)$ (and therefore weakly in 
$H^1_0(D)$) almost surely, where  $\gamma_{\ho}^{H,i}$ is the unique weak solution in $H^1_0(Q_{H,i}\cap D)$ (extended by zero as
a $H^1_0(D)$ function on $D\setminus Q_{H,i}$) of
$$
-\nabla \cdot A_{\ho} (M_H(\nabla u_{\ho})+ \nabla \gamma_{\ho}^{H,i})\,=\,0.
$$
Since $A_\ho$ is $\kappa$-Lipschitz, for all $x\in Q_{H,i}\cap D$, $|A_\ho(x)-M_{H,i}(A_\ho)|\lesssim \kappa H$.
Hence, an energy estimate yields
\begin{eqnarray*}
\int_{Q_{H,i}\cap D} \nabla \gamma_{\ho}^{H,i}\cdot A_\ho \nabla \gamma_{\ho}^{H,i}
&=&\int_{Q_{H,i}\cap D} \nabla \gamma_{\ho}^{H,i}\cdot A_\ho M_H(\nabla u_{\ho})
\\ 
&=&
\Big(\int_{Q_{H,i}\cap D} \nabla \gamma_{\ho}^{H,i}\Big)\cdot M_{H,i}(A_\ho)M_{H,i}(\nabla u_{\ho}) \\
&&+\int_{Q_{H,i}\cap D} \nabla \gamma_{\ho}^{H,i}\cdot (A_\ho -M_H(A_\ho))M_H(\nabla u_{\ho})  \\
&\lesssim& \kappa H \|\nabla \gamma_{\ho}^{H,i}\|_{L^2(Q_{H,i}\cap D)} \|\nabla u_\ho\|_{L^2(Q_{H,i}\cap D)} ,
\end{eqnarray*}
since $\int_{Q_{H,i}\cap D} \nabla \gamma_{\ho}^{H,i}=0$ and $M_{H,i}$ is a contraction.
Setting $\gamma_\ho^H=\sum_{i=1}^{I_H} \gamma_\ho^{H,i}1_{Q_{H,i}\cap D}\in H^1_0(D)$, this implies by Poincar\'e's inequality
\begin{equation}\label{eq:gamma-to-0}
\|\gamma_{\ho}^H\|_{H^1(D)}\,\lesssim \, H \|\nabla u_\ho\|_{L^2(D)} \stackrel{H\downarrow 0}{\longrightarrow} 0,
\end{equation}
so that
\begin{equation}\label{eq:standard-corr-3}
\lim_{H\downarrow 0} \lim_{\e \downarrow 0} \sum_{i=1}^{I_H} \Big(f,\gamma_{\e}^{H,i}\Big)_{H^{-1}(D),H^1_0(D)} \,=\, \lim_{H\downarrow 0}\Big(f,\gamma_{\ho}^{H}\Big)_{H^{-1}(D),H^1_0(D)} \,=\,0.
\end{equation}
Estimate~\eqref{eq:standard-corr-1} then follows from \eqref{eq:standard-corr-2} and \eqref{eq:standard-corr-3}.

\medskip

\noindent Next, we prove that 
\begin{equation}\label{eq:standard-corr-1bis}
\lim_{H\downarrow 0}\lim_{\e \downarrow 0} 
\sum_{i=1}^{I_H}\int_{D} \nabla u_{\e} \cdot A_\e\nabla u_\e^{H,i}\,=\, \int_D \nabla u_\ho\cdot A_\ho \nabla u_\ho.
\end{equation}
By compensated compactness on each $Q_{H,i}$, $\nabla u_\e\cdot A_\e\nabla u_\e^{H,i}$ converges  in the sense of distributions to 
$\nabla u_\ho\cdot A_\ho(M_{H,i}(\nabla u_\ho)+\nabla \gamma^{H,i}_\ho)$.
To upgrade this convergence into a weak convergence in $L^1(Q_{H,i})$, we need to prove that $\nabla u_\e\cdot A_\e\nabla u_\e^{H,i}$ is equi-integrable, which follows itself from the uniform boundedness of $\nabla u_\e\cdot A_\e\nabla u_\e^{H,i}$ in $L^q(Q_{H,i})$ for some $q>1$. The latter is a consequence of Meyers' estimate, which ensures that $\nabla \gamma^{H,i}_\e$ is uniformly bounded in $L^{\tilde q}(Q_{H,i})$ for some $\tilde q>2$.
Hence,
$$
\lim_{\e \downarrow 0} 
\sum_{i=1}^{I_H}\int_{D} \nabla u_{\e} \cdot A_\e\nabla u_\e^{H,i}\,=\, \int_D \nabla u_\ho\cdot A_\ho (M_H(\nabla u_\ho)+\gamma^H_\ho),
$$
from which \eqref{eq:standard-corr-1bis} follows by convergence of $M_H$ to the identity on $L^2(D)$ and \eqref{eq:gamma-to-0}.

\medskip

\noindent We finally turn to the first two terms of the RHS of \eqref{eq:dble-prod}. Since H-convergence implies the convergence of the energy, we have 
\begin{equation}\label{eq:standard-corr-6}
\lim_{\e \downarrow 0} \int_D \nabla u_\e \cdot A_\e \nabla u_\e \,=\,\int_D \nabla u_\ho \cdot A_\ho \nabla u_\ho. 
\end{equation}
Likewise, for all $1\leq i\leq I_H$,
\begin{multline*}
\lim_{\e \downarrow 0} \int_{Q_{H,i}\cap D}(M_H(\nabla u_\ho)+\nabla \gamma_{\e}^{H,i} )\cdot A_\e (M_H(\nabla u_\ho)+\nabla \gamma_{\e}^{H,i})
\\
\,=\,\int_{Q_{H,i}\cap D}(M_H(\nabla u_\ho)+\nabla \gamma_{\ho}^{H,i} )\cdot A_\ho (M_H(\nabla u_\ho)+\nabla \gamma_{\ho}^{H,i}),
\end{multline*}
so that by summation over $i$ and defining $\gamma_\e^H:=\sum_{i=1}^{I_H} \gamma_\e^{H,i} \in H^1_0(D)$,
\begin{multline*}
\lim_{\e \downarrow 0} \int_{D}(M_H(\nabla u_\ho)+\nabla \gamma_{\e}^{H} )\cdot A_\e (M_H(\nabla u_\ho)+\nabla \gamma_{\e}^{H})
\\
\,=\,\int_{D}(M_H(\nabla u_\ho)+\nabla \gamma_{\ho}^{H} )\cdot A_\ho (M_H(\nabla u_\ho)+\nabla \gamma_{\ho}^{H}).
\end{multline*}
Since $M_H$ converges to the identity on $L^2(D)$ and $\gamma_\ho^H$ converges to zero in $H^1(D)$, this implies in particular
\begin{equation}\label{eq:standard-corr-7}
\lim_{H\downarrow 0} \lim_{\e \downarrow 0} \int_{D}(M_H(\nabla u_\ho)+\nabla \gamma_{\e}^{H} )\cdot A_\e (M_H(\nabla u_\ho)+\nabla \gamma_{\e}^{H})
\,=\,\int_{D}\nabla u_\ho\cdot A_\ho \nabla u_\ho.
\end{equation}
The combination of \eqref{eq:dble-prod}, \eqref{eq:standard-corr-1}, \eqref{eq:standard-corr-6}, and \eqref{eq:standard-corr-7}
yields \eqref{eq:standard-corr}.

\medskip

\step{2} First auxilary numerical corrector and reformulation of \eqref{eq:corr-conv-reg}.

\noindent In view of \eqref{eq:standard-corr}, it enough to prove that almost-surely for all $k'\in \N$,
\begin{equation}\label{eq:Tstandard-corr} 
\limsup_{T\uparrow \infty, H,\rho\downarrow 0} \limsup_{\e \downarrow 0} \|C_{T,k,k',\rho,\e}^{\delta,H}-C_{\e}^{H}\|_{L^2(D)}\,=\,0.
\end{equation}
We introduce the auxiliary numerical corrector $C_{T,k',\e}^{\delta,H}\in L^2(D,\R^d)$ defined as follows.
The function $\gamma_{T,1,\e}^{\delta,H,i}$ is associated with $u_\ho$  as
the unique weak solution in $H^1_0(Q_{H,i}^\delta\cap D)$ of
\begin{equation}\label{eq:def-corr-reg-hom}
(T\e^2)^{-1}\gamma_{T,1,\e}^{\delta,H,i} -\nabla \cdot A_\e \bigg( M_{H,i}(\nabla u_{\ho})+\nabla \gamma_{T,1,\e}^{\delta,H,i} \bigg)\,=\,0,
\end{equation}
and we define $\gamma_{T,k',\e}^{\delta,H,i}$ for all $k'>1$ by Richardson extrapolation, cf. Definition~\ref{def:extra}.
We then set for all $k' \in \N$ and for all $1\leq i\leq I_H$,
\begin{equation*}
\nabla u_{T,k',\e}^{\delta,H,i}\,:=\,M_H(\nabla u_{\ho})|_{Q_{H,i}\cap D}+(\nabla \gamma_{T,k',\e}^{\delta,H,i}) |_{Q_{H,i}\cap D} \in L^2(Q_{H,i}\cap D),
\end{equation*}
and finally define the numerical corrector
\begin{equation*}
C^{\delta,H}_{T,k',\e}\,=\,\sum_{i=1}^{I_H}\nabla   u_{T,k',\e}^{\delta,H,i} 1_{Q_{H,i}}.
\end{equation*}
Substracting the equations \eqref{eq:def-corr-reg} and \eqref{eq:def-corr-reg-hom} for $\gamma_{T,k,1,\rho,\e}^{\delta,H,i}$ and $\gamma_{T,1,\e}^{\delta,H,i}$, respectively, yields the following energy estimate
\begin{equation}
\|\nabla \gamma_{T,k,1,\rho,\e}^{\delta,H,i}-\nabla \gamma_{T,1,\e}^{\delta,H,i}\|_{L^2(Q_{H,i}^\delta\cap D)}^2
\,\lesssim \, |Q_{H,i}^\delta\cap D| |M_{H,i}(\nabla u_{T,k,\rho,\e}^\delta-\nabla u_\ho)|^2,
\end{equation}
so that, using that $M_H$ is a contraction on $L^2(D,\R^d)$, 
\begin{equation*}
\|C_{T,k,1,\rho,\e}^{\delta,H}-C_{T,1,\e}^{\delta,H}\|_{L^2(D)} \,\lesssim\,
\|\nabla u_{T,k,\rho,\e}^\delta-\nabla u_\ho\|_{L^2(D)}.
\end{equation*}
By definition of the Richardson extrapolation this yields for all $k'\in \N$,
\begin{equation*}
\|C_{T,k,k',\rho,\e}^{\delta,H}-C_{T,k',\e}^{\delta,H}\|_{L^2(D)} \,\lesssim\,
\|\nabla u_{T,k',\rho,\e}^\delta-\nabla u_\ho\|_{L^2(D)},
\end{equation*}
and therefore by Corollary~\ref{coro:reg}: 
\begin{equation}\label{eq:Tstandard-corr1} 
\limsup_{T\uparrow \infty,\rho \downarrow 0} \limsup_{\e \downarrow 0} \|C_{T,k,k',\rho,\e}^{\delta,H}-C_{T,k',\e}^{\delta,H}\|_{L^2(D)} \,=\,0,
\end{equation}
uniformly with respect to $H$, and almost surely.
Hence, the claim \eqref{eq:Tstandard-corr} follows from \eqref{eq:Tstandard-corr1}
provided we prove that almost-surely
\begin{equation}\label{eq:Tstandard-corr2} 
\lim_{T\uparrow \infty,H\downarrow 0} \lim_{\e \downarrow 0} \|C_{T,k',\e}^{\delta,H}-C_\e^H \|_{L^2(D)} \,=\,0.
\end{equation}

\medskip

\step{3} Further auxiliary numerical correctors and reformulation of~\eqref{eq:Tstandard-corr2}.

\noindent For all $1\leq i \leq I_H$, fix some arbitrary $x_i \in Q_{H,i}$ and set for all $x\in Q_{H,i}^\delta$, $A_{\e,i}(x):=\tilde A(x_i,\frac{x}{\e})$. The approximation of $A_\e$ by $A_{\e,i}$ on $Q_{H,i}$ (and $Q_{H,i}^\delta$) allows us to appeal to the results we proved in Sections~\ref{sec:spectral} and~\ref{sec:tests} for \emph{stationary} coefficients.
The error we make by replacing $A_\e$ by $A_{\e,i}$ is then controlled using the Lipschitz condition in Hypothesis~\ref{hypo:reg}.

\smallskip

\noindent We introduce the auxiliary numerical correctors $\tilde C_\e^H, \tilde C_{T,k',\e}^{\delta,H}\in L^2(D,\R^d)$ 
associated with $\tilde A_\e$ as follows.
The functions $\tilde \gamma_\e^{H,i}$ and $\tilde \gamma_{T,1,\e}^{\delta,H,i}$ are the unique weak solutions in $H^1_0(Q_{H,i}\cap D)$ and $H^1_0(Q_{H,i}^\delta\cap D)$, respectively, of 
\begin{eqnarray}\label{eq:def-corr-reg-hom-tilde}
-\nabla \cdot A_{\e,i} \bigg( M_{H,i}(\nabla u_{\ho})+\nabla \tilde \gamma_{\e}^{H,i} \bigg)\,=\,0, 
\\
(T\e^2)^{-1}\tilde \gamma_{T,1,\e}^{\delta,H,i} -\nabla \cdot A_{\e,i}  \bigg( M_{H,i}(\nabla u_{\ho})+\nabla \tilde \gamma_{T,1,\e}^{\delta,H,i} \bigg)\,=\,0, \label{eq:def-corr-reg-tilde}
\end{eqnarray}
and $\tilde \gamma_{T,k',\e}^{\delta,H,i}$ is defined for all $k'>1$ by Richardson extrapolation of $\tilde \gamma_{T,1,\e}^{\delta,H,i}$, cf. Definition~\ref{def:extra}.
We then set for all $k' \in \N$ and for all $1\leq i\leq I_H$,
\begin{eqnarray*}
\nabla \tilde u_{\e}^{H,i}\,:=\,M_H(\nabla u_{\ho})|_{Q_{H,i}\cap D}+(\nabla \tilde \gamma_{\e}^{H,i}) |_{Q_{H,i}\cap D} \in L^2(Q_{H,i}\cap D),\\
\nabla \tilde u_{T,k',\e}^{\delta,H,i}\,:=\,M_H(\nabla u_{\ho})|_{Q_{H,i}\cap D}+(\nabla \tilde \gamma_{T,k',\e}^{\delta,H,i}) |_{Q_{H,i}\cap D} \in L^2(Q_{H,i}\cap D),
\end{eqnarray*}
and finally define the numerical correctors
\begin{eqnarray*}
\tilde C^{H}_{\e}\,=\,\sum_{i=1}^{I_H}\nabla  \tilde u_{\e}^{H,i} 1_{Q_{H,i}\cap D},
\qquad
\tilde C^{\delta,H}_{T,k',\e}\,=\,\sum_{i=1}^{I_H}\nabla \tilde u_{T,k',\e}^{\delta,H,i} 1_{Q_{H,i}\cap D}.
\end{eqnarray*}
From \eqref{eq:corr-rev1} and \eqref{eq:def-corr-reg-hom-tilde} on the one hand, 
and \eqref{eq:def-corr-reg-hom} and \eqref{eq:def-corr-reg-tilde} on the other hand, we infer that
\begin{equation*}
-\nabla \cdot A_\e \nabla (\gamma_{\e}^{H,i}-\tilde \gamma_{\e}^{H,i})\,=\, -\nabla \cdot (A_{\e,i}-A_\e) (M_H(\nabla u_\ho)+\nabla \tilde \gamma_{\e}^{H,i}) \quad \mbox{ in }Q_{H,i}\cap D,
\end{equation*}
and 
\begin{multline*}
{(T\e^2)^{-1}(\gamma_{T,1,\e}^{\delta,H,i}-\tilde \gamma_{T,1,\e}^{\delta,H,i}) -\nabla \cdot A_{\e}  \nabla (\gamma_{T,1,\e}^{\delta,H,i}-\tilde \gamma_{T,1,\e}^{\delta,H,i}) }
\\
\,=\,-\nabla \cdot (A_{\e,i}-A_\e) (M_H(\nabla u_\ho)+\nabla \tilde \gamma_{T,1,\e}^{\delta,H,i}) \quad \mbox{ in }Q_{H,i}^\delta\cap D,
\end{multline*}
so that energy estimates combined with the fact that $\tilde A(\cdot,\cdot)$ is $\kappa$-Lipschitz with respect to its first variable yields
\begin{eqnarray*}
\|\nabla \gamma_{\e}^{H,i}-\nabla \tilde \gamma_{\e}^{H,i}\|_{L^2(Q_{H,i}\cap D)}^2 &\lesssim &
(\kappa H)^2 |Q_{H,i}\cap D||M_{H,i}(\nabla u_\ho)|^2,\\
\|\nabla\gamma_{T,1,\e}^{\delta,H,i}-\nabla \tilde \gamma_{T,1,\e}^{\delta,H,i}\|_{L^2(Q_{H,i}\cap D)}^2 &\lesssim &
(\kappa H)^2 |Q_{H,i}^\delta\cap D||M_{H,i}(\nabla u_\ho)|^2,
\end{eqnarray*}
from which we deduce, using that $M_H$ is a contraction,
\begin{eqnarray*}
\|C^{H}_{\e}-\tilde C^{H}_{\e}\|_{L^2(D)} &\lesssim &
H \|\nabla u_\ho\|_{L^2(D)},\\
\|C^{\delta,H}_{T,k',\e}-\tilde C^{\delta,H}_{T,k',\e}\|_{L^2(D)} &\lesssim &
H \|\nabla u_\ho\|_{L^2(D)},
\end{eqnarray*}
uniformly in $\e,T,k'$, and the realization.
To prove \eqref{eq:Tstandard-corr2} it is therefore enough to prove that almost surely 
\begin{equation}\label{eq:Tstandard-corr3} 
\lim_{T\uparrow \infty,H\downarrow 0} \lim_{\e \downarrow 0} \|\tilde C_{T,k',\e}^{\delta,H}-\tilde C_\e^H \|_{L^2(D)} \,=\,0.
\end{equation}

\medskip

\step{4} Proof of \eqref{eq:Tstandard-corr3}.

\noindent Define $J_1^H:=\{1\leq i \leq I_H\,|\, Q_{H,i}^\delta \cap D \neq Q_{H,i}^\delta\}$ and $J_2^H:=\{1\leq i\leq I_H, \, i\notin J_1^H\}$. It suffices to prove that 
\begin{eqnarray}
\lim_{T\uparrow \infty,H\downarrow 0} \lim_{\e \downarrow 0} \|\tilde C_{T,k',\e}^{\delta,H}-\tilde C_\e^H \|_{L^2(D\cap \cup_{i\in J_1^H}Q_{H,i})} &=&0,\label{eq:Tstand-cor3-1}\\
\lim_{T\uparrow \infty,H\downarrow 0} \lim_{\e \downarrow 0} \|\tilde C_{T,k',\e}^{\delta,H}-\tilde C_\e^H \|_{L^2(\cup_{i\in J_2^H}Q_{H,i})} &=&0.\label{eq:Tstand-cor3-2}
\end{eqnarray}
We start with the proof of \eqref{eq:Tstand-cor3-1}. Energy estimates based on \eqref{eq:def-corr-reg-hom-tilde} and \eqref{eq:def-corr-reg-tilde} show that
\begin{multline*}
\|\tilde C_{T,k',\e}^{\delta,H}-\tilde C_\e^H \|_{L^2(D\cap \cup_{i\in J_1^H}Q_{H,i})}  \,\leq \,
\|\tilde C_{T,k',\e}^{\delta,H}\|_{L^2(D\cap \cup_{i\in J_1^H}Q_{H,i})} +\|\tilde C_\e^H \|_{L^2(D\cap \cup_{i\in J_1^H}Q_{H,i})}  \\
\,\lesssim \,
\|M_H(\nabla u_\ho)\|_{L^2(D\cap \cup_{i\in J_1^H}Q_{H,i})} 
\, \leq \, \|\nabla u_\ho\|_{L^2(D\cap \cup_{i\in J_1^H}Q_{H,i})} 
\end{multline*}
uniformly in $T$ and $\e$. Since the measure of the domain of integration $D\cap \cup_{i\in J_1^H}Q_{H,i}$ vanishes as $H\downarrow 0$ and since $\nabla u_\ho \in L^2(D)$, estimate~\eqref{eq:Tstand-cor3-1} follows.

\medskip

\noindent We turn now to \eqref{eq:Tstand-cor3-2}, which, by ellipticity of $\tilde A$, is equivalent to
$$
\lim_{T\uparrow \infty,H\downarrow 0} \lim_{\e \downarrow 0}  \sum_{i\in J_2^H} \int_{Q_{H,i}} (\tilde C_{T,k',\e}^{\delta,H}-\tilde C_\e^H)\cdot A_{\e,i} (\tilde C_{T,k',\e}^{\delta,H}-\tilde C_\e^H)\,=\,0.
$$
By expanding the square it is enough to prove the following four statements for all $i\in J_2^H$ almost surely:
\begin{eqnarray}
\lim_{\e \downarrow 0} \fint_{Q_{H,i}} \tilde C_\e^H\cdot A_{\e,i} \tilde C_\e^H
&=&M_{H,i}(\nabla u_\ho)\cdot A_\ho(x_i) M_{H,i}(\nabla u_\ho),\label{eq:Tstandard-corr21} 
\\
\lim_{\e \downarrow 0} \fint_{Q_{H,i}} \tilde C_{T,k',\e}^{\delta,H}\cdot A_{\e,i}\tilde  C_{\e}^{H}
&=&M_{H,i}(\nabla u_\ho)\cdot A_\ho(x_i) M_{H,i}(\nabla u_\ho),\label{eq:Tstandard-corr22} 
\\
\lim_{T\uparrow \infty}\lim_{\e \downarrow 0} \fint_{Q_{H,i}} \tilde C_{\e}^{H}\cdot A_{\e,i}\tilde  C_{T,k',\e}^{\delta,H}
&=&M_{H,i}(\nabla u_\ho)\cdot A_\ho(x_i) M_{H,i}(\nabla u_\ho),\label{eq:Tstandard-corr22bis} 
\\
\lim_{T\uparrow \infty}\lim_{\e \downarrow 0} \fint_{Q_{H,i}} \tilde C_{T,k',\e}^{\delta,H}\cdot A_{\e,i}\tilde C_{T,k',\e}^{\delta,H}
&=&M_{H,i}(\nabla u_\ho)\cdot A_\ho(x_i) M_{H,i}(\nabla u_\ho).\label{eq:Tstandard-corr23} 
\end{eqnarray}

\smallskip

\noindent Identity \eqref{eq:Tstandard-corr21} follows from the convergence of the energy associated with the homogenization of \eqref{eq:def-corr-reg-hom-tilde} and the fact that the associated homogenized coefficients are constant (with value $A_\ho(x_i)$, since on $Q_{H,i}$, $A_{\e,i}$ is given by the stationary coefficients $x\mapsto \tilde A(x_i,\frac{x}{\e})$). 

\smallskip

\noindent From Theorem~\ref{th:ATkRLp-approx}, we learn that almost surely
$$
\lim_{\e \downarrow 0} \fint_{Q_{H,i}} \tilde C_{T,k',\e}^{\delta,H}\cdot A_{\e,i}\tilde C_{T,k',\e}^{\delta,H}
\,=\, M_{H,i}(\nabla u_\ho)\cdot A_{T,k'} (x_i) M_{H,i}(\nabla u_\ho),
$$
where $A_{T,k'}(x_i)$ is the approximation of the homogenized coefficients associated with the stationary field $x\mapsto \tilde A(x_i,x)$ of Definition~\ref{def:extra-hom}.
Estimate~\eqref{eq:Tstandard-corr23} is now a consequence of Theorem~\ref{th:spec}.

\smallskip

\noindent We conclude with the proofs of \eqref{eq:Tstandard-corr22} and \eqref{eq:Tstandard-corr22bis}. 
By Meyers' estimate, on each $Q_{H,i}$, $\tilde C_{T,k',\e}^{\delta,H}$ is uniformly bounded in $L^{\tilde q}(Q_{H,i})$ for some $\tilde q>2$, so that $\tilde C_{\e}^{H}\cdot A_{\e,i}\tilde  C_{T,k',\e}^{\delta,H}$ and $\tilde C_{T,k',\e}^{\delta,H}\cdot A_{\e,i}\tilde  C_{\e}^{H}$ are equi-integrable in $L^1(Q_{H,i})$. Combined with compensated compactness, it is therefore enough to prove the
following almost sure weak convergences in $L^2(Q_{H,i},\R^d)$:
\begin{eqnarray*}
&&A_{\e,i}\tilde  C_{\e}^{H}\rightharpoonup A_\ho(x_i) M_{H,i}(\nabla u_\ho), \quad \tilde C_{T,k',\e}^{\delta,H}\rightharpoonup M_{H,i}(\nabla u_\ho),\\
&&A_{\e,i}\tilde C_{T,k',\e}^{\delta,H}\rightharpoonup A_\ho(x_i) M_{H,i}(\nabla u_\ho), \quad \tilde  C_{\e}^{H}\rightharpoonup M_{H,i}(\nabla u_\ho).
\end{eqnarray*}
The first and fourth convergences are a consequence of homogenization, which implies the weak convergence of the fluxe and of the gradient of the solution.
For the second and third convergences, recall that on $Q_{H,i}$, $\tilde C_{T,k',\e}^{\delta,H}\,=\,\nabla \tilde \gamma_{T,k',\e}^{\delta,H,i}+M_{H,i}(\nabla u_\ho)$. Denote by $\tilde \gamma_{T,k',\e}^i \in H^1_\loc(\R^d)$ the Richardson extrapolations of the unique solution $\tilde \gamma_{T,1,\e}^i \in H^1_\loc(\R^d)$ of
$$
(T\e^2)^{-1}\tilde \gamma_{T,1,\e}^{i} -\nabla \cdot \tilde A(x_i,\frac{\cdot}{\e})  \big( M_{H,i}(\nabla u_{\ho})+\nabla \tilde \gamma_{T,1,\e}^{\i} \big)\,=\,0 \quad \mbox{ in }\R^d,
$$
satisfying \eqref{eq:class-1}.
By \eqref{eq:th-approx-TRk} in Remark~\ref{rem:approx-TRk}, we then have (after rescaling by $\e$):
$$
\int_{Q_{H,i}} |\nabla \tilde \gamma_{T,k',\e}^{\delta,H,i}-\nabla\tilde \gamma_{T,k',\e}^{i} |^2
\,\lesssim \, T \exp\Big( -c (\delta-1)\frac{H}{\e} \frac{1}{\sqrt{2^{k'-1}T}}\Big).
$$
On the one hand, $\nabla \tilde \gamma_{T,k',\e}^{\delta,H,i}$ weakly converges to zero as $\e \downarrow 0$ if and only if $\nabla\tilde \gamma_{T,k',\e}^{i}$ weakly converges to zero. But the latter is a consequence of the ergodic theorem since
by construction $\nabla\tilde \gamma_{T,k',\e}^{i}$ is the $\e$-rescaled version of the gradient of the corrector,
which is stationary and the expectation of which vanishes. This proves the fourth convergence.
On the other hand, this also implies that
the weak limits as $\e \downarrow 0$ of $A_{\e,i}\tilde C_{T,k',\e}^{\delta,H}$ and $A_{\e,i}(M_{H,i}(\nabla u_\ho)+\nabla\tilde \gamma_{T,k',\e}^{i})$ are the same. By the ergodic theorem, $A_{\e,i}(M_{H,i}(\nabla u_\ho)+\nabla\tilde \gamma_{T,k',\e}^{i})$ weakly converges to $\expec{\tilde A(x_i,\cdot)(M_{H,i}(\nabla u_\ho)+\nabla \tilde \gamma_{T,k,1})}$, so that 
$$
\lim_{\e \downarrow 0} \fint_{Q_{H,i}} \tilde C_{\e}^{H}\cdot A_{\e,i}\tilde  C_{T,k',\e}^{\delta,H}
\,=\,M_{H,i}(\nabla u_\ho)\cdot \expec{\tilde A(x_i,\cdot)(M_{H,i}(\nabla u_\ho)+\nabla \tilde\gamma_{T,k,1})}.
$$
By Theorem~\ref{th:spec}, $\nabla \tilde\gamma_{T,k,1}$ converges strongly to $\nabla \tilde\gamma$ (the gradient of the corrector associated with $\tilde A(x_i,\cdot)$ in direction $M_{H,i}(\nabla u_\ho)$) in $L^2(\Omega)$.
Hence
\begin{multline*}
\lim_{T\uparrow \infty} M_{H,i}(\nabla u_\ho)\cdot \expec{\tilde A(x_i,\cdot)(M_{H,i}(\nabla u_\ho)+\nabla \tilde\gamma_{T,k,1})}\\
=\,M_{H,i}(\nabla u_\ho)\cdot A_\ho(x_i)M_{H,i}(\nabla u_\ho),
\end{multline*}
which shows the third convergence, and concludes the proof.
\end{proof}

\subsection{Discretization of the analytical framework}\label{sec:HMM}

In this paragraph we discuss a possible way to discretize the analytical framework introduced in this section.
We only present the ``direct approach", which allows us to obtain a new variant of the HMM (see \cite{E-05,Abdulle-05,E-07,E-12},\cite{Arbogast-00} for the original method) with regularization and extrapolation. In particular, we shall prove the almost-sure convergence of this numerical approximation under Hypothesis~\ref{hypo:reg}. Note that in \cite{Gloria-06b,Gloria-07b,Gloria-12b} we have also introduced a ``dual approach", which allows one to recover variants of the MsFEM (see \cite{Hou-97,Hou-99,Efendiev-00,Hou-04,Allaire-05,Efendiev-Hou-09}). 
We defer the analysis of the dual approach, which is more technical and requires additional ideas, to a subsequent contribution.

\medskip

\noindent In order to solve \eqref{eq:Tkrhoepsdelta}, one has first to approximate the coefficients $A^\delta_{T,k,\rho,\e}$.
Let $\xi \in \R^d$.
For all $x\in D$, denote by $V_h(x)$ a family of dense subspaces of $H^1_0(Q_{\delta \rho}\cap T_{-x}D)$,
and consider the Galerkin approximation $v_{T,k,h}^{\delta\rho,\e}(x,\cdot),{v'}_{T,k,h}^{\delta\rho,\e}(x,\cdot)$
of $v_{T,k}^{\delta\rho,\e}(x,\cdot),{v'}_{T,k}^{\delta\rho,\e}(x,\cdot)$ in $V_h(x)$ (cf. Definition~\ref{def:reg-ana}), and, following \eqref{eq:reg-approx}, set:
\begin{multline*}\label{eq:reg-approx-h}
\xi'\cdot A_{T,k,\rho,\e}^{\delta,h}(x)\xi :=\int_{Q_{\rho}\cap T_{-x}D} (\xi' +P_{\rho,x}(\nabla_y {v'}_{T,k,h}^{\delta\rho,\e})(x,y))\\ \cdot A_\e(x+y)(\xi+P_{\rho,x}(\nabla_y v_{T,k,h}^{\delta\rho,\e})(x,y))\mu_{\rho}(y)dy,
\end{multline*}
where $P_{\rho,h}$ is as in \eqref{def:mean-free-mu}.
\begin{rem}\label{rem:bdy-layer}
Since $\tilde A$ is Lipschitz-continuous in its first variable, $A_\ho$ is Lipschitz-continuous as well, and we can 
replace $A_{T,k,\rho,\e}^{\delta,h}(x)$ for $x\in D$ such that $Q_{\delta \rho}\cap T_{-x}D\neq Q_{\delta \rho}$
by the value $A_{T,k,\rho,\e}^{\delta,h}(x')$ at a point $x'$ such that $Q_{\delta \rho}\cap T_{-x'}D= Q_{\delta \rho}$
and such that $|x-x'|\lesssim \rho$ (which is possible since $D$ has a Lipschitz boundary). The convergence analysis is similar and this allows one to avoid possible boundary layers on $Q_{\delta}\cap T_{-x}D$, at the expense of the regularity of $A_\ho$.
\end{rem}
\noindent The same argument as in Step~1 of the proof of Theorem~\ref{th:main-reg} shows that the approximation 
$A_{T,k,\rho,\e}^{\delta,h}$ of $A_{T,k,\rho,\e}^{\delta}$ is uniformly elliptic for symmetric coefficients (for non-symmetric coefficients, the discretization yields an additional error term which is uniform).
Let now $V_H$ be a family of dense Galerkin subspaces of $H^1_0(D)$, and denote by $u_{T,k,\rho,\e}^{\delta,h,H}$
the Galerkin approximation in $V_H$ of the weak solution $u_{T,k,\rho,\e}^{\delta,h} \in H^1_0(D)$ of 
$$
-\nabla \cdot A_{T,k,\rho,\e}^{\delta,h}\nabla u_{T,k,\rho,\e}^{\delta,h}\,=\,f,
$$
which exists and is unique by the uniform ellipticity of $A_{T,k,\rho,\e}^{\delta,h}$.
As a first convergence result we have:
\begin{equation}\label{eq:conv-HMM-mod}
\lim_{T\uparrow \infty,\rho,H\downarrow 0} \lim_{\e\downarrow 0} \lim_{h\downarrow 0}
\|u_{T,k,\rho,\e}^{\delta,h,H}-u_\ho\|_{H^1_0(D)}\,=\,0,
\end{equation}
where the order of the limits in $T,\rho,H$ is irrelevant.
Indeed, by the triangle inequality,
\begin{multline*}
\|u_{T,k,\rho,\e}^{\delta,h,H}-u_\ho\|_{H^1_0(D)}\,\leq \, \|u_{T,k,\rho,\e}^{\delta,h,H}-u_{T,k,\rho,\e}^{\delta,h}\|_{H^1_0(D)}+\|u_{T,k,\rho,\e}^{\delta,h}-u_{T,k,\rho,\e}^{\delta}\|_{H^1_0(D)}
\\
+\|u_{T,k,\rho,\e}^{\delta}-u_\ho\|_{H^1_0(D)}.
\end{multline*}
Combined with C\'ea's lemma and the triangle inequality, the first RHS term is estimated as follows:
\begin{eqnarray*}
 \|u_{T,k,\rho,\e}^{\delta,h,H}-u_{T,k,\rho,\e}^{\delta,h}\|_{H^1_0(D)}&\lesssim & \inf_{v_H\in V_H}
\|u_{T,k,\rho,\e}^{\delta,h}-v_H\|_{H^1_0(D)}\\
&\leq & \|u_{T,k,\rho,\e}^{\delta,h}-u_{T,k,\rho,\e}^{\delta}\|_{H^1_0(D)}
+\|u_{T,k,\rho,\e}^{\delta}-u_\ho\|_{H^1_0(D)} \\
&&+\inf_{v_H\in V_H}
\|u_{\ho}-v_H\|_{H^1_0(D)}
\end{eqnarray*}
so that we have
\begin{multline*}
\|u_{T,k,\rho,\e}^{\delta,h,H}-u_\ho\|_{H^1_0(D)}\,\leq \,2\|u_{T,k,\rho,\e}^{\delta,h}-u_{T,k,\rho,\e}^{\delta}\|_{H^1_0(D)}+2\|u_{T,k,\rho,\e}^{\delta}-u_\ho\|_{H^1_0(D)}
\\
+\inf_{v_H\in V_H} \|u_{\ho}-v_H\|_{H^1_0(D)}.
\end{multline*}
The first RHS term vanishes as $h\downarrow 0$ by Galerkin approximation, the second RHS term converges to zero by Theorem~\ref{th:main-reg}, whereas the last RHS term vanishes as $H\downarrow 0$ by Galerkin approximation as well: \eqref{eq:conv-HMM-mod} follows.

\medskip
\noindent We turn now to the numerical corrector\footnote{The number of sub/super-scripts may not ease the reading, and we encourage the reader to simply consider $H_1=H_2=\rho$ everywhere --- although we shall need all the parameters in the proof.}.
For clarity we denote by $H_1$ the discretization parameter of the Galerkin subspaces $V_{H_1}$ of $H^1_0(D)$, and 
denote by $u_{T,k,\rho,\e}^{\delta,h,H_1}$ the Galerkin approximation of $u_{T,k,\rho,\e}^{\delta,h}$ in $V_{H_1}$.
We then follow the strategy of Definition~\ref{def:corr-reg} to introduce the desired numerical corrector.
Let $H_2>0$, $I_{H_2}\in \N$, and let $\{Q_{H_2,i}\}_{i\in [\![1,I_{H_2}]\!]}$ be a covering of $D$ in disjoint subdomains of diameter of order $H_2$.
We denote by $(M_{H_2})$ the family of approximations of the identity on $L^2(D)$ associated with
$Q_{H_2,i}$.
Let $\delta>1$ be as in Theorem~\ref{th:main-reg}, and for all $i\in [\![1,I_{H_2}]\!]$, set
$$
Q_{H_2,i}^\delta:=\{x\in \R^d\,|\,d(x,Q_{H_2,i})<(\delta-1)H_2\}.
$$
For all $i\in [\![1,I_{H_2}]\!]$, let $V_{i,H_2,h}$ be a family of dense subspaces of $H^1_0(Q_{H_2,i}^\delta\cap D)$.
With the notation of Corollary~\ref{coro:reg}, we define the functions $\tilde \gamma_{T,k,1,\rho,\e}^{\delta,h,H_1,H_2,i}$ and $\tilde \gamma_{T,k,1,\rho,\e}^{\delta,H_1,H_2,i}$ as the unique weak 
solutions in $H^1_0(Q_{H_2,i}^\delta\cap D)$ of 
\begin{eqnarray}\label{eq:def-corr-reg-disc}
(T\e^2)^{-1}\tilde \gamma_{T,k,1,\rho,\e}^{\delta,h,H_1,H_2,i}-\nabla \cdot A_\e \bigg( M_{H_2,i}(\nabla u_{T,k,\rho,\e}^{\delta,h,H_1})+\nabla \tilde \gamma_{T,k,1,\rho,\e}^{\delta,h,H_1,H_2,i}\bigg)&=&0,\\
(T\e^2)^{-1}\tilde \gamma_{T,k,1,\rho,\e}^{\delta,H_1,H_2,i}-\nabla \cdot A_\e \bigg( M_{H_2,i}(\nabla u_{T,k,\rho,\e}^{\delta,H_1})+\nabla \tilde \gamma_{T,k,1,\rho,\e}^{\delta,H_1,H_2,i}\bigg)&=&0.
\end{eqnarray}
We then define the numerical corrector $\gamma_{T,k,1,\rho,\e}^{\delta,h,H_1,H_2,i}$ associated with $u_{T,k,\rho,\e}^{\delta,h,H_1}$  as the Galerkin approximation in $V_{i,H_2,h}$ of $\tilde \gamma_{T,k,1,\rho,\e}^{\delta,h,H_1,H_2,i}$ in $H^1_0(Q_{H_2,i}^\delta\cap D)$.
The functions $\gamma_{T,k,k',\rho,\e}^{\delta,h,H_1,H_2,i}$ and $\tilde \gamma_{T,k,k',\rho,\e}^{\delta,H_1,H_2,i}$ are obtained for all $k'\in \N$ by Richardson extrapolation, cf. Definition~\ref{def:extra}.
We also set for all $k' \in \N$ and for all $1\leq i\leq I_{H_2}$,
\begin{eqnarray*}
\nabla \tilde u_{T,k,k',\rho,\e}^{\delta,H_1,H_2,i}&:=&M_{H_2}(\nabla u_{T,k,\rho,\e}^{\delta,H_1})|_{Q_{H_2,i}\cap D}+(\nabla \tilde \gamma_{T,k,k',\rho,\e}^{\delta,H_1,H_2,i}) |_{Q_{H_2,i}\cap D} \in L^2(Q_{H_2,i}\cap D),\\
\nabla u_{T,k,k',\rho,\e}^{\delta,h,H_1,H_2,i}&:=&M_{H_2}(\nabla u_{T,k,\rho,\e}^{\delta,h,H_1})|_{Q_{H_2,i}\cap D}+(\nabla \gamma_{T,k,k',\rho,\e}^{\delta,h,H_1,H_2,i}) |_{Q_{H_2,i}\cap D} \in L^2(Q_{H_2,i}\cap D),
\end{eqnarray*}
and finally define numerical correctors
\begin{equation}\label{eq:def:corrector-HMM-disc}
\tilde C^{\delta,H_1,H_2}_{T,k,k',\rho,\e}\,:=\,\sum_{i=1}^{I_{H_2}}\nabla \tilde u_{T,k,k',\rho,\e}^{\delta,H_1,H_2,i} 1_{Q_{H_2,i}\cap D}, \quad
C^{\delta,h,H_1,H_2}_{T,k,k',\rho,\e}\,:=\,\sum_{i=1}^{I_{H_2}}\nabla u_{T,k,k',\rho,\e}^{\delta,h,H_1,H_2,i} 1_{Q_{H_2,i}\cap D}.
\end{equation}
Our second result is the following almost sure convergence:
\begin{equation}\label{eq:conv-HMM-corr-mod}
\lim_{T\uparrow \infty,\rho,H_1,H_2\downarrow 0} \lim_{\e\downarrow 0} \lim_{h\downarrow 0}
\|C^{\delta,h,H_1,H_2}_{T,k,k',\rho,\e}-\nabla u_\e\|_{L^2(D)}\,=\,0,
\end{equation}
where the order of the limits in $T,\rho,H_1,H_2$ is irrelevant. In particular, one may take $\rho=H_1=H_2$ in practice.

\noindent
By the triangle inequality,
\begin{eqnarray*}
 \|C^{\delta,h,H_1,H_2}_{T,k,k',\rho,\e}-\nabla u_\e\|_{L^2(D)} &\leq &  \|C^{\delta,h,H_1,H_2}_{T,k,k',\rho,\e}-\tilde C^{\delta,H_1,H_2}_{T,k,k',\rho,\e}\|_{L^2(D)} 
\\
&&+\|\tilde C^{\delta,H_1,H_2}_{T,k,k',\rho,\e}-C^{\delta,H_2}_{T,k,k',\rho,\e}\|_{L^2(D)} +
\|C^{\delta,H_2}_{T,k,k',\rho,\e}-\nabla u_\e\|_{L^2(D)},
\end{eqnarray*}
where $C^{\delta,H_2}_{T,k,k',\rho,\e}$ is the corrector of Definition~\ref{def:corr-reg} (with $H_2$ in place of $H$).
We start the analysis with the first RHS term.
Note that $C^{\delta,h,H_1,H_2}_{T,k,k',\rho,\e}$ depends doubly on $h$: once through the Galerkin approximation in $V_{i,H_2,h}$ and once through the approximation $M_{H_2,i}(\nabla u_{T,k,\rho,\e}^{\delta,h,H_1})$
of $M_{H_2,i}(\nabla u_{T,k,\rho,\e}^{\delta,H_1})$.
Since $\lim_{h\downarrow 0} M_{H_2,i}(\nabla u_{T,k,\rho,\e}^{\delta,h,H_1})=M_{H_2,i}(\nabla u_{T,k,\rho,\e}^{\delta,H_1})$ and since the solution of \eqref{eq:def-corr-reg-disc} depends continuously on $M_{H_2,i}(\nabla u_{T,k,\rho,\e}^{\delta,h,H_1})$ in $H^1_0(Q_{H_2,i}^\delta\cap D)$, the Galerkin approximation  $\gamma_{T,k,k',\rho,\e}^{\delta,h,H_1,H_2,i}$ converges to $\tilde \gamma_{T,k,k',\rho,\e}^{\delta,H_1,H_2,i}$ in $H^1_0(Q_{H_2,i}^\delta\cap D)$ as $h\downarrow 0$. Hence,
\begin{equation*}
\lim_{h\downarrow 0} \|C^{\delta,h,H_1,H_2}_{T,k,k',\rho,\e}-\tilde C^{\delta,H_1,H_2}_{T,k,k',\rho,\e}\|_{L^2(D)} \,=\,0.
\end{equation*}
We continue with the second term, which measures the error between the corrector associated with $u_{T,k,\rho,\e}^{\delta}$ and the corrector associated with $u_{T,k,\rho,\e}^{\delta,H_1}$. Since the solution
$\tilde \gamma_{T,k,k',\rho,\e}^{\delta,h,H_1,H_2,i}$ of \eqref{eq:def-corr-reg-disc} depends linearly in $H^1_0(Q_{H_2,i}^\delta\cap D)$ on $M_{H_2}(\nabla u_{T,k,\rho,\e}^{\delta,H_1})$, and $M_{H_2}$ is a continuous linear map on $L^2(D)$, we have
\begin{equation*}
\|\tilde C^{\delta,H_1,H_2}_{T,k,k',\rho,\e}- C^{\delta,H_2}_{T,k,k',\rho,\e}\|_{L^2(D)} \,\lesssim \,
\|\nabla u_{T,k,\rho,\e}^{\delta,H_1}-\nabla u_{T,k,\rho,\e}^{\delta}\|_{L^2(D)}.
\end{equation*}
By Theorem~\ref{th:main-reg}, this second term converges uniformly to zero wrt to $H_2$.
For the third and last RHS terms we finally appeal to Theorem~\ref{th:corr-reg}.
This concludes the qualitative convergence analysis of the HMM with regularization and extrapolation under Hypothesis~\ref{hypo:reg}.

\subsection{Numerical analysis for periodic coefficients}\label{sec:Num.Anal.loc_per}

In this last paragraph we make the qualitative convergence analysis of the HMM with regularization and extrapolation quantitative for periodic coefficients. The choice of the periodic coefficients allows us to combine our quantitative analysis of Sections~\ref{sec:spectral} and~\ref{sec:tests} with quantitative two-scale expansions. 
In particular, in order to complete the analysis in the case of random Poisson inclusions, we would need a quantitative two-scale expansion, which is not yet known to hold (see however the recent progress in that direction for discrete elliptic equations \cite{Gloria-Neukamm-Otto-11}).

\begin{theo}\label{thm:full-discrete}
Let $A\in \Mab$ be periodic coefficients.
Let $f$ and $D$ be smooth enough so that the solution $u_\ho\in H^1_0(D)$ of 
the associated homogenized problem
$$
-\nabla \cdot A_\ho \nabla u_\ho \,=\,f
$$
is in $W^{2,\infty}(D)$. Fix $\delta>1$.
Let $V_H$ be a P1-finite element subspace of $H^1_0(D)$ of dimension $I_H$ associated with a tessellation $\mathcal T_H$ of $D$ of meshsize $H>0$. For every element $T_{H,i}$ of $\calT_H$, let $\calT_{H,i,h}^\delta$ be a tessellation of $T_{H,i}^\delta\cap D$ of meshsize $h>0$ with $T_{H,i}^\delta=\{x\in \R^d\,|\,d(x,T_{H,i})\leq (\delta-1)H\}$, and let $V_{H,h,i}$ be the associated P1-finite element subspace of $H^1_0(T_{H,i}^\delta\cap D)$.
For all $k\in \N$, all filters of order $p\geq 0$, $T>0$ and $\e>0$, we denote by $A_{T,k,H,\e}^{\delta,h}$ the (piecewise constant) approximation of $A_{T,k,\rho,\e}^\delta$ on $\calT_H$ (cf. \eqref{eq:reg-approx}) obtained as follows. 
For all $1\leq i\leq I_H$ such that $T_{H,i}^\delta\subset D$, we define $A_{T,k,H,\e}^{\delta,h}|_{T_{H,i}}$ as the 
approximation \eqref{eq:reg-approx}, with $T_{H,i}^\delta$ in place of $Q_{H,i}^\delta$ and $V_{H,h,i}$ in place of $H^1_0(Q_{H,i}^\delta\cap D)$. If $1\leq i\leq I_H$  is such that $T_{H,i}^\delta\not\subset D$, we follow Remark~\ref{rem:bdy-layer} and set $A_{T,k,H,\e}^{\delta,h}|_{T_{H,i}}:=A_{T,k,H,\e}^{\delta,h}|_{T_{H,j}}$ for some
$1\leq j\leq I_H$ such that $T_{H,j}^\delta\subset D$ and $d(T_{H,i},T_{H,j})\lesssim H$.
For $\delta = \frac{3}{2}$, $T=\frac{H}{\e}$ and $p\ge 2k-1$, we then have 
\begin{equation}\label{eq:quant-HMM-1}
|A_{T,k,H,\e}^{\delta,h}-A_\ho|\,\lesssim \,  \big(\frac{\e}{H}\big)^{2k}+\Big(\frac{h}{\e}\Big)^2,
\end{equation}
Denote by $u_{T,k,H,\e}^{\delta,h}$ the Galerkin approximation in $V_H$ of 
$$
-\nabla \cdot A_{T,k,H,\e}^{\delta,h}\nabla u_{T,k,H,\e}^{\delta,h}\,=\,f.
$$
We have
\begin{equation}\label{eq:quant-HMM-2}
\|u_{T,k,H,\e}^{\delta,h}-u_\ho\|_{H^1(D)}\,\lesssim \, H+\big(\frac{\e}{H}\big)^{2k}+\Big(\frac{h}{\e}\Big)^2.
\end{equation}
Let $C^{\delta,h,H}_{T,k,\e}$ denote the numerical corrector $C^{\delta,h,H_1,H_2}_{T,k,k',\rho,\e}$ defined in \eqref{eq:def:corrector-HMM-disc}, for the covering $\calT_H$ of $D$ and with $\rho=H_1=H_2=H$ and $k=k'$. We have
\begin{equation}\label{eq:quant-HMM-8}
\|\nabla u_\e-C^{\delta,h,H}_{T,k,\e}\|_{L^2(D)} \, \lesssim \, \sqrt{\e}+H+\frac{h}{\e}+\big(\frac{\e}{H}\big)^{\min\{2,k\}}.
\end{equation}
\qed
\end{theo}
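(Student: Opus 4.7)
The plan is to combine the quantitative results of Sections~\ref{sec:spectral} and~\ref{sec:tests} on the approximation of homogenized coefficients and correctors with standard finite element analysis and with the periodic two-scale expansion.

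For~\eqref{eq:quant-HMM-1}, since $A_\ho$ is constant and $A_{T,k,H,\e}^{\delta,h}$ is piecewise constant over $\calT_H$, I would argue element by element. On each interior element ($T_{H,i}^\delta\subset D$) the change of variables $y\mapsto \e y$ rescales the regularized cell problem~\eqref{eq:reg-corr-approx} to one of the form~\eqref{eq:extra-approx} on a box of size $R=\delta H/\e$ with filter width $L=H/\e$, regularization $T$, and periodic coefficients at unit scale. Theorem~\ref{th:error-estim-per} then yields
\[
|A_{T,k,H,\e}^{\delta}-A_\ho|\,\lesssim\,\Bigl(\tfrac{\e}{H}\Bigr)^{p+1}+\Bigl(\tfrac{\e}{H}\Bigr)^{2k}+\Bigl(\tfrac{H}{\e}\Bigr)^{1/4}\exp\Bigl(-c\sqrt{\tfrac{H}{2^{k-1}\e}}\Bigr).
\]
The choices $\delta=3/2$, $T=H/\e$ and $p\ge 2k-1$ fit the admissible regime of Theorem~\ref{th:error-estim-per} and make the first and third right-hand side contributions dominated by $(\e/H)^{2k}$. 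The additional $(h/\e)^2$ term stems from the P1 Galerkin approximation of the rescaled cell problem on $V_{H,h,i}$: since the coefficients $A_{T,k,H,\e}^{\delta,h}$ involve a symmetric bilinear form in $\phi_{T,k,h}$ and $\phi'_{T,k,h}$, Céa's lemma together with an interior $H^2$-regularity estimate for the rescaled regularized corrector yields a quadratic-in-$(h/\e)$ error. For boundary elements handled via Remark~\ref{rem:bdy-layer}, the same bound follows from the Lipschitz continuity of $A_\ho$ (a shift of order $H$, absorbed below in~\eqref{eq:quant-HMM-2}).

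For~\eqref{eq:quant-HMM-2}, I would invoke the first Strang lemma applied to the Galerkin approximation of the elliptic problem with piecewise constant coefficients $A_{T,k,H,\e}^{\delta,h}$, uniformly elliptic in the considered regime, to get
\[
\|u_{T,k,H,\e}^{\delta,h}-u_\ho\|_{H^1(D)}\,\lesssim\,\inf_{v_H\in V_H}\|u_\ho-v_H\|_{H^1(D)}+\|A_{T,k,H,\e}^{\delta,h}-A_\ho\|_{L^\infty(D)}\|\nabla u_\ho\|_{L^2(D)}.
\]
Standard P1-interpolation bounds the first right-hand side term by $H\|u_\ho\|_{W^{2,\infty}(D)}$, while~\eqref{eq:quant-HMM-1} controls the second.

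For~\eqref{eq:quant-HMM-8}, using the periodic two-scale expansion of $u_\e$ as a guiding ansatz, I would decompose the error into five contributions matching the scaling on the right-hand side. Denoting by $\phi_\xi$ the periodic corrector in direction $\xi$, the classical boundary-layer estimate
\[
\bigl\|\nabla u_\e(\cdot)-\nabla u_\ho(\cdot)-\nabla_y\phi_{\nabla u_\ho(\cdot)}(\cdot/\e)\bigr\|_{L^2(D)}\,\lesssim\,\sqrt{\e}
\]
accounts for the $\sqrt{\e}$ term. On each $T_{H,i}$ I would then (i) replace $\nabla u_\ho$ by $M_{H,i}(\nabla u_\ho)$ at cost $\lesssim H$ via $u_\ho\in W^{2,\infty}(D)$; (ii) replace the periodic corrector in the resulting constant direction by its rescaled Richardson extrapolation, for an error of order $(\e/H)^k$ after the change of variables $y\mapsto \e y$ via Theorem~\ref{th:quant-estim-Perio}, capped at $(\e/H)^2$ because the two-scale expansion is truncated at first order, yielding the exponent $\min\{2,k\}$; (iii) reduce from the whole space to $T_{H,i}^\delta\cap D$ at a super-algebraically small cost by Theorem~\ref{th:approx-TR} and Remark~\ref{rem:approx-TRk}; (iv) perform the P1 Galerkin approximation on $V_{H,h,i}$ at cost $h/\e$ via Céa on the rescaled problem; and (v) upgrade $M_{H,i}(\nabla u_\ho)$ to $M_{H,i}(\nabla u_{T,k,H,\e}^{\delta,h})$ at a cost controlled by~\eqref{eq:quant-HMM-2}.

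The hard part will be cleanly combining these contributions in the $L^2(D)$-norm without losing powers in $H$, $\e$, or $h$: in particular, one must carefully handle the jumps of the piecewise constant mean-field direction $M_{H,i}(\nabla u_{T,k,H,\e}^{\delta,h})$ across element interfaces, and absorb the boundary layer of the numerical corrector (generated by the homogeneous Dirichlet condition on $T_{H,i}^\delta\cap D$) via the exponential decay provided by the $T$-regularization. The scaling $T\sim H/\e$ is precisely the one that equilibrates this boundary-layer decay against the systematic error $(\e/H)^{2k}$ in~\eqref{eq:quant-HMM-1} and balances the corresponding contributions in~\eqref{eq:quant-HMM-8}.
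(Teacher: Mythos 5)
Your treatment of \eqref{eq:quant-HMM-1} and \eqref{eq:quant-HMM-2} is essentially the paper's: element-by-element rescaling to a cell problem of size $R=\delta H/\e$ with $L=H/\e$, Theorem~\ref{th:error-estim-per} with $p+1\ge 2k$, a quadratic Galerkin error $(h/\e)^2$ for the energy-type quantity, and then the first Strang lemma — which the paper phrases equivalently as an intermediate exact solution with the perturbed coefficients, the energy identity \eqref{eq:ref-pr-HMM-corr-disc}, and C\'ea's lemma. The decomposition you propose for \eqref{eq:quant-HMM-8} also matches the paper's: the two-scale expansion contributes $\sqrt{\e}$, and your substitutions (i)--(v) correspond to the terms collected in \eqref{eq:quant-HMM-5}.

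The one genuine mis-step is your justification of the exponent $\min\{2,k\}$. It does not come from the first-order truncation of the two-scale expansion — that truncation only produces the $\sqrt{\e}$ term, which you have already accounted for separately. The cap at $2$ comes from the elements $T_{H,i}$ with $T_{H,i}^\delta\not\subset D$: there the oversampled cell problem lives on the truncated domain $T_{H,i}^\delta\cap D$, the Dirichlet boundary layer cannot be pushed away by the $T$-regularization because the buffer zone is cut by $\partial D$, and the resonance error reverts to the naive rate $\e/H$ regardless of $k$ (this is the difference between \eqref{eq:quant-HMM-52} and \eqref{eq:quant-HMM-5}). Since these elements fill a neighborhood $N_H(\partial D)$ of measure $O(H)$, their contribution to the $L^2(D)$-norm is $\lesssim \sqrt{H}\,\tfrac{\e}{H}+\dots$, and Young's inequality $\sqrt{H}\,\tfrac{\e}{H}\lesssim H+\big(\tfrac{\e}{H}\big)^{2}$ converts this into the $H$ and $(\e/H)^2$ terms of \eqref{eq:quant-HMM-8}, while in the interior the full rate $(\e/H)^k$ survives. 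As written, your argument would either assert $(\e/H)^k$ globally (which fails near $\partial D$) or impose the cap for a reason that cannot produce it; you need to split $D$ into $N_H(\partial D)$ and its complement and treat the boundary elements separately.
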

\noindent Estimates~\eqref{eq:quant-HMM-1}, \eqref{eq:quant-HMM-2}, and \eqref{eq:quant-HMM-8} are the best estimates available in the literature in terms of the resonance error $\frac{\e}{H}$ for a numerical homogenization method with an oversampling of fixed size $\delta>1$ (taken $\frac{3}{2}$ for the reasoning).
Note that estimate~\eqref{eq:quant-HMM-8} may be overly pessismistic due to our currently poor understanding of the boundary layer.  
\begin{rem}\label{rem:whole-space}
For problems without boundary, say on the whole space with a zero order term of magnitude $1$,
the following improved versions of the estimates of Theorem~\ref{thm:full-discrete} hold for coefficients that are periodic or in the Kozlov class of almost periodic functions:
\begin{eqnarray*}
|A_{T,k,H,\e}^{\delta,h}-A_\ho|&\lesssim & \big(\frac{\e}{H}\big)^{2k}+\Big(\frac{h}{\e}\Big)^2,\\
\|u_{T,k,H,\e}^{\delta,h}-u_\ho\|_{H^1(\R^d)}&\lesssim & H+\big(\frac{\e}{H}\big)^{2k}+\Big(\frac{h}{\e}\Big)^2,\\
\|\nabla u_\e-C^{\delta,h,H}_{T,k,\e}\|_{L^2(\R^d)} & \lesssim & {\e}+H+\frac{h}{\e}+\big(\frac{\e}{H}\big)^{k},
\end{eqnarray*}
the proofs of which are left to the reader (for almost periodic coefficients, the two-scale expansion
on the whole space corresponding to \eqref{eq:quant-HMM-7} with $\e$ on the RHS is proved in \cite{Kozlov-78} 
in this context).
\end{rem}
\begin{proof}
We split the proof into three steps.

\medskip
\step{1} Proof of \eqref{eq:quant-HMM-1}.

\noindent By Theorem~\ref{th:quant-estim-Perio}
and a standard Galerkin approximation result, we have for all $1\leq i \leq I_H$ such that $T_{H,i}^\delta \subset D$, 
\begin{multline*}
|A_{T,k,H,\e}^{\delta,h}-A_\ho|\,\lesssim \, \big(\frac{\e}{H}\big)^{p+1}+\big(\frac{\e}{H}\big)^{2k}+\big(\frac{H}{\e}\big)^{\frac{1}{4}}\exp\Big(-c\frac{1}{\sqrt{2^{k+1}}}\sqrt{\frac{H}{\e}}\Big)+\Big(\frac{h}{\e}\Big)^2.
\end{multline*}
Since $A_\ho$ is constant on $D$ (and not only Lipschitz), this also yields for those $1\leq i \leq I_H$ such that $T_{H,i}^\delta \not \subset D$,
\begin{equation*}
|A_{T,k,H,\e}^{\delta,h}-A_\ho|\,\lesssim \, \big(\frac{\e}{H}\big)^{2k}+\Big(\frac{h}{\e}\Big)^2.
\end{equation*}
(If $A_\ho$ were Lipschitz and non-constant, there would be an additional term of order $H$ in this estimate.)
This proves \eqref{eq:quant-HMM-1}, recalling that $p+1=2k$ and neglecting the exponential term.

\medskip

\step{2} Proof of \eqref{eq:quant-HMM-2}.

\noindent 
Estimate \eqref{eq:quant-HMM-2} is a direct consequence of \eqref{eq:quant-HMM-1} and of the regularity of $u_\ho\in H^2(D)$.
Indeed, let $v_{T,k,H,\e}^{\delta,h}$ be the weak solution in $H^1_0(D)$ of 
$$
-\nabla \cdot A_{T,k,H,\e}^{\delta,h}\nabla v_{T,k,H,\e}^{\delta,h}\,=\,f.
$$
We then appeal to \eqref{eq:ref-pr-HMM-corr-disc} in the proof of Lemma~\ref{6:lemma:ptwise-conv-sol}, which takes the form 
$$
\|\nabla u_\ho-\nabla v_{T,k,H,\e}^{\delta,h}\|_{L^2(D)} \, \lesssim \, \|A_{T,k,H,\e}^{\delta,h}-A_\ho\|_{L^\infty(D)}\|\nabla u_\ho\|_{L^2(D)}.
$$
Combined with \eqref{eq:quant-HMM-1}, this yields
\begin{equation}\label{eq:quant-HMM-3}
\|\nabla u_\ho-\nabla v_{T,k,H,\e}^{\delta,h}\|_{L^2(D)} \, \lesssim \, 
\big(\frac{\e}{H}\big)^{2k}+\Big(\frac{h}{\e}\Big)^2.
\end{equation}
Finally, by C\'ea's lemma and the triangle inequality,
\begin{eqnarray*}
\|u_{T,k,H,\e}^{\delta,h}-u_\ho\|_{H^1(D)}&\leq & \|u_{T,k,H,\e}^{\delta,h}-v_{T,k,H,\e}^{\delta,h}\|_{H^1(D)}
+\|v_{T,k,H,\e}^{\delta,h}-u_\ho\|_{H^1(D)}\\
&\lesssim & \inf_{v_H\in V_H}  \|v_H-v_{T,k,H,\e}^{\delta,h}\|_{H^1(D)}
+\|v_{T,k,H,\e}^{\delta,h}-u_\ho\|_{H^1(D)} \\
&\leq &  \inf_{v_H\in V_H}  \|v_H-u_\ho\|_{H^1(D)}+2\|v_{T,k,H,\e}^{\delta,h}-u_\ho\|_{H^1(D)} ,
\end{eqnarray*}
so that \eqref{eq:quant-HMM-2} follows from \eqref{eq:quant-HMM-3} and a Galerkin approximation result.

\medskip

\step{3} Proof of \eqref{eq:quant-HMM-8}.

\noindent Let denote by $\phi^i$ the periodic corrector associated with $\tilde A$ in direction $e_i$. On the one hand, we have the following quantitative two-scale expansion:
\begin{equation}\label{eq:quant-HMM-7}
\|\nabla u_\e-\nabla u_\ho-\sum_{i=1}^d \nabla_i u_\ho \nabla \phi^i(\e^{-1}\cdot)\|_{L^2(D)} \, \lesssim \, \sqrt{\e}.
\end{equation}
On the other hand, the combination of \eqref{eq:quant-HMM-3} with Remark~\ref{rem:approx-TRk} and a Galerkin approximation result yields for all $1\leq i \leq I_H$ such that $T_{H,i}^\delta\subset D$,
\begin{multline}\label{eq:quant-HMM-5}
\|\nabla u_\ho+\sum_{i=1}^d \nabla_i u_\ho \nabla \phi^i(\e^{-1}\cdot)-C^{\delta,h,H}_{T,k,\e}\|_{L^2(T_{H,i})}^2\,
\\
\lesssim\, |T_{H,i}|\Big(|M_{H,i}(\nabla u_\ho-\nabla u_{T,k,H,\e}^{\delta,h})|+ \frac{h}{\e}+H+\big(\frac{\e}{H}\big)^{k}+\frac{h}{\e} \Big)^2,
\end{multline}
and 
\begin{multline}\label{eq:quant-HMM-52}
\|\nabla u_\ho+\sum_{i=1}^d \nabla_i u_\ho \nabla \phi^i(\e^{-1}\cdot)-C^{\delta,h,H}_{T,k,\e}\|_{L^2(T_{H,i})}^2\,
\\
\lesssim\, |T_{H,i}|\Big(|M_{H,i}(\nabla u_\ho-\nabla u_{T,k,H,\e}^{\delta,h})|+ \frac{h}{\e}+H+\frac{\e}{H}+\frac{h}{\e} \Big)^2
\end{multline}
otherwise (on such elements $T_{H,i}$ the resonance error remains a priori of order $\frac{\e}{H}$).
The combination of  \eqref{eq:quant-HMM-7}, \eqref{eq:quant-HMM-3}, and \eqref{eq:quant-HMM-52} 
on $N_H(\partial D):=\{x\in D\,|\,d(x,\partial D)\leq H\}$ and \eqref{eq:quant-HMM-5} 
on $D\setminus N_H(\partial D)$ then yields the desired estimate \eqref{eq:quant-HMM-8}, and also the interior estimate
\begin{equation*}
\|\nabla u_\e-C^{\delta,h,H}_{T,k,\e}\|_{L^2(D\setminus N_H(\partial D))} \, \lesssim \, \sqrt{\e}+H+\frac{h}{\e}+\big(\frac{\e}{H}\big)^{k}
\end{equation*}
in the spirit of Remark~\ref{rem:whole-space}.
\end{proof}


\section*{Acknowledgements}

The first author acknowledges financial support from the European Research Council under
the European Community's Seventh Framework Programme (FP7/2014-2019 Grant Agreement
QUANTHOM 335410).

\bibliographystyle{plain}

\end{document}